\documentclass[11pt]{article}

\usepackage[margin=1.05in]{geometry}
\usepackage[utf8]{inputenc}
\usepackage[T1]{fontenc}
\usepackage{lmodern}
\usepackage{amsmath,amssymb,amsfonts,amsthm,mathtools}
\usepackage{mathrsfs}
\usepackage{enumitem}
\usepackage{xcolor}
\usepackage{microtype}
\usepackage{hyperref}
\usepackage[nameinlink,capitalise]{cleveref}

\numberwithin{equation}{section}
\allowdisplaybreaks

\hypersetup{
  colorlinks=true,
  linkcolor=blue!60!black,
  citecolor=blue!60!black,
  urlcolor=blue!60!black
}

\setlist[itemize]{leftmargin=2em}
\setlist[enumerate]{leftmargin=2em}

\theoremstyle{plain}
\newtheorem{theorem}{Theorem}[section]
\newtheorem{proposition}[theorem]{Proposition}
\newtheorem{lemma}[theorem]{Lemma}
\newtheorem{corollary}[theorem]{Corollary}

\theoremstyle{definition}
\newtheorem{definition}[theorem]{Definition}
\newtheorem{assumption}[theorem]{Assumption}
\newtheorem{problem}[theorem]{Problem}
\newtheorem{conjecture}[theorem]{Conjecture}

\newtheorem{principle}[theorem]{Principle}
\newtheorem{target}[theorem]{Target Theorem}

\theoremstyle{remark}
\newtheorem{remark}[theorem]{Remark}

\newcommand{\R}{\mathbb R}
\newcommand{\T}{\mathbb T}
\newcommand{\Z}{\mathbb Z}

\newcommand{\eps}{\varepsilon}
\newcommand{\CKN}{\mathrm{CKN}}
\newcommand{\NS}{\mathrm{NS}}
\newcommand{\PF}{\mathrm{PF}}
\newcommand{\PFE}{\mathrm{PFE}}
\newcommand{\PFET}{\mathrm{PFET}}

\newcommand{\loc}{\mathrm{loc}}
\newcommand{\har}{\mathrm{har}}
\newcommand{\Har}{\mathrm{Har}}
\newcommand{\Com}{\mathrm{Com}}
\newcommand{\Loc}{\mathrm{Loc}}
\newcommand{\Err}{\mathrm{Err}}
\newcommand{\quot}{\mathrm{quot}}

\newcommand{\dxdt}{\,dx\,dt}

\newcommand{\norm}[2]{\left\|#1\right\|_{#2}}
\newcommand{\pair}[2]{\left\langle #1,#2\right\rangle}
\newcommand{\dist}{\operatorname{dist}}

\newcommand{\tr}{\operatorname{tr}}

\newcommand{\esssup}{\operatorname*{ess\,sup}}

\newcommand{\calC}{\mathcal C}
\newcommand{\calD}{\mathcal D}
\newcommand{\calE}{\mathcal E}

\newcommand{\calG}{\mathcal G}
\newcommand{\calH}{\mathcal H}
\newcommand{\calK}{\mathcal K}

\newcommand{\calN}{\mathcal N}
\newcommand{\calO}{\mathcal O}

\newcommand{\calR}{\mathcal R}

\newcommand{\calW}{\mathcal W}
\newcommand{\calZ}{\mathcal Z}
\newcommand{\frakD}{\mathfrak D}
\newcommand{\frakO}{\mathfrak O}
\newcommand{\bfB}{\mathfrak B}
\newcommand{\Xcrit}{\mathcal X_{\mathrm{crit}}}
\newcommand{\Good}{\mathcal G}
\newcommand{\Bad}{\mathcal B}
\newcommand{\Vort}{\mathcal V}
\newcommand{\Flux}{\mathcal F}
\newcommand{\Press}{\mathcal P}

\newcommand{\act}{\mathrm{act}}
\newcommand{\tot}{\mathrm{tot}}
\newcommand{\absop}{\mathrm{abs}}

\newcommand{\sg}{\mathrm{sg}}
\newcommand{\sgn}{\mathrm{sgn}}

\newcommand{\Bop}{\mathsf B}
\newcommand{\Fop}{\mathsf F}

\title{\textbf{Invisible Defect Cascades for Navier--Stokes Regularity}}
\author{Runlong Yu\\
	The University of Alabama, Tuscaloosa, AL, USA\\
	\texttt{ryu5@ua.edu}}
\date{}

\begin{document}
\maketitle

\begin{abstract}
	We formulate a conditional scale-critical defect-cascade reduction for the local regularity problem of the three-dimensional incompressible Navier--Stokes equations.  The theorem concerns a potential singular point for which no sufficiently small dyadic scale enters the Caffarelli--Kohn--Nirenberg smallness regime.  Under the structural hypotheses of the framework, such a point cannot be explained by an undifferentiated concentration of energy or pressure.  It must lead either to non-effective moving-window observability or to an NS-realizable, cleaned, scale-critical defect cascade invisible to the combined active-pressure, flux, energy, and adjoint-trace tests. The reduction is built from dyadic rescaling, coarse graining, active/harmonic pressure splitting, Reynolds covariance positivity, pressure compatibility, and local energy-flux identities.  Finite-window observability reduces possible invisible directions to explicitly defined residual kernels, while budget compatibility and sign coherence convert visible pressure--flux activity into depletion.  Consequently, within a controlled window class where dyadic defect extraction, observable depletion, and moving-window growth control hold, effective observability together with exclusion of NS-realizable combined-invisible cascades yields a CKN scale and hence local regularity.  The final component interprets the remaining obstruction as a critical recurrence problem and proposes diagnostics for vortex stretching, active pressure work, and interscale flux.  Spatially harmonic pressure is retained as a physical local pressure component; only purely time-dependent pressure functions are treated as gauge.
\end{abstract}

\noindent\textbf{Keywords.} Navier--Stokes equations; suitable weak solutions; local regularity; Caffarelli--Kohn--Nirenberg theory; pressure defect; Reynolds stress; energy flux; defect cascade; observability; scale-critical regularity.

\medskip
\noindent\textbf{2020 Mathematics Subject Classification.} 35Q30; 35B65; 35B45; 76D05.

\tableofcontents

\section{Introduction}

This paper is organized around the following Clay-adjacent reduction principle:
\[
\boxed{
\text{potential Navier--Stokes singularities are reduced to the existence of an}
}
\]
\[
\boxed{
\text{NS-realizable, cleaned, scale-critical, combined-invisible defect cascade.}
}
\]
The statement is deliberately not a proof of global Navier--Stokes regularity, and it is not a singular-solution construction.  It is a structural reduction theorem.  If a point fails to enter the Caffarelli--Kohn--Nirenberg smallness regime at all sufficiently small dyadic scales, then in the present framework the obstruction must have a very special form.  It must survive dyadic coarse graining, separation of active and harmonic pressure, finite-window localization, pressure testing, flux testing, positive energy testing, and adjoint trace testing.

The clean one-sentence conclusion is:
\begin{quote}
Potential Navier--Stokes singularities are reduced to the existence of an NS-realizable combined-invisible defect cascade.
\end{quote}
Thus the role of the paper is not to announce that singularities have been excluded.  Rather, it compresses the possible shape of a singular branch from a vague compactness failure into a highly constrained scale-critical object.

\subsection*{The theorem spine}

Let
\[
        r_n=2^{-n}
\]
be dyadic scales around a potential singular point.  A non-CKN branch means that
\[
        C(r_n)+D(r_n)>
        \varepsilon_{\CKN}
        \qquad\text{for all sufficiently large } n.
\]
From such a branch one attempts to extract NS-realizable coarse-grained defect packages
\[
        D_{n,\ell_n}
        =
        (U_{n,\ell_n},P_{n,\ell_n};P^{\act}_{n,\ell_n},P^{\har}_{n,\ell_n},R_{n,\ell_n},\Pi_{n,\ell_n}),
\]
with
\[
        R_{n,\ell_n}
        =
        S_{\ell_n}(u^{(n)}\otimes u^{(n)})
        -U_{n,\ell_n}\otimes U_{n,\ell_n},
        \qquad
        \Pi_{n,\ell_n}=-R_{n,\ell_n}:\nabla U_{n,\ell_n}.
\]
These objects are not formal defects.  They are produced by Navier--Stokes rescaling and coarse graining, and hence carry the exact momentum identity, the pressure-compatibility relation, the active/harmonic pressure split, the Reynolds covariance constraint, and the local energy-flux identity.

In each finite window
\[
        W_n=(n,\ell_n,\Lambda_n,\chi_n,s_n),
\]
we test the cleaned defect by the combined observation map
\[
        O^{\mathrm{comb}}_{W_n}
        =
        (O^P_{W_n},O^F_{W_n},O^E_{W_n},O^T_{W_n}),
\]
consisting of active pressure, flux, energy, and adjoint-trace channels.  The finite-window hierarchy is
\[
        \text{pressure}
        \Longrightarrow
        \text{flux}
        \Longrightarrow
        \text{energy}
        \Longrightarrow
        \text{adjoint trace}.
\]
Active-pressure-visible defects are not phantoms.  Active-pressure-null defects are tested by flux, while the local harmonic pressure component is retained in the momentum and energy channels rather than quotiented out.  Pressure--flux invisible defects enter the finite-dimensional kernel \(K_W^{\PF}\).  Positive NS-realizable directions inside this kernel are then tested by the energy channel.  Residual pressure--flux--energy invisible directions enter \(K_W^{\PFE}\) and are finally tested by selected-time trace-cost duality.  The final fixed-window residual is the projected pressure--flux--energy--trace obstruction space
\[
        \mathcal T^{\PFET}_{W,\NS}.
\]

The moving-window theorem has the following form:
\[
\boxed{
\begin{gathered}
\text{dyadic defect extraction + observable depletion}\\
\text{+ finite-window NS-realizability exclusion}\\
\text{+ moving-window growth control}\\
\Longrightarrow
\text{no indefinitely persistent non-CKN dyadic branch.}
\end{gathered}}
\]
Equivalently,
\[
\boxed{
\begin{gathered}
\text{controlled observability + no NS-realizable invisible cascade}\\
\Longrightarrow \text{CKN scale} \Longrightarrow \text{local regularity.}
\end{gathered}}
\]
The contrapositive is the sharper structural statement:
\[
\boxed{
\text{if no CKN scale occurs, then either moving-window observability is non-effective}
}
\]
\[
\boxed{
\text{or there exists an NS-realizable combined-invisible defect cascade.}
}
\]

\subsection*{Main contributions}

The first contribution is the shift from a one-component or single-channel viewpoint to a full three-dimensional defect-cascade problem.  The object of study is not a special smallness assumption on one velocity component, but the scale-critical structure that any potential singular branch must carry.

The second contribution is the NS-realizability filter.  A formal finite-dimensional kernel is not automatically a Navier--Stokes obstruction.  It becomes dangerous only after intersection with the closure of the range of actual NS-derived coarse-grained packages:
\[
        d\in
        \overline{\operatorname{Range}\mathcal R^{\NS}_W}.
\]
This distinction prevents model phantoms from being mistaken for genuine Navier--Stokes obstructions.

The third contribution is the combined observability mechanism.  Pressure, flux, energy, and trace are not treated as independent decorations.  They are tied together by the coarse momentum identity, pressure compatibility, local harmonic pressure work, positive covariance, the flux identity, and the local energy inequality.  A defect invisible to one channel is forced into a smaller residual class tested by the next channel.

The fourth contribution is the explicit failure object.  A genuine moving-window failure must produce a sequence
\[
        (W_n,d_n,y_n)
\]
such that \(d_n\) is NS-realizable, \(\|d_n\|\ge c_0\), \(\|y_n\|=1\),
\[
        \mathcal O_n^*(y_n)\to0,
        \qquad
        |\langle d_n,y_n\rangle|\ge c_1>0.
\]
This is the NS-realizable invisible moving-window defect cascade.  A regularity route must rule it out.  A counterexample route must construct it while preserving the coarse momentum equation, pressure compatibility, \(R\ge0\), the flux identity, and the local energy inequality.

The fifth contribution is the reduction of future work to four theorem targets:
\[
\begin{gathered}
\boxed{\text{dyadic defect extraction}},
\qquad
\boxed{\text{observable depletion}},\\[2mm]
\boxed{\text{finite-window NS-realizability exclusion}},
\qquad
\boxed{\text{moving-window growth control}}.
\end{gathered}
\]
Proving these targets in an intrinsic window class would give a local regularity theorem.  Constructing an NS-realizable failure of one of them would identify a genuine obstruction to this route.

\subsection*{From invisible defects to critical recurrence}

The combined-invisible cascade language is useful because it cleans away formal artifacts and identifies the residual obstruction left by pressure, flux, energy, and trace observations.  However, it should not be viewed as the final explanatory language.  A possible singularity would have to do more than hide from a chosen observation map: it would have to reproduce a non-CKN pattern across infinitely many scales.  This motivates the critical recurrence viewpoint developed later in the manuscript.

In this viewpoint the rescaled sequence
\[
    u^{(n)}(x,t)=r_nu(r_nx,r_n^2t),
    \qquad
    p^{(n)}(x,t)=r_n^2p(r_nx,r_n^2t)
\]
is regarded as an orbit in a critical state space.  A persistent non-CKN branch is then a recurrent bad orbit.  The question becomes: what sustains this recurrence?  We isolate three possible scale-critical sustaining mechanisms: vortex-stretching production, active pressure work, and interscale energy flux.  The programmatic part of the paper formulates diagnostics for these mechanisms and explains how they refine the earlier defect-observability framework.

\subsection{Relation to existing literature and novelty of the formulation}
\label{subsec:related-work}

The framework should be viewed as a synthesis of several established themes in the local theory of the Navier--Stokes equations, rather than as an isolated replacement for them.  The weak-solution background begins with the Leray--Hopf theory \cite{Leray1934,Hopf1951}.  The local suitable-weak-solution and partial-regularity endpoint used here comes from the work of Scheffer, Caffarelli--Kohn--Nirenberg, and Lin \cite{Scheffer1976,Scheffer1977,CKN1982,Lin1998}; related refinements and alternative proofs include Struwe, Ladyzhenskaya--Seregin, Choe--Lewis, Vasseur, and Seregin's monograph \cite{Struwe1988,LadyzhenskayaSeregin1999,ChoeLewis2000,Vasseur2007,Seregin2015}.  In this theory, regularity is obtained once a scale-invariant local quantity enters a universal smallness regime.  The present paper keeps this CKN endpoint but changes the object studied before smallness occurs: instead of tracking only the size of a norm, it tracks the equation-generated defect package which remains along a non-CKN dyadic branch.

A second related direction is local \(\varepsilon\)-regularity, pressure-sensitive regularity criteria, and quantitative regularity near possible singularities.  Interior criteria and one-scale criteria appear in work of Gustafson--Kang--Tsai, Seregin, Guevara--Phuc, and Albritton--Barker--Prange \cite{GustafsonKangTsai2007,Seregin2007Local,GuevaraPhuc2017,AlbrittonBarkerPrange2023}.  Barker and Prange \cite{BarkerPrange2021} show that, under critical or Type-I hypotheses, potential singularities force quantitative concentration behavior and yield bounds on regularity mechanisms.  This line of work is close in spirit to the present reduction because both approaches ask what must persist if a regular scale has not yet appeared.  The difference is that the present paper does not formulate the obstruction only as concentration of a critical norm.  It further decomposes the obstruction into pressure, flux, energy, and trace channels, and then asks whether the remaining invisible direction is realized by actual Navier--Stokes dyadic defect packages.

A third related direction is the large literature on critical regularity criteria and critical-space obstruction mechanisms.  The classical Prodi--Serrin criteria and their refinements form part of this background \cite{Prodi1959,Serrin1962,Serrin1963,KozonoSohr1997}.  The endpoint and critical-element viewpoints are represented by Escauriaza--Seregin--\v{S}ver\'ak, Kenig--Koch, Gallagher--Koch--Planchon, and Jia--\v{S}ver\'ak \cite{EscauriazaSereginSverak2003,KenigKoch2011,GallagherKochPlanchon2013,JiaSverak2014}.  The moving-window defect cascade in the present paper plays an analogous structural role, but it is not an ancient solution, a self-similar profile, or a compact critical element.  It is a sequence of cleaned finite-window defect directions extracted from coarse-grained Navier--Stokes packages, together with dual directions invisible to the combined observation map.

A fourth point of contact is one-component and anisotropic regularity theory.  Results of Kukavica--Ziane, Zhou--Pokorn\'y, Cao--Titi, Chemin--Zhang, Chemin--Zhang--Zhang, Kukavica--Rusin--Ziane, Han--Lei--Li--Zhao, and Kang--Nguyen show how partial information about one component, one direction, or one entry of the gradient can force regularity under suitable critical hypotheses \cite{KukavicaZiane2006,KukavicaZiane2007,ZhouPokorny2009,ZhouPokorny2010,CaoTiti2011,CheminZhang2016,CheminZhangZhang2017,KukavicaRusinZiane2017,HanLeiLiZhao2019,KangNguyen2023}.  The present manuscript does not assume one-component smallness.  Its use of ``visible'' and ``invisible'' directions grew out of the same lesson: a possible singular branch should be studied through the structure that remains after the known regularizing channels have been tested.  The finite-scale harmonic-pressure viewpoint in \cite{Yu2026HarmonicPressure} is related to the pressure-cleaning convention used here, but the present paper shifts the emphasis from one-component approximation to full defect-cascade realizability.

A fifth source of motivation is the defect-measure and energy-flux viewpoint.  Constantin--E--Titi \cite{ConstantinETiti1994}, Eyink \cite{Eyink1994}, and Duchon--Robert \cite{DuchonRobert2000} show that coarse graining and commutator fluxes provide a natural way to measure anomalous energy transfer for weak solutions.  Leslie and Shvydkoy \cite{LeslieShvydkoy2018} use an energy measure to quantify possible failure of energy equality near blow-up.  The quantity
\[
  \Pi_{n,\ell}=-R_{n,\ell}:\nabla U_{n,\ell}
\]
used here belongs to this circle of ideas.  The additional point in the present paper is that energy flux is not treated as a standalone defect.  It is combined with pressure compatibility, positive covariance, local energy balance, and selected-time trace duality.

The pressure side of the framework is also tied to the regularity theory of local pressure.  The decomposition into an active part generated by the local quadratic source and a retained harmonic part is consistent with pressure regularity and local pressure analyses of Sohr--von Wahl, Seregin--\v{S}ver\'ak, Seregin, and Wolf \cite{SohrWahl1986,SereginSverak2002,Seregin2015,Wolf2017}.  This is why the paper quotients only the genuine pressure gauge, namely addition of a function of time, and not the whole local harmonic pressure component.

Finally, Tao's averaged Navier--Stokes blow-up construction \cite{Tao2016} gives an important cautionary lesson.  It shows that energy cancellation and harmonic-analysis-type estimates alone do not capture all the nonlinear structure needed for the true Navier--Stokes problem.  This is one reason that the present framework insists on \emph{NS-realizability}: a formal kernel vector or a formal defect envelope is not regarded as a genuine obstruction unless it lies in the finite-window closure of defect packages generated by actual suitable weak solutions.

Thus the main contribution of this paper is not the individual existence of coarse-grained stresses, pressure decompositions, energy fluxes, compactness alternatives, finite-dimensional observability estimates, or parabolic adjoint maps.  These ingredients all have predecessors, including standard compactness tools such as Simon's compactness theorem \cite{Simon1986} and linear parabolic theory for adjoint evolutions \cite{Amann1995,Lunardi1995}.  The new formulation is the combined reduction
\[
\boxed{
\begin{gathered}
\text{non-CKN dyadic branch} \\
\Longrightarrow \\
\text{non-effective moving-window observability} \\
\text{or NS-realizable combined-invisible defect cascade.}
\end{gathered}}
\]
In this formulation, the remaining obstruction is required to be simultaneously scale-critical, cleaned of gauge and localization artifacts, invisible to active pressure, flux, energy, and adjoint trace observations, and compatible with the Navier--Stokes coarse momentum, pressure, flux, positivity, and local-energy structures.

\subsection*{Honest status}

This paper does not prove the global Navier--Stokes regularity conjecture.  It does not prove that all potential singularities are impossible.  It also does not construct a singular solution.

The proved content is a structural reduction and a set of finite-window model verifications.  The reduction is designed to make the remaining obstruction precise.  A possible singularity, if it survives the combined observability mechanism, must be much more organized than an arbitrary concentration of energy or pressure.  It must be an NS-realizable scale-critical defect cascade that remains invisible to energy, active pressure, flux, and adjoint trace observations while preserving the coarse-grained Navier--Stokes identities, pressure compatibility, Reynolds stress structure, and local energy inequality.

Thus the broad question ``Can a Navier--Stokes singularity occur?'' is transformed into the more structured question ``Can there exist an NS-realizable invisible moving-window defect cascade?''  This is the central object identified in the paper.

Equivalently, at the conceptual level, the output of the paper is the following reduction:
\[
\boxed{
\begin{gathered}
\text{non-CKN dyadic branch}\\
\Longrightarrow\\
\text{non-effective observability}\\
\text{or NS-realizable invisible defect cascade.}
\end{gathered}}
\]
Thus a positive route must prove dyadic defect extraction, observable depletion, finite-window NS-realizability exclusion, and moving-window growth control.  A counterexample route must construct a cleaned, NS-realizable, scale-critical cascade invisible to active pressure, flux, energy, and adjoint trace while preserving the coarse-grained Navier--Stokes identities and the local energy inequality.

\section{Dyadic coarse-grained defect packages}
\label{sec:dyadic-defect-packages}

The purpose of this section is to define the scale-local objects used throughout the paper.  We do not assume smallness of any velocity component, nor do we compare the solution with a lower-dimensional shadow class.  Instead, to each suitable weak solution and each dyadic scale we associate a coarse-grained Navier--Stokes system with an exact Reynolds defect, an exact pressure compatibility law, and an exact coarse-grained energy-flux identity.  These objects form the first version of the scale-critical defect package.

\subsection{Suitable weak solutions and scale-invariant quantities}

Let
\[
  Q_r(z_0)=B_r(x_0)\times(t_0-r^2,t_0),
  \qquad z_0=(x_0,t_0),
\]
and write \(Q_r=Q_r(0,0)\).  We consider suitable weak solutions \((u,p)\) of
\begin{equation}
  \partial_t u-\Delta u+\nabla\cdot(u\otimes u)+\nabla p=0,
  \qquad
  \nabla\cdot u=0
  \label{eq:NS}
\end{equation}
in a parabolic cylinder.  Thus
\[
  u\in L_t^\infty L_x^2\cap L_t^2H_x^1,
  \qquad
  p\in L^{3/2},
\]
the equations hold distributionally, and the local energy inequality holds in the Caffarelli--Kohn--Nirenberg sense \cite{Leray1934,Hopf1951,Scheffer1976,CKN1982}.

For \(z_0=(x_0,t_0)\), define the scale-invariant quantities
\[
  A(z_0,r)
  :=
  \esssup_{t_0-r^2<t<t_0}
  \frac1r\int_{B_r(x_0)} |u(x,t)|^2\,dx,
\]
\[
  E(z_0,r)
  :=
  \frac1r\int_{Q_r(z_0)} |\nabla u|^2\,dx\,dt,
\]
\[
  C(z_0,r)
  :=
  \frac1{r^2}\int_{Q_r(z_0)} |u|^3\,dx\,dt,
\]
and
\[
  D(z_0,r)
  :=
  \frac1{r^2}\int_{Q_r(z_0)}
  |p-(p)_{B_r(x_0)}(t)|^{3/2}\,dx\,dt.
\]
We also write
\[
  \Phi(z_0,r)=A(z_0,r)+E(z_0,r)+C(z_0,r)+D(z_0,r),
  \qquad
  \Psi(z_0,r)=C(z_0,r)+D(z_0,r).
\]
At the origin we suppress \(z_0\) from the notation.

The Caffarelli--Kohn--Nirenberg criterion and its later variants give a universal constant \(\eps_{\rm CKN}>0\) such that if
\[
  \Psi(z_0,r)\le \eps_{\rm CKN},
\]
then \(u\) is regular in a smaller cylinder \cite{CKN1982,Lin1998,Vasseur2007,Seregin2007Local,GustafsonKangTsai2007,AlbrittonBarkerPrange2023}.  Hence a potential singular point is one at which no sufficiently small scale is known to enter the CKN regime.  The role of the defect package is to record what remains at those scales.

\subsection{Dyadic rescaling near a potential singular point}

Fix a point \(z_0=(x_0,t_0)\).  After translation we may assume \(z_0=(0,0)\).  Let
\[
  r_n=2^{-n},\qquad n\ge0.
\]
The Navier--Stokes rescaling at scale \(r_n\) is
\[
  u^{(n)}(x,t)=r_nu(r_nx,r_n^2t),
  \qquad
  p^{(n)}(x,t)=r_n^2p(r_nx,r_n^2t).
\]
Then \((u^{(n)},p^{(n)})\) is again a suitable weak solution in the rescaled cylinder, and
\[
  A_{u^{(n)}}(1)=A_u(r_n),
  \qquad
  E_{u^{(n)}}(1)=E_u(r_n),
\]
\[
  C_{u^{(n)}}(1)=C_u(r_n),
  \qquad
  D_{p^{(n)}}(1)=D_p(r_n).
\]
Thus each dyadic scale near \(z_0\) is converted into a unit-scale problem.

We do not assume a uniform bound on \(\Phi(r_n)\) at this stage.  If a sequence of scales has \(\sup_n\Phi(r_n)<\infty\), then the corresponding packages are uniformly scale-critical.  If \(\Phi(r_n)\to\infty\), that growth is itself part of the concentration profile.  The formalism below is valid in either case, with constants depending on the scale under consideration.

\subsection{Interior coarse graining}

Let \(\rho\in C_c^\infty(\R^3\times\R)\) be a nonnegative parabolic mollifier with
\[
  \int_{\R^3\times\R}\rho(x,t)\,dx\,dt=1.
\]
For \(0<\ell<1\), set
\[
  \rho_\ell(x,t)=\ell^{-5}\rho(x/\ell,t/\ell^2),
\]
and define
\[
  S_\ell f=\rho_\ell*f.
\]
All identities below are understood in an interior cylinder \(Q_{\rm obs}\Subset Q_{\rm prep}\), with \(\ell\) smaller than the parabolic distance from \(Q_{\rm obs}\) to \(\partial Q_{\rm prep}\).  Thus convolution commutes with derivatives in \(Q_{\rm obs}\).  Local cutoffs and the commutators they generate are treated later; this coarse-graining viewpoint is parallel to the commutator-flux formalism used in energy-conservation and anomalous-dissipation problems \cite{ConstantinETiti1994,Eyink1994,DuchonRobert2000}.

For the rescaled solution \((u^{(n)},p^{(n)})\), define
\[
  U_{n,\ell}=S_\ell u^{(n)},
  \qquad
  P_{n,\ell}=S_\ell p^{(n)}.
\]
When no confusion is possible, we write simply \(U=U_{n,\ell}\) and \(P=P_{n,\ell}\).  Since \(S_\ell\) commutes with spatial derivatives in the interior,
\[
  \nabla\cdot U=0.
\]

\subsection{The Reynolds defect}

The coarse-grained nonlinear flux is not equal to the nonlinear flux of the coarse-grained velocity.  The difference is the Reynolds defect
\begin{equation}
  R_{n,\ell}
  :=
  S_\ell(u^{(n)}\otimes u^{(n)})
  -U_{n,\ell}\otimes U_{n,\ell}.
  \label{eq:Reynolds-defect}
\end{equation}
Equivalently,
\[
  S_\ell(u^{(n)}\otimes u^{(n)})
  =U_{n,\ell}\otimes U_{n,\ell}+R_{n,\ell}.
\]
The tensor \(R_{n,\ell}\) is symmetric and, because the mollifier is nonnegative, nonnegative as a quadratic form:
\[
  R_{n,\ell}(x,t)\ge0.
\]
Indeed, for every vector \(a\in\R^3\),
\[
  a_iR_{n,\ell,ij}a_j
  =S_\ell\bigl((a\cdot u^{(n)})^2\bigr)
  -\bigl(S_\ell(a\cdot u^{(n)})\bigr)^2
  \ge0,
\]
by Jensen's inequality.  In particular,
\[
  \kappa_{n,\ell}:=\frac12\tr R_{n,\ell}\ge0
\]
is the unresolved kinetic energy density.

\begin{definition}[Dyadic Reynolds defect]
For each dyadic scale \(r_n\) and coarse-graining scale \(\ell\), the tensor \(R_{n,\ell}\) defined by \eqref{eq:Reynolds-defect} is called the \emph{dyadic Reynolds defect} of \((u,p)\) at \((r_n,\ell)\).
\end{definition}

\subsection{Coarse-grained Navier--Stokes identity}

Applying \(S_\ell\) to the Navier--Stokes equations for \((u^{(n)},p^{(n)})\) gives
\[
  \partial_t U-\Delta U+
  \nabla\cdot S_\ell(u^{(n)}\otimes u^{(n)})+
  \nabla P=0.
\]
Using
\[
  S_\ell(u^{(n)}\otimes u^{(n)})=U\otimes U+R,
\]
we obtain the exact coarse-grained momentum identity
\begin{equation}
  \partial_t U-\Delta U+
  \nabla\cdot(U\otimes U)+\nabla P
  =
  -\nabla\cdot R,
  \qquad
  \nabla\cdot U=0.
  \label{eq:coarse-momentum}
\end{equation}
Thus the coarse-grained velocity \(U\) solves Navier--Stokes with an internal stress forcing \(-\nabla\cdot R\).  This forcing is not arbitrary; it is generated by the unresolved part of the same Navier--Stokes solution.

\begin{proposition}[NS-realizable coarse-grained momentum balance]
Let \((u,p)\) be a suitable weak solution and let \((U_{n,\ell},P_{n,\ell},R_{n,\ell})\) be defined as above.  Then, in every interior observation cylinder,
\[
  \partial_t U_{n,\ell}
  -\Delta U_{n,\ell}
  +\nabla\cdot(U_{n,\ell}\otimes U_{n,\ell})
  +\nabla P_{n,\ell}
  =
  -\nabla\cdot R_{n,\ell},
\]
and
\[
  \nabla\cdot U_{n,\ell}=0.
\]
\end{proposition}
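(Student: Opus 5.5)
The plan is to derive the identity by testing the distributional Navier--Stokes system for \((u^{(n)},p^{(n)})\) against translated, reflected copies of the mollifier. Two preliminary facts are needed first.

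\textbf{Step 1 (the rescaled pair solves NS).} Because the Navier--Stokes rescaling is a symmetry, \((u^{(n)},p^{(n)})\) solves \eqref{eq:NS} in the distributional sense on the rescaled preparation cylinder \(Q_{\rm prep}\). I will check this by a change of variables in the weak formulation.

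\textbf{Step 2 (the terms are integrable).} I will verify that every term in the equation is a locally integrable function or a derivative of one:
\begin{itemize}
\item \(u^{(n)}\in L^\infty_tL^2_x\cap L^2_tH^1_x\), so by interpolation \(u^{(n)}\in L^{10/3}_{\loc}\);
\item consequently \(u^{(n)}\otimes u^{(n)}\in L^{5/3}_{\loc}\subset L^1_{\loc}\);
\item \(p^{(n)}\in L^{3/2}\).
\end{itemize}
Hence each of \(S_\ell u^{(n)}\), \(S_\ell(u^{(n)}\otimes u^{(n)})\) and \(S_\ell p^{(n)}\) is a smooth function on \(Q_{\rm obs}\).

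\textbf{Step 3 (convolve pointwise).} Fix \(z=(x,t)\in Q_{\rm obs}\). Take the test function \(\varphi_z(y,s)=\rho_\ell(x-y,t-s)\). By the assumption on \(\ell\), this is compactly supported in \(Q_{\rm prep}\).

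Pairing the distributional equation with \(\varphi_z\) and moving derivatives back onto \(z\) gives
\[
\partial_t S_\ell u-\Delta S_\ell u+\nabla\cdot S_\ell(u\otimes u)+\nabla S_\ell p=0 \quad\text{pointwise in } Q_{\rm obs}.
\]
Here \(u\) abbreviates \(u^{(n)}\). The key identity is \(\partial_{y}\varphi_z=-\partial_{x}\varphi_z\), and likewise \(\partial_s\varphi_z=-\partial_t\varphi_z\). This is exactly the statement that convolution commutes with derivatives away from the boundary. Differentiation under the integral sign is justified because \(\rho\in C_c^\infty\) and the integrands are in \(L^1_{\loc}\).

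The same argument applied to \(\nabla\cdot u^{(n)}=0\) gives \(\nabla\cdot U_{n,\ell}=0\).

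\textbf{Step 4 (substitute the Reynolds defect).} Using \eqref{eq:Reynolds-defect}, write \(S_\ell(u\otimes u)=U\otimes U+R\) and move \(\nabla\cdot R\) to the right-hand side. This yields \eqref{eq:coarse-momentum}.

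This step is legitimate because \(U\otimes U\) and \(R\) are smooth on \(Q_{\rm obs}\), so their divergences are classical. The identity therefore holds pointwise, and a fortiori in distributions. Nothing beyond the distributional equation is used: the local energy inequality and the sign of \(R\) play no role here.

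\textbf{Main obstacle.} The only delicate point is bookkeeping of supports. I must confirm that the parabolic support of \(\rho_\ell\) is contained in \(\{|y|<\ell,\ |s|<\ell^2\}\) up to a constant depending on \(\operatorname{supp}\rho\). I must also confirm that the hypothesis "\(\ell\) smaller than the parabolic distance" is interpreted with that constant, so that \(\varphi_z\) is admissible for every \(z\in Q_{\rm obs}\).

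A further caution concerns the time direction. If \(\rho\) is not supported in past times, the backward cylinder still works, provided the distance condition covers both time directions. The preparation cylinder should therefore be chosen to extend slightly beyond \(t=0\) whenever the solution is defined there.
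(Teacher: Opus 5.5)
Your proposal is correct and follows the paper's route: the paper's proof also mollifies the distributional Navier--Stokes equations in the interior, commutes derivatives with \(S_\ell\), and then substitutes \(S_\ell(u^{(n)}\otimes u^{(n)})=U\otimes U+R\). Your version simply makes explicit the integrability (\(u^{(n)}\in L^{10/3}_{\rm loc}\), \(p^{(n)}\in L^{3/2}\)) and the support bookkeeping for \(\rho_\ell\) that the paper leaves implicit; in particular, \(Q_{\rm obs}\Subset Q_{\rm prep}\) already keeps the mollifier's support away from both time ends of \(Q_{\rm prep}\).
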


\begin{proof}
Apply the smoothing operator \(S_\ell\) to the distributional Navier--Stokes equations and use the definition of \(R_{n,\ell}\).  Since the identity is stated in an interior cylinder, no cutoff commutator appears.
\end{proof}

\subsection{Pressure compatibility, active pressure, and harmonic pressure}

Taking divergence in \eqref{eq:coarse-momentum} and using \(\nabla\cdot U=0\), we get the pressure compatibility law
\begin{equation}
  -\Delta P
  =
  \partial_i\partial_j
  \bigl(U_iU_j+R_{ij}\bigr).
  \label{eq:coarse-pressure-poisson}
\end{equation}
Equivalently,
\[
  -\Delta P
  =
  \partial_i\partial_jS_\ell(u_i^{(n)}u_j^{(n)}).
\]

There are two different pressure issues which must be kept separate.  The true Navier--Stokes pressure gauge is the addition of a function of time, since such a term has zero spatial gradient.  A spatially harmonic local pressure component is not a gauge: its gradient enters the momentum equation and its pressure work enters the local energy identity.  Therefore the local harmonic pressure component is not quotiented out of the defect package.  This convention follows the standard local-pressure distinction between the source-generated part and the harmonic remainder \cite{SohrWahl1986,SereginSverak2002,Seregin2015,Wolf2017}.

In a local cylinder \(Q\), let
\[
  \calH(Q)=
  \{h\in L^{3/2}(Q):\Delta h(\cdot,t)=0
  \text{ in the spatial ball for a.e. }t\}
\]
and let
\[
  \calH_0(Q)=\{a(t):a\in L^{3/2}_{\rm loc}\}
\]
be the true pressure-gauge subspace.  A finite-window pressure cleaning means a fixed bounded choice of an active representative
\[
  P^{\act}=\mathsf P_Q^{\act}P,
  \qquad
  P^{\har}=P-P^{\act},
\]
such that
\[
  -\Delta P^{\act}
  =
  \partial_i\partial_j(U_iU_j+R_{ij})
\]
in the chosen local representative and \(\Delta P^{\har}=0\) in the observation ball.  The component \(P^{\act}\) is the pressure generated by the active quadratic source.  The component \(P^{\har}\) records the harmonic local pressure field and is retained in the momentum and energy identities.

\begin{definition}[Active pressure representative and retained harmonic pressure]
The active pressure component of the dyadic defect package is the chosen representative
\[
  P^{\act}_{n,\ell}=\mathsf P_Q^{\act}P_{n,\ell},
\]
and the retained harmonic pressure component is
\[
  P^{\har}_{n,\ell}=P_{n,\ell}-P^{\act}_{n,\ell}.
\]
Only the residual addition of a function of time is treated as a pressure gauge.  Spatially harmonic pressure components are not null directions unless an observation channel explicitly projects them away and the resulting loss is recorded as a harmonic-pressure error in the relevant estimate.
\end{definition}

This pressure compatibility law is the first place where the Navier--Stokes structure imposes a constraint among the objects in the defect package.  The tensor \(R\) cannot be chosen independently of \(P\); its double divergence contributes directly to the active pressure source, while the harmonic pressure component remains part of the local dynamics.
\subsection{Coarse-grained energy flux}

Let
\[
  e_U=\frac12|U|^2.
\]
Taking the scalar product of \eqref{eq:coarse-momentum} with \(U\), using \(\nabla\cdot U=0\), and writing transport terms in divergence form gives
\begin{equation}
  \partial_t e_U
  -\Delta e_U
  +|\nabla U|^2
  +
  \nabla\cdot
  \left[
    (e_U+P)U+RU
  \right]
  =
  R:\nabla U.
  \label{eq:coarse-energy}
\end{equation}
Here \((RU)_j=R_{ij}U_i\).  We define the coarse-grained energy flux by
\begin{equation}
  \Pi_{n,\ell}
  :=
  -R_{n,\ell}:\nabla U_{n,\ell}.
  \label{eq:energy-flux}
\end{equation}
With this sign convention, \eqref{eq:coarse-energy} becomes
\begin{equation}
  \partial_t e_U
  -\Delta e_U
  +|\nabla U|^2
  +
  \nabla\cdot
  \left[
    (e_U+P)U+RU
  \right]
  =
  -\Pi_{n,\ell}.
  \label{eq:coarse-energy-flux}
\end{equation}
Thus \(\Pi_{n,\ell}\) measures the energy exchanged between the resolved velocity and the unresolved stress.  It is the natural local flux quantity associated with a scale-critical cascade.

\begin{proposition}[Exact coarse-grained energy-flux identity]
For every dyadic scale \(r_n\) and every interior coarse-graining scale \(\ell\),
\[
  \partial_t \frac12|U_{n,\ell}|^2
  -\Delta \frac12|U_{n,\ell}|^2
  +|\nabla U_{n,\ell}|^2
  +
  \nabla\cdot
  \left[
  \left(\frac12|U_{n,\ell}|^2+P_{n,\ell}\right)U_{n,\ell}
  +R_{n,\ell}U_{n,\ell}
  \right]
  =
  -\Pi_{n,\ell}.
\]
Equivalently, \(\Pi_{n,\ell}=-R_{n,\ell}:\nabla U_{n,\ell}\).
\end{proposition}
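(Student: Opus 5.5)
The identity will follow by taking the scalar product of the coarse-grained momentum balance (the NS-realizable coarse-grained momentum balance proposition) with \(U=U_{n,\ell}\) and rewriting each term in divergence form. In the interior observation cylinder, \(U\) and \(P\) are smooth: they are mollifications of \(L^\infty_tL^2_x\cap L^2_tH^1_x\) and \(L^{3/2}\) functions by \(\rho_\ell\), with \(\ell\) below the parabolic distance to \(\partial Q_{\rm prep}\). The tensor \(R=S_\ell(u\otimes u)-U\otimes U\) is also smooth there, since \(u\otimes u\in L^1_{\rm loc}\). All manipulations are therefore pointwise classical calculus, and no weak-solution subtleties arise. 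I would state this regularity first, since it is the only point where suitability or integrability of \((u,p)\) enters.

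The term-by-term computation then runs as follows.
\[
  U\cdot\partial_tU=\partial_te_U,
  \qquad
  -U\cdot\Delta U=-\Delta e_U+|\nabla U|^2.
\]
The transport term is \(U_j\partial_i(U_iU_j)=U_i\partial_i e_U=\partial_i(e_UU_i)\), using \(\nabla\cdot U=0\). The pressure term is \(U\cdot\nabla P=\nabla\cdot(PU)\), again by incompressibility. For the defect term, write
\[
  U_j\partial_iR_{ij}=\partial_i(R_{ij}U_j)-R_{ij}\partial_iU_j .
\]
By the symmetry of \(R\), the first piece equals \(\nabla\cdot(RU)\) with the convention \((RU)_j=R_{ij}U_i\). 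The second piece is \(R:\nabla U\), where the index convention for \(\nabla U\) is irrelevant, again by symmetry of \(R\).

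Moving \(R:\nabla U\) to the right-hand side gives the intermediate form \eqref{eq:coarse-energy}. Substituting the definition \(\Pi_{n,\ell}=-R:\nabla U\) then yields the stated identity \eqref{eq:coarse-energy-flux}. The "equivalently" clause is just this definition read back.

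There is no genuine obstacle. The only place requiring care is bookkeeping: the index conventions in \(RU\) and \(R:\nabla U\), where symmetry of \(R\) must be invoked, and the sign convention for \(\Pi\). I would also remark that the identity holds pointwise in \(Q_{\rm obs}\), and hence distributionally there. It is an exact balance, with no inequality, because the smoothness of \(U\) removes the defect measure that appears in the CKN local energy inequality for \(u\) itself.
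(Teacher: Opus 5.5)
Your proposal is correct and follows essentially the same route as the paper: dot the coarse momentum identity with \(U\), rewrite the time-derivative, viscous, transport, pressure and Reynolds terms in divergence form using \(\nabla\cdot U=0\) and the product rule, then identify \(R:\nabla U=-\Pi\). Your added remarks on the interior smoothness of \(U\), \(P\), \(R\) and on using the symmetry of \(R\) to fix the index conventions are sound, and they make explicit what the paper leaves implicit.
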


\begin{proof}
Dot \eqref{eq:coarse-momentum} with \(U\).  Since \(\nabla\cdot U=0\),
\[
  U\cdot\partial_tU=\partial_t\frac12|U|^2,
\]
\[
  -U\cdot\Delta U=-\Delta\frac12|U|^2+|\nabla U|^2,
\]
\[
  U\cdot\nabla\cdot(U\otimes U)=
  \nabla\cdot\left(\frac12|U|^2U\right),
\]
and
\[
  U\cdot\nabla P=\nabla\cdot(PU).
\]
For the Reynolds term,
\[
  -U_i\partial_jR_{ij}
  =
  -\partial_j(U_iR_{ij})+R_{ij}\partial_jU_i.
\]
Putting these identities together gives \eqref{eq:coarse-energy}.  The definition of \(\Pi\) gives \eqref{eq:coarse-energy-flux}.
\end{proof}

\subsection{Scale-critical bounds for the defect package}

The objects above inherit scale-critical bounds from the original solution.  These estimates are not smallness statements.  Their role is to show that the package is controlled at the same scaling as the Navier--Stokes equation.

\begin{lemma}[Basic scale-critical bounds]
Let \((u,p)\) be a suitable weak solution, and let the corresponding dyadic coarse-grained package be defined as above.  Then, in a fixed interior observation cylinder,
\[
  \norm{U_{n,\ell}}{L^3}
  \le
  \norm{u^{(n)}}{L^3},
\]
\[
  \norm{\nabla U_{n,\ell}}{L^2}
  \le
  \norm{\nabla u^{(n)}}{L^2},
\]
\[
  \norm{R_{n,\ell}}{L^{3/2}}
  \le
  C\norm{u^{(n)}}{L^3}^2,
\]
and, for the chosen active pressure representative,
\[
  \norm{P^{\act}_{n,\ell}}{L^{3/2}}
  \le
  C\norm{p^{(n)}-(p^{(n)})_{B_1}(t)}{L^{3/2}}.
\]
The full pressure is still present as \(P_{n,\ell}=P^{\act}_{n,\ell}+P^{\har}_{n,\ell}\); no estimate above declares \(P^{\har}_{n,\ell}\) to be a gauge-null field.  Moreover, by smoothing,
\[
  \norm{\nabla U_{n,\ell}}{L^3}
  \le
  C\ell^{-1}\norm{u^{(n)}}{L^3},
\]
and therefore
\[
  \norm{\Pi_{n,\ell}}{L^1}
  \le
  C\ell^{-1}\norm{u^{(n)}}{L^3}^3.
\]
In scale-invariant notation,
\[
  \norm{U_{n,\ell}}{L^3}^3\lesssim C(r_n),
  \qquad
  \norm{R_{n,\ell}}{L^{3/2}}^{3/2}\lesssim C(r_n),
\]
\[
  \norm{P^{\act}_{n,\ell}}{L^{3/2}}^{3/2}\lesssim D(r_n),
  \qquad
  \norm{\Pi_{n,\ell}}{L^1}\lesssim \ell^{-1}C(r_n).
\]
\end{lemma}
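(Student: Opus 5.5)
The plan is to prove each bound directly from properties of the mollifier \(S_\ell\): it is an average against a nonnegative probability kernel, so Young's inequality applies with \(\|\rho_\ell\|_{L^1}=1\). The basic convention is that every norm on the observation cylinder \(Q_{\rm obs}\) is dominated by the corresponding norm of the uncoarsened field on the preparation cylinder \(Q_{\rm prep}\). The latter is taken to be a fixed dilate of \(Q_1\), so that after the rescaling of the dyadic-rescaling subsection it corresponds to \(Q_{c r_n}\). This is legitimate because \(\ell\) is below the parabolic distance between the two cylinders, so \(S_\ell f\) on \(Q_{\rm obs}\) only sees \(f\) on \(Q_{\rm prep}\).

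The first two bounds, \(\|U\|_{L^3}\le\|u^{(n)}\|_{L^3}\) and \(\|\nabla U\|_{L^2}\le\|\nabla u^{(n)}\|_{L^2}\), then follow from Young's inequality, using that \(\nabla\) commutes with \(S_\ell\) in the interior. For \(R\), I write \(R=S_\ell(u\otimes u)-U\otimes U\). Young gives \(\|S_\ell(u\otimes u)\|_{L^{3/2}}\le\|u\|_{L^3}^2\), and Hölder together with the \(L^3\) bound gives \(\|U\otimes U\|_{L^{3/2}}\le\|u\|_{L^3}^2\), so \(C=2\) works. Alternatively, \(0\le R\) gives \(|R|\le\tr R\le S_\ell|u|^2\), which yields \(C=1\).

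For the gradient bound in \(L^3\), I use \(\nabla U=(\nabla\rho_\ell)*u\) with \(\|\nabla_x\rho_\ell\|_{L^1}=\ell^{-1}\|\nabla\rho\|_{L^1}\). Combining this with \(|\Pi|\le|R||\nabla U|\) and Hölder with exponents \(3/2\) and \(3\) gives \(\|\Pi\|_{L^1}\le\|R\|_{L^{3/2}}\|\nabla U\|_{L^3}\le C\ell^{-1}\|u\|_{L^3}^3\). The scale-invariant forms then come from the identities \(C_{u^{(n)}}(1)=C_u(r_n)\) and \(D_{p^{(n)}}(1)=D_p(r_n)\) established earlier, up to replacing the unit cylinder by the fixed preparation cylinder. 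The constants therefore really involve \(C(c\,r_n)\) for a fixed \(c\), which is harmless since \(\lesssim\) absorbs fixed dyadic shifts. I will state this explicitly.

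The pressure bound is the main obstacle, because it depends on the choice of the operator \(\mathsf P^{\act}_Q\), which the paper only requires to be ``bounded''. I would fix a concrete construction: take a spatial cutoff \(\phi\) equal to \(1\) on the observation ball and set
\[
P^{\act}=\calR_i\calR_j\bigl(\phi\,S_\ell(u_i^{(n)}u_j^{(n)})\bigr),
\]
where \(\calR_i\) are the Riesz transforms. This solves the required Poisson equation on the observation ball. Calderón--Zygmund then gives \(\|P^{\act}\|_{L^{3/2}}\le C\|u^{(n)}\|_{L^3}^2\).

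This naturally controls \(P^{\act}\) by \(C(r_n)^{2/3}\) rather than by the pressure oscillation \(D(r_n)\). So the claimed \(D\)-bound needs a different argument. I would define \(\mathsf P^{\act}_Q\) as a bounded linear projection on \(L^{3/2}\) modulo \(\calH_0\), with \(\mathsf P^{\act}_Q a(t)=0\). An example is the Calderón--Zygmund part applied through \(\Delta\) to \(\phi P\), corrected by a harmonic term. With such a choice, \(\|\mathsf P^{\act}_QP\|\le C\|P-(P)_{B}(t)\|\le C\|p^{(n)}-(p^{(n)})_{B_1}(t)\|_{L^{3/2}}\) follows from Young's inequality applied to \(S_\ell\), after subtracting the time-dependent mean, which \(S_\ell\) maps to a time function.

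If a clean projection of this kind proves awkward, I would record the bound in the equivalent hybrid form \(\|P^{\act}\|_{L^{3/2}}^{3/2}\lesssim C(r_n)+D(r_n)\), using the Riesz-transform representative. This form suffices for every later use of \(\Psi\).
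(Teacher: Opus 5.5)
Your proposal is correct and is essentially the paper's proof: Young's inequality for $S_\ell$ on nested cylinders gives the $U$ and $\nabla U$ bounds, the triangle inequality and H\"older give $R$ (your $R\ge0$ variant with $C=1$ is also valid), the $\ell^{-1}$ smoothing bound and H\"older give $\Pi$, and the pressure bound is the boundedness of the fixed active cleaning composed with $S_\ell$, which the paper asserts in one line and you rightly make explicit by requiring $\mathsf P^{\act}_Q$ to be $L^{3/2}$-bounded and to annihilate functions of time (a concrete choice is $P\mapsto(-\Delta)^{-1}(\varphi(-\Delta P))=\varphi P+2(-\Delta)^{-1}\nabla\cdot(P\nabla\varphi)-(-\Delta)^{-1}(P\Delta\varphi)$ with $\varphi\equiv1$ on the observation ball). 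Two small corrections: the Riesz-transform representative is controlled only by $C(r_n)$, so your hybrid $C(r_n)+D(r_n)$ fallback is strictly weaker than the stated bound, not equivalent to it; and you should take $Q_{\rm prep}=Q_1$ itself, because $C(cr)\le c^{-2}C(r)$ holds only for $c\le1$, so a dilate with $c>1$ is not absorbed into $\lesssim C(r_n)$.
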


\begin{proof}
The \(L^3\), \(L^2\), and \(L^{3/2}\) estimates follow from the boundedness of convolution on Lebesgue spaces and the definition of the scale-invariant quantities.  For the Reynolds stress,
\[
  \norm{R}{L^{3/2}}
  \le
  \norm{S_\ell(u^{(n)}\otimes u^{(n)})}{L^{3/2}}
  +
  \norm{U\otimes U}{L^{3/2}}
  \le
  C\norm{u^{(n)}}{L^3}^2.
\]
The smoothing estimate
\[
  \norm{\nabla U}{L^3}\le C\ell^{-1}\norm{u^{(n)}}{L^3}
\]
is standard.  Hence
\[
  \norm{\Pi}{L^1}
  =
  \norm{R:\nabla U}{L^1}
  \le
  \norm{R}{L^{3/2}}\norm{\nabla U}{L^3}
  \le
  C\ell^{-1}\norm{u^{(n)}}{L^3}^3.
\]
The scale-invariant forms follow from the definition of \(u^{(n)}\) and \(p^{(n)}\).
\end{proof}

\subsection{The NS-realizable dyadic defect package}

We now collect the preceding objects into a single definition.

\begin{definition}[NS-realizable dyadic defect package]
Let \((u,p)\) be a suitable weak solution, let \(z_0\) be a point, let \(r_n=2^{-n}\), and let \(0<\ell<\ell_0\).  The NS-realizable dyadic defect package at \((z_0,r_n,\ell)\) is
\[
  \frakD_{n,\ell}(u,p;z_0)
  :=
  \left(
  U_{n,\ell},
  P_{n,\ell};
  P^{\act}_{n,\ell},
  P^{\har}_{n,\ell},
  R_{n,\ell},
  \Pi_{n,\ell}
  \right),
\]
where
\[
  U_{n,\ell}=S_\ell u^{(n)},
  \qquad
  P_{n,\ell}=S_\ell p^{(n)},
\]
\[
  P_{n,\ell}=P^{\act}_{n,\ell}+P^{\har}_{n,\ell},
  \qquad
  \Delta P^{\har}_{n,\ell}=0
  \quad\text{locally in the chosen pressure window},
\]
\[
  R_{n,\ell}
  =
  S_\ell(u^{(n)}\otimes u^{(n)})
  -U_{n,\ell}\otimes U_{n,\ell},
\]
and
\[
  \Pi_{n,\ell}
  =
  -R_{n,\ell}:\nabla U_{n,\ell}.
\]
The package is called NS-realizable because it arises from an actual suitable weak solution through Navier--Stokes scaling and coarse graining.
\end{definition}

The package satisfies the three structural identities
\[
  \partial_t U-\Delta U+
  \nabla\cdot(U\otimes U)+\nabla P
  =
  -\nabla\cdot R,
\]
\[
  -\Delta P
  =
  \partial_i\partial_j(U_iU_j+R_{ij}),
  \qquad
  P=P^{\act}+P^{\har},
  \qquad
  \Delta P^{\har}=0
  \quad\text{locally},
\]
and
\[
  \partial_t\frac12|U|^2
  -\Delta\frac12|U|^2
  +|\nabla U|^2
  +
  \nabla\cdot\left[
    \left(\frac12|U|^2+P\right)U+RU
  \right]
  =
  -\Pi.
\]
These identities show that \(U\), \(P\), \(R\), and \(\Pi\) are not independent observables.  The Reynolds defect determines the active pressure source, the local harmonic pressure component contributes to momentum and pressure work, and the same Reynolds defect interacts with the resolved strain to produce the energy flux.
\subsection{Potential singularities and scale-critical defect cascades}

The preceding construction applies at every point.  It becomes significant near a potential singular point.  Suppose \(z_0\) is not known to be regular.  If no dyadic scale \(r_n\) satisfies the CKN smallness condition
\[
  \Psi(z_0,r_n)\le \eps_{\rm CKN},
\]
then the sequence of packages
\[
  \left\{\frakD_{n,\ell_n}(u,p;z_0)\right\}_{n\ge n_0}
\]
records the scale-critical information that persists along the possible singular cascade.

The central problem may now be stated informally:
\begin{quote}
\centering
\textit{Can an NS-realizable sequence of dyadic defect packages remain scale-critical while escaping all combined observation channels?}
\end{quote}
Equivalently, if all nonzero NS-realizable dyadic defects are detected by pressure, flux, energy, or adjoint trace observations with summable constants, then a CKN scale should eventually appear.  Conversely, if no CKN scale appears, then one should expect to extract a scale-critical defect cascade that is NS-realizable and invisible to the combined observability mechanism.

This section constructs the objects and proves their exact identities.  The next step is to define the relevant observation maps acting on \(\frakD_{n,\ell}\).

\section{Combined observability maps}
\label{sec:combined-observability}

In the previous section we associated to each dyadic scale and each coarse-graining length an NS-realizable defect package
\[
  \frakD_{n,\ell}=
  \left(U_{n,\ell},P_{n,\ell};P^{\act}_{n,\ell},P^{\har}_{n,\ell},R_{n,\ell},\Pi_{n,\ell}\right).
\]
We now specify what it means for such a package, or for a normalized defect direction extracted from it, to be observed.  The purpose of this section is structural: we define finite observation windows, the active defect quotient, four observation channels, the combined observability map, and the corresponding notion of an invisible defect direction.  No uniform or summable observability estimate is asserted here.

The four observation channels are
\[
  \text{energy},
  \qquad
  \text{active pressure},
  \qquad
  \text{flux},
  \qquad
  \text{adjoint trace}.
\]
Together they form the combined observability map.  A scale-critical defect cascade is dangerous only if, after all exact gauge and exact null directions have been removed, and after perturbative localization, CKN-small, and harmonic-pressure leakage errors have been accounted for, it remains invisible to this combined map.

\subsection{Finite observation windows}

A finite observation window is a tuple
\[
  W=(n,\ell,\Lambda,\chi,s_*),
\]
where \(n\) is the dyadic scale index, \(0<\ell<1\) is the coarse-graining length in the rescaled unit cylinder, \(\Lambda\) is a finite active frequency or finite-dimensional test window, \(\chi\in C_c^\infty(Q_{\rm prep})\) is a cutoff with \(\chi\equiv1\) on \(Q_{\rm obs}\Subset Q_{\rm prep}\), and \(s_*\in(-1,0)\) is a selected time when selected-time or adjoint-trace observation is used.

The role of \(\Lambda\) is not necessarily Fourier-theoretic.  In a periodic model it may be a finite Fourier set.  In a localized cylinder it may be a finite-dimensional space of test functions, wave packets, Stokes eigenfunctions, localized Fourier packets, or adjoint test modes.  We only require that all spaces associated with \(W\) be finite-dimensional.

Let
\[
  \Pi_W^{\rm proj}
\]
denote the finite-window projection acting on the dyadic defect package.  It extracts the part of
\[
  (U,P;P^{\act},P^{\har},R,\Pi)
\]
visible at the window \(W\).  We write
\[
  \Pi_W^{\rm proj}\frakD_{n,\ell}
  =
  \left(U_W,P_W;P_W^{\act},P_W^{\har},R_W,\Pi_W^{\rm flux}\right).
\]

\subsection{The constrained active defect quotient}

The raw finite-window variables contain two different kinds of removable pieces.  Exact null directions may be quotiented.  Perturbative effects, such as cutoff leakage, CKN-small pieces, finite-window tails, and harmonic-pressure leakage into channels that do not observe it, are not quotiented; they are recorded as errors in the corresponding estimates.

Let \(\calD_W^{\rm raw}\) be the finite-dimensional space of projected variables
\[
  \dot{\frakD}_W=(\dot U,\dot P;\dot P^{\act},\dot P^{\har},\dot R,\dot\Pi)
\]
supported in the window \(W\).  The dot notation indicates a finite-window defect direction, not necessarily a full nonlinear package.  We first restrict to the constrained finite-window tangent space \(\calZ_W\subset\calD_W^{\rm raw}\) consisting of directions satisfying, in the projected window sense,
\[
  \nabla\cdot \dot U=0,
\]
\[
  \partial_t\dot U-\Delta\dot U
  +\nabla\cdot(\dot U\otimes U+U\otimes\dot U)
  +\nabla\dot P
  =
  -\nabla\cdot\dot R,
\]
\[
  -\Delta\dot P
  =
  \partial_i\partial_j
  \bigl(\dot U_iU_j+U_i\dot U_j+\dot R_{ij}\bigr),
  \qquad
  \dot P=\dot P^{\act}+\dot P^{\har},
  \qquad
  \Delta\dot P^{\har}=0
  \quad\text{locally},
\]
and
\[
  \dot\Pi=-\dot R:\nabla U-R:\nabla\dot U.
\]
Thus pressure compatibility and the flux identity are constraints on admissible defect directions, not identities imposed after quotienting arbitrary raw variables.

The exact quotient-null subspace is denoted by
\[
  \calG_W^{\rm ex}
  =
  \calG_W^{\rm div}
  +
  \calG_W^{\rm time}.
\]
Here \(\calG_W^{\rm div}\) denotes exact Leray-projection null directions already removed by the divergence-free projection, and \(\calG_W^{\rm time}\) denotes additions of functions of time to the pressure.  These are the pressure-gauge directions with zero spatial gradient.  Spatially harmonic pressure functions are not placed in \(\calG_W^{\rm ex}\).

The cleaned active defect space is
\begin{equation}
  Y_W=\calZ_W/\calG_W^{\rm ex}.
  \label{eq:active-defect-space}
\end{equation}
We denote the cleaned class of \(\dot{\frakD}_W\) by \([\dot{\frakD}_W]\in Y_W\).  The norm on \(Y_W\) is fixed by choosing finite-window representatives orthogonal to \(\calG_W^{\rm ex}\):
\[
  \norm{[\dot{\frakD}_W]}{Y_W}
  =
  \inf_{\mathfrak g\in\calG_W^{\rm ex}}
  \norm{\dot{\frakD}_W+\mathfrak g}{\calZ_W}.
\]

The perturbative sectors
\[
  \calN_W^{\rm loc},\qquad
  \calN_W^{\rm ckn},\qquad
  \calN_W^{\rm har},\qquad
  \calN_W^{\rm tail}
\]
are not quotient-null spaces.  Their contributions enter estimates through bounds of the form
\[
  \|\widetilde{\calO}_W n\|
  \le
  \eps_W\|n\|
\]
for the relevant observation channel.  This distinction is essential: a small direction is not the same thing as a zero direction.

\begin{definition}[Cleaned dyadic defect direction]
Given an NS-realizable dyadic defect package \(\frakD_{n,\ell}\), its cleaned finite-window defect direction is
\[
  d_W=[\Pi_W^{\rm proj}\frakD_{n,\ell}]\in Y_W.
\]
If \(d_W\ne0\), its normalized direction is
\[
  \widehat d_W=\frac{d_W}{\norm{d_W}{Y_W}}.
\]
\end{definition}

Thus the object observed by the combined map is not the entire solution and not a raw pressure quotient.  It is the constrained, cleaned active defect direction \(d_W\), with local harmonic pressure retained unless a specific observation channel explicitly projects it away.

\begin{definition}[Admissible cleaned observation window]
\label{def:admissible-cleaned-window}
Let
\[
  \widetilde{\calO}^{E}_W,
  \quad
  \widetilde{\calO}^{P}_W,
  \quad
  \widetilde{\calO}^{F}_W,
  \quad
  \widetilde{\calO}^{T}_W
\]
be the raw finite-window observation maps on \(\calZ_W\).  The window is called exactly cleaned if
\[
  \widetilde{\calO}^{E}_W\mathfrak g
  =
  \widetilde{\calO}^{P}_W\mathfrak g
  =
  \widetilde{\calO}^{F}_W\mathfrak g
  =
  \widetilde{\calO}^{T}_W\mathfrak g
  =0
  \qquad
  \text{for every }\mathfrak g\in\calG_W^{\rm ex}.
\]
It is called perturbatively cleaned with error \(\eps_W\) if, in addition, every declared localization, CKN-small, harmonic-leakage, or tail component is bounded by the relevant perturbative observation error rather than placed in the quotient.
\end{definition}

\begin{lemma}[Well-defined quotient observations]
\label{lem:quotient-observations-well-defined}
In an exactly cleaned finite window, the four observation maps descend to continuous maps on the quotient \(Y_W=\calZ_W/\calG_W^{\rm ex}\).  In a perturbatively cleaned window, the same maps are well-defined on \(Y_W\) up to the explicitly recorded perturbative errors.
\end{lemma}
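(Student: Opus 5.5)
The plan is the standard factorization of a linear map through a quotient, applied channel by channel, followed by a perturbative version of the same argument. Fix a finite window \(W\). By construction \(\calZ_W\) is a finite-dimensional subspace of \(\calD_W^{\rm raw}\), and \(\calG_W^{\rm ex}=\calG_W^{\rm div}+\calG_W^{\rm time}\) is a linear subspace of it. The subspace is closed automatically, since we are in finite dimensions. Hence \(Y_W=\calZ_W/\calG_W^{\rm ex}\) is a finite-dimensional normed space with the infimum norm fixed after \eqref{eq:active-defect-space}, and the quotient map \(q:\calZ_W\to Y_W\) is linear, surjective and of norm at most one. Each raw map \(\widetilde\calO^{\bullet}_W\), \(\bullet\in\{E,P,F,T\}\), is a linear map from \(\calZ_W\) into a finite-dimensional observation space, so it is automatically bounded.

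\textbf{Exactly cleaned windows.} Here Definition~\ref{def:admissible-cleaned-window} gives \(\calG_W^{\rm ex}\subset\ker\widetilde\calO^{\bullet}_W\) for each channel. I will define \(\calO^{\bullet}_W[\dot\frakD]:=\widetilde\calO^{\bullet}_W\dot\frakD\). This is independent of the representative, because two representatives differ by some \(\mathfrak g\in\calG_W^{\rm ex}\), which the channel annihilates. For continuity, note that for every \(\mathfrak g\in\calG_W^{\rm ex}\),
\[
\|\calO^{\bullet}_W[\dot\frakD]\|=\|\widetilde\calO^{\bullet}_W(\dot\frakD+\mathfrak g)\|\le\|\widetilde\calO^{\bullet}_W\|\,\|\dot\frakD+\mathfrak g\|_{\calZ_W}.
\]
Taking the infimum over \(\mathfrak g\) yields \(\|\calO^\bullet_W\|_{Y_W\to}\le\|\widetilde\calO^\bullet_W\|\). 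The combined map \(O^{\rm comb}_W\) descends as well, since it is the direct sum of the four channels.

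\textbf{Perturbatively cleaned windows.} First decompose \(\calZ_W\), at least at the level of chosen representatives, as a sum of an exact part and the declared sectors \(\calN_W^{\rm loc}\), \(\calN_W^{\rm ckn}\), \(\calN_W^{\rm har}\), \(\calN_W^{\rm tail}\), on which \(\|\widetilde\calO^\bullet_W n\|\le\eps_W\|n\|\). Then choose a section \(\sigma:Y_W\to\calZ_W\) taking the representative orthogonal to \(\calG_W^{\rm ex}\), as already fixed in the definition of the norm, and set \(\calO^\bullet_W:=\widetilde\calO^\bullet_W\circ\sigma\). This is linear and continuous. For any other representative \(\dot\frakD=\sigma(d)+\mathfrak g\), the discrepancy is \(\widetilde\calO^\bullet_W\mathfrak g\). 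I will show this is controlled by \(\eps_W\|\mathfrak g\|\), or by \(\eps_W\) times the recorded size of the leakage component, using the hypothesis that any non-annihilated part of \(\mathfrak g\) lies in a declared perturbative sector. Together with the orthogonal-representative normalization, this should give the bound \(\lesssim\eps_W\|\dot\frakD\|_{\calZ_W}\). Well-definedness then holds up to an error of size \(O(\eps_W)\), as the lemma asserts.

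\textbf{Main obstacle.} The hard step is the perturbative case. Definition~\ref{def:admissible-cleaned-window} does not fully specify how a gauge direction may interact with the perturbative sectors. I would first try to read the hypothesis as saying that \(\widetilde\calO^\bullet_W\) vanishes on \(\calG_W^{\rm ex}\) modulo the perturbative sectors. With that reading, the required bound \(\|\widetilde\calO^\bullet_W\mathfrak g\|\le\eps_W\|\mathfrak g\|\) follows directly. Keeping \(\|\mathfrak g\|\) comparable to \(\|\dot\frakD\|\) uses that \(\sigma\) is an orthogonal projection, so that \(\|\mathfrak g\|\le\|\dot\frakD\|\).

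A separate point must be checked: harmonic pressure must not be absorbed into the quotient. The argument must make sure that \(\calG_W^{\rm time}\) contains only functions of time, so that \(P^{\har}\) directions stay in \(Y_W\), contributing at most to \(\calN_W^{\rm har}\) errors and never to the quotient.
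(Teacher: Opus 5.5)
\textbf{Gap in the perturbatively cleaned case.} Your exactly cleaned argument is correct and is the paper's own argument: representatives differing by $\mathfrak g\in\calG_W^{\rm ex}$ give the same raw observation, and continuity is automatic in finite dimensions. Your explicit operator-norm bound via the infimum norm is a harmless refinement. The perturbative half, however, rests on a misreading of \Cref{def:admissible-cleaned-window}. A window is perturbatively cleaned if, \emph{in addition} to being exactly cleaned, its declared localization, CKN-small, harmonic-leakage and tail components have observations bounded by $\eps_W$. So $\widetilde\calO^{\bullet}_W\mathfrak g=0$ still holds for every $\mathfrak g\in\calG_W^{\rm ex}$, and the four maps descend exactly by your first argument. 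The phrase ``up to the explicitly recorded perturbative errors'' refers to the sectors $\calN_W^{\rm loc},\calN_W^{\rm ckn},\calN_W^{\rm har},\calN_W^{\rm tail}$. These are \emph{not} quotiented; they remain inside $Y_W$ and contribute small but nonzero observations $\|\widetilde\calO_W n\|\le\eps_W\|n\|$. That is the paper's one-line proof. No section $\sigma$, no splitting of gauge directions, and no ``main obstacle'' is needed.

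\textbf{Why your reading fails.} You let gauge directions carry observation signal that is only approximately annihilated. Under that reading your argument does not prove descent. The discrepancy between $\widetilde\calO^{\bullet}_W\sigma(d)$ and $\widetilde\calO^{\bullet}_W(\sigma(d)+\mathfrak g)$ is at best $\eps_W\|\mathfrak g\|$. Even that bound needs a splitting of $\calG_W^{\rm ex}$ into annihilated and sector parts with controlled projection constants, which the paper never posits. Since $\mathfrak g$ ranges over the whole linear subspace $\calG_W^{\rm ex}$, this error is unbounded on each class. Your bound $\lesssim\eps_W\|\dot\frakD\|_{\calZ_W}$ depends on the representative, not on $\|d\|_{Y_W}$, so it gives no map on $Y_W$ that is defined ``up to $O(\eps_W)$'' in any uniform sense.

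More importantly, that reading erases the distinction the paper stresses just before the definition: a small direction is not a zero direction. Exact null directions are quotiented and must be annihilated exactly; perturbative directions are kept in $Y_W$ and only estimated. To repair the proof, replace your perturbative paragraph with the observation that perturbative cleaning includes exact cleaning, and note that the sector errors live in $Y_W$. Your closing remark that $\calG_W^{\rm time}$ must contain only functions of time, so harmonic pressure is never quotiented, is consistent with the paper's convention.
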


\begin{proof}
If two constrained representatives differ by an element of \(\calG_W^{\rm ex}\), exact cleaning gives the same value for every raw observation map.  Thus the observations depend only on the quotient class.  Perturbative sectors are not quotiented, so their small but nonzero contributions remain as error terms.  Continuity is automatic because all window spaces are finite-dimensional.
\end{proof}
\subsection{Energy observation}

The energy channel records the part of the defect visible through the local energy balance.  Let
\[
  \Theta_W^E=\operatorname{span}\{\varphi_1,\ldots,\varphi_{N_E}\}
  \subset C_c^\infty(Q_{\rm obs})
\]
be a finite-dimensional space of energy test functions.  For a smooth coarse package
\[
  \frakD_W=(U,P;P^{\act},P^{\har},R,\Pi),
  \qquad P=P^{\act}+P^{\har},
\]
define the coarse energy functional
\[
  \mathscr E_{\frakD_W}(\varphi)
  =
  \iint
  \left[
    -e_U(\partial_t+\Delta)\varphi
    -\left((e_U+P)U+RU\right)\cdot\nabla\varphi
    +\left(|\nabla U|^2+\Pi\right)\varphi
  \right]\,dx\,dt,
\]
where \(e_U=\frac12|U|^2\).  For an exact smooth coarse-grained solution this quantity vanishes by the coarse-grained energy identity.  For a defect direction, its linearization detects how the local energy balance changes in that direction.

We define the energy observation map
\[
  \calO_W^E:Y_W\longrightarrow E_W
\]
by
\begin{equation}
  \calO_W^E(d)
  =
  \left(
  D\mathscr E_{\frakD_W}[\dot{\frakD}_W](\varphi_j)
  \right)_{j=1}^{N_E},
  \label{eq:energy-observation}
\end{equation}
where \(\dot{\frakD}_W\) is any representative of \(d\).  The definition is independent of the representative after quotienting by the exact null space \(\calG_W^{\rm ex}\).  In particular, adding a function of time to \(P\) contributes
\[
  -\iint a(t)U\cdot\nabla\varphi\dxdt=0
\]
after integration by parts and \(\nabla\cdot U=0\).  The target space is \(E_W\simeq\R^{N_E}\) with a fixed Euclidean norm.

\subsection{Active pressure observation}

The pressure channel records the non-harmonic part of the pressure generated by the active quadratic source.  It does not declare the local harmonic pressure component to be physically irrelevant; that component remains available to the momentum, energy, and trace channels.

Let
\[
  \Theta_W^P=\operatorname{span}\{\psi_1,\ldots,\psi_{N_P}\}
\]
be a finite-dimensional space of pressure test functions chosen for the active pressure representative.  Equivalently, \(\Theta_W^P\) is taken in the dual of the active pressure range produced by the chosen pressure-cleaning operator.

For a constrained defect direction
\[
  d=[\dot U,\dot P;\dot P^{\act},\dot P^{\har},\dot R,\dot\Pi]\in Y_W,
\]
define
\[
  \calO_W^P d
  =
  \left(
    \pair{\dot P^{\act}}{\psi_j}
  \right)_{j=1}^{N_P}.
\]
Because \(d\in\calZ_W\), the active representative may equivalently be expressed through the linearized active pressure source:
\[
  -\Delta \dot P^{\act}
  =
  \partial_i\partial_j
  \left(
    \dot U_iU_j+U_i\dot U_j+\dot R_{ij}
  \right)
\]
in the chosen active pressure representative.  Therefore
\[
  \calO_W^P d
  =
  \left(
  \left\langle
    \mathsf P_W^{\act}(-\Delta)^{-1}\partial_i\partial_j
    (\dot U_iU_j+U_i\dot U_j+\dot R_{ij}),
    \psi_j
  \right\rangle
  \right)_{j=1}^{N_P},
\]
with all inverse and projection operators understood inside the fixed finite-window pressure cleaning.  This representation is not asserted for arbitrary raw directions outside \(\calZ_W\).

\begin{remark}
The pressure observation is not the raw \(L^{3/2}\)-norm of \(P\), nor is it a quotient of the full pressure by all spatially harmonic functions.  It is the finite-window active pressure signal.  Harmonic pressure can be invisible to this channel while still affecting the energy channel through pressure work.
\end{remark}
\subsection{Flux observation}

The flux channel records the part of the defect visible through interscale energy transfer.  Recall that
\[
  \Pi=-R:\nabla U.
\]
For a constrained defect direction
\[
  d=[\dot U,\dot P;\dot P^{\act},\dot P^{\har},\dot R,\dot\Pi]\in Y_W,
\]
the linearized flux identity is
\begin{equation}
  \dot\Pi
  =
  -\dot R:\nabla U-R:\nabla\dot U.
  \label{eq:linearized-flux}
\end{equation}
Let
\[
  \Theta_W^F=\operatorname{span}\{\eta_1,\ldots,\eta_{N_F}\}
  \subset C_c^\infty(Q_{\rm obs})
\]
be a finite-dimensional space of flux test functions.  Define
\[
  \calO_W^F d
  =
  \left(
    \iint \dot\Pi\,\eta_j\,dx\,dt
  \right)_{j=1}^{N_F}.
\]
Equivalently,
\[
  \calO_W^F d
  =
  -\left(
  \iint
  \left(
    \dot R:\nabla U+R:\nabla\dot U
  \right)\eta_j\,dx\,dt
  \right)_{j=1}^{N_F}.
\]
The flux observation is the most direct channel for detecting a cascade.  If a defect is invisible to pressure but produces nonzero interscale transfer, then it is not a true invisible defect.

\subsection{Adjoint trace observation}

The adjoint trace channel measures whether the defect direction can be seen by selected-time perturbations.  Let \(H_W\) be the finite-dimensional selected-time trace correction space.  Elements of \(H_W\) are divergence-free vector fields on the selected time slice \(t=s_*\), localized in the observation ball and projected to the window \(\Lambda\).

Let
\[
  A_W:H_W\longrightarrow Y_W
\]
be the active trace-defect map.  Given \(\xi\in H_W\), one evolves \(\xi\) through the linearized coarse-grained Navier--Stokes system around \(U\), with pressure determined through the active/harmonic pressure split, and records the resulting active defect direction in \(Y_W\).  Thus \(A_W\xi\) is the active defect produced by the selected-time trace correction \(\xi\).

The dual map
\[
  A_W^*:Y_W^*\longrightarrow H_W
\]
is defined by
\[
  \pair{A_W\xi}{y}_{Y_W,Y_W^*}
  =
  \pair{\xi}{A_W^*y}_{H_W}.
\]
Equivalently, \(A_W^*y\) is obtained by solving the backward adjoint linearized Navier--Stokes equation with forcing \(y\) in the active defect quotient and reading off its trace at \(t=s_*\).  This finite-window construction is a localized, finite-dimensional version of standard parabolic duality and linearized parabolic theory \cite{Amann1995,Lunardi1995}.

\begin{remark}
The adjoint trace channel is naturally dual.  A dual vector \(y\in Y_W^*\) with \(A_W^*y=0\) is invisible to selected-time trace corrections.  Such a direction is not necessarily a true phantom, because it may still be visible through pressure, flux, or energy observations.
\end{remark}

\subsection{Primal and dual combined maps}

The primal combined observability map is
\[
  \calO_W^{\rm comb}:Y_W
  \longrightarrow
  E_W\oplus P_W\oplus F_W\oplus T_W,
\]
where
\begin{equation}
  \calO_W^{\rm comb}d
  =
  \left(
    \calO_W^E d,
    \calO_W^P d,
    \calO_W^F d,
    \calO_W^T d
  \right).
  \label{eq:primal-combined-map}
\end{equation}
Here \(\calO_W^T\) is the primal selected-time trace observation
\[
  \calO_W^T d=\Pi_W^T\dot U(s_*),
\]
where \(\Pi_W^T\) projects the selected-time velocity perturbation onto the finite trace window.

The primal observability constant is
\[
  M_W^{\rm pr}
  =
  \sup_{0\ne d\in Y_W}
  \frac{\norm{d}{Y_W}}
  {
  \norm{\calO_W^Ed}{E_W}
  +\norm{\calO_W^Pd}{P_W}
  +\norm{\calO_W^Fd}{F_W}
  +\norm{\calO_W^Td}{T_W}
  }.
\]
If the denominator vanishes for some nonzero \(d\), we set \(M_W^{\rm pr}=+\infty\).

For trace-cost and phantom analysis, the dual formulation is often more important.  Write
\[
  J_W^P:Y_W^*\to P_W^*,
  \qquad
  J_W^F:Y_W^*\to F_W^*,
  \qquad
  J_W^E:Y_W^*\to E_W^*
\]
for the dual observation components.  Thus \(J_W^Py\) is the part of the dual direction tested by the active pressure channel, \(J_W^Fy\) is the part tested by the flux channel, and \(J_W^Ey\) is the part tested by the energy channel.

The dual combined observability map is
\begin{equation}
  \frakO_W^*:Y_W^*
  \longrightarrow
  H_W\oplus E_W^*\oplus P_W^*\oplus F_W^*
  \label{eq:dual-combined-map}
\end{equation}
defined by
\begin{equation}
  \frakO_W^*y
  =
  \left(
    A_W^*y,
    J_W^Ey,
    J_W^Py,
    J_W^Fy
  \right).
  \label{eq:dual-map-components}
\end{equation}
The corresponding dual combined observability constant is
\[
  M_W^*
  =
  \sup_{0\ne y\in Y_W^*}
  \frac{\norm{y}{Y_W^*}}
  {
  \norm{A_W^*y}{H_W}
  +\norm{J_W^Ey}{E_W^*}
  +\norm{J_W^Py}{P_W^*}
  +\norm{J_W^Fy}{F_W^*}
  }.
\]
If the denominator vanishes for some nonzero \(y\), then \(M_W^*=+\infty\).

\subsection{Invisible directions and true defect phantoms}

\begin{definition}[Primal invisible defect direction]
A nonzero direction \(d\in Y_W\) is called primal-invisible in the window \(W\) if
\[
  \calO_W^E d=0,
  \qquad
  \calO_W^P d=0,
  \qquad
  \calO_W^F d=0,
  \qquad
  \calO_W^T d=0.
\]
The primal invisible space is
\[
  \calK_W^{\rm pr}
  =
  \ker\calO_W^E
  \cap\ker\calO_W^P
  \cap\ker\calO_W^F
  \cap\ker\calO_W^T
  \subset Y_W.
\]
\end{definition}

\begin{definition}[Dual invisible direction]
A nonzero direction \(y\in Y_W^*\) is called dual-invisible in the window \(W\) if
\[
  A_W^*y=0,
  \qquad
  J_W^Ey=0,
  \qquad
  J_W^Py=0,
  \qquad
  J_W^Fy=0.
\]
The dual invisible space is
\[
  \calK_W^*
  =
  \ker A_W^*
  \cap\ker J_W^E
  \cap\ker J_W^P
  \cap\ker J_W^F
  \subset Y_W^*.
\]
\end{definition}

A true defect phantom is not merely a vector in one of these kernels.  It must also be realized by the Navier--Stokes equation.

\begin{definition}[Finite-window true defect phantom]
A nonzero \(d\in Y_W\) is a finite-window true defect phantom if:
\begin{enumerate}
  \item \(d\in\calK_W^{\rm pr}\);
  \item \(d\) is the cleaned finite-window projection of an NS-realizable dyadic defect package, or of a normalized limit of such packages;
  \item \(d\) is not a true pressure-gauge direction, not merely retained harmonic-pressure leakage, not a localization leakage direction, and not already controlled by a CKN-small local norm.
\end{enumerate}
\end{definition}

Similarly, a dual true phantom is a nonzero \(y\in\calK_W^*\) that has nontrivial pairing with an NS-realizable defect:
\[
  |\pair{d_W}{y}|
  \ge
  c\norm{d_W}{Y_W}\norm{y}{Y_W^*}
\]
along a sequence of windows, after all gauge and localization artifacts have been removed.

\subsection{Finite-dimensional equivalence}

Because \(Y_W\) and \(Y_W^*\) are finite-dimensional, absence of invisible directions is equivalent to a finite observability constant.

\begin{proposition}[Finite-window observability criterion]
For a fixed finite window \(W\), the following are equivalent:
\begin{enumerate}
  \item \(\calK_W^{\rm pr}=\{0\}\);
  \item \(M_W^{\rm pr}<\infty\);
  \item the primal combined observability estimate
  \[
  \norm{d}{Y_W}
  \le
  M_W^{\rm pr}
  \left(
    \norm{\calO_W^Ed}{E_W}
    +\norm{\calO_W^Pd}{P_W}
    +\norm{\calO_W^Fd}{F_W}
    +\norm{\calO_W^Td}{T_W}
  \right)
  \]
  holds for every \(d\in Y_W\).
\end{enumerate}
Similarly, the following are equivalent:
\begin{enumerate}
  \item \(\calK_W^*=\{0\}\);
  \item \(M_W^*<\infty\);
  \item the dual combined observability estimate
  \[
  \norm{y}{Y_W^*}
  \le
  M_W^*
  \left(
    \norm{A_W^*y}{H_W}
    +\norm{J_W^Ey}{E_W^*}
    +\norm{J_W^Py}{P_W^*}
    +\norm{J_W^Fy}{F_W^*}
  \right)
  \]
  holds for every \(y\in Y_W^*\).
\end{enumerate}
\end{proposition}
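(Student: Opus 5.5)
The proof is a standard finite-dimensional compactness argument, carried out for the primal map and then repeated verbatim for the dual map. Write
\[
  N(d)=\norm{\calO_W^Ed}{E_W}+\norm{\calO_W^Pd}{P_W}+\norm{\calO_W^Fd}{F_W}+\norm{\calO_W^Td}{T_W}.
\]
By \cref{lem:quotient-observations-well-defined}, in an exactly cleaned window the four maps are well defined and linear on \(Y_W\). They are also continuous, since \(Y_W\) is finite-dimensional. Hence \(N\) is a continuous seminorm on \(Y_W\), and \(\calK_W^{\rm pr}=\{d:N(d)=0\}\). All three statements are homogeneous of degree one in \(d\), so it suffices to work on the unit sphere \(\Sigma=\{d\in Y_W:\norm{d}{Y_W}=1\}\). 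This set is compact because \(\dim Y_W<\infty\).

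For (1)\(\Rightarrow\)(2), I would argue as follows. If \(\calK_W^{\rm pr}=\{0\}\), then \(N\) is strictly positive on \(\Sigma\). Being continuous on a compact set, it attains a minimum \(m>0\). Homogeneity then gives \(M_W^{\rm pr}=\sup_{\Sigma}1/N=1/m<\infty\).

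For (2)\(\Rightarrow\)(3), the estimate is just the definition of \(M_W^{\rm pr}\) as a supremum of \(\norm{d}{Y_W}/N(d)\) over \(d\neq0\). The case \(d=0\) holds trivially. This step uses the convention that \(M_W^{\rm pr}=+\infty\) when some nonzero \(d\) has \(N(d)=0\): finiteness of \(M_W^{\rm pr}\) already excludes vanishing denominators, so the quotient is defined everywhere.

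For (3)\(\Rightarrow\)(1), take \(d\in\calK_W^{\rm pr}\). Then \(N(d)=0\), and the estimate (with a finite constant) forces \(\norm{d}{Y_W}=0\), that is, \(d=0\).

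The dual statement follows identically. Here the maps \(A_W^*,J_W^E,J_W^P,J_W^F\) are adjoints of linear maps between finite-dimensional spaces, hence linear and continuous on \(Y_W^*\). The unit sphere of \(Y_W^*\) is compact, and the same three implications go through with \(N^*(y)=\norm{A_W^*y}{H_W}+\norm{J_W^Ey}{E_W^*}+\norm{J_W^Py}{P_W^*}+\norm{J_W^Fy}{F_W^*}\).

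The only point requiring care, and what I expect to be the main obstacle, is the well-definedness and continuity of the observation maps on the quotient. In an exactly cleaned window this is supplied by \cref{lem:quotient-observations-well-defined}. If the window is only perturbatively cleaned, I would state the equivalence for the exactly cleaned maps, treating the perturbative errors separately. Otherwise linearity of the maps fails and the kernel \(\calK_W^{\rm pr}\) is not a subspace.
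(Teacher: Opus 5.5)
Your proposal is correct and takes essentially the same approach as the paper. When $\calK_W^{\rm pr}=\{0\}$, compactness of the unit sphere of the finite-dimensional space $Y_W$ (respectively $Y_W^*$) gives a positive minimum of the observation seminorm, the estimate is then just the definition of $M_W^{\rm pr}$, and a nonzero kernel vector rules out a finite constant, which the paper phrases as $M_W^{\rm pr}=+\infty$ and you phrase as (3)$\Rightarrow$(1). The dual case is handled, as in the paper, by repeating the same argument verbatim.
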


\begin{proof}
We prove the primal statement; the dual one is identical.  If \(\calK_W^{\rm pr}\ne\{0\}\), then there exists \(0\ne d\in Y_W\) such that all four observations vanish.  Hence the denominator in the definition of \(M_W^{\rm pr}\) is zero while the numerator is positive, so \(M_W^{\rm pr}=+\infty\).

Conversely, assume \(\calK_W^{\rm pr}=\{0\}\).  On the unit sphere
\[
  S_W=\{d\in Y_W:\norm{d}{Y_W}=1\},
\]
the continuous function
\[
  d\mapsto
  \norm{\calO_W^Ed}{E_W}
  +\norm{\calO_W^Pd}{P_W}
  +\norm{\calO_W^Fd}{F_W}
  +\norm{\calO_W^Td}{T_W}
\]
is strictly positive.  Since \(S_W\) is compact, its minimum is a positive number.  This gives a finite value of \(M_W^{\rm pr}\) and hence the estimate.  The converse is immediate from the estimate.
\end{proof}

\subsection{Moving-window combined observability}

The fixed-window criterion is not enough for a regularity theorem.  A potential singularity may move through windows as the scale changes.  Let
\[
  W_n=(n,\ell_n,\Lambda_n,\chi_n,s_n)
\]
be a moving sequence of windows.  We say that the sequence has controlled combined observability if there exists a majorant \(M_n\) such that
\[
  M_{W_n}^*\le M_n
\]
and the sequence \(M_n\) is absorbable by the scale gains available in a later CKN-selection argument.

The exact meaning of absorbability depends on the final optimization.  Two model cases are polynomial growth in the window frequency \(N_n\),
\[
  M_n\le C N_n^p,
\]
and single-exponential parabolic growth,
\[
  M_n\le C N_n^p\exp(CN_n^q).
\]
In the latter case one chooses \(N_n\) logarithmically relative to the small scale parameter so that the exponential loss remains absorbable.

\begin{definition}[Moving-window invisible defect cascade]
A moving-window invisible defect cascade is a sequence \((W_n,d_n,y_n)\) such that:
\begin{enumerate}
  \item \(W_n=(n,\ell_n,\Lambda_n,\chi_n,s_n)\) is a sequence of observation windows with at least one parameter escaping;
  \item \(d_n\in Y_{W_n}\) is the cleaned projection of an NS-realizable dyadic defect package;
  \item \(y_n\in Y_{W_n}^*\) satisfies \(\norm{y_n}{Y_{W_n}^*}=1\);
  \item the combined dual observations vanish asymptotically:
  \[
  \norm{A_{W_n}^*y_n}{H_{W_n}}
  +\norm{J_{W_n}^Ey_n}{E_{W_n}^*}
  +\norm{J_{W_n}^Py_n}{P_{W_n}^*}
  +\norm{J_{W_n}^Fy_n}{F_{W_n}^*}
  \to0;
  \]
  \item the NS-derived defect still pairs nontrivially with this invisible direction:
  \[
  |\pair{d_n}{y_n}|
  \ge
  c_0\norm{d_n}{Y_{W_n}}
  \]
  for some \(c_0>0\).
\end{enumerate}
\end{definition}

Such a cascade is the full three-dimensional analogue of a true phantom cascade.  It is not merely a failure of one observation map.  It is a scale-moving sequence of NS-realizable defects that remains invisible to energy, active pressure, flux, and adjoint trace observations simultaneously.

\section{Finite-window pressure--flux/energy reduction}
\label{sec:main-theorem}

We first state the global logical consequence of the estimates proved later.

\begin{theorem}[Finite-window pressure--flux depletion and positive anti-kernel reduction]
\label{thm:main-reduction}
Let \(\{W_n\}_{n\ge n_0}\) be a moving family of clean periodic or perturbatively localized finite windows associated with a normalized NS-realizable dyadic branch.  Assume the following.

\begin{enumerate}[label=\textup{(H\arabic*)},leftmargin=2.6em]
    \item \textbf{Finite-window pressure--flux observability modulo kernel.}  In each window \(W_n\), the pressure--flux map has quotient coercivity constant \(M_n^{\PF}\):
    \[
        \dist_{Y_{W_n}}(d,K^{\PF}_{W_n})
        \le
        M_n^{\PF}
        \bigl(|O^P_{W_n}d|+|O^F_{W_n}d|\bigr)
    \]
    for all selected active directions \(d\).  Thus a normalized direction is pressure--flux visible only after it has a definite distance from the pressure--flux kernel.
    \item \textbf{Budget compatibility.}  There are weights \(\lambda_n>0\), a nonnegative budget \(B_n\), and summable errors \(e_n\ge0\) such that
    \begin{equation}
        B_n-B_{n+1}
        \ge
        \lambda_n\bfB^{\PF}_{W_n}(D_{W_n})-e_n.
        \label{eq:main-budget-compatibility}
    \end{equation}
    \item \textbf{Uniform budget certificate.}  The finite windows are normalized so that pressure--flux observation lower bounds imply
    \begin{equation}
        \bfB^{\PF}_{W_n}(D_{W_n})
        \ge
        c_0
        \bigl(|O^P_{W_n}D_{W_n}|+|O^F_{W_n}D_{W_n}|\bigr)^{3/2}
        \label{eq:main-budget-certificate}
    \end{equation}
    with \(c_0>0\) independent of \(n\).
    \item \textbf{Energy separation on the residual cone.}  On the positive NS-realizable pressure--flux kernel in each window, the positive energy tests separate both resolved velocity energy and nonnegative covariance trace.
\end{enumerate}

Then every normalized persistent branch satisfies the following dichotomy at each selected scale.

\begin{enumerate}[label=\textup{(\alph*)},leftmargin=2.2em]
    \item If the normalized branch direction \(\widehat d_n\) is pressure--flux visible away from the kernel, in the precise sense that
    \[
        \dist(\widehat d_n,K^{\PF}_{W_n})\ge \sigma_0
    \]
    for some fixed \(\sigma_0>0\), then
    \begin{equation}
        B_n-B_{n+1}
        \ge
        c_0\lambda_n\sigma_0^{3/2}(M_n^{\PF})^{-3/2}-e_n.
        \label{eq:main-pf-depletion}
    \end{equation}
    Consequently, if
    \begin{equation}
        \sum_{n}\lambda_n(M_n^{\PF})^{-3/2}=+\infty,
        \qquad
        \sum_n e_n<\infty,
        \label{eq:main-divergence-condition}
    \end{equation}
    and \(B_n\) is finite at the initial scale, then an indefinitely persistent branch staying a fixed distance from the pressure--flux kernel is impossible.
    \item If \(\dist(\widehat d_n,K^{\PF}_{W_n})\to0\), the branch has entered the residual pressure--flux kernel and must be passed to the energy channel, with the distance-to-kernel error recorded.
    \item If the residual pressure--flux direction belongs to the positive NS-realizable cone, then it is visible to the positive energy channel unless it is zero in the cleaned active quotient.
    \item Therefore any indefinitely persistent branch in this controlled class must enter a residual finite-dimensional space of directions invisible to pressure, flux, and positive energy, or else fail one of the structural hypotheses above.
\end{enumerate}
\end{theorem}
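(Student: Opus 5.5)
The argument is a chain of elementary inequalities. Each hypothesis is invoked exactly once, in the order (H1) $\to$ (H3) $\to$ (H2) for part (a), and (H4) for part (c).

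\emph{Part (a).} Fix a selected scale $n$ with $\dist(\widehat d_n,K^{\PF}_{W_n})\ge\sigma_0$. Apply (H1) to the normalized direction $\widehat d_n$:
\[
\sigma_0\le M_n^{\PF}\bigl(|O^P_{W_n}\widehat d_n|+|O^F_{W_n}\widehat d_n|\bigr),
\]
which gives the lower bound $|O^P\widehat d_n|+|O^F\widehat d_n|\ge\sigma_0/M_n^{\PF}$. Since the branch is normalized, the package $D_{W_n}$ used in (H2)--(H3) has cleaned direction $\widehat d_n$; I would take this identification as part of the normalization convention. Inserting the lower bound into the certificate (H3) gives
\[
\bfB^{\PF}_{W_n}(D_{W_n})\ge c_0\sigma_0^{3/2}(M_n^{\PF})^{-3/2}.
\]
Then (H2) yields \eqref{eq:main-pf-depletion}. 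For the consequence, sum \eqref{eq:main-pf-depletion} from $n_0$ to $N$ along a branch that persists at distance $\ge\sigma_0$:
\[
B_{n_0}\ \ge\ B_{n_0}-B_{N+1}\ \ge\ c_0\sigma_0^{3/2}\sum_{n=n_0}^N\lambda_n(M_n^{\PF})^{-3/2}-\sum_n e_n,
\]
where the first inequality uses $B_{N+1}\ge0$. Under \eqref{eq:main-divergence-condition} the right-hand side tends to $+\infty$, which contradicts finiteness of $B_{n_0}$.

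I would add a remark that the same argument works if the distance bound holds only along a subsequence of scales. In that case one needs the divergence in \eqref{eq:main-divergence-condition} along that subsequence, and one bounds each remaining increment by $B_n-B_{n+1}\ge-e_n$, which follows from (H2) because $\lambda_n\bfB^{\PF}\ge0$.

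\emph{Part (b).} This is a bookkeeping step. Choose $k_n\in K^{\PF}_{W_n}$ attaining the distance, which exists because the spaces are finite-dimensional. Write $\widehat d_n=k_n+r_n$ with $\|r_n\|=\dist(\widehat d_n,K^{\PF}_{W_n})\to0$. Then pass $k_n$, renormalized, to the energy channel and carry $r_n$ as a recorded error in later estimates, in the spirit of the perturbative-sector convention of Definition~\ref{def:admissible-cleaned-window}.

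\emph{Part (c).} Let $k$ lie in the positive NS-realizable pressure--flux kernel with $\mathcal O^E_W k=0$. By (H4), vanishing of all positive energy tests forces both the resolved velocity energy component $\dot U$ and the covariance trace $\tr\dot R$ to vanish. Positivity then does the rest: on the positive cone, $\dot R\ge0$ together with $\tr\dot R=0$ gives $\dot R=0$. With $\dot U=0$ and $\dot R=0$, the linearized flux identity \eqref{eq:linearized-flux} gives $\dot\Pi=0$. The pressure-compatibility constraint in $\calZ_W$ then gives $\dot P^{\act}=0$. The harmonic part $\dot P^{\har}$ now satisfies $\nabla\dot P^{\har}=0$ from the linearized momentum constraint, so it is a function of time. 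That places it in $\calG^{\rm time}_W$, and hence $k=0$ in $Y_W$.

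I expect this step to be the main obstacle. It is the only place where genuine structure is used: positivity of $\dot R$ on the cone and the constraints defining $\calZ_W$. The hard point is making sure that a spatially nonconstant harmonic pressure cannot survive. If the momentum constraint alone does not kill it, I would read (H4) as also separating pressure work and fold this into the hypothesis.

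\emph{Part (d).} Combine (a)--(c). A persistent branch cannot stay away from $K^{\PF}$ by (a), so it enters $K^{\PF}$ up to a recorded error by (b). Inside $K^{\PF}$, positive directions are energy-visible by (c). Hence the only remaining directions are the pressure--flux--energy invisible ones, which form the residual finite-dimensional space $K^{\PFE}_{W}$, unless one of (H1)--(H4) fails.
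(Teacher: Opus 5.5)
Your proof is correct and takes the same route as the paper. Part (a) chains (H1), then (H3), then (H2), and telescopes using $B_{N+1}\ge0$; parts (b) and (d) are the same bookkeeping; and part (c) is the positive-energy anti-kernel argument of \Cref{thm:positive-energy-anti-kernel-pf}, which you reprove inline rather than cite. Your use of the linearized momentum constraint in $\calZ_W$ to force $\nabla\dot P=0$, so that the leftover harmonic pressure is a pure time-gauge, slightly tightens the paper's ``no retained harmonic pressure signal'' step, so your fallback of strengthening (H4) is unnecessary. Your subsequence remark is valid because (H2) holds at every $n$, which gives $B_n-B_{n+1}\ge -e_n$ off the selected set.
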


\begin{proof}
By quotient coercivity in (H1), the distance assumption gives
\[
        |O^P_{W_n}\widehat d_n|+|O^F_{W_n}\widehat d_n|
        \ge
        \sigma_0(M_n^{\PF})^{-1}.
\]
The pressure--flux budget certificate \eqref{eq:main-budget-certificate} and budget compatibility \eqref{eq:main-budget-compatibility} then give \eqref{eq:main-pf-depletion}.  Summing from \(n_0\) to \(N\) gives
\[
        B_{n_0}-B_{N+1}
        \ge
        c_0\sigma_0^{3/2}
        \sum_{n=n_0}^{N}\lambda_n(M_n^{\PF})^{-3/2}
        -
        \sum_{n=n_0}^{N}e_n.
\]
The right-hand side diverges under \eqref{eq:main-divergence-condition}, while the left-hand side is bounded above by \(B_{n_0}\) since \(B_{N+1}\ge0\).  This contradiction excludes indefinite persistence of a pressure--flux visible branch at fixed distance from the kernel.

The near-kernel alternative is exactly the failure of the distance lower bound.  Part (c) is \Cref{thm:positive-energy-anti-kernel-pf}.  Part (d) combines the finite-dimensional pressure--flux kernel alternative with the residual energy-kernel alternative in \Cref{prop:residual-pfe-kernel}.
\end{proof}
\begin{remark}[Interpretation]
\Cref{thm:main-reduction} is a reduction theorem, not a regularity theorem.  The theorem identifies what must be proved to turn pressure--flux observability into a genuine Navier--Stokes exclusion argument: one needs a robust way of selecting windows for which pressure--flux observations are effective and for which the local pressure--flux budget is paid by an intrinsic telescoping Navier--Stokes envelope.
\end{remark}

\section{Clean periodic pressure--flux coercivity}
\label{sec:clean-periodic-pf}

We now isolate the first finite-window theorem.  The purpose is deliberately modest.  We do not claim that pressure and flux observe every Navier--Stokes defect in the full local problem.  We prove only the clean periodic finite-window statement: on nonzero periodic modes, active pressure sources are fully visible, and joint pressure--flux invisibility is equivalent to membership in a finite-dimensional kernel.

\begin{lemma}[Clean periodic active pressure-source visibility]
\label{lem:clean-periodic-pressure-source-visibility}
Let \(b\) be an active pressure source supported in \(\Lambda\):
\[
        b(x)=\sum_{\sigma\in\Lambda} b_\sigma e^{i\sigma\cdot x}.
\]
Let \(P=(-\Delta)^{-1}b\) be the zero-mean periodic pressure representative.  Set
\[
        N_\Lambda=\max_{\sigma\in\Lambda}|\sigma|,
        \qquad
        m_\Lambda=\min_{\sigma\in\Lambda}|\sigma|.
\]
Since \(0\notin\Lambda\), one has \(m_\Lambda>0\).  Then
\begin{equation}
        N_\Lambda^{-2}\|b\|_{\ell^2(\Lambda)}
        \le
        \|P\|_{\ell^2(\Lambda)}
        \le
        m_\Lambda^{-2}\|b\|_{\ell^2(\Lambda)}.
        \label{eq:pressure-source-visibility}
\end{equation}
In particular,
\[
        P=0
        \quad\Longleftrightarrow\quad
        b=0.
\]
\end{lemma}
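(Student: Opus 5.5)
The plan is to diagonalize the periodic inverse Laplacian on the finite frequency set \(\Lambda\) and read the two-sided bound off the symbol. On the torus, \(-\Delta e^{i\sigma\cdot x}=|\sigma|^2e^{i\sigma\cdot x}\). Since \(0\notin\Lambda\), the zero-mean representative \(P=(-\Delta)^{-1}b\) is well defined and has coefficients
\[
  P_\sigma=|\sigma|^{-2}b_\sigma,\qquad \sigma\in\Lambda .
\]
In particular \(P\) is again supported in \(\Lambda\), so \(\|P\|_{\ell^2(\Lambda)}\) is the full coefficient norm of \(P\). No cutoff or harmonic-pressure component enters. In the clean periodic setting the only harmonic functions are constants, and these are exactly the time-gauge directions in \(\calG_W^{\rm time}\) that the zero-mean normalization removes.

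Next I would bound the multiplier on \(\Lambda\). For every \(\sigma\in\Lambda\) we have \(m_\Lambda\le|\sigma|\le N_\Lambda\), hence
\[
  N_\Lambda^{-2}\le|\sigma|^{-2}\le m_\Lambda^{-2}.
\]
Squaring this bound, multiplying by \(|b_\sigma|^2\), and summing over the finite set \(\Lambda\) gives
\[
  N_\Lambda^{-4}\sum_\sigma|b_\sigma|^2\le\sum_\sigma|P_\sigma|^2\le m_\Lambda^{-4}\sum_\sigma|b_\sigma|^2 .
\]
Taking square roots yields \eqref{eq:pressure-source-visibility}. By Parseval, the same statement holds with \(L^2(\T^3)\) norms up to the normalization constant of the torus. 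The equivalence \(P=0\Leftrightarrow b=0\) then follows at once from the lower bound, because \(N_\Lambda<\infty\) when \(\Lambda\) is finite.

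There is no real analytic obstacle here. The only point needing care is the standing convention that \(\Lambda\) is a finite set of nonzero frequencies, so that \(m_\Lambda>0\) and \(N_\Lambda<\infty\). I would state explicitly that the mean-zero mode has been quotiented as a pressure gauge, in line with the paper's convention that only functions of time are gauge. This justifies restricting to \(0\notin\Lambda\) and shows that no physically relevant harmonic pressure is discarded in this periodic model.
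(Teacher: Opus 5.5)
Your proof is correct and is essentially the paper's argument: the paper also writes \(\widehat P(\sigma)=|\sigma|^{-2}\widehat b(\sigma)\) on \(\Lambda\), expands \(\|P\|_{\ell^2(\Lambda)}^2=\sum_{\sigma\in\Lambda}|\sigma|^{-4}|b_\sigma|^2\), and reads off the two-sided bound from \(N_\Lambda^{-2}\le|\sigma|^{-2}\le m_\Lambda^{-2}\). Strictly, the lower bound only gives \(P=0\Rightarrow b=0\); the converse comes from the upper bound (or simply linearity), which is immediate.
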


\begin{proof}
Since \(P\) is the zero-mean solution of \(-\Delta P=b\) on \(\T^3\), each nonzero Fourier mode satisfies
\[
        |\sigma|^2\widehat P(\sigma)=\widehat b(\sigma),
        \qquad \sigma\in\Lambda.
\]
Thus
\[
        \widehat P(\sigma)=|\sigma|^{-2}\widehat b(\sigma),
        \qquad \sigma\in\Lambda,
\]
and therefore
\[
        \|P\|_{\ell^2(\Lambda)}^2
        =
        \sum_{\sigma\in\Lambda}|\sigma|^{-4}|b_\sigma|^2.
\]
Because
\[
        N_\Lambda^{-2}\le |\sigma|^{-2}\le m_\Lambda^{-2}
        \qquad(\sigma\in\Lambda),
\]
the two-sided estimate follows.  The equivalence \(P=0\Longleftrightarrow b=0\) is immediate.
\end{proof}

\begin{corollary}[Pressure-null reduction]
\label{cor:pressure-null-reduction-clean-periodic}
Let \(z\in Z_\Lambda\).  Assume that the active pressure observation contains the
zero-mean pressure coefficients on all output modes generated by the active
source \(\Bop_\Lambda z\), or more generally that it is injective on the
finite-dimensional source range \(\Bop_\Lambda Z_\Lambda\).  If the active
pressure observation of \(z\) vanishes, then
\[
        \Bop_\Lambda z=0.
\]
Equivalently,
\[
        P_\Lambda\partial_i\partial_j
        \bigl(U_i\dot U_j+\dot U_iU_j+\dot R_{ij}\bigr)=0.
\]
Thus a pressure--flux invisible direction in the clean periodic window must lie in
\[
        K^{\PF}_\Lambda
        =
        \{z\in Z_\Lambda:\Bop_\Lambda z=0,\ \Fop_\Lambda z=0\}.
\]
\end{corollary}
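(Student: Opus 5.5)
The argument is linear algebra on the finite periodic window, with \Cref{lem:clean-periodic-pressure-source-visibility} supplying the only analytic input.

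I read the undefined objects from context. \(Z_\Lambda\) is the finite-dimensional space of constrained directions \(z=(\dot U,\dot R,\dots)\) in the clean periodic window. \(\Bop_\Lambda z\) is the active pressure source
\[
P_\Lambda\partial_i\partial_j\bigl(U_i\dot U_j+\dot U_iU_j+\dot R_{ij}\bigr),
\]
projected to the output mode set, which excludes the zero mode. \(\Fop_\Lambda\) is the flux observation. The pressure observation is then \(\calO^P z=L\,(-\Delta)^{-1}\Bop_\Lambda z\), where \(L\) collects the pressure coefficients tested by \(\Theta^P\).

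\textbf{Step 1 (pressure vanishing forces the source to vanish).} Suppose \(\calO^P z=0\). Under the first form of the hypothesis, \(L\) contains every zero-mean pressure coefficient on the output modes of \(\Bop_\Lambda z\). Then \(\calO^Pz=0\) gives \(\widehat P(\sigma)=0\) on every such mode, so \(\|P\|_{\ell^2(\Lambda)}=0\). The lower bound \(N_\Lambda^{-2}\|b\|_{\ell^2(\Lambda)}\le \|P\|_{\ell^2(\Lambda)}\) in \eqref{eq:pressure-source-visibility}, applied with \(b=\Bop_\Lambda z\), then gives \(b=0\).

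Under the second form of the hypothesis, \(\calO^P\) is injective on \(\Bop_\Lambda Z_\Lambda\). I read this as saying that \(b\mapsto L(-\Delta)^{-1}b\) is injective on that range; the composition makes sense because the lemma shows \((-\Delta)^{-1}\) is an isomorphism on sources supported in \(\Lambda\), since \(0\notin\Lambda\). Since \(\Bop_\Lambda z\) lies in the range, \(\calO^P z=0\) again forces \(\Bop_\Lambda z=0\). This is the displayed equation of the corollary.

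\textbf{Step 2 (the kernel).} A pressure--flux invisible direction satisfies \(\calO^Pz=0\) and \(\calO^Fz=0\). Step 1 turns the first condition into \(\Bop_\Lambda z=0\). The second is \(\Fop_\Lambda z=0\) by definition of \(\Fop_\Lambda\) as the flux observation. Hence \(z\in K^{\PF}_\Lambda\). The reverse inclusion is immediate, because \(\calO^P\) factors through \(\Bop_\Lambda\). So \(K^{\PF}_\Lambda\) is exactly the joint kernel, and it is finite-dimensional as a subspace of \(Z_\Lambda\).

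\textbf{Main obstacle.} The only delicate point is the bookkeeping that \(\Bop_\Lambda\) includes only nonzero modes. The active source is a double divergence, so its zero Fourier mode vanishes automatically; modes that are not observed are removed by the projection \(P_\Lambda\). I would state this explicitly, since it is what makes \(m_\Lambda>0\) and lets the lemma apply to every mode.
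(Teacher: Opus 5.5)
Your proof is correct and follows the paper's argument. The paper also combines the injectivity of \((-\Delta)^{-1}\) on nonzero periodic modes from \Cref{lem:clean-periodic-pressure-source-visibility} with the observation-injectivity hypothesis to get \(\Bop_\Lambda z=0\), and then adds \(\Fop_\Lambda z=0\) to conclude \(z\in K^{\PF}_\Lambda\); your remarks on the reverse inclusion and on the zero mode are harmless extras not needed for the stated conclusion.
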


\begin{proof}
By \Cref{lem:clean-periodic-pressure-source-visibility}, the inverse Laplacian is injective on nonzero periodic modes.  Under the stated observation-injectivity assumption, vanishing of the observed active pressure variation is therefore equivalent to vanishing of the active source on the selected range.  Adding the condition \(\Fop_\Lambda z=0\) gives precisely \(z\in K^{\PF}_\Lambda\).
\end{proof}

\begin{theorem}[Clean periodic pressure--flux alternative]
\label{thm:clean-periodic-pressure-flux-alternative}
For a fixed clean periodic finite window \(\Lambda\), exactly one of the following alternatives holds.

\smallskip
\noindent\textup{(i) Pressure--flux coercivity.}
If
\[
        K^{\PF}_\Lambda=\{0\},
\]
then there exists a finite constant \(C^{\PF}_\Lambda<\infty\) such that
\begin{equation}
        \|z\|_{Z_\Lambda}
        \le
        C^{\PF}_\Lambda
        \bigl(
            \|\Bop_\Lambda z\|+\|\Fop_\Lambda z\|
        \bigr)
        \label{eq:clean-pf-coercivity}
\end{equation}
for every \(z\in Z_\Lambda\).

\smallskip
\noindent\textup{(ii) Finite-dimensional pressure--flux kernel.}
If
\[
        K^{\PF}_\Lambda\neq\{0\},
\]
then every clean periodic direction invisible to both active pressure and flux lies in the finite-dimensional kernel
\[
        K^{\PF}_\Lambda=
        \ker\Bop_\Lambda\cap\ker\Fop_\Lambda.
\]
\end{theorem}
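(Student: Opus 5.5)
The plan is to treat \(Z_\Lambda\) as a finite-dimensional normed space and regard \(\Bop_\Lambda\) and \(\Fop_\Lambda\) as linear maps into finite-dimensional normed target spaces. In the clean periodic window these are linear in the direction \(z\): the active source is \(P_\Lambda\partial_i\partial_j(U_i\dot U_j+\dot U_iU_j+\dot R_{ij})\), and the flux observation tests \(-\dot R:\nabla U-R:\nabla\dot U\) against finitely many functions, both with \(U,R\) fixed. Since all norms on \(Z_\Lambda\) are equivalent, both maps are continuous. The dichotomy is then trivially exhaustive: either \(K^{\PF}_\Lambda=\ker\Bop_\Lambda\cap\ker\Fop_\Lambda\) is \(\{0\}\), or it is not. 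The content lies in the two conclusions.

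For alternative (i), I would run the same compactness argument as in the proof of the finite-window observability criterion. Assume \(K^{\PF}_\Lambda=\{0\}\) and consider
\[
  g(z)=\|\Bop_\Lambda z\|+\|\Fop_\Lambda z\|
\]
on the unit sphere \(S_\Lambda=\{z\in Z_\Lambda:\|z\|_{Z_\Lambda}=1\}\). The function \(g\) is continuous, and \(S_\Lambda\) is compact because \(\dim Z_\Lambda<\infty\). Moreover \(g\) is strictly positive on \(S_\Lambda\): if \(g(z)=0\), then \(z\in\ker\Bop_\Lambda\cap\ker\Fop_\Lambda=\{0\}\), which contradicts \(\|z\|=1\). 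So \(g\) attains a minimum \(\mu_\Lambda>0\). Setting \(C^{\PF}_\Lambda=\mu_\Lambda^{-1}\) and rescaling by homogeneity of \(g\) gives \eqref{eq:clean-pf-coercivity} for every \(z\ne0\). The case \(z=0\) is trivial.

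For alternative (ii), I would combine the definition of invisibility with \Cref{cor:pressure-null-reduction-clean-periodic}. A direction invisible to active pressure has vanishing active pressure observation. By the corollary, which relies on \Cref{lem:clean-periodic-pressure-source-visibility} and the injectivity of \((-\Delta)^{-1}\) on nonzero modes, this forces \(\Bop_\Lambda z=0\). Invisibility to flux is \(\Fop_\Lambda z=0\) by definition. Hence \(z\in K^{\PF}_\Lambda\). The kernel is finite-dimensional because it is a linear subspace of \(Z_\Lambda\).

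The only genuine obstacle is bookkeeping rather than analysis. One must make sure that ``pressure observation vanishes'' really implies ``source vanishes''. This requires the observation-injectivity hypothesis of the corollary on \(\Bop_\Lambda Z_\Lambda\), together with \(0\notin\Lambda\), so that the pressure observation and the source observation have the same kernel. I would state this dependence explicitly. Optionally, I would also record a refinement: when \(K^{\PF}_\Lambda\neq\{0\}\), the same compactness argument on the quotient \(Z_\Lambda/K^{\PF}_\Lambda\) yields
\[
  \dist(z,K^{\PF}_\Lambda)\le C\bigl(\|\Bop_\Lambda z\|+\|\Fop_\Lambda z\|\bigr),
\]
which is the form used in (H1) of \Cref{thm:main-reduction}.
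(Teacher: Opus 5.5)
Your proposal is correct and follows the paper's proof: (i) is the same positive-minimum-on-the-compact-unit-sphere argument for \(z\mapsto\|\Bop_\Lambda z\|+\|\Fop_\Lambda z\|\), and (ii) is membership in \(\ker\Bop_\Lambda\cap\ker\Fop_\Lambda\) together with finite-dimensionality of a subspace of \(Z_\Lambda\). The paper treats (ii) as immediate from the definition of the kernel and handles the step from a vanishing pressure observation to \(\Bop_\Lambda z=0\) separately in \Cref{cor:pressure-null-reduction-clean-periodic}; your explicit invocation of that corollary, with its observation-injectivity hypothesis flagged, is the same logic made visible.
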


\begin{proof}
The map
\[
        \calO^{\PF}_\Lambda z=(\Bop_\Lambda z,\Fop_\Lambda z)
\]
is a linear map between finite-dimensional normed spaces, and
\[
        \ker\calO^{\PF}_\Lambda
        =
        \ker\Bop_\Lambda\cap\ker\Fop_\Lambda
        =
        K^{\PF}_\Lambda.
\]
Assume \(K^{\PF}_\Lambda=\{0\}\).  Then \(\calO^{\PF}_\Lambda\) is injective.  On the compact unit sphere
\[
        S_\Lambda=\{z\in Z_\Lambda:\|z\|_{Z_\Lambda}=1\},
\]
the function
\[
        z\mapsto \|\Bop_\Lambda z\|+\|\Fop_\Lambda z\|
\]
is continuous and strictly positive.  Hence it has a positive minimum \(c^{\PF}_\Lambda>0\).  By homogeneity,
\[
        \|\Bop_\Lambda z\|+\|\Fop_\Lambda z\|
        \ge
        c^{\PF}_\Lambda\|z\|_{Z_\Lambda}
\]
for every \(z\in Z_\Lambda\).  Taking \(C^{\PF}_\Lambda=(c^{\PF}_\Lambda)^{-1}\) gives \eqref{eq:clean-pf-coercivity}.

If \(K^{\PF}_\Lambda\neq\{0\}\), then by definition any direction satisfying \(\Bop_\Lambda z=0\) and \(\Fop_\Lambda z=0\) belongs to this kernel.  Since \(Z_\Lambda\) is finite-dimensional, so is \(K^{\PF}_\Lambda\).
\end{proof}

\begin{remark}[What has and has not been proved]
The theorem proves a clean finite-window pressure--flux reduction.  It does not assert that \(K^{\PF}_\Lambda\) is always trivial.  Nor does it assert that a nonzero vector in \(K^{\PF}_\Lambda\) is a Navier--Stokes obstruction.  If the kernel is nonzero, it is only a formal finite-dimensional invisible space.  It must still be tested by the energy channel, by the adjoint trace channel, and by the NS-realizability filter.
\end{remark}

\section{Perturbative localization of the pressure--flux map}
\label{sec:perturbative-localization}

We now pass from the clean periodic finite-window model to a localized finite-window model.  The purpose of this section is to prove that the clean pressure--flux estimate is stable under a perturbative cutoff.  This step does not use the local energy inequality and does not yet prove depletion.  It says only that localization, when perturbative at the chosen window, does not create a new pressure--flux invisible direction.

Let
\[
        Q_{\rm obs}\Subset Q_{\rm prep}
\]
be two parabolic cylinders.  Let
\[
        \chi_R(x,t)=\chi(x/R,t/R^2)
\]
be a slowly varying cutoff, where \(\chi\in C_c^\infty\) is fixed and equals \(1\) near the origin.  We assume \(R\) is large enough so that \(\chi_R\equiv1\) on \(Q_{\rm obs}\).  The derivatives satisfy
\begin{equation}
        |\nabla_x^a\partial_t^b\chi_R|
        \le
        C_{a,b}R^{-a-2b}.
        \label{eq:cutoff-derivative-bound}
\end{equation}

Fix a finite active window \(\Lambda\) and the clean periodic space \(Z_\Lambda\).  The localized pressure--flux observation map is denoted by
\[
        \calO^{\PF}_{\Lambda,R}z=(\Bop_{\Lambda,R}z,\Fop_{\Lambda,R}z).
\]
Here \(\Bop_{\Lambda,R}\) is the localized active pressure-source observation in the chosen active representative, with retained harmonic pressure terms estimated separately, and \(\Fop_{\Lambda,R}\) is the localized flux observation generated by \eqref{eq:linearized-flux}.  We write
\begin{equation}
        \calO^{\PF}_{\Lambda,R}
        =
        \calO^{\PF}_\Lambda+\calC^{\PF}_{\Lambda,R},
        \qquad
        \calC^{\PF}_{\Lambda,R}:=\calO^{\PF}_{\Lambda,R}-\calO^{\PF}_\Lambda.
        \label{eq:localization-commutator}
\end{equation}

\begin{definition}[Perturbative pressure--flux localization]
\label{def:pf-perturbative-localization}
The localized window \((\Lambda,R)\) is called pressure--flux perturbative if
\[
        \varepsilon^{\PF}_{\Lambda,R}
        :=
        \|\calC^{\PF}_{\Lambda,R}\|_{Z_\Lambda\to \Bop_\Lambda Z_\Lambda\oplus Y^F_\Lambda}
        <\infty
\]
and this norm is small compared with the clean pressure--flux coercivity constant.
\end{definition}

\begin{assumption}[Finite-window cutoff expansion]
\label{ass:finite-window-cutoff-expansion}
For the localized active pressure representative, retained harmonic-pressure leakage terms, and flux projection used in \(\calO^{\PF}_{\Lambda,R}\), the matrix coefficients of \(\calO^{\PF}_{\Lambda,R}-\calO^{\PF}_\Lambda\) in fixed finite-window bases are sums of terms containing at least one derivative of \(\chi_R\), plus localized harmonic leakage terms of the same order.
\end{assumption}

\begin{lemma}[Finite-window pressure--flux commutator bound]
\label{lem:pf-localization-commutator}
Assume \Cref{ass:finite-window-cutoff-expansion}.  For each fixed finite active window \(\Lambda\), there exists a constant \(C^{\PF}_{\Lambda,\chi}<\infty\), depending only on \(\Lambda\), on the chosen finite-window norms, on finitely many seminorms of \(\chi\), and on the localized active-pressure projection and on the harmonic-pressure leakage estimates, such that
\begin{equation}
        \|\calC^{\PF}_{\Lambda,R}z\|
        \le
        C^{\PF}_{\Lambda,\chi}R^{-1}\|z\|_{Z_\Lambda}
        \label{eq:pf-commutator-bound}
\end{equation}
for every \(z\in Z_\Lambda\).  Equivalently,
\[
        \varepsilon^{\PF}_{\Lambda,R}
        \le
        C^{\PF}_{\Lambda,\chi}R^{-1}.
\]
\end{lemma}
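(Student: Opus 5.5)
The plan is to reduce \eqref{eq:pf-commutator-bound} to a finite-dimensional matrix estimate and then read off the $R^{-1}$ scaling from the cutoff bounds \eqref{eq:cutoff-derivative-bound}. Fix bases $\{z_k\}$ of $Z_\Lambda$ and bases of the finite-dimensional target spaces $\Bop_\Lambda Z_\Lambda$ and $Y^F_\Lambda$. Since every norm on a finite-dimensional space is equivalent to the $\ell^\infty$ norm of coordinates, with constants depending only on $\Lambda$ and the chosen norms, it suffices to show that each matrix coefficient of $\calC^{\PF}_{\Lambda,R}$ in these bases is bounded by $C R^{-1}$. The operator-norm bound on $\varepsilon^{\PF}_{\Lambda,R}$ then follows with $C^{\PF}_{\Lambda,\chi}$ equal to $\dim Z_\Lambda$ times the basis-equivalence constants times the maximal coefficient constant.

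To bound the coefficients, I would use \Cref{ass:finite-window-cutoff-expansion}. It says each coefficient is a finite sum of two kinds of terms. The first kind are integrals of products of basis modes, the background fields $U,R$ restricted to the window, their finitely many derivatives, and at least one factor $\nabla_x^a\partial_t^b\chi_R$ with $a+2b\ge1$. The second kind are localized harmonic leakage terms of the same order.

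For the first kind, the window modes and the projected background are fixed smooth objects on $Q_{\rm prep}$, so their $L^\infty$ norms are finite constants depending on $\Lambda$. The derivative of $\chi_R$ is bounded by $C_{a,b}R^{-a-2b}$. Since $R\ge1$ (indeed $R$ is large, because $\chi_R\equiv1$ on $Q_{\rm obs}$), we have $R^{-a-2b}\le R^{-1}$ whenever $a+2b\ge1$. The integration region is the fixed window domain, or the periodic cell, so integrating gives $C R^{-1}$. Only finitely many $(a,b)$ appear, so only finitely many seminorms of $\chi$ enter the constant.

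For the harmonic leakage terms, I would apply the leakage estimates, which the hypothesis declares to be ``of the same order.'' In practice this should come from the fact that $\chi_R\equiv1$ near $Q_{\rm obs}$, so the harmonic correction $P^{\har}$ produced by localizing the active source solves $\Delta h=0$ in the observation ball. Its size is controlled by the commutator $[\Delta,\chi_R]$ applied to the active pressure. Interior harmonic estimates then bound any finite-window test of $h$ by $CR^{-1}\|z\|$. This step contributes the dependence of $C^{\PF}_{\Lambda,\chi}$ on the leakage estimates.

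The main obstacle is the nonlocal active-pressure part of $\Bop_{\Lambda,R}$. The inverse Laplacian does not commute with $\chi_R$, so before the assumption can be applied one must check that $[(-\Delta)^{-1}\partial_i\partial_j,\chi_R]$ actually produces terms carrying a derivative of $\chi_R$ with the claimed decay. I would first write
\[
(-\Delta)^{-1}\partial_i\partial_j(\chi_R f)-\chi_R(-\Delta)^{-1}\partial_i\partial_j f
\]
through the commutator identity $[\Delta,\chi_R]=2\nabla\chi_R\cdot\nabla+\Delta\chi_R$, and then use boundedness of the zero-order operators $(-\Delta)^{-1}\partial_i\partial_j$ and $(-\Delta)^{-1}\nabla$ on the finite window. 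This yields the $R^{-1}$ factor, and the remainder is absorbed into the constant through the finite dimensionality of $\Lambda$.
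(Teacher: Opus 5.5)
Your proposal is correct and follows the paper's proof essentially step for step. Both fix bases so that the clean and localized maps become finite matrices. Both invoke \Cref{ass:finite-window-cutoff-expansion} to see that every coefficient of $\calC^{\PF}_{\Lambda,R}$ carries at least one derivative of $\chi_R$ (or a harmonic-leakage factor of the same size), bound each such coefficient by $C R^{-1}$ via \eqref{eq:cutoff-derivative-bound}, and pass to the operator norm using the fixed matrix size. Your separate verification of the nonlocal pressure commutator is more than the paper does: it simply lists $[P_\Lambda,\chi_R]$, $[\partial_i\partial_j,\chi_R]$ and $[L_{\loc},\chi_R]$ among the terms already covered by the cutoff-expansion assumption, so that step is optional rather than required.
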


\begin{proof}
Since \(\Lambda\) is fixed, all input and output spaces are finite-dimensional.  Choose bases in \(Z_\Lambda\), in the active pressure-output space, and in the flux-output space.  In these bases, both \(\calO^{\PF}_\Lambda\) and \(\calO^{\PF}_{\Lambda,R}\) are finite matrices.

By \Cref{ass:finite-window-cutoff-expansion}, the difference between localized and clean matrix coefficients is produced by commutators such as
\[
        [P_\Lambda,\chi_R],
        \qquad
        [\partial_i\partial_j,\chi_R],
        \qquad
        [L_{\loc},\chi_R],
\]
for the pressure channel, and by product-rule terms involving derivatives of \(\chi_R\) for the flux channel.  Here \(L_{\loc}\) denotes the localized inverse of \(-\Delta\) in the chosen active pressure representative, with harmonic pressure leakage recorded separately.

For fixed finite-window modes, all symbols and all projected basis functions are smooth with uniform bounds depending only on \(\Lambda\).  Every commutator coefficient contains at least one derivative of \(\chi_R\), or one harmonic leakage factor assumed to have the same size.  By \eqref{eq:cutoff-derivative-bound}, each first-order cutoff contribution is \(O(R^{-1})\), and higher-order parabolic cutoff contributions are \(O(R^{-2})\) or smaller.  Therefore every matrix coefficient of \(\calC^{\PF}_{\Lambda,R}\) is bounded by \(C_{\Lambda,\chi}R^{-1}\).  Since the matrices have fixed finite size, the operator norm satisfies \eqref{eq:pf-commutator-bound}.
\end{proof}

\begin{theorem}[Perturbative localized pressure--flux stability]
\label{thm:pf-perturbative-localized-stability}
Assume the clean pressure--flux map is coercive on \(Z_\Lambda\):
\begin{equation}
        \|z\|_{Z_\Lambda}
        \le
        C^{\PF}_\Lambda
        \|\calO^{\PF}_\Lambda z\|
        \qquad
        (z\in Z_\Lambda).
        \label{eq:clean-pf-coercivity-operator}
\end{equation}
If
\begin{equation}
        \varepsilon^{\PF}_{\Lambda,R}
        \le
        \frac{1}{2C^{\PF}_\Lambda},
        \label{eq:pf-localization-smallness}
\end{equation}
then the localized pressure--flux map is also coercive:
\begin{equation}
        \|z\|_{Z_\Lambda}
        \le
        2C^{\PF}_\Lambda
        \|\calO^{\PF}_{\Lambda,R}z\|
        \qquad
        (z\in Z_\Lambda).
        \label{eq:localized-pf-coercivity}
\end{equation}
In particular, perturbative localization does not create a nonzero pressure--flux invisible direction.
\end{theorem}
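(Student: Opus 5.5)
The argument is a standard Neumann-type perturbation estimate carried out entirely inside the fixed finite-dimensional space \(Z_\Lambda\). We start from the splitting \eqref{eq:localization-commutator}, \(\calO^{\PF}_{\Lambda,R}=\calO^{\PF}_\Lambda+\calC^{\PF}_{\Lambda,R}\). The commutator norm \(\varepsilon^{\PF}_{\Lambda,R}\) from \Cref{def:pf-perturbative-localization} is measured into the same target space \(\Bop_\Lambda Z_\Lambda\oplus Y^F_\Lambda\) as the clean observation. We therefore fix this target norm once and for all and use it in \eqref{eq:clean-pf-coercivity-operator}, \eqref{eq:pf-localization-smallness} and \eqref{eq:localized-pf-coercivity}. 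All three quantities are then comparable without any additional constants.

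For a given \(z\in Z_\Lambda\), the reverse triangle inequality gives
\[
  \|\calO^{\PF}_{\Lambda,R}z\|
  \ge
  \|\calO^{\PF}_\Lambda z\|-\|\calC^{\PF}_{\Lambda,R}z\|
  \ge
  (C^{\PF}_\Lambda)^{-1}\|z\|_{Z_\Lambda}-\varepsilon^{\PF}_{\Lambda,R}\|z\|_{Z_\Lambda}.
\]
Here the first term is bounded below by the clean coercivity \eqref{eq:clean-pf-coercivity-operator}, and the second is bounded above by the definition of the operator norm. The smallness condition \eqref{eq:pf-localization-smallness} then gives \(\varepsilon^{\PF}_{\Lambda,R}\le\frac12(C^{\PF}_\Lambda)^{-1}\). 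Hence the right-hand side is at least \(\frac12(C^{\PF}_\Lambda)^{-1}\|z\|_{Z_\Lambda}\). Rearranging yields \eqref{eq:localized-pf-coercivity}.

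The final claim follows from \eqref{eq:localized-pf-coercivity}. If \(\calO^{\PF}_{\Lambda,R}z=0\), then \(\|z\|_{Z_\Lambda}\le0\), so \(z=0\). Thus \(\ker\calO^{\PF}_{\Lambda,R}=\{0\}\), and localization creates no pressure--flux invisible direction. As a corollary worth recording, \Cref{lem:pf-localization-commutator} shows that \eqref{eq:pf-localization-smallness} is implied by \(R\ge 2C^{\PF}_\Lambda C^{\PF}_{\Lambda,\chi}\). This gives an explicit cutoff radius at which the theorem applies.

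The only real obstacle is bookkeeping rather than analysis. The harmonic-pressure leakage terms are not quotiented, and they must be included in \(\calC^{\PF}_{\Lambda,R}\) rather than silently dropped. Otherwise the bound \(\varepsilon^{\PF}_{\Lambda,R}\) would not control the full difference of the two observation maps. The output norms must also match, as noted above. Once these conventions are fixed, the argument consists of the two displayed inequalities.
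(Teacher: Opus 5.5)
Your proof is correct and is essentially the paper's argument: the splitting $\calO^{\PF}_{\Lambda,R}=\calO^{\PF}_\Lambda+\calC^{\PF}_{\Lambda,R}$, the reverse triangle inequality, the clean coercivity lower bound, and the smallness of the commutator norm, combined to get the factor $\frac{1}{2C^{\PF}_\Lambda}$. Your remarks about matching output norms and about the explicit radius from \Cref{lem:pf-localization-commutator} (which is \Cref{cor:sufficient-localization-scale}, relying on \Cref{ass:finite-window-cutoff-expansion}) are accurate additions, not changes of route.
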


\begin{proof}
Using \eqref{eq:localization-commutator},
\[
        \|\calO^{\PF}_{\Lambda,R}z\|
        \ge
        \|\calO^{\PF}_\Lambda z\|-
        \|\calC^{\PF}_{\Lambda,R}z\|.
\]
By \eqref{eq:clean-pf-coercivity-operator},
\[
        \|\calO^{\PF}_\Lambda z\|
        \ge
        (C^{\PF}_\Lambda)^{-1}\|z\|_{Z_\Lambda}.
\]
By \eqref{eq:pf-localization-smallness},
\[
        \|\calC^{\PF}_{\Lambda,R}z\|
        \le
        \frac{1}{2C^{\PF}_\Lambda}\|z\|_{Z_\Lambda}.
\]
Therefore
\[
        \|\calO^{\PF}_{\Lambda,R}z\|
        \ge
        \frac{1}{2C^{\PF}_\Lambda}\|z\|_{Z_\Lambda},
\]
which is \eqref{eq:localized-pf-coercivity}.
\end{proof}

\begin{corollary}[A sufficient localization scale]
\label{cor:sufficient-localization-scale}
Assume \eqref{eq:clean-pf-coercivity-operator} and \Cref{ass:finite-window-cutoff-expansion}.  If
\[
        R\ge
        R^{\PF}_\Lambda
        :=2C^{\PF}_\Lambda C^{\PF}_{\Lambda,\chi},
\]
then localized pressure--flux coercivity holds:
\[
        \|z\|_{Z_\Lambda}
        \le
        2C^{\PF}_\Lambda
        \|\calO^{\PF}_{\Lambda,R}z\|.
\]
\end{corollary}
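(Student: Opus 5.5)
The plan is to combine \Cref{lem:pf-localization-commutator} with \Cref{thm:pf-perturbative-localized-stability}. All the work is in checking that the stated threshold on \(R\) activates the smallness hypothesis \eqref{eq:pf-localization-smallness}. No new estimate is needed, since both ingredients were proved above under exactly the hypotheses assumed here: clean coercivity \eqref{eq:clean-pf-coercivity-operator} and \Cref{ass:finite-window-cutoff-expansion}.

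First, I would invoke \Cref{lem:pf-localization-commutator}. Under \Cref{ass:finite-window-cutoff-expansion} it yields a finite constant \(C^{\PF}_{\Lambda,\chi}\), depending only on \(\Lambda\), \(\chi\), the finite-window norms and the harmonic leakage estimates. It gives
\[
\varepsilon^{\PF}_{\Lambda,R}\le C^{\PF}_{\Lambda,\chi}R^{-1}.
\]
Second, I would insert \(R\ge R^{\PF}_\Lambda=2C^{\PF}_\Lambda C^{\PF}_{\Lambda,\chi}\). Since \(R^{-1}\le (2C^{\PF}_\Lambda C^{\PF}_{\Lambda,\chi})^{-1}\), this gives
\[
\varepsilon^{\PF}_{\Lambda,R}\le \frac{C^{\PF}_{\Lambda,\chi}}{2C^{\PF}_\Lambda C^{\PF}_{\Lambda,\chi}}=\frac{1}{2C^{\PF}_\Lambda},
\]
which is precisely \eqref{eq:pf-localization-smallness}. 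Third, \Cref{thm:pf-perturbative-localized-stability} then gives \eqref{eq:localized-pf-coercivity}, which is the asserted estimate.

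The only point needing care is the degenerate case \(C^{\PF}_{\Lambda,\chi}=0\), where the threshold \(R^{\PF}_\Lambda\) is zero and the division above is illegitimate. In that case the commutator vanishes identically, so \(\varepsilon^{\PF}_{\Lambda,R}=0\) and \eqref{eq:pf-localization-smallness} holds trivially; the conclusion follows directly from \Cref{thm:pf-perturbative-localized-stability} (indeed with constant \(C^{\PF}_\Lambda\)). Alternatively one can replace \(C^{\PF}_{\Lambda,\chi}\) by any larger positive constant, since the lemma remains true with a larger constant. Some admissibility condition on \(R\) is also implicit: \(\chi_R\equiv1\) on \(Q_{\rm obs}\) must hold, as stipulated in the setup. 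If the threshold is smaller than the radius required for this, I would take the maximum of the two, noting that the corollary implicitly assumes admissibility. I do not expect any real obstacle beyond this bookkeeping.
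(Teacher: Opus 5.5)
Your proposal is correct and follows the paper's own proof, which simply cites \Cref{lem:pf-localization-commutator} and \Cref{thm:pf-perturbative-localized-stability}. You additionally check that \(R\ge 2C^{\PF}_\Lambda C^{\PF}_{\Lambda,\chi}\) yields \(\varepsilon^{\PF}_{\Lambda,R}\le (2C^{\PF}_\Lambda)^{-1}\), and you handle the degenerate case \(C^{\PF}_{\Lambda,\chi}=0\) and the admissibility of \(R\), neither of which the paper spells out.
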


\begin{proof}
This follows from \Cref{lem:pf-localization-commutator,thm:pf-perturbative-localized-stability}.
\end{proof}

\begin{proposition}[Localized pressure--flux coercivity modulo the clean kernel]
\label{prop:pf-localized-quotient-stability}
Suppose the clean pressure--flux kernel \(K^{\PF}_\Lambda=\ker\calO^{\PF}_\Lambda\) is not necessarily trivial.  Let \(Z_\Lambda^\perp\) be a fixed complement of \(K^{\PF}_\Lambda\) in \(Z_\Lambda\).  Then the clean map is coercive on the quotient:
\begin{equation}
        \dist_{Z_\Lambda}(z,K^{\PF}_\Lambda)
        \le
        C^{\PF}_{\Lambda,\quot}
        \|\calO^{\PF}_\Lambda z\|.
        \label{eq:pf-quotient-coercivity}
\end{equation}
If
\begin{equation}
        \|\calC^{\PF}_{\Lambda,R}\|
        \le
        \frac{1}{2C^{\PF}_{\Lambda,\quot}},
        \label{eq:pf-quotient-smallness}
\end{equation}
then
\begin{equation}
        \dist_{Z_\Lambda}(z,K^{\PF}_\Lambda)
        \le
        2C^{\PF}_{\Lambda,\quot}
        \|\calO^{\PF}_{\Lambda,R}z\|
        +
        2C^{\PF}_{\Lambda,\quot}
        \|\calO^{\PF}_{\Lambda,R}P_{K^{\PF}_\Lambda}z\|.
        \label{eq:localized-quotient-stability}
\end{equation}
In particular, if the localized map also annihilates the clean kernel up to perturbative leakage, then localized pressure--flux invisibility forces \(z\) to lie close to \(K^{\PF}_\Lambda\).
\end{proposition}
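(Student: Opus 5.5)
The plan has two parts: a finite-dimensional coercivity argument on the complement \(Z_\Lambda^\perp\), followed by a perturbation step that mirrors the proof of \Cref{thm:pf-perturbative-localized-stability}.

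\emph{Quotient coercivity.} Take \(P_{K^{\PF}_\Lambda}\) to be the projection onto \(K^{\PF}_\Lambda\) along the fixed complement \(Z_\Lambda^\perp\), and write \(z=z_K+z_\perp\) with \(z_K=P_{K^{\PF}_\Lambda}z\). The restriction \(\calO^{\PF}_\Lambda|_{Z_\Lambda^\perp}\) is injective, because \(Z_\Lambda^\perp\cap\ker\calO^{\PF}_\Lambda=\{0\}\). The compactness argument from \Cref{thm:clean-periodic-pressure-flux-alternative}, applied on the unit sphere of \(Z_\Lambda^\perp\), then gives \(\|z_\perp\|\le C\|\calO^{\PF}_\Lambda z_\perp\|\). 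Since \(\calO^{\PF}_\Lambda z=\calO^{\PF}_\Lambda z_\perp\) and \(\dist_{Z_\Lambda}(z,K^{\PF}_\Lambda)\le\|z_\perp\|\), this yields \eqref{eq:pf-quotient-coercivity} with \(C^{\PF}_{\Lambda,\quot}:=C\).

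It is convenient to state the quotient coercivity in the form \(\|z_\perp\|\le C^{\PF}_{\Lambda,\quot}\|\calO^{\PF}_\Lambda z\|\), not only for the distance. To make this legitimate, I would choose \(Z_\Lambda^\perp\) to be the orthogonal complement with respect to a Euclidean structure on \(Z_\Lambda\), so that \(\|z_\perp\|=\dist(z,K^{\PF}_\Lambda)\). For a general complement, the two quantities are comparable up to the norm of the projection, and that constant would be absorbed into \(C^{\PF}_{\Lambda,\quot}\). I expect this to be the main point needing care: the statement fixes an arbitrary complement, and the clean constants must be stated relative to \(z_\perp\).

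\emph{Perturbation step.} Using \eqref{eq:localization-commutator}, write
\[
\calO^{\PF}_\Lambda z_\perp=\calO^{\PF}_{\Lambda,R}z_\perp-\calC^{\PF}_{\Lambda,R}z_\perp .
\]
This gives
\[
\|z_\perp\|\le C^{\PF}_{\Lambda,\quot}\bigl(\|\calO^{\PF}_{\Lambda,R}z_\perp\|+\|\calC^{\PF}_{\Lambda,R}\|\,\|z_\perp\|\bigr).
\]
By \eqref{eq:pf-quotient-smallness}, the last term is at most \(\tfrac12\|z_\perp\|\) and can be absorbed into the left side, so
\[
\|z_\perp\|\le 2C^{\PF}_{\Lambda,\quot}\|\calO^{\PF}_{\Lambda,R}z_\perp\|.
\]
Linearity of \(\calO^{\PF}_{\Lambda,R}\) and \(z_\perp=z-P_{K^{\PF}_\Lambda}z\) give
\[
\|\calO^{\PF}_{\Lambda,R}z_\perp\|\le\|\calO^{\PF}_{\Lambda,R}z\|+\|\calO^{\PF}_{\Lambda,R}P_{K^{\PF}_\Lambda}z\|.
\]
Combining this with \(\dist(z,K^{\PF}_\Lambda)\le\|z_\perp\|\) gives \eqref{eq:localized-quotient-stability}.

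\emph{Final assertion.} Since \(\calO^{\PF}_\Lambda\) vanishes on the kernel, \(\calO^{\PF}_{\Lambda,R}P_{K^{\PF}_\Lambda}z=\calC^{\PF}_{\Lambda,R}P_{K^{\PF}_\Lambda}z\). Its norm is therefore at most \(\varepsilon^{\PF}_{\Lambda,R}\|P_{K^{\PF}_\Lambda}z\|\), which is \(O(R^{-1})\|z\|\) by \Cref{lem:pf-localization-commutator}. Hence localized invisibility, \(\calO^{\PF}_{\Lambda,R}z=0\), forces \(\dist(z,K^{\PF}_\Lambda)\lesssim R^{-1}\|z\|\).
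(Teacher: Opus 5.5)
Your proposal is correct and follows the paper's proof essentially step for step: you split \(z=z_K+z_\perp\), apply finite-dimensional coercivity to \(z_\perp\), absorb the commutator using the smallness hypothesis, and then use linearity, \(\calO^{\PF}_{\Lambda,R}z_\perp=\calO^{\PF}_{\Lambda,R}z-\calO^{\PF}_{\Lambda,R}z_K\). Two of your refinements tighten what the paper leaves loose. Defining \(C^{\PF}_{\Lambda,\quot}\) as the coercivity constant on the complement, so that \(\dist_{Z_\Lambda}(z,K^{\PF}_\Lambda)\le\|z_\perp\|\) yields the stated inequality with the same constant, makes the paper's informal ``\(\dist\simeq\|z_\perp\|\)'' step exact; with that definition the orthogonal-complement specialization is unnecessary. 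Your identity \(\calO^{\PF}_{\Lambda,R}P_{K^{\PF}_\Lambda}z=\calC^{\PF}_{\Lambda,R}P_{K^{\PF}_\Lambda}z\) makes the final ``in particular'' clause quantitative, which the paper's proof leaves implicit.
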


\begin{proof}
The quotient estimate \eqref{eq:pf-quotient-coercivity} is finite-dimensional compactness applied to the injective map induced by \(\calO^{\PF}_\Lambda\) on \(Z_\Lambda/K^{\PF}_\Lambda\).

Write
\[
        z=z_\perp+z_K,
        \qquad
        z_K\in K^{\PF}_\Lambda,
        \qquad
        z_\perp\in Z_\Lambda^\perp.
\]
Then \(\dist_{Z_\Lambda}(z,K^{\PF}_\Lambda)\simeq\|z_\perp\|_{Z_\Lambda}\).  By quotient coercivity,
\[
        \|z_\perp\|
        \le
        C^{\PF}_{\Lambda,\quot}
        \|\calO^{\PF}_\Lambda z_\perp\|.
\]
Using
\[
        \calO^{\PF}_\Lambda z_\perp
        =
        \calO^{\PF}_{\Lambda,R}z_\perp-
        \calC^{\PF}_{\Lambda,R}z_\perp
\]
and \eqref{eq:pf-quotient-smallness}, we obtain
\[
        \|z_\perp\|
        \le
        2C^{\PF}_{\Lambda,\quot}
        \|\calO^{\PF}_{\Lambda,R}z_\perp\|.
\]
Finally,
\[
        \calO^{\PF}_{\Lambda,R}z_\perp
        =
        \calO^{\PF}_{\Lambda,R}z-
        \calO^{\PF}_{\Lambda,R}z_K,
\]
which gives \eqref{eq:localized-quotient-stability}.
\end{proof}

\begin{proposition}[Nonperturbative localized pressure--flux alternative]
\label{prop:pf-nonperturbative-localized-alternative}
For a fixed localized window \((\Lambda,\chi)\), define
\[
        K^{\PF}_{\Lambda,\chi}
        :=
        \ker\Bop_{\Lambda,\chi}\cap\ker\Fop_{\Lambda,\chi}.
\]
Then exactly one of the following alternatives holds.

\smallskip
\noindent\textup{(i) Localized pressure--flux visibility.}
If \(K^{\PF}_{\Lambda,\chi}=\{0\}\), then there exists \(C^{\PF}_{\Lambda,\chi}<\infty\) such that
\[
        \|z\|_{Z_{\Lambda,\chi}}
        \le
        C^{\PF}_{\Lambda,\chi}
        \bigl(
            \|\Bop_{\Lambda,\chi}z\|+
            \|\Fop_{\Lambda,\chi}z\|
        \bigr)
\]
for every localized pressure--flux direction \(z\).

\smallskip
\noindent\textup{(ii) Localized finite-dimensional kernel.}
If \(K^{\PF}_{\Lambda,\chi}\neq\{0\}\), then every localized direction invisible to both pressure and flux lies in the finite-dimensional kernel \(K^{\PF}_{\Lambda,\chi}\).
\end{proposition}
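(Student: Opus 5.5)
The argument is the finite-dimensional compactness argument already used for \Cref{thm:clean-periodic-pressure-flux-alternative}, applied to the localized window in place of the clean periodic one. No smallness of the cutoff commutator is needed: the statement is purely about the fixed localized linear map, so \Cref{thm:pf-perturbative-localized-stability} and \Cref{prop:pf-localized-quotient-stability} play no role here.

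First, record the structural facts. For fixed \((\Lambda,\chi)\), the localized direction space \(Z_{\Lambda,\chi}\) is finite-dimensional, as required of all window spaces in the definition of a finite observation window. The localized pressure map \(\Bop_{\Lambda,\chi}\) and flux map \(\Fop_{\Lambda,\chi}\) are linear, with finite-dimensional targets. The flux map is linear because \eqref{eq:linearized-flux} is linear in \((\dot U,\dot R)\) for fixed background \((U,R)\). The pressure map is linear because the active pressure representative is a fixed bounded linear cleaning operator applied to a source that is linear in the direction. Retained harmonic-pressure leakage terms are also linear in \(z\), so they do not affect this. Set
\[
  \calO^{\PF}_{\Lambda,\chi}z=(\Bop_{\Lambda,\chi}z,\Fop_{\Lambda,\chi}z).
\]
Then \(\ker\calO^{\PF}_{\Lambda,\chi}=K^{\PF}_{\Lambda,\chi}\) directly from the definitions.

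\emph{Case (i).} Suppose \(K^{\PF}_{\Lambda,\chi}=\{0\}\), so that \(\calO^{\PF}_{\Lambda,\chi}\) is injective. The function
\[
  z\mapsto\|\Bop_{\Lambda,\chi}z\|+\|\Fop_{\Lambda,\chi}z\|
\]
is continuous on the unit sphere of \(Z_{\Lambda,\chi}\), which is compact because the space is finite-dimensional. The function is strictly positive there by injectivity, so it attains a positive minimum \(c\). By homogeneity the coercivity estimate holds with \(C^{\PF}_{\Lambda,\chi}=c^{-1}\). (The same symbol is used in \Cref{lem:pf-localization-commutator} for a different constant; here it denotes only the localized coercivity constant.)

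\emph{Case (ii).} Suppose \(K^{\PF}_{\Lambda,\chi}\neq\{0\}\). Any direction invisible to both localized channels lies in \(\ker\Bop_{\Lambda,\chi}\cap\ker\Fop_{\Lambda,\chi}\) by definition. This kernel is a subspace of the finite-dimensional space \(Z_{\Lambda,\chi}\), hence finite-dimensional. The two cases are mutually exclusive and exhaust all possibilities, which gives ``exactly one''.

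The only point needing care is the well-posedness of the localized maps. They must be single-valued and linear on \(Z_{\Lambda,\chi}\), and in particular must not depend on the representative modulo the exact gauge \(\calG^{\rm ex}_W\). I would handle this by invoking \Cref{lem:quotient-observations-well-defined} in an exactly cleaned window. Harmonic-pressure and localization leakage are not quotiented in this framework; they enter as linear components of the localized operators, so they do not interfere with the compactness step. Continuity then follows automatically from finite-dimensionality.
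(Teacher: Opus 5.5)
Your proposal is correct and matches the paper's proof: finite-dimensionality of the fixed localized window gives coercivity in case (i) via compactness of the unit sphere and injectivity, and case (ii) holds by the definition of the kernel. Your added remarks on linearity of the localized maps and on independence of the gauge representative are harmless details that the paper leaves implicit.
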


\begin{proof}
For fixed \((\Lambda,\chi)\), all spaces are finite-dimensional.  The localized pressure--flux map has kernel \(K^{\PF}_{\Lambda,\chi}\).  If this kernel is trivial, the observation norm is strictly positive on the unit sphere, and compactness gives coercivity.  If the kernel is nontrivial, its definition is exactly the set of localized pressure--flux invisible directions.
\end{proof}

\section{Pressure--flux budget certificates}
\label{sec:pressure-flux-budget-certificates}

We now take the first analytic step beyond finite-dimensional observability.  The goal is to connect a nonzero pressure--flux observation to a positive local budget.  This is weaker than a full dyadic depletion theorem, but it is the correct first estimate: it shows that a pressure--flux visible NS-realizable defect cannot be cost-free.

Throughout this section \(W=(n,\ell,\Lambda,\chi,s_*)\) is a clean periodic or perturbatively localized finite window.  Let
\[
        D_W=(U_W,P_W;P_W^{\act},P_W^{\har},R_W,\Pi_W)
\]
be the finite-window projection of an NS-realizable dyadic coarse-grained package.  Recall that
\[
        \Pi_W=-R_W:\nabla U_W
\]
and that the full pressure satisfies compatibility while the active component is selected by the chosen active/harmonic split:
\[
        -\Delta P_W
        =
        \partial_i\partial_j(U_{W,i}U_{W,j}+R_{W,ij}).
\]

\subsection{The local pressure--flux budget}

Let \(P^a_W\) denote the active pressure representative selected by the finite-window pressure-cleaning operator.  In a clean periodic window this is simply the zero-mean pressure supported on the active modes.  In a localized window it is the chosen active representative; the retained harmonic pressure component is not discarded but contributes to the harmonic-work error when relevant.

Define the pressure--flux budget density by
\[
        d\mathfrak b^{\PF}_W
        :=
        |P^a_W|^{3/2}\,dx\,dt
        +
        |\Pi_W|\,dx\,dt.
\]
For a finite window \(W\), set
\begin{equation}
        \bfB^{\PF}_W(D_W)
        :=
        \int_{Q_W}|P^a_W|^{3/2}\,dx\,dt
        +
        \int_{Q_W}|\Pi_W|\,dx\,dt.
        \label{eq:local-pf-budget}
\end{equation}
Here \(Q_W\) denotes the physical support of the window.  In a localized window, one may insert the cutoff \(\chi\) in both integrals; this changes only finite-window constants.

\begin{lemma}[Pressure--flux observation is budget-controlled]
\label{lem:pf-observation-budget-controlled}
For every fixed clean periodic or perturbatively localized finite window \(W\), there exists a constant \(C_W<\infty\) such that every NS-realizable projected package \(D_W\) satisfies
\begin{equation}
        |O^P_WD_W|
        \le
        C_W\|P^a_W\|_{L^{3/2}(Q_W)}
        \label{eq:pressure-observation-controlled}
\end{equation}
and
\begin{equation}
        |O^F_WD_W|
        \le
        C_W\|\Pi_W\|_{L^1(Q_W)}.
        \label{eq:flux-observation-controlled}
\end{equation}
Consequently, if
\begin{equation}
        |O^P_WD_W|+|O^F_WD_W|\ge\eta,
        \label{eq:pf-observation-lower}
\end{equation}
and if the normalized finite-window observations are bounded by \(1\), then
\begin{equation}
        \bfB^{\PF}_W(D_W)
        \ge
        c_W\eta^{3/2}
        \label{eq:budget-certificate}
\end{equation}
for some \(c_W>0\) depending only on the finite window.
\end{lemma}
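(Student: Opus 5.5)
The plan is to treat the two observation estimates as finite-dimensional duality bounds, and then derive the budget certificate from them by elementary inequalities.

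\textbf{Pressure channel.} In a fixed window \(W\), \(O^P_W D_W\) is the vector of pairings \(\langle P^a_W,\psi_j\rangle\), \(j=1,\dots,N_P\), against the finitely many test functions spanning \(\Theta^P_W\). In a clean periodic window these are Fourier modes; in a localized window they are smooth compactly supported functions, with the cutoff \(\chi\) inserted if needed. Each \(\psi_j\) is bounded on \(Q_W\), so Hölder's inequality gives
\[
|\langle P^a_W,\psi_j\rangle|\le \|\psi_j\|_{L^3(Q_W)}\|P^a_W\|_{L^{3/2}(Q_W)}.
\]
The Euclidean norm on \(P_W\simeq\R^{N_P}\) is controlled by the sum of these bounds, which yields \eqref{eq:pressure-observation-controlled} with \(C_W\ge \sum_j\|\psi_j\|_{L^3}\).

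\textbf{Flux channel.} The same argument applies, using \(\|\eta_j\|_{L^\infty}\) and the pairing \(\iint \Pi_W\eta_j\), which gives \eqref{eq:flux-observation-controlled}. The one point to check is that the observation evaluated on the projected package is exactly this pairing of the projected flux \(\Pi_W\). If the projection \(\Pi^{\rm proj}_W\) intervenes, I would absorb its bounded finite-window norm into \(C_W\). In a perturbatively localized window the retained harmonic-leakage terms are, by convention, recorded separately as errors, so they do not enter this bound. No uniformity in \(n\) is claimed, which is why \(C_W\) may depend on the window.

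\textbf{Budget certificate.} Assume \eqref{eq:pf-observation-lower}. Then one of the two observations is at least \(\eta/2\).

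If \(|O^P_WD_W|\ge\eta/2\), then \(\|P^a_W\|_{L^{3/2}}\ge \eta/(2C_W)\), and therefore
\[
\int|P^a_W|^{3/2}\ge (2C_W)^{-3/2}\eta^{3/2}.
\]

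If instead \(|O^F_WD_W|\ge\eta/2\), then \(\|\Pi_W\|_{L^1}\ge\eta/(2C_W)\). This is linear in \(\eta\) rather than of order \(\eta^{3/2}\). Here I use the normalization hypothesis: the observations are bounded by \(1\), so \(\eta\le 2\), and hence \(\eta\ge 2^{-1/2}\eta^{3/2}\). This gives
\[
\|\Pi_W\|_{L^1}\ge c\,\eta^{3/2}/C_W .
\]

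Both budget terms are nonnegative, so in either case \eqref{eq:budget-certificate} holds with
\[
c_W=\min\{(2C_W)^{-3/2},\,(2^{3/2}C_W)^{-1}\}.
\]

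The only step needing care is the mismatch of homogeneities between the \(L^{3/2}\) pressure term and the \(L^1\) flux term. The boundedness normalization of the observations is exactly what reconciles them, and it is also why \(c_W\) is window-dependent rather than uniform. Making it uniform is precisely hypothesis (H3) of \Cref{thm:main-reduction}.
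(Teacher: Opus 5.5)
Your proposal is correct and follows the paper's argument. It uses Hölder duality against the finitely many pressure tests $\psi_j$ (in $L^3$) and flux tests $\eta_j$ (in $L^\infty$), then an elementary inequality that uses the normalization $|O^P_WD_W|,|O^F_WD_W|\le 1$ to upgrade the linear flux term to order $\eta^{3/2}$. The only cosmetic difference is that you split into cases according to which observation exceeds $\eta/2$, whereas the paper uses the single inequality $a^{3/2}+b\ge c(a+b)^{3/2}$ for $a,b\in[0,1]$.
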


\begin{proof}
The pressure observation consists of finitely many pairings of the active pressure representative against fixed pressure test functions:
\[
        O^P_WD_W
        =
        \bigl(\langle P^a_W,\psi_j\rangle\bigr)_{j=1}^{N_P}.
\]
Since the test family is finite and each \(\psi_j\) is smooth and compactly supported in the observation region,
\[
        |\langle P^a_W,\psi_j\rangle|
        \le
        \|P^a_W\|_{L^{3/2}(Q_W)}\|\psi_j\|_{L^3(Q_W)}.
\]
Summing over \(j\) gives \eqref{eq:pressure-observation-controlled}.

Similarly, the flux observation consists of finitely many pairings of the coarse-grained flux with fixed flux tests:
\[
        O^F_WD_W
        =
        \left(\int_{Q_W}\Pi_W\eta_j\,dx\,dt\right)_{j=1}^{N_F}.
\]
Since the \(\eta_j\) are smooth and bounded,
\[
        \left|\int_{Q_W}\Pi_W\eta_j\,dx\,dt\right|
        \le
        \|\eta_j\|_{L^\infty(Q_W)}\|\Pi_W\|_{L^1(Q_W)},
\]
which proves \eqref{eq:flux-observation-controlled}.

Let
\[
        a=|O^P_WD_W|,
        \qquad
        b=|O^F_WD_W|.
\]
By \eqref{eq:pressure-observation-controlled},
\[
        \int_{Q_W}|P^a_W|^{3/2}\,dx\,dt
        =
        \|P^a_W\|_{L^{3/2}(Q_W)}^{3/2}
        \ge
        C_W^{-3/2}a^{3/2}.
\]
By \eqref{eq:flux-observation-controlled},
\[
        \int_{Q_W}|\Pi_W|\,dx\,dt
        \ge
        C_W^{-1}b.
\]
If \(a+b\ge\eta\) and \(0\le a,b\le1\), then
\[
        a^{3/2}+b
        \ge
        c(a+b)^{3/2}
        \ge
        c\eta^{3/2}.
\]
Combining the estimates gives \eqref{eq:budget-certificate}.
\end{proof}

\begin{remark}
The exponent \(3/2\) appears because the active pressure budget is naturally measured in \(L^{3/2}\).  The flux part is measured in \(L^1\).  On normalized windows the combined lower bound may therefore be stated with the common exponent \(q=3/2\).  Other finite-dimensional normalizations give equivalent exponents.
\end{remark}

\subsection{From local budget certificate to dyadic depletion}

The previous lemma proves that a pressure--flux visible defect has a positive local pressure--flux budget.  To turn this into dyadic depletion, one needs a geometric or telescoping principle saying that the local budget measured in \(W_n\) is paid by a decrease of a dyadic quantity \(B_n\).

\begin{definition}[Pressure--flux budget-compatible dyadic selection]
\label{def:pf-budget-compatible-selection}
A moving family of windows
\[
        W_n=(n,\ell_n,\Lambda_n,\chi_n,s_n)
\]
is called pressure--flux budget-compatible if there exist a nonnegative dyadic budget \(B_n\), weights \(\lambda_n>0\), and summable errors \(e_n\ge0\) such that
\[
        \sum_n e_n<\infty
\]
and every NS-realizable dyadic package satisfies
\begin{equation}
        B_n-B_{n+1}
        \ge
        \lambda_n\bfB^{\PF}_{W_n}(D_{W_n})-e_n.
        \label{eq:budget-compatible-selection}
\end{equation}
For perturbatively localized windows, the error is allowed to include localization leakage:
\[
        e_n=e_n^{\rm geom}+C_{W_n}R_n^{-1}.
\]
\end{definition}

\begin{remark}
The estimate \eqref{eq:budget-compatible-selection} is a genuine Navier--Stokes analytic input.  It is the place where one must use the local energy inequality, pressure decomposition, window geometry, and possible annular or Carleson packing of the chosen windows.  Without such a compatibility principle, pressure--flux visibility gives a positive local budget, but not yet a telescoping dyadic depletion.
\end{remark}

\begin{theorem}[Controlled pressure--flux depletion]
\label{thm:controlled-pf-depletion}
Let \(W_n\) be a pressure--flux budget-compatible dyadic window selection.  Suppose that an NS-realizable normalized defect direction in \(W_n\) satisfies
\begin{equation}
        |O^P_{W_n}D_{W_n}|+|O^F_{W_n}D_{W_n}|
        \ge
        \eta_n.
        \label{eq:pf-visible-eta}
\end{equation}
Then
\begin{equation}
        B_n-B_{n+1}
        \ge
        c_{W_n}\lambda_n\eta_n^{3/2}-e_n.
        \label{eq:controlled-pf-depletion}
\end{equation}
If the windows belong to a controlled class for which \(c_{W_n}\ge c_0>0\), then
\[
        B_n-B_{n+1}
        \ge
        c_0\lambda_n\eta_n^{3/2}-e_n.
\]
\end{theorem}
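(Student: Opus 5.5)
The plan is to chain the budget certificate of \Cref{lem:pf-observation-budget-controlled} with the budget-compatibility inequality \eqref{eq:budget-compatible-selection} of \Cref{def:pf-budget-compatible-selection}. Both are already available at each fixed window $W_n$. The only care needed is in the normalization hypotheses under which the certificate was proved.

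\emph{Step 1: the local certificate.} Fix $n$ and set $a_n=|O^P_{W_n}D_{W_n}|$ and $b_n=|O^F_{W_n}D_{W_n}|$. The defect is a normalized NS-realizable direction, and the normalized finite-window observations are bounded by $1$; this is exactly the normalization in \Cref{lem:pf-observation-budget-controlled}. So \eqref{eq:pressure-observation-controlled} and \eqref{eq:flux-observation-controlled} apply with the constant $C_{W_n}$. The hypothesis \eqref{eq:pf-visible-eta} is the observation lower bound \eqref{eq:pf-observation-lower} with $\eta=\eta_n$. The lemma then gives
\[
\bfB^{\PF}_{W_n}(D_{W_n})\ge c_{W_n}\eta_n^{3/2}.
\]
Its proof goes through $a^{3/2}+b\ge c(a+b)^{3/2}$ for $0\le a,b\le 1$. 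If $\eta_n>2$, then \eqref{eq:pf-visible-eta} cannot hold under the normalization, so we may assume $\eta_n\le 2$ throughout. The constant $c_{W_n}$ is $c\min(C_{W_n}^{-3/2},C_{W_n}^{-1})$, which depends only on the window.

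\emph{Step 2: telescoping input.} By the budget-compatibility assumption,
\[
B_n-B_{n+1}\ge\lambda_n\bfB^{\PF}_{W_n}(D_{W_n})-e_n .
\]
We have $\lambda_n>0$, so inserting the lower bound from Step 1 preserves the inequality. This gives \eqref{eq:controlled-pf-depletion}. For perturbatively localized windows, $e_n$ already contains the localization leakage $C_{W_n}R_n^{-1}$, so nothing extra is needed there.

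\emph{Step 3: controlled class.} If $c_{W_n}\ge c_0>0$, replace $c_{W_n}$ by $c_0$ in \eqref{eq:controlled-pf-depletion}. This uses $\lambda_n\eta_n^{3/2}\ge 0$, and yields the uniform form.

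The argument is essentially bookkeeping. The only real point of care, which I expect to be the main obstacle, is checking that the normalization ``observations bounded by $1$'' is consistent with the use of a normalized direction. It must also hold in the localized case, where harmonic-pressure leakage could a priori enlarge the observations. I would handle this by absorbing the leakage into $C_{W_n}$ through \Cref{lem:pf-localization-commutator}.
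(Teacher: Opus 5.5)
Your proposal is correct and follows the paper's proof exactly: you apply \Cref{lem:pf-observation-budget-controlled} to get $\bfB^{\PF}_{W_n}(D_{W_n})\ge c_{W_n}\eta_n^{3/2}$, then insert this into the budget-compatibility inequality using $\lambda_n>0$. Your closing worry about localization is unnecessary. That lemma is already stated for perturbatively localized windows, with the bounded-by-one normalization as a standing hypothesis, and any other uniform bound would only change $c_{W_n}$; so the appeal to \Cref{lem:pf-localization-commutator} is not needed.
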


\begin{proof}
By \Cref{lem:pf-observation-budget-controlled}, the pressure--flux observation lower bound \eqref{eq:pf-visible-eta} implies
\[
        \bfB^{\PF}_{W_n}(D_{W_n})
        \ge
        c_{W_n}\eta_n^{3/2}.
\]
The budget-compatible dyadic selection property \eqref{eq:budget-compatible-selection} gives
\[
        B_n-B_{n+1}
        \ge
        \lambda_n\bfB^{\PF}_{W_n}(D_{W_n})-e_n.
\]
Combining the two inequalities yields \eqref{eq:controlled-pf-depletion}.
\end{proof}

\begin{corollary}[Pressure--flux observability gives dyadic depletion]
\label{cor:pf-observability-gives-depletion}
Assume that the pressure--flux map in \(W_n\) has quotient coercivity constant \(M^{\PF}_n\):
\[
        \dist(d,K^{\PF}_{W_n})
        \le
        M^{\PF}_n
        \bigl(|O^P_{W_n}d|+|O^F_{W_n}d|\bigr).
\]
Assume that the normalized NS-realizable branch direction \(\widehat d_n\) satisfies
\[
        \dist(\widehat d_n,K^{\PF}_{W_n})\ge\sigma_n.
\]
Assume also that the moving windows are pressure--flux budget-compatible.  Then
\begin{equation}
        B_n-B_{n+1}
        \ge
        c_{W_n}\lambda_n\sigma_n^{3/2}(M^{\PF}_n)^{-3/2}-e_n.
        \label{eq:pf-observability-depletion}
\end{equation}
In particular, if \(\sigma_n\ge\sigma_0>0\),
\[
        \sum_n\lambda_n(M^{\PF}_n)^{-3/2}=+\infty,
        \qquad
        \sum_n e_n<\infty,
\]
and if \(B_n\ge0\) is finite at the initial scale, then a pressure--flux visible non-CKN branch staying a fixed distance from the pressure--flux kernel cannot persist indefinitely.
\end{corollary}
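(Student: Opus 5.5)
The plan is to chain the quotient coercivity hypothesis with \Cref{thm:controlled-pf-depletion}, then telescope. The statement is essentially a composition of already established estimates, so each step is a direct citation.

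\emph{Step 1: a pressure--flux lower bound.} Apply the assumed quotient coercivity to the normalized direction $d=\widehat d_n$. Together with $\dist(\widehat d_n,K^{\PF}_{W_n})\ge\sigma_n$ this gives
\[
        |O^P_{W_n}\widehat d_n|+|O^F_{W_n}\widehat d_n|
        \ge
        \sigma_n(M^{\PF}_n)^{-1}=:\eta_n .
\]
To pass this to the projected package $D_{W_n}$ used in \Cref{thm:controlled-pf-depletion}, I use the normalization convention of the branch. Since the branch is normalized, $\widehat d_n$ is the cleaned class of $D_{W_n}$ rescaled to unit size. The observation maps are linear on $Y_{W_n}$ and well defined on the quotient by \Cref{lem:quotient-observations-well-defined}. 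Hence the lower bound is exactly \eqref{eq:pf-visible-eta} with this $\eta_n$. The normalized observations are bounded by $1$, as required in \Cref{lem:pf-observation-budget-controlled}.

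\emph{Step 2: per-scale depletion.} Invoke \Cref{thm:controlled-pf-depletion}, which rests on the budget certificate \eqref{eq:budget-certificate} and budget compatibility \eqref{eq:budget-compatible-selection}. This yields
\[
        B_n-B_{n+1}\ge c_{W_n}\lambda_n\eta_n^{3/2}-e_n ,
\]
which is \eqref{eq:pf-observability-depletion} after substituting $\eta_n$.

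\emph{Step 3: the persistence statement by telescoping.} Assume $\sigma_n\ge\sigma_0$ for all $n\ge n_0$, and that the constants are uniform, $c_{W_n}\ge c_0$. Summing from $n_0$ to $N$ gives
\[
        B_{n_0}\ge B_{n_0}-B_{N+1}\ge c_0\sigma_0^{3/2}\sum_{n=n_0}^N\lambda_n(M^{\PF}_n)^{-3/2}-\sum_{n=n_0}^N e_n .
\]
The first inequality uses $B_{N+1}\ge0$. Letting $N\to\infty$, the right side diverges while the left side is finite, which is a contradiction.

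The only delicate point is the uniformity $c_{W_n}\ge c_0$ in Step 3. The corollary's ``in particular'' clause tacitly needs either this controlled-class assumption, as in \Cref{thm:controlled-pf-depletion}, or a divergence condition with $c_{W_n}$ included in the sum. I would state that explicitly, or absorb $c_{W_n}$ into $\lambda_n$.

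A secondary subtlety is the identification in Step 1 of the observations of $\widehat d_n$ with those of $D_{W_n}$. Because the maps are linear, this holds up to the normalization factor $\|d_n\|$. I would handle it by the normalization convention that the branch package has unit cleaned norm. Any perturbative localization leakage is already included in $e_n$, as in \Cref{def:pf-budget-compatible-selection}.
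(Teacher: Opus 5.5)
Your proposal is correct and follows the paper's proof step for step: quotient coercivity with the distance bound gives $\eta_n=\sigma_n(M^{\PF}_n)^{-1}$, \Cref{thm:controlled-pf-depletion} gives the one-step drop, and telescoping with $B_{N+1}\ge0$ yields the contradiction. You are right that the ``in particular'' clause tacitly needs $c_{W_n}\ge c_0>0$; the paper's proof adds exactly this controlled-class assumption in its argument without stating it in the corollary.
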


\begin{proof}
Quotient coercivity and the distance lower bound give
\[
        |O^P_{W_n}D_{W_n}|+|O^F_{W_n}D_{W_n}|
        \ge
        \sigma_n(M^{\PF}_n)^{-1}
\]
for the normalized branch representative.  Set \(\eta_n=\sigma_n(M^{\PF}_n)^{-1}\) in \Cref{thm:controlled-pf-depletion}.  Summing \eqref{eq:pf-observability-depletion} from \(n_0\) to \(N\) gives
\[
        B_{n_0}-B_{N+1}
        \ge
        \sum_{n=n_0}^{N}c_{W_n}\lambda_n\sigma_n^{3/2}(M^{\PF}_n)^{-3/2}
        -
        \sum_{n=n_0}^{N}e_n.
\]
If \(c_{W_n}\ge c_0>0\) and \(\sigma_n\ge\sigma_0>0\), the first sum diverges by assumption, while the error sum remains finite.  Since \(B_{N+1}\ge0\), the left-hand side is bounded above by \(B_{n_0}\).  This contradiction excludes indefinite persistence of a pressure--flux visible branch.
\end{proof}
\section{Controlled packing of pressure--flux budgets}
\label{sec:controlled-packing-pf-budget}

We now prove a first budget-compatible selection lemma.  The result is not yet the full intrinsic Navier--Stokes depletion theorem.  Instead, it gives a controlled packing mechanism: if the selected pressure--flux windows have a finite weighted total local budget, then the weighted tail of these local budgets is a genuine dyadic budget and telescopes exactly.

Let
\[
        W_n=(n,\ell_n,\Lambda_n,\chi_n,s_n)
\]
be a moving sequence of clean or perturbatively localized finite windows associated with dyadic scales \(r_n=2^{-n}\).  For each window, let
\[
        D_{W_n}=(U_{W_n},P_{W_n};P^{\act}_{W_n},P^{\har}_{W_n},R_{W_n},\Pi_{W_n})
\]
be the finite-window projection of the NS-realizable dyadic package.  Define
\begin{equation}
        \bfB^{\PF}_{W_n}
        :=
        \int_{Q_{W_n}}|P^a_{W_n}|^{3/2}\,dx\,dt
        +
        \int_{Q_{W_n}}|\Pi_{W_n}|\,dx\,dt.
        \label{eq:packing-local-budget}
\end{equation}

\begin{definition}[Weighted pressure--flux packing]
\label{def:weighted-pf-packing}
The moving window family \(\{W_n\}_{n\ge n_0}\) satisfies weighted pressure--flux packing with weights \(\lambda_n>0\) if
\begin{equation}
        \sum_{n=n_0}^{\infty}
        \lambda_n\bfB^{\PF}_{W_n}
        <
        \infty.
        \label{eq:weighted-pf-packing}
\end{equation}
If the windows are perturbatively localized, we also require the localization errors \(\varepsilon_n^{\loc}\ge0\) to be summable:
\[
        \sum_{n=n_0}^{\infty}\varepsilon_n^{\loc}<\infty.
\]
\end{definition}

\begin{lemma}[Tail budget generated by weighted pressure--flux packing]
\label{lem:tail-budget-pf-packing}
Assume \eqref{eq:weighted-pf-packing}.  Define
\begin{equation}
        B_n^{\PF}
        :=
        \sum_{k=n}^{\infty}\lambda_k\bfB^{\PF}_{W_k}.
        \label{eq:pf-tail-budget}
\end{equation}
Then \(B_n^{\PF}\) is finite, nonnegative, nonincreasing, and satisfies the exact telescoping identity
\begin{equation}
        B_n^{\PF}-B_{n+1}^{\PF}
        =
        \lambda_n\bfB^{\PF}_{W_n}.
        \label{eq:pf-tail-telescope}
\end{equation}
If perturbative localization errors are present and summable, then the modified budget
\[
        \widetilde B_n^{\PF}
        :=
        \sum_{k=n}^{\infty}\lambda_k\bfB^{\PF}_{W_k}
        +
        \sum_{k=n}^{\infty}\varepsilon_k^{\loc}
\]
satisfies
\[
        \widetilde B_n^{\PF}-\widetilde B_{n+1}^{\PF}
        =
        \lambda_n\bfB^{\PF}_{W_n}+\varepsilon_n^{\loc}
        \ge
        \lambda_n\bfB^{\PF}_{W_n}.
\]
\end{lemma}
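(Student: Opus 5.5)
The plan is to treat \Cref{lem:tail-budget-pf-packing} as an elementary statement about tails of a convergent series of nonnegative terms. No Navier--Stokes structure beyond the sign of the local budgets is needed.

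First, each term \(\lambda_k\bfB^{\PF}_{W_k}\) is nonnegative. Indeed, \(\lambda_k>0\), and \(\bfB^{\PF}_{W_k}\) in \eqref{eq:packing-local-budget} is an integral of the nonnegative densities \(|P^a_{W_k}|^{3/2}\) and \(|\Pi_{W_k}|\). The packing hypothesis \eqref{eq:weighted-pf-packing} says the full series starting at \(n_0\) converges. Every tail \(B_n^{\PF}\) is therefore a convergent series of nonnegative terms, bounded above by the full sum, so it is finite and nonnegative.

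Second, I would derive the telescoping identity directly. Splitting off the first term of the convergent series defining \(B_n^{\PF}\) gives \(B_n^{\PF}=\lambda_n\bfB^{\PF}_{W_n}+B_{n+1}^{\PF}\). This is legitimate because both pieces are finite; one can verify it on partial sums \(\sum_{k=n}^{N}\) and then let \(N\to\infty\). Rearranging gives \eqref{eq:pf-tail-telescope}. Monotonicity is then immediate, since the increment \(\lambda_n\bfB^{\PF}_{W_n}\) is nonnegative.

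Third, the modified budget \(\widetilde B_n^{\PF}\) is the sum of two tails of convergent nonnegative series. The second series converges by the summability assumption \(\sum\varepsilon_k^{\loc}<\infty\) in \Cref{def:weighted-pf-packing}. Applying the same first-term splitting to each tail gives the stated identity, and dropping \(\varepsilon_n^{\loc}\ge0\) gives the inequality.

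I do not expect a genuine obstacle here. The only point to check is that each \(\bfB^{\PF}_{W_k}\) is a finite number, which follows from the scale-critical bounds for \(P^{\act}\) and \(\Pi\) in \(L^{3/2}\) and \(L^1\) at fixed \((n,\ell)\). Those bounds make the series well-defined term by term, and summability is then exactly the hypothesis. It is also worth remarking that this construction yields budget compatibility \eqref{eq:budget-compatible-selection} with \(e_n=0\).
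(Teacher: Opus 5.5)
Your proposal is correct and follows essentially the same route as the paper: the packing hypothesis plus termwise nonnegativity gives finiteness, nonnegativity and monotonicity of the tails, and splitting off the first term gives the telescoping identity. The modified budget is handled the same way; note that your appeal to the scale-critical bounds for finiteness of each term is unnecessary, since convergence of a nonnegative series already forces every term to be finite.
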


\begin{proof}
The finiteness and nonnegativity of \(B_n^{\PF}\) follow directly from \eqref{eq:weighted-pf-packing}.  Since every term in the tail is nonnegative, \(B_n^{\PF}\) is nonincreasing.  Moreover,
\[
        B_n^{\PF}
        =
        \lambda_n\bfB^{\PF}_{W_n}
        +
        \sum_{k=n+1}^{\infty}\lambda_k\bfB^{\PF}_{W_k}
        =
        \lambda_n\bfB^{\PF}_{W_n}+B_{n+1}^{\PF}.
\]
This proves \eqref{eq:pf-tail-telescope}.  The modified identity is identical after adding the summable localization-error tail.
\end{proof}

\begin{theorem}[Pressure--flux packing gives budget-compatible selection]
\label{thm:pf-packing-budget-compatible}
Let \(\{W_n\}_{n\ge n_0}\) be a moving window family satisfying weighted pressure--flux packing with weights \(\lambda_n\).  Then the family is pressure--flux budget-compatible: there exists a nonnegative dyadic budget \(B_n\) such that
\[
        B_n-B_{n+1}
        \ge
        \lambda_n\bfB^{\PF}_{W_n}-e_n,
\]
with \(\sum_n e_n<\infty\).  In the clean case one may take \(e_n=0\).  In the perturbatively localized case one may take \(e_n\) to be the localization leakage error, provided it is summable.
\end{theorem}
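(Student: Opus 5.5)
The plan is to obtain the dyadic budget directly from \Cref{lem:tail-budget-pf-packing}, taking as budget the weighted tail of the local pressure--flux budgets. In the clean case we set
\[
        B_n:=B_n^{\PF}=\sum_{k=n}^{\infty}\lambda_k\bfB^{\PF}_{W_k}.
\]
The weighted packing hypothesis \eqref{eq:weighted-pf-packing} makes this finite. Each $\bfB^{\PF}_{W_k}$ is an integral of the nonnegative densities $|P^a_{W_k}|^{3/2}$ and $|\Pi_{W_k}|$, and $\lambda_k>0$, so $B_n\ge0$. The exact telescoping identity \eqref{eq:pf-tail-telescope} then gives $B_n-B_{n+1}=\lambda_n\bfB^{\PF}_{W_n}$. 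This is the required inequality with $e_n=0$, and $\sum_n e_n=0<\infty$ trivially.

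In the perturbatively localized case we use the modified budget $\widetilde B_n^{\PF}$ from the same lemma, which adds the summable tail $\sum_{k\ge n}\varepsilon_k^{\loc}$. This budget is again finite and nonnegative, and its increment is
\[
        \widetilde B_n^{\PF}-\widetilde B_{n+1}^{\PF}=\lambda_n\bfB^{\PF}_{W_n}+\varepsilon_n^{\loc}.
\]
There are two equally valid ways to match this to the compatibility form \eqref{eq:budget-compatible-selection}:
\begin{itemize}
\item drop the nonnegative term $\varepsilon_n^{\loc}$, giving the inequality with $e_n=0$;
\item if one wants the local budget to be measured with the localized (cutoff-inserted) integrals, with the discrepancy from the clean integrals bounded by $\varepsilon_n^{\loc}$, take $B_n=\widetilde B_n^{\PF}$ built from the clean budgets and set $e_n=\varepsilon_n^{\loc}$. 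This $e_n$ is summable by the packing definition.
\end{itemize}
Either way the compatibility inequality holds with summable errors. This matches \Cref{def:pf-budget-compatible-selection}, whose localized error has the form $e_n^{\rm geom}+C_{W_n}R_n^{-1}$; here we simply identify $e_n$ with the recorded leakage $\varepsilon_n^{\loc}$.

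The argument is essentially bookkeeping once \Cref{lem:tail-budget-pf-packing} is available. The only point requiring care is whether the inequality must hold for \emph{every} NS-realizable package, as in \Cref{def:pf-budget-compatible-selection}, rather than just for the fixed branch. I would handle this by noting that the budget $B_n$ is defined from the given branch's own packages $D_{W_n}$, so the statement is a property of the selected family $\{W_n\}$ together with its associated branch; I would state the theorem in that sense.

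The main obstacle is not in this argument itself: it lies in verifying the packing hypothesis \eqref{eq:weighted-pf-packing} intrinsically from the Navier--Stokes structure, and that is assumed here.
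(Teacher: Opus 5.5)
Your proposal is correct and is essentially the paper's proof: take the weighted tail \(B_n^{\PF}\) from \Cref{lem:tail-budget-pf-packing}, whose exact telescoping identity gives the clean case with \(e_n=0\). In the localized case the paper does not pass to the modified budget \(\widetilde B_n^{\PF}\). It reuses the same tail \(B_n^{\PF}\) and observes that the exact identity already implies the inequality for any nonnegative summable \(e_n\), so your first bullet suffices and the second is unnecessary.
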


\begin{proof}
In the clean case, take \(B_n=B_n^{\PF}\).  Then \Cref{lem:tail-budget-pf-packing} gives the exact identity \eqref{eq:pf-tail-telescope}.  In the perturbatively localized case, let \(e_n\) denote the total localized leakage at \(W_n\), and assume \(\sum_ne_n<\infty\).  The same tail budget satisfies
\[
        B_n^{\PF}-B_{n+1}^{\PF}
        =
        \lambda_n\bfB^{\PF}_{W_n}
        \ge
        \lambda_n\bfB^{\PF}_{W_n}-e_n.
\]
\end{proof}

\subsection{A sufficient Carleson packing condition}

The following finite-overlap argument is valid when the selected window
budgets are restrictions of one common pressure--flux measure.  This distinction
matters: along a moving dyadic branch the packages at different scales are, in
general, different rescaled objects.  A Tonelli argument may be used only after
all windows have been pulled back to a common physical cylinder and dominated
by a single finite measure.

\begin{definition}[Common-measure pressure--flux packing]
\label{def:common-measure-pf-packing}
A moving family \(\{W_n\}_{n\ge n_0}\) satisfies common-measure pressure--flux
packing if there exist a finite nonnegative measure \(\mu_*^{\PF}\) on a fixed
prepared cylinder \(Q_*\), measurable supports \(Q_{W_n}\subset Q_*\), and
constants \(c_W,C_W>0\) such that
\[
        \bfB^{\PF}_{W_n}
        \le
        C_W\int_{Q_{W_n}}d\mu_*^{\PF}
\]
for every selected window.  The constants are required to be uniformly bounded
on the window class under consideration.
\end{definition}

\begin{definition}[Weighted finite-overlap window family]
\label{def:weighted-finite-overlap}
The family \(\{W_n\}_{n\ge n_0}\) has weighted finite overlap with weights
\(\lambda_n\) if there exists \(C_{\rm ov}<\infty\) such that
\begin{equation}
        \sum_{n=n_0}^{\infty}\lambda_n\mathbf 1_{Q_{W_n}}(x,t)
        \le
        C_{\rm ov}
        \qquad
        \text{for }\mu_*^{\PF}\text{-a.e. }(x,t).
        \label{eq:weighted-finite-overlap}
\end{equation}
\end{definition}

\begin{lemma}[Finite-overlap pressure--flux packing]
\label{lem:finite-overlap-pf-packing}
Assume common-measure pressure--flux packing and weighted finite overlap.  Then
\begin{equation}
        \sum_{n=n_0}^{\infty}\lambda_n\bfB^{\PF}_{W_n}
        \le
        C C_{\rm ov}\,\mu_*^{\PF}(Q_*),
        \label{eq:finite-overlap-packing}
\end{equation}
where \(C\) is the uniform comparison constant in
\Cref{def:common-measure-pf-packing}.  Consequently the weighted
pressure--flux packing condition holds whenever \(\mu_*^{\PF}(Q_*)<\infty\).
\end{lemma}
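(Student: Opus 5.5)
The plan is a direct Tonelli argument: dominate each local budget by the common measure, then exchange sum and integral. First I would invoke \Cref{def:common-measure-pf-packing}. It gives, for every selected window,
\[
  \bfB^{\PF}_{W_n}\le C_W\,\mu_*^{\PF}(Q_{W_n})
  =C_W\int_{Q_*}\mathbf 1_{Q_{W_n}}\,d\mu_*^{\PF}.
\]
Set \(C:=\sup_n C_W<\infty\); the definition requires the comparison constants to be uniformly bounded on the window class, so this supremum is finite. Multiplying by \(\lambda_n>0\) and summing over \(n\ge n_0\) gives
\[
  \sum_{n\ge n_0}\lambda_n\bfB^{\PF}_{W_n}
  \le C\sum_{n\ge n_0}\int_{Q_*}\lambda_n\mathbf 1_{Q_{W_n}}\,d\mu_*^{\PF}.
\]

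Second, I would exchange sum and integral. All terms are nonnegative and measurable, since the supports \(Q_{W_n}\) are measurable, so Tonelli's theorem (equivalently, monotone convergence for partial sums) applies. This yields
\[
  \int_{Q_*}\Bigl(\sum_{n\ge n_0}\lambda_n\mathbf 1_{Q_{W_n}}\Bigr)\,d\mu_*^{\PF}.
\]
The weighted finite-overlap bound \eqref{eq:weighted-finite-overlap} holds \(\mu_*^{\PF}\)-a.e. Integrating it bounds the last integral by \(C_{\rm ov}\,\mu_*^{\PF}(Q_*)\), which is \eqref{eq:finite-overlap-packing}.

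The consequence is then immediate. If \(\mu_*^{\PF}(Q_*)<\infty\), which holds automatically because the measure is finite, the right-hand side is finite. That is exactly \eqref{eq:weighted-pf-packing}, so \Cref{lem:tail-budget-pf-packing} and \Cref{thm:pf-packing-budget-compatible} become available.

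I do not expect a real obstacle, since the argument is routine measure theory. The only point requiring care is the one flagged before the definition: the domination must be by a single measure on a single cylinder, so that the windows from different rescaled packages are compared on one measure space. That is precisely the content of the common-measure hypothesis, and the proof should say explicitly that this hypothesis is what allows Tonelli to be used.
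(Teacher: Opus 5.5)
Your proposal is correct and is essentially the paper's proof: you dominate each budget by the common measure with the uniform constant $C$, exchange sum and integral by Tonelli, and integrate the $\mu_*^{\PF}$-a.e.\ weighted overlap bound. Your remark that $\mu_*^{\PF}(Q_*)<\infty$ already follows from the definition, which requires a finite measure, is also right.
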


\begin{proof}
By the common-measure domination and Tonelli's theorem,
\[
        \sum_{n=n_0}^{\infty}\lambda_n\bfB^{\PF}_{W_n}
        \le
        C
        \sum_{n=n_0}^{\infty}\lambda_n
        \int_{Q_{W_n}}d\mu_*^{\PF}
        =
        C
        \int_{Q_*}
        \left(\sum_{n=n_0}^{\infty}\lambda_n\mathbf 1_{Q_{W_n}}\right)
        d\mu_*^{\PF}.
\]
Using \eqref{eq:weighted-finite-overlap} gives \eqref{eq:finite-overlap-packing}.
\end{proof}

\begin{proposition}[Fixed-package pressure--flux measure]
\label{prop:pf-measure-finite}
Let
\[
        D_{n,\ell}=(U_{n,\ell},P_{n,\ell};P^{\act}_{n,\ell},P^{\har}_{n,\ell},R_{n,\ell},\Pi_{n,\ell})
\]
be one NS-realizable coarse-grained package in a fixed prepared cylinder, and set
\[
        d\mu_{n,\ell}^{\PF}
        :=
        |P^a_{n,\ell}|^{3/2}\,dx\,dt+|\Pi_{n,\ell}|\,dx\,dt .
\]
If
\[
        \|P^a_{n,\ell}\|_{L^{3/2}(Q_{\rm prep})}<\infty,
        \qquad
        \|\Pi_{n,\ell}\|_{L^1(Q_{\rm prep})}<\infty,
\]
then \(\mu_{n,\ell}^{\PF}\) is finite.  Moreover, after rescaling to a unit
prepared cylinder, standard estimates give
\begin{equation}
        \|P^a_{n,\ell_n}\|_{L^{3/2}(Q_{\rm prep})}^{3/2}
        \lesssim
        D(r_n)+C(r_n),
        \label{eq:pressure-measure-finite}
\end{equation}
and
\begin{equation}
        \|\Pi_{n,\ell_n}\|_{L^1(Q_{\rm prep})}
        \lesssim
        \ell_n^{-1}C(r_n).
        \label{eq:flux-measure-finite}
\end{equation}
\end{proposition}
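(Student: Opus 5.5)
The finiteness statement is immediate and will be dispatched first: \(\mu^{\PF}_{n,\ell}\) is a nonnegative measure absolutely continuous with respect to \(dx\,dt\), and its total mass on \(Q_{\rm prep}\) is \(\|P^a_{n,\ell}\|_{L^{3/2}}^{3/2}+\|\Pi_{n,\ell}\|_{L^1}\). Both terms are finite by hypothesis. All remaining work goes into the two quantitative bounds \eqref{eq:pressure-measure-finite} and \eqref{eq:flux-measure-finite}, after rescaling so that the prepared cylinder is a fixed unit-size cylinder contained in the region where \((u^{(n)},p^{(n)})\) is defined.

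For the flux bound \eqref{eq:flux-measure-finite} I will reuse the Basic scale-critical bounds lemma directly. Young's inequality for the nonnegative mollifier and Jensen give \(\|R_{n,\ell}\|_{L^{3/2}}\le C\|u^{(n)}\|_{L^3}^2\). Differentiating the kernel gives \(\|\nabla U_{n,\ell}\|_{L^3}\le C\ell^{-1}\|u^{(n)}\|_{L^3}\), with \(C\) depending only on \(\|\nabla\rho\|_{L^1}\). Hölder then yields \(\|\Pi_{n,\ell}\|_{L^1}\le C\ell^{-1}\|u^{(n)}\|_{L^3}^3\). The \(L^3\) norm of \(u^{(n)}\) is taken over a slightly enlarged cylinder, enlarged by \(\ell\) because of the convolution. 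Since \(C_{u^{(n)}}(1)=C_u(r_n)\) and the enlarged cylinder sits inside \(Q_{2}\) of the rescaled solution, the result is \(\lesssim \ell_n^{-1}C(2r_n)\). This is comparable to \(C(r_{n-1})\), which I will absorb into the "\(\lesssim C(r_n)\)" notation by the usual convention of adjusting the dyadic index or the prepared cylinder.

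For the pressure bound \eqref{eq:pressure-measure-finite} I will fix the active representative concretely. Take a spatial cutoff \(\phi\) equal to \(1\) on the observation ball and supported in the prepared ball, and define
\[
P^a_{n,\ell}=(-\Delta)^{-1}\partial_i\partial_j\bigl(\phi\,S_\ell(u^{(n)}_iu^{(n)}_j)\bigr),
\]
which equals \(U_iU_j+R_{ij}\) inside by the definition of \(R\). With this choice \(-\Delta P^a=\partial_i\partial_j(U_iU_j+R_{ij})\) on the observation ball, and \(P^{\har}=P-P^a\) is harmonic there, so it is an admissible cleaning. The Calderón--Zygmund estimate on \(L^{3/2}(\R^3)\) for the Riesz transforms \(R_iR_j\), applied at each time and then integrated, gives \(\|P^a\|_{L^{3/2}}\le C\|S_\ell(u^{(n)}\otimes u^{(n)})\|_{L^{3/2}}\le C\|u^{(n)}\|_{L^3}^2\). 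Hence \(\|P^a\|_{L^{3/2}}^{3/2}\lesssim C(r_n)\), which is even stronger than claimed.

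For a general bounded cleaning operator \(\mathsf P^{\act}_Q\), as in the Basic bounds lemma, I will instead write the bound as \(\lesssim\|p^{(n)}-(p^{(n)})_{B_1}\|_{L^{3/2}}^{3/2}=D(r_n)\). The time-dependent mean is removable because \(\mathsf P^{\act}\) annihilates \(\calH_0\). Adding the two possibilities accounts for the stated form \(D(r_n)+C(r_n)\).

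The main obstacle is bookkeeping rather than analysis. The two active representatives do not coincide: the chosen cleaning must be bounded on \(L^{3/2}\) modulo gauge, uniformly in \(n\), and the mollification and cutoff enlarge the cylinder. I will handle this by stating that the implied constants depend on \(\rho\), \(\phi\) and the fixed cleaning operator but not on \(n\) or \(\ell\). The neighbouring-scale quantities \(C(2r_n)\) and \(D(2r_n)\) are then identified with \(C(r_{n-1})\) and \(D(r_{n-1})\), up to relabelling the prepared cylinder.
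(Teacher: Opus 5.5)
Your proposal is correct and follows the paper's own argument. Finiteness is read off from the definition of \(\mu^{\PF}_{n,\ell}\); the flux bound is H\"older combined with the Reynolds-stress bound and the smoothing estimate \(\|\nabla U_{n,\ell}\|_{L^3}\lesssim \ell^{-1}\|u^{(n)}\|_{L^3}\); and the pressure bound comes from Calder\'on--Zygmund control of the source-generated piece plus boundedness of the chosen active projection, where your case split just unpacks the paper's one-line appeal to the local pressure decomposition, and your remark that the Riesz-transform representative is controlled by \(C(r_n)\) alone is a correct sharpening.

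One piece of bookkeeping to tidy: absorb the \(\ell\)-enlargement by shrinking the prepared cylinder inside the rescaled unit cylinder, not by shifting the dyadic index. An index shift does not give the stated inequality, because \(C(r_{n-1})\) is not bounded by a multiple of \(C(r_n)\) in general.
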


\begin{proof}
The first statement is immediate from the definition of \(\mu_{n,\ell}^{\PF}\).
The chosen active pressure representative is controlled by the usual local pressure decomposition, the bounded finite-window active projection, and Calderon--Zygmund estimates, giving \eqref{eq:pressure-measure-finite}.  For the flux,
\[
        \Pi_{n,\ell_n}=-R_{n,\ell_n}:\nabla U_{n,\ell_n}.
\]
Holder's inequality, the Reynolds stress estimate, and smoothing give
\[
        \|\Pi_{n,\ell_n}\|_{L^1}
        \le
        \|R_{n,\ell_n}\|_{L^{3/2}}
        \|\nabla U_{n,\ell_n}\|_{L^3}
        \lesssim
        \ell_n^{-1}\|u^{(n)}\|_{L^3}^3
        \lesssim
        \ell_n^{-1}C(r_n),
\]
which proves \eqref{eq:flux-measure-finite}.
\end{proof}

\begin{remark}[Moving dyadic packages require an extra domination hypothesis]
\label{rem:moving-packages-not-one-measure}
For a moving family \(\{W_n\}\), the quantities
\(P^a_{W_n}\) and \(\Pi_{W_n}\) usually come from different rescaled packages.
Thus there is generally a family of measures \(\mu_n^{\PF}\), not a single
measure \(\mu^{\PF}\).  The finite-overlap proof above does not apply unless
one proves a common-measure domination after pulling the windows back to a
single physical cylinder.  Without this additional domination, the tail budget
of \Cref{lem:tail-budget-pf-packing} remains a valid tautological budget, but
finite overlap alone does not prove intrinsic Navier--Stokes depletion.
\end{remark}

\begin{remark}[Why the tail budget is not yet intrinsic]
The tail budget \(B_n^{\PF}\) is a rigorous telescoping budget, but it is generated from the selected pressure--flux budgets themselves.  It is not yet an intrinsic monotone Navier--Stokes quantity such as a localized energy envelope.  A stronger theorem must identify \(B_n\) directly with a local energy--pressure quantity derived from the local energy inequality.  This is the next analytic target.
\end{remark}

\section{Intrinsic energy--pressure budget for pressure--flux activity}
\label{sec:intrinsic-energy-pressure-budget}

The preceding tail-budget construction gives a rigorous telescoping budget, but the budget is generated from the selected pressure--flux observations themselves.  We now formulate a more intrinsic version.  The goal is to relate pressure--flux activity to a local Navier--Stokes energy--pressure envelope.

The local energy inequality does not give a sign-definite monotone quantity at every scale.  Transport terms, pressure work, harmonic pressure leakage, and cutoff errors may all enter.  The correct statement is therefore a budget inequality with explicitly isolated work and localization errors.

Let \(z_0=(0,0)\), \(r_n=2^{-n}\), and
\[
        Q_n:=Q_{r_n}(z_0).
\]
Choose smooth parabolic cutoffs \(\phi_n\in C_c^\infty(Q_n)\) such that
\[
        0\le\phi_n\le1,
        \qquad
        \phi_n\equiv1 \text{ on } Q_{n+1},
\]
and
\begin{equation}
        |\nabla\phi_n|\lesssim r_n^{-1},
        \qquad
        |\partial_t\phi_n|+|\Delta\phi_n|\lesssim r_n^{-2}.
        \label{eq:dyadic-cutoff-bounds}
\end{equation}
Define the local energy--pressure envelope
\begin{align}
        \calE_n
        :=&\
        r_n^{-1}\esssup_{-r_n^2<t<0}
        \int_{B_{r_n}}|u(x,t)|^2\phi_n(x,t)\,dx
        +
        r_n^{-1}\int_{Q_n}|\nabla u|^2\phi_n\,dx\,dt
        \notag\\
        &+
        r_n^{-2}\int_{Q_n}|p-(p)_{B_{r_n}}(t)|^{3/2}\phi_n\,dx\,dt.
        \label{eq:energy-pressure-envelope}
\end{align}
This quantity is scale-invariant up to harmless cutoff constants.

\begin{definition}[Energy--pressure admissible pressure--flux window]
\label{def:ep-admissible-pf-window}
A pressure--flux window \(W_n\subset Q_n\) is called energy--pressure admissible with constants \((\lambda_n,\Err_n)\) if
\begin{equation}
        \lambda_n\bfB^{\PF}_{W_n}
        \le
        C\bigl(\calE_n-\calE_{n+1}\bigr)+\Err_n,
        \label{eq:ep-admissibility}
\end{equation}
where \(\lambda_n>0\), \(\Err_n\ge0\), and
\[
        \sum_{n\ge n_0}\Err_n<\infty.
\]
\end{definition}

\begin{remark}
The inequality \eqref{eq:ep-admissibility} is the intrinsic form of pressure--flux depletion.  Unlike the tail budget identity, it is not tautological.  It asserts that pressure--flux activity measured in the selected window is paid for by a decrease of a genuine local energy--pressure envelope, up to summable errors.
\end{remark}

\subsection{A local-energy work identity}

The following identity explains the origin of \eqref{eq:ep-admissibility}.  It is an exact coarse-grained local energy balance.

\begin{lemma}[Coarse-grained local energy work identity]
\label{lem:coarse-local-energy-work}
Let
\[
        U=U_{n,\ell},
        \qquad
        P=P_{n,\ell},
        \qquad
        R=R_{n,\ell},
        \qquad
        \Pi=-R:\nabla U
\]
be an NS-realizable coarse-grained package in an interior cylinder.  For every nonnegative \(\varphi\in C_c^\infty(Q_{\rm prep})\),
\begin{align}
        \int_{Q_{\rm prep}}
        (|\nabla U|^2+\Pi)\varphi\,dx\,dt
        ={}&
        \int_{Q_{\rm prep}}
        \frac12|U|^2(\partial_t+\Delta)\varphi\,dx\,dt
        \notag\\
        &+
        \int_{Q_{\rm prep}}
        \left[
            \left(\frac12|U|^2+P\right)U+RU
        \right]\cdot\nabla\varphi\,dx\,dt.
        \label{eq:coarse-local-energy-work}
\end{align}
Equivalently,
\begin{equation}
        \int_{Q_{\rm prep}}\Pi\varphi\,dx\,dt
        =
        -\int_{Q_{\rm prep}}|\nabla U|^2\varphi\,dx\,dt
        +
        \calW_\varphi(U,P,R),
        \label{eq:coarse-local-energy-work-rearranged}
\end{equation}
where
\begin{align}
        \calW_\varphi(U,P,R)
        :=&\
        \int_{Q_{\rm prep}}
        \frac12|U|^2(\partial_t+\Delta)\varphi\,dx\,dt
        \notag\\
        &+
        \int_{Q_{\rm prep}}
        \left[
            \left(\frac12|U|^2+P\right)U+RU
        \right]\cdot\nabla\varphi\,dx\,dt.
        \label{eq:work-functional}
\end{align}
\end{lemma}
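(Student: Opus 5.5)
The identity is the weak form of the exact coarse-grained energy-flux identity of the Proposition in Section~2 (equation \eqref{eq:coarse-energy-flux}). My plan is to test that pointwise identity against \(\varphi\) and integrate by parts.

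First I need the pointwise identity to hold classically on \(\operatorname{supp}\varphi\). Since \(\ell\) is smaller than the parabolic distance to the boundary, \(U=S_\ell u^{(n)}\) and \(P=S_\ell p^{(n)}\) are smooth in space-time there. The tensor \(S_\ell(u^{(n)}\otimes u^{(n)})\) is also smooth, because \(u^{(n)}\otimes u^{(n)}\in L^1_{\rm loc}\), and hence so are \(R\) and \(\Pi\). The coarse momentum identity \eqref{eq:coarse-momentum} therefore holds pointwise, and dotting it with \(U\) as in the proof of that Proposition gives
\[
  \partial_t e_U-\Delta e_U+|\nabla U|^2+\nabla\cdot\bigl[(e_U+P)U+RU\bigr]=R:\nabla U=-\Pi
\]
pointwise, with \(e_U=\frac12|U|^2\). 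If \(\varphi\) is supported in the smaller interior region where convolution commutes with derivatives, this step is immediate. Otherwise I will shrink \(Q_{\rm prep}\) to the region where the coarse-graining is interior, which is how the paper's standing convention should be read.

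Next I multiply by \(\varphi\ge0\), integrate over \(Q_{\rm prep}\), and move derivatives onto \(\varphi\). Because \(\varphi\) has compact support in space-time, there are no boundary terms, including at the initial and final times. Integration by parts gives \(\iint\varphi\,\partial_t e_U=-\iint e_U\,\partial_t\varphi\), \(-\iint\varphi\,\Delta e_U=-\iint e_U\,\Delta\varphi\), and \(\iint\varphi\,\nabla\cdot F=-\iint F\cdot\nabla\varphi\) with \(F=(e_U+P)U+RU\). This yields
\[
  \iint(|\nabla U|^2+\Pi)\varphi=\iint e_U(\partial_t+\Delta)\varphi+\iint F\cdot\nabla\varphi ,
\]
which is \eqref{eq:coarse-local-energy-work}. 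Subtracting \(\iint|\nabla U|^2\varphi\) from both sides and naming the right-hand side \(\calW_\varphi\) gives \eqref{eq:coarse-local-energy-work-rearranged}. Nonnegativity of \(\varphi\) is never used; it is only recorded for later use with the local energy inequality.

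The only real obstacle is justifying smoothness and integrability up to \(\operatorname{supp}\varphi\). In particular, the term \(PU\cdot\nabla\varphi\) needs \(P\in L^1_{\rm loc}\), and \(P=S_\ell p^{(n)}\) with \(p^{(n)}\in L^{3/2}\) is smooth in the interior, so this is fine. If only the weaker regularity \(u\in L^\infty L^2\cap L^2H^1\) were used instead of smoothness, I would still get the identity in the integrated form. The integrands are \(|U|^2\), \(|U|^3\), \(|P||U|\), \(|R||U|\) and \(|R||\nabla U|\), and each is integrable by the bounds of the Basic scale-critical bounds lemma together with Hölder's inequality, in particular \(\|\Pi\|_{L^1}\lesssim\ell^{-1}\|u^{(n)}\|_{L^3}^3\). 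A standard density argument in \(\varphi\) then completes the proof.
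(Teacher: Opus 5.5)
Your proposal is correct and follows the paper's proof: test the pointwise coarse-grained energy-flux identity against \(\varphi\), integrate by parts, and read off the rearranged form. Your added remarks fill in details the paper leaves implicit. These are the smoothness of \(U,P,R,\Pi\) on \(\operatorname{supp}\varphi\), and reading \(Q_{\rm prep}\) as the region where the coarse-graining is interior.
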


\begin{proof}
The coarse-grained energy identity is
\[
        \partial_t\frac12|U|^2
        -
        \Delta\frac12|U|^2
        +
        |\nabla U|^2
        +
        \nabla\cdot\left[
            \left(\frac12|U|^2+P\right)U+RU
        \right]
        =
        -\Pi.
\]
Multiplying by \(\varphi\) and integrating by parts gives \eqref{eq:coarse-local-energy-work}.  The rearranged form \eqref{eq:coarse-local-energy-work-rearranged} follows immediately.
\end{proof}

\begin{remark}[Sign issue]
The identity controls the signed flux \(\int\Pi\varphi\), not automatically \(\int|\Pi|\varphi\).  Therefore an intrinsic depletion theorem for the absolute flux budget requires either a sign-selection of flux windows, a positive/negative flux decomposition, or an oscillation-loss term.
\end{remark}

\subsection{Forward signed pressure--flux depletion}

The sign in \eqref{eq:coarse-local-energy-work-rearranged} is important.  With
our convention \(\Pi=-R:\nabla U\), only the forward signed integral
\(\int \Pi\varphi\) can be paid by the same decreasing energy--pressure envelope
without adding a new positive dissipation cost.  Negative signed flux
(backscatter) must be treated as a separate mechanism or placed into an
oscillation/backscatter error.

\begin{definition}[Forward sign-coherent flux window]
\label{def:signed-flux-window}
A flux window \(W_n\) is called forward sign-coherent if there exists a
nonnegative test function \(\varphi_n\) supported in \(Q_{W_n}\) such that
\begin{equation}
        \int_{Q_{W_n}}|\Pi_{W_n}|\,dx\,dt
        \le
        C_W\int_{Q_{W_n}}\Pi_{W_n}\varphi_n\,dx\,dt
        +
        \mathrm{Osc}^F_n,
        \label{eq:sign-coherence}
\end{equation}
where \(\mathrm{Osc}^F_n\ge0\) is a flux-oscillation or backscatter error.
If the dominant coherent sign is negative, \eqref{eq:sign-coherence} is not
assumed; that case is classified as backscatter-dominated and is not paid by
the same monotone envelope.
\end{definition}

\begin{lemma}[Conditional forward flux work inequality]
\label{lem:flux-work-ep-envelope}
Let \(W_n\subset Q_n\) be a forward sign-coherent flux window.  Assume that the
annular cutoff work, harmonic-pressure leakage, and coarse-graining commutators
generated by applying \Cref{lem:coarse-local-energy-work} to a cutoff adapted to
\(W_n\) are collected in a nonnegative quantity \(\calR_n\).  Then
\begin{equation}
        \int_{Q_{W_n}}|\Pi_{W_n}|\,dx\,dt
        \le
        C_W(\calE_n-\calE_{n+1})+C_W\calR_n+\mathrm{Osc}^F_n.
        \label{eq:flux-work-envelope}
\end{equation}
A typical bound for \(\calR_n\) has the form
\begin{align}
        \calR_n
        \lesssim{}&
        \int_{Q_n\setminus Q_{n+1}}
        \left(
            r_n^{-2}|U|^2
            +
            r_n^{-1}|U|^3
            +
            r_n^{-1}|P-(P)_{B_{r_n}}|\,|U|
            +
            r_n^{-1}|R|\,|U|
        \right)dx\,dt
        \notag\\
        &+
        \Har_n+\Com_n.
        \label{eq:typical-work-error}
\end{align}
\end{lemma}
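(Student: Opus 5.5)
The plan is to chain the forward sign-coherence inequality \eqref{eq:sign-coherence} with the coarse-grained local energy work identity of \Cref{lem:coarse-local-energy-work}, applied with the nonnegative test function \(\varphi_n\) supplied by \Cref{def:signed-flux-window}. First, \eqref{eq:sign-coherence} reduces the absolute flux budget to the signed quantity:
\[
\int_{Q_{W_n}}|\Pi_{W_n}|\dxdt\le C_W\int\Pi_{W_n}\varphi_n\dxdt+\mathrm{Osc}^F_n .
\]
Second, \eqref{eq:coarse-local-energy-work-rearranged} rewrites the signed flux as \(-\int|\nabla U|^2\varphi_n+\calW_{\varphi_n}(U,P,R)\). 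Since the dissipation term is nonpositive, it can simply be dropped. This leaves the bound \(\int\Pi\varphi_n\le\calW_{\varphi_n}\). At this point the forward sign is essential: for backscatter one could not discard anything, which is why that case is excluded by definition.

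Third, I would split the work functional \(\calW_{\varphi_n}\). The test function \(\varphi_n\) is supported in \(Q_{W_n}\subset Q_n\), and I would compare it with the dyadic cutoffs \(\phi_n\) and \(\phi_{n+1}\) of \eqref{eq:dyadic-cutoff-bounds}. Write \(\varphi_n=\phi_n-\phi_{n+1}+(\varphi_n-\phi_n+\phi_{n+1})\), or more simply adapt \(\varphi_n\) to the annulus structure. The terms of \(\calW\) in which derivatives of the cutoff fall on the annulus \(Q_n\setminus Q_{n+1}\) produce exactly the integrands listed in \eqref{eq:typical-work-error}:
\begin{itemize}
\item \(r_n^{-2}|U|^2\), from \((\partial_t+\Delta)\varphi\);
\item \(r_n^{-1}|U|^3\), from the transport flux;
\item \(r_n^{-1}|P-(P)_{B_{r_n}}||U|\), from pressure work, where the mean is subtracted using \(\nabla\cdot U=0\) and the gauge invariance shown for the energy channel;
\item \(r_n^{-1}|R||U|\), from the Reynolds flux.
\end{itemize}
Replacing \(P\) by \(P^{\act}+P^{\har}\) gives the harmonic-work term \(\Har_n\). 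Passing from coarse quantities \(U,P,R\) back to \(u,p\) in the envelope \(\calE_n\) produces the commutator term \(\Com_n\). All of these terms are collected into \(\calR_n\), exactly as the hypothesis allows.

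The remaining main term must be identified with \(\calE_n-\calE_{n+1}\). This is the step I expect to be the main obstacle. The local energy inequality, tested against \(\phi_n\) and \(\phi_{n+1}\), controls the energy-plus-dissipation part of \(\calE_n\) by work terms, but it does not literally produce a difference of envelopes. Moreover, the pressure part of \(\calE_n\) is not governed by the energy inequality at all. My first attempt would be to use the hypothesis that all non-sign-definite leftovers are placed in \(\calR_n\), so that what remains is the positive part of the energy balance over \(Q_n\) minus that over \(Q_{n+1}\). The pressure \(L^{3/2}\) parts are nonnegative and nested because \(\phi_n\ge\phi_{n+1}\) on \(Q_{n+1}\), up to the \(r_n\) versus \(r_{n+1}\) normalizations, which cost a constant. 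Their difference can therefore be added on the right-hand side at no cost, or absorbed into \(C_W\). The essential-supremum term is handled by taking times realizing the supremum and absorbing the mismatch into \(\calR_n\).

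Finally, I would multiply through by \(C_W\) and obtain \eqref{eq:flux-work-envelope}, with the constants depending only on the window class.
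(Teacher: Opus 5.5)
Your first three steps are the paper's proof. The paper applies \Cref{lem:coarse-local-energy-work} with a cutoff equal to \(1\) on the core and discards \(-\int|\nabla U|^2\varphi_n\) because of the forward sign. It bounds the derivative terms on \(Q_n\setminus Q_{n+1}\) via \eqref{eq:dyadic-cutoff-bounds}, subtracting the spatial mean of \(P\) using \(\nabla\cdot U=0\). It sends retained harmonic pressure work to \(\Har_n\) and localization discrepancies to \(\Com_n\), and closes with \eqref{eq:sign-coherence}.

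Where you go further is the envelope step, and there the paper derives nothing. It simply writes the work bound as \(\calW_{\varphi_n}\le C_W(\calE_n-\calE_{n+1})+C_W\calR_n\), as part of the conditional packaging. \Cref{thm:intrinsic-controlled-pf-depletion} later states explicitly that the telescoping drop is assumed, not obtained from the local energy identity.

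Your attempt to actually derive the drop fails, for three reasons.
\begin{itemize}
\item There is no ``remaining main term''. Every term of \(\calW_{\varphi_n}\) carries a derivative of \(\varphi_n\). Once the annular integrals are placed in \(\calR_n\), your argument already gives \(\int|\Pi_{W_n}|\le C\calR_n+\mathrm{Osc}^F_n\), with nothing left to identify.
\item The difference \(\calE_n-\calE_{n+1}\) has no sign. The scale-\((n+1)\) terms carry \(r_{n+1}^{-1}=2r_n^{-1}\), \(r_{n+1}^{-2}=4r_n^{-2}\) and a different spatial mean. Nestedness of the unnormalized integrals therefore gives only bounds of the form (part of \(\calE_{n+1}\)) \(\le C\,\)(part of \(\calE_n\)) with \(C>1\), which say nothing about the sign of the difference. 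A normalization constant cannot be absorbed inside a difference. Adding a possibly negative quantity to the right-hand side strengthens the claimed inequality, so it is not ``at no cost''.
\item The local energy inequality bounds each envelope from above separately; it never bounds \(\calE_n-\calE_{n+1}\) from below. The ess-sup ``mismatch'' you propose to absorb is also not among the quantities \(\calR_n\) is declared to collect.
\end{itemize}

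What the argument honestly proves is \(\int|\Pi_{W_n}|\le C_W\calR_n+\mathrm{Osc}^F_n\). The stated form \eqref{eq:flux-work-envelope} follows from this only when \(\calE_n\ge\calE_{n+1}\), or if \((\calE_{n+1}-\calE_n)_+\) is counted in \(\calR_n\). The latter is effectively how the paper's ``written as'' step has to be read.
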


\begin{proof}
Apply \Cref{lem:coarse-local-energy-work} with a cutoff \(\varphi_n\) adapted
to \(W_n\), equal to \(1\) on the core and supported in \(Q_n\).  The signed
flux integral satisfies
\[
        \int \Pi\varphi_n
        =
        \calW_{\varphi_n}(U,P,R)
        -
        \int |\nabla U|^2\varphi_n
        \le
        \calW_{\varphi_n}(U,P,R).
\]
The nonnegative dissipation term therefore has the favorable sign only for this
forward signed flux.  The work terms containing \((\partial_t+\Delta)\varphi_n\)
and \(\nabla\varphi_n\) are supported where the cutoff changes, hence in
\(Q_n\setminus Q_{n+1}\), and are bounded by
\eqref{eq:typical-work-error} using \eqref{eq:dyadic-cutoff-bounds}.  The pressure is split into its active representative and its retained harmonic component; the latter is not a gauge and is placed in \(\Har_n\) as harmonic pressure-work leakage.  The difference between localized and
interior coarse-grained identities is placed in \(\Com_n\).  The resulting work
bound is written as
\[
        \calW_{\varphi_n}(U,P,R)
        \le
        C_W(\calE_n-\calE_{n+1})+C_W\calR_n.
\]
Finally, forward sign coherence \eqref{eq:sign-coherence} converts the forward
signed flux integral into the absolute flux budget up to \(\mathrm{Osc}^F_n\).
\end{proof}

\subsection{Pressure activity and the pressure envelope}

\begin{lemma}[Active pressure budget is controlled by pressure envelope]
\label{lem:active-pressure-envelope}
For every cleaned pressure window \(W_n\subset Q_n\),
\begin{equation}
        \int_{Q_{W_n}}|P^a_{W_n}|^{3/2}\,dx\,dt
        \le
        C_W r_n^2D(r_n)+\Har_n+\Loc_n.
        \label{eq:active-pressure-envelope}
\end{equation}
Equivalently, in scale-invariant form,
\[
        r_n^{-2}\int_{Q_{W_n}}|P^a_{W_n}|^{3/2}\,dx\,dt
        \le
        C_WD(r_n)+r_n^{-2}(\Har_n+\Loc_n).
\]
This estimate is an envelope bound, not a telescoping depletion estimate.  To use it in a dyadic budget, one must separately prove pressure-envelope admissibility, packing, or a selected-interval drop; otherwise it only says that active pressure is controlled by the local pressure size.
\end{lemma}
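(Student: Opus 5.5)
The plan is to write the active pressure representative as the chosen active projection of the full coarse-grained pressure, and to estimate it directly against the oscillation of the original pressure on \(Q_n\).

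First, I use the definition \(P^a_{W_n}=\mathsf P^{\act}_{Q}P_{W_n}\) and the fact that \(\calH_0\) (functions of time) lies in the kernel of the active cleaning. This lets me replace \(P_{W_n}\) by \(P_{W_n}-(P_{W_n})_{B}(t)\) at no cost. Next I compare the finite-window projection of the coarse-grained pressure with the coarse-grained pressure itself, and compare the localized active projection with its interior version on \(Q_{W_n}\). Every discrepancy in this comparison comes from cutoffs: the cutoff \(\chi_n\), the localized inverse Laplacian \(L_{\loc}\), and tails of \(\Pi^{\rm proj}_{W_n}\). I collect these contributions into \(\Loc_n\). The part of the full pressure that the active projector fails to annihilate because of harmonic leakage is collected into \(\Har_n\), consistent with the convention that harmonic pressure is retained and recorded as an error rather than a gauge.

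Second, for the main term I use boundedness of \(\mathsf P^{\act}_Q\) on \(L^{3/2}\) with a window constant, as recorded in the basic scale-critical bounds lemma: \(\|P^{\act}\|_{L^{3/2}}\le C\|p^{(n)}-(p^{(n)})_{B_1}(t)\|_{L^{3/2}}\). I combine this with Jensen's inequality for the nonnegative mollifier \(S_\ell\), which gives \(\|S_\ell q\|_{L^{3/2}}\le\|q\|_{L^{3/2}}\) on the interior cylinder. This yields
\[
\int_{Q_{W_n}}|P^a_{W_n}|^{3/2}\le C_W\int |p-(p)_{B_{r_n}}(t)|^{3/2}
\]
over the relevant physical cylinder, after undoing the rescaling. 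The definition \(D(r_n)=r_n^{-2}\int_{Q_{r_n}}|p-(p)_{B_{r_n}}|^{3/2}\) then turns the right-hand side into \(C_Wr_n^2D(r_n)\). If the bound is measured in rescaled variables instead, the \(r_n^2\) factor becomes the Jacobian convention of the window, and the scale-invariant form follows by dividing by \(r_n^2\).

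The main obstacle is that the mollification and cutoff enlarge the support slightly beyond \(Q_n\), while \(D(r_n)\) is taken exactly on \(Q_{r_n}\). I would handle this by choosing \(\ell_n\) and \(\chi_n\) so that \(Q_{W_n}+\operatorname{supp}\rho_{\ell_n}\subset Q_n\). If that inclusion cannot be arranged, I would absorb the overhang into \(\Loc_n\). I would also point out that the estimate is purely an upper envelope bound, so no sign or telescoping structure is needed.
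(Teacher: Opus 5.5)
Your proposal follows the paper's proof: you project the time-mean-subtracted pressure through the bounded finite-window active projection, put cutoff and window discrepancies into \(\Loc_n\) and harmonic leakage into \(\Har_n\), and read off \(r_n^2D(r_n)\) from the definition of \(D\). Your extra steps (the mollifier contraction, and the support enlargement beyond \(Q_n\)) fill in what the paper skips, since its final identity silently treats the coarse-grained \(P\) as the original \(p\); they do not change the argument.
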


\begin{proof}
The active pressure representative is obtained by projecting
\[
        P-(P)_{B_{r_n}}(t)
\]
onto the finite active pressure representative.  Since the active projection is bounded on the finite window,
\[
        \|P^a_{W_n}\|_{L^{3/2}(Q_{W_n})}^{3/2}
        \le
        C_W\|P-(P)_{B_{r_n}}(t)\|_{L^{3/2}(Q_n)}^{3/2}
        +
        \Har_n+\Loc_n.
\]
By the definition of \(D(r_n)\),
\[
        \|P-(P)_{B_{r_n}}(t)\|_{L^{3/2}(Q_n)}^{3/2}
        =
        r_n^2D(r_n),
\]
which proves \eqref{eq:active-pressure-envelope}.
\end{proof}

\begin{theorem}[Conditional intrinsic controlled pressure--flux depletion]
\label{thm:intrinsic-controlled-pf-depletion}
Let \(W_n\subset Q_n\) be a cleaned, forward sign-coherent, perturbatively localized pressure--flux window.  The assertion below is conditional: the conversion of signed flux work and active pressure envelope control into a single telescoping drop is assumed through energy--pressure admissibility, not derived solely from the local energy identity.  Suppose the errors
\[
        \Err_n:=\calR_n+\mathrm{Osc}^F_n+\Err^P_n
\]
are nonnegative and summable, and suppose the resulting pressure and flux estimates imply energy--pressure admissibility in the sense of \Cref{def:ep-admissible-pf-window}:
\[
        \lambda_n\bfB^{\PF}_{W_n}
        \le
        C(\calE_n-\calE_{n+1})+\Err_n.
\]
Define
\[
        B_n:=C\calE_n+\sum_{k=n}^{\infty}\Err_k.
\]
Then
\begin{equation}
        B_n-B_{n+1}
        \ge
        \lambda_n\bfB^{\PF}_{W_n}.
        \label{eq:intrinsic-budget-drop}
\end{equation}
Consequently, if
\[
        |O^P_{W_n}D_{W_n}|+|O^F_{W_n}D_{W_n}|\ge\eta_n,
\]
then
\begin{equation}
        B_n-B_{n+1}
        \ge
        c_{W_n}\lambda_n\eta_n^{3/2}.
        \label{eq:intrinsic-observable-drop}
\end{equation}
Equivalently, keeping the errors outside the budget gives
\[
        C(\calE_n-\calE_{n+1})
        \ge
        c_{W_n}\lambda_n\eta_n^{3/2}-\Err_n.
\]
\end{theorem}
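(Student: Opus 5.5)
The argument is essentially algebraic once energy--pressure admissibility is granted. We do not rederive admissibility from the local energy identity; the statement assumes it. Lemma~\ref{lem:flux-work-ep-envelope} and Lemma~\ref{lem:active-pressure-envelope} only motivate why the errors take the form \(\calR_n+\mathrm{Osc}^F_n+\Err^P_n\). We therefore start directly from
\[
\lambda_n\bfB^{\PF}_{W_n}\le C(\calE_n-\calE_{n+1})+\Err_n .
\]
Because \(\Err_k\ge0\) and \(\sum_k\Err_k<\infty\), the tail \(T_n:=\sum_{k\ge n}\Err_k\) is finite, nonnegative, and nonincreasing. It satisfies the exact telescoping \(T_n-T_{n+1}=\Err_n\), by the same computation as in Lemma~\ref{lem:tail-budget-pf-packing}. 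Hence \(B_n=C\calE_n+T_n\) is well defined. If \(\calE_n\) is finite at the scales considered, which holds since \(u\in L^\infty_tL^2_x\cap L^2_tH^1_x\) and \(p\in L^{3/2}\), then \(B_n\) is nonnegative.

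The first step is the telescoping computation
\[
B_n-B_{n+1}=C(\calE_n-\calE_{n+1})+\Err_n .
\]
By admissibility, the right-hand side is at least \(\lambda_n\bfB^{\PF}_{W_n}\), which is \eqref{eq:intrinsic-budget-drop}. The point to note is that the error is absorbed into the budget rather than subtracted. This is why no \(-e_n\) appears, in contrast with Theorem~\ref{thm:controlled-pf-depletion}.

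The second step is the observable lower bound. Here we invoke Lemma~\ref{lem:pf-observation-budget-controlled} on the cleaned, perturbatively localized window \(W_n\). The hypothesis \(|O^P_{W_n}D_{W_n}|+|O^F_{W_n}D_{W_n}|\ge\eta_n\), together with the normalization of the observations by \(1\), gives
\[
\bfB^{\PF}_{W_n}\ge c_{W_n}\eta_n^{3/2}.
\]
Multiplying by \(\lambda_n>0\) and chaining with \eqref{eq:intrinsic-budget-drop} yields \eqref{eq:intrinsic-observable-drop}. The equivalent form that keeps the errors outside the budget comes from combining the admissibility inequality directly with the same certificate, without passing through \(B_n\):
\[
C(\calE_n-\calE_{n+1})\ge\lambda_n\bfB^{\PF}_{W_n}-\Err_n\ge c_{W_n}\lambda_n\eta_n^{3/2}-\Err_n .
\]

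The only delicate point is bookkeeping rather than analysis. The normalization hypothesis of Lemma~\ref{lem:pf-observation-budget-controlled} (observations bounded by \(1\)) must apply to the package \(D_{W_n}\) in the chosen cleaned window. The constant \(c_{W_n}\) is therefore window-dependent, and it becomes uniform only in a controlled class, as in Theorem~\ref{thm:controlled-pf-depletion}. We would state this explicitly. We would also remark that the genuinely hard content, namely that signed flux work and the active-pressure envelope are actually paid by a drop of \(\calE_n\), lies entirely inside the admissibility assumption and is not claimed here.
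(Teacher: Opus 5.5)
Your proposal is correct and follows the paper's proof essentially step for step. It uses the identity \(B_n-B_{n+1}=C(\calE_n-\calE_{n+1})+\Err_n\), then energy--pressure admissibility for the budget drop, then the budget certificate of \Cref{lem:pf-observation-budget-controlled} on normalized windows for the observable drop. Your extra bookkeeping (finiteness of the error tail and of \(\calE_n\), and the window-dependence of \(c_{W_n}\)) is consistent with the paper and simply makes explicit what its proof leaves implicit.
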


\begin{proof}
By definition of \(B_n\),
\[
        B_n-B_{n+1}
        =
        C(\calE_n-\calE_{n+1})+\Err_n.
\]
Energy--pressure admissibility gives \eqref{eq:intrinsic-budget-drop}.  The pressure--flux budget certificate, \Cref{lem:pf-observation-budget-controlled}, gives
\[
        \bfB^{\PF}_{W_n}
        \ge
        c_{W_n}
        \bigl(|O^P_{W_n}D_{W_n}|+|O^F_{W_n}D_{W_n}|\bigr)^{3/2}
\]
on normalized windows.  Substituting the observation lower bound yields \eqref{eq:intrinsic-observable-drop}.
\end{proof}

\begin{remark}[What remains conditional]
The intrinsic theorem has genuine hypotheses: forward sign coherence of the flux window, summability of cutoff/work errors, pressure-envelope admissibility for the active pressure part, and a clean active/harmonic pressure split.  These are not cosmetic.  Without forward sign coherence, the local energy identity controls signed flux, not \(|\Pi|\).  Without a separate pressure-envelope drop or packing principle, active pressure control by \(D(r_n)\) does not telescope.  Without control of the retained harmonic pressure component, pressure work can leak into \(\Har_n\).
\end{remark}

\begin{corollary}[Intrinsic pressure--flux observable depletion]
\label{cor:intrinsic-pf-observable-depletion}
Assume the hypotheses of \Cref{thm:intrinsic-controlled-pf-depletion}.  Suppose also that the pressure--flux quotient observability constant in \(W_n\) is \(M_n^{\PF}\), and that the normalized NS-realizable branch direction remains a distance at least \(\sigma_n\) from \(K^{\PF}_{W_n}\).  Then
\[
        B_n-B_{n+1}
        \ge
        c_{W_n}\lambda_n\sigma_n^{3/2}(M^{\PF}_n)^{-3/2}.
\]
If \(\sigma_n\ge\sigma_0>0\),
\[
        \sum_n\lambda_n(M^{\PF}_n)^{-3/2}=+\infty,
        \qquad
        c_{W_n}\ge c_0>0,
\]
then an indefinitely persistent pressure--flux visible non-CKN branch is impossible in this controlled window class, provided \(B_{n_0}<\infty\).
\end{corollary}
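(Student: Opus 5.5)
The corollary follows by combining quotient coercivity with \Cref{thm:intrinsic-controlled-pf-depletion}, exactly as in \Cref{cor:pf-observability-gives-depletion}. The difference is that the budget \(B_n\) is now the intrinsic one, \(B_n=C\calE_n+\sum_{k\ge n}\Err_k\), and it requires no separate error term.

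\emph{First step: the observation lower bound.} Apply the quotient coercivity inequality to the normalized branch direction \(\widehat d_n\). Together with the distance hypothesis \(\dist(\widehat d_n,K^{\PF}_{W_n})\ge\sigma_n\), this gives
\[
|O^P_{W_n}\widehat d_n|+|O^F_{W_n}\widehat d_n|\ge \eta_n:=\sigma_n(M^{\PF}_n)^{-1}.
\]
We identify \(\widehat d_n\) with the normalized projected package \(D_{W_n}\), as in the earlier corollary. Feeding \(\eta_n\) into \eqref{eq:intrinsic-observable-drop} then yields the claimed per-scale drop \(B_n-B_{n+1}\ge c_{W_n}\lambda_n\sigma_n^{3/2}(M^{\PF}_n)^{-3/2}\). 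The errors \(\Err_n\) have already been absorbed into the definition of \(B_n\), so no subtraction appears.

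\emph{Second step: summation and contradiction.} Under \(\sigma_n\ge\sigma_0\) and \(c_{W_n}\ge c_0\), sum the drop from \(n_0\) to \(N\). This gives
\[
B_{n_0}-B_{N+1}\ge c_0\sigma_0^{3/2}\sum_{n=n_0}^N\lambda_n(M^{\PF}_n)^{-3/2}.
\]
Since \(\calE_n\ge0\) and \(\Err_k\ge0\), we have \(B_{N+1}\ge0\), so the left side is at most \(B_{n_0}<\infty\). Finiteness of \(B_{n_0}\) also uses \(\sum\Err_k<\infty\), which is assumed. The right side diverges as \(N\to\infty\), which is a contradiction. Hence no branch can remain indefinitely at distance \(\ge\sigma_0\) from the kernel while all the hypotheses hold at every scale.

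\emph{The delicate point} is the first step: making sure the normalization used for quotient coercivity matches the normalization required by the budget certificate in \Cref{lem:pf-observation-budget-controlled}. That certificate needs observations bounded by \(1\) to produce the exponent \(3/2\). I would handle this by working on normalized windows, where \(|O^P|+|O^F|\le 1\), as the hypotheses of \Cref{thm:intrinsic-controlled-pf-depletion} already presuppose. I would also note that \(\eta_n\le 1\) may be assumed after shrinking \(\sigma_0\).
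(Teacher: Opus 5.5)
Your proposal is correct and follows the paper's proof: quotient coercivity plus the distance bound gives $\eta_n=\sigma_n(M^{\PF}_n)^{-1}$. This is fed into \eqref{eq:intrinsic-observable-drop} and then summed against the nonnegative, initially finite budget $B_n=C\calE_n+\sum_{k\ge n}\Err_k$. Your normalization remark is harmless but not needed: the certificate in \Cref{lem:pf-observation-budget-controlled} only requires each observation to be bounded by $1$, not $\eta_n\le 1$, so no shrinking of $\sigma_0$ is required.
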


\begin{proof}
Quotient observability and the distance lower bound give \(\eta_n=\sigma_n(M_n^{\PF})^{-1}\).  Apply \Cref{thm:intrinsic-controlled-pf-depletion} with this value of \(\eta_n\) and sum in \(n\).
\end{proof}

\section{Energy positivity on the pressure--flux kernel}
\label{sec:energy-positivity}

We now analyze residual directions that survive the pressure--flux channels.  The previous sections show that pressure--flux visible defects consume a local budget.  Therefore the next possible obstruction must lie in the pressure--flux kernel
\[
        K^{\PF}_W=
        \ker O^P_W\cap\ker O^F_W.
\]
The purpose of this section is to show that a large part of this kernel is still visible through the energy channel, provided the direction is NS-realizable in the positive-covariance sense.

The important point is that a formal linear defect may have sign-changing stress variation, but an actual Reynolds covariance generated by coarse graining satisfies
\[
        R_{n,\ell}\ge0,
        \qquad
        \kappa_{n,\ell}:=\frac12\tr R_{n,\ell}\ge0.
\]
This positivity is a genuine NS-realizability constraint.

\subsection{The positive energy observation}

Fix a finite window \(W\).  Let
\[
        z=(\dot U,\dot R)\in K^{\PF}_W
\]
be a pressure--flux invisible active direction.  The corresponding pressure variation \(\dot P=\dot P^{\act}+\dot P^{\har}\) is determined by the constrained pressure compatibility relation
\[
        -\Delta\dot P
        =
        \partial_i\partial_j
        \bigl(U_i\dot U_j+\dot U_iU_j+\dot R_{ij}\bigr).
\]
Since \(z\in K^{\PF}_W\), this active pressure source is invisible in the chosen pressure window and the linearized flux observation also vanishes.

Let
\[
        \Theta^{\rm tr}_W=\{\theta_1,\dots,\theta_{N_{\rm tr}}\}
\]
be nonnegative selected-time kinetic tests, and let
\[
        \Theta^{\rm bulk}_W=\{\zeta_1,\dots,\zeta_{N_b}\}
\]
be nonnegative bulk tests supported in the observation region.  We define the positive energy seminorm
\begin{align}
        Q^E_W(z)
        :=&
        \left(
            \sum_{j=1}^{N_{\rm tr}}
            \int \theta_j(x)|\dot U(x,s_*)|^2\,dx
        \right)^{1/2}
        \notag\\
        &+
        \left(
            \sum_{j=1}^{N_b}
            \iint \zeta_j|\nabla\dot U|^2\,dx\,dt
        \right)^{1/2}
        +
        \sum_{j=1}^{N_b}
        \iint \zeta_j\frac12\tr\dot R\,dx\,dt,
        \label{eq:positive-energy-seminorm}
\end{align}
whenever \(\dot R\ge0\).  The last term is used only on the positive covariance cone.  For a general sign-changing formal variation, it is only a linear functional and cannot be treated as positive.

We also keep the linearized energy-balance observation.  For a test function \(\phi\in C_c^\infty(Q_W)\), define
\begin{align}
        L^E_Wz(\phi)
        =
        \iint
        \Big[{}&
        -(U\cdot\dot U)(\partial_t+\Delta)\phi
        \notag\\
        &-
        \Big(
            (U\cdot\dot U+\dot P)U
            +
            \Big(\frac12|U|^2+P\Big)\dot U
            +
            \dot R\,U
            +
            R\dot U
        \Big)\cdot\nabla\phi
        \notag\\
        &+
        \bigl(2\nabla U:\nabla\dot U+\dot\Pi\bigr)\phi
        \Big]dx\,dt,
        \label{eq:linearized-energy-observation}
\end{align}
where \(\dot\Pi=-\dot R:\nabla U-R:\nabla\dot U\).  The finite-dimensional energy observation is
\[
        O^E_Wz
        =
        \bigl(L^E_Wz(\phi_1),\dots,L^E_Wz(\phi_{N_E}),Q^E_W(z)\bigr).
\]

\begin{definition}[Positive NS-realizable pressure--flux kernel]
\label{def:positive-ns-pf-kernel}
Let \(K^{\PF,+}_{W,\NS}\) be the closed cone of pressure--flux invisible finite-window package increments whose Reynolds covariance component is retained with its NS-realizable sign.  Concretely, its elements \(z=(\dot U,\dot R)\) are cone-level limits of NS-realizable positive covariance packages, not arbitrary differences of two packages, and
\[
        \dot R\ge0
\]
as a quadratic form in the observation region.  If one instead studies a general linearized difference of packages, this positivity assumption is not available and the formal kernel must be treated by the linear observation in \Cref{prop:residual-pfe-kernel}.  The positive pressure--flux--energy kernel is
\begin{equation}
        K^{\PFE,+}_{W,\NS}
        :=
        \{z\in K^{\PF,+}_{W,\NS}:L^E_Wz=0,\ Q^E_W(z)=0\}.
        \label{eq:positive-pfe-kernel}
\end{equation}
\end{definition}

\begin{definition}[Energy-separating window]
\label{def:energy-separating-window}
A finite window \(W\) is called energy-separating on positive NS-realizable pressure--flux directions if the following two conditions hold.

First, the bulk and selected-time tests separate resolved gradients and kinetic traces on the active velocity window:
\[
        \sum_j\iint\zeta_j|\nabla\dot U|^2\,dx\,dt
        +
        \sum_j\int\theta_j|\dot U(\cdot,s_*)|^2\,dx
        =0
        \quad\Longrightarrow\quad
        \dot U=0
\]
for every active velocity variation \(\dot U\), modulo the quotient-null directions.

Second, the covariance tests separate nonnegative covariance traces:
\[
        \iint\zeta_j\tr\dot R\,dx\,dt=0
        \quad\text{for every }j
        \quad\Longrightarrow\quad
        \dot R=0
\]
for every nonnegative finite-window covariance tensor \(\dot R\ge0\).
\end{definition}

\begin{remark}
The second condition is automatic, for example, if the positive test family contains a function strictly positive on the core of the observation region and the finite-window covariance is supported in that core.  Since \(\dot R\ge0\), vanishing of \(\tr\dot R\) implies \(\dot R=0\).
\end{remark}

\begin{lemma}[Positive covariance is energy-visible]
\label{lem:positive-covariance-energy-visible}
Let \(W\) be energy-separating, and let
\[
        z=(0,\dot R)\in K^{\PF,+}_{W,\NS}.
\]
If \(\dot R\neq0\), then \(Q^E_W(z)>0\).  Consequently,
\[
        z\notin K^{\PFE,+}_{W,\NS}.
\]
\end{lemma}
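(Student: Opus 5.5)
The argument is a direct positivity computation with the seminorm \(Q^E_W\) from \eqref{eq:positive-energy-seminorm}, followed by the second separation condition of \Cref{def:energy-separating-window}. Since \(z=(0,\dot R)\in K^{\PF,+}_{W,\NS}\), we have \(\dot R\ge0\) by \Cref{def:positive-ns-pf-kernel}. Hence the covariance term in \(Q^E_W\) is a genuine nonnegative quantity and not merely a signed linear functional. With \(\dot U=0\), both velocity contributions vanish identically: the selected-time trace term and the bulk gradient term. What remains is
\[
        Q^E_W(z)=\sum_{j=1}^{N_b}\iint \zeta_j\,\tfrac12\tr\dot R\,dx\,dt .
\]
Each summand is nonnegative, because \(\zeta_j\ge0\) and \(\tr\dot R\ge0\) pointwise when \(\dot R\ge0\). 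Therefore \(Q^E_W(z)\ge0\), and \(Q^E_W(z)=0\) holds if and only if every individual integral \(\iint\zeta_j\tr\dot R\) vanishes.

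Next I argue by contradiction. Suppose \(Q^E_W(z)=0\). Then \(\iint\zeta_j\tr\dot R\,dx\,dt=0\) for every \(j\). The second condition of \Cref{def:energy-separating-window} applies to nonnegative finite-window covariance tensors, and \(\dot R\) is one, so it gives \(\dot R=0\). This contradicts \(\dot R\ne0\), and so \(Q^E_W(z)>0\).

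The consequence is immediate from \eqref{eq:positive-pfe-kernel}. Membership in \(K^{\PFE,+}_{W,\NS}\) requires \(Q^E_W(z)=0\), which fails, so \(z\notin K^{\PFE,+}_{W,\NS}\). I do not need to examine the condition \(L^E_Wz=0\) at all.

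The only delicate point is making sure the positivity is legitimate. The sum \(\sum_j\iint\zeta_j\frac12\tr\dot R\) counts as part of the seminorm only on the positive cone, and \(z\) lies there precisely because of the NS-realizable sign \(\dot R\ge0\) built into \(K^{\PF,+}_{W,\NS}\). I will say explicitly that this step fails for a general sign-changing formal variation. That case is deferred to the linear observation in \Cref{prop:residual-pfe-kernel}. I do not expect any further obstacle, because the separation hypothesis is assumed exactly in the form needed.
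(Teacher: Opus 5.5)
Your proposal is correct and follows essentially the paper's argument. Because \(\dot R\ge0\) and \(\zeta_j\ge0\), the covariance term of \(Q^E_W\) is a sum of nonnegative integrals, and covariance separation forces at least one \(\iint\zeta_j\tr\dot R\) to be positive when \(\dot R\neq0\); your contradiction framing is just the contrapositive of the paper's direct step.
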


\begin{proof}
Since \(\dot R\ge0\) and \(\dot R\neq0\), its trace is nonnegative and not identically zero.  By the covariance-separating property of the energy tests, there exists \(\zeta_j\) such that
\[
        \iint\zeta_j\tr\dot R\,dx\,dt>0.
\]
Therefore the covariance part of \(Q^E_W(z)\) is strictly positive.
\end{proof}

\begin{lemma}[Resolved gradients are energy-visible]
\label{lem:resolved-gradient-energy-visible}
Let \(W\) be energy-separating, and let
\[
        z=(\dot U,\dot R)\in K^{\PF,+}_{W,\NS}.
\]
If \(\dot U\neq0\) in the active quotient, then \(Q^E_W(z)>0\).  Consequently,
\[
        z\notin K^{\PFE,+}_{W,\NS}.
\]
\end{lemma}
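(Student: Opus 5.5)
The proof will mirror that of \Cref{lem:positive-covariance-energy-visible}, now using the velocity-separating half of \Cref{def:energy-separating-window} instead of the covariance-separating half. Fix \(z=(\dot U,\dot R)\in K^{\PF,+}_{W,\NS}\) with \(\dot U\neq0\) in the active quotient \(Y_W=\calZ_W/\calG_W^{\rm ex}\). The plan is to show that one of the first two summands in \eqref{eq:positive-energy-seminorm} is strictly positive. Since the third summand is nonnegative on the positive cone, this will force \(Q^E_W(z)>0\).

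First I check that \(Q^E_W(z)\) is well defined and that its three summands are nonnegative. The first two are square roots of integrals of nonnegative tests \(\theta_j,\zeta_j\) against \(|\dot U(\cdot,s_*)|^2\) and \(|\nabla\dot U|^2\), so they are nonnegative. The third summand is also nonnegative, because on \(K^{\PF,+}_{W,\NS}\) we have \(\dot R\ge0\), hence \(\tr\dot R\ge0\), and \(\zeta_j\ge0\). Therefore
\[
        Q^E_W(z)\ge
        \Bigl(\sum_j\int\theta_j|\dot U(\cdot,s_*)|^2\,dx\Bigr)^{1/2}
        +\Bigl(\sum_j\iint\zeta_j|\nabla\dot U|^2\,dx\,dt\Bigr)^{1/2}.
\]
The point of restricting to the positive NS-realizable cone is exactly here: a sign-changing \(\dot R\) could make the covariance term negative and cancel the velocity terms.

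Next I argue by contrapositive. Suppose \(Q^E_W(z)=0\). Then both velocity sums vanish. By the first condition of \Cref{def:energy-separating-window}, this gives \(\dot U=0\) modulo the quotient-null directions \(\calG_W^{\rm ex}\), which contradicts \(\dot U\neq0\) in the active quotient. Hence \(Q^E_W(z)>0\). Since the definition \eqref{eq:positive-pfe-kernel} of \(K^{\PFE,+}_{W,\NS}\) requires \(Q^E_W(z)=0\), it follows that \(z\notin K^{\PFE,+}_{W,\NS}\).

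The only delicate step is ensuring that \(Q^E_W\) depends only on the quotient class of \(\dot U\). Adding a divergence-null or time-gauge element of \(\calG_W^{\rm ex}\) must not change the velocity integrals, or at least must not break the implication used above. I would handle this by evaluating \(Q^E_W\) on the distinguished representative orthogonal to \(\calG_W^{\rm ex}\) used to define \(\norm{\cdot}{Y_W}\). The time-gauge directions act only on the pressure, so they do not affect \(\dot U\) at all. The Leray-null directions have already been removed from \(\calZ_W\) by the constraint \(\nabla\cdot\dot U=0\). With this representative, the separation hypothesis, which is stated modulo these same null directions, applies verbatim.
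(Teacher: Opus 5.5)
Your proposal is correct and follows the paper's route: the velocity-separating half of the energy-separating definition makes the kinetic-trace or bulk-gradient part of \(Q^E_W(z)\) strictly positive. You also make explicit that the covariance summand is nonnegative on the positive cone and so cannot cancel this, which the paper leaves implicit. Your final paragraph on a distinguished representative is not needed, because the separation hypothesis is stated for every velocity variation modulo the quotient-null directions, so the contrapositive applies to \(z\) as given.
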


\begin{proof}
If \(\dot U\neq0\) in the active quotient, then the energy-separating property implies that either the selected-time kinetic part is nonzero or the bulk gradient part is nonzero.  Hence the velocity part of \(Q^E_W(z)\) is strictly positive.
\end{proof}

\begin{theorem}[Positive energy anti-kernel on the pressure--flux kernel]
\label{thm:positive-energy-anti-kernel-pf}
Let \(W\) be a clean periodic or perturbatively localized finite window.  Assume that \(W\) is energy-separating on positive NS-realizable pressure--flux directions.  Then
\begin{equation}
        K^{\PFE,+}_{W,\NS}=\{0\}.
        \label{eq:positive-energy-anti-kernel}
\end{equation}
Equivalently, no nonzero positive-covariance NS-realizable pressure--flux kernel direction can be invisible to the energy channel.
\end{theorem}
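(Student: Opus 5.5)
The plan is to argue by contradiction using only the two separation lemmas already proved, \Cref{lem:positive-covariance-energy-visible} and \Cref{lem:resolved-gradient-energy-visible}. Take an element \(z=(\dot U,\dot R)\in K^{\PFE,+}_{W,\NS}\). By \eqref{eq:positive-pfe-kernel} this means \(z\in K^{\PF,+}_{W,\NS}\), \(L^E_Wz=0\) and \(Q^E_W(z)=0\). Only the last condition is needed. Since \(z\) lies in the positive cone, \(\dot R\ge0\). Hence the covariance term \(\sum_j\iint\zeta_j\frac12\tr\dot R\) in \eqref{eq:positive-energy-seminorm} is a sum of nonnegative numbers. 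The two velocity terms are square roots of nonnegative quantities. So \(Q^E_W(z)\) is a sum of three nonnegative pieces, and its vanishing forces each piece to vanish separately. This positivity decomposition is the key point. It is exactly where NS-realizability (the sign \(\dot R\ge0\) inherited from \(R_{n,\ell}\ge0\)) is used. Without it, the covariance term could cancel against the velocity terms.

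The proof then splits into two cases. First, if \(\dot U\ne0\) in the active quotient, \Cref{lem:resolved-gradient-energy-visible} gives \(Q^E_W(z)>0\), which is a contradiction. So \(\dot U=0\) modulo \(\calG^{\rm ex}_W\), and after choosing the cleaned representative we may write \(z=(0,\dot R)\). Second, since \(z\) is then in the form covered by \Cref{lem:positive-covariance-energy-visible}, the assumption \(\dot R\ne0\) again gives \(Q^E_W(z)>0\). Therefore \(\dot R=0\), and \(z=0\) in \(Y_W\). The reverse inclusion \(\{0\}\subset K^{\PFE,+}_{W,\NS}\) is trivial, because the zero direction lies in the closed cone and all the observations are homogeneous. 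This establishes \eqref{eq:positive-energy-anti-kernel}. The equivalent formulation in words is just a restatement.

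The main obstacle is justifying the passage from \(\dot U=0\) in the quotient to an element of the precise form \((0,\dot R)\) that still lies in \(K^{\PF,+}_{W,\NS}\). This requires that subtracting an exact quotient-null direction neither destroys the sign \(\dot R\ge0\) nor moves the element out of the pressure--flux kernel. I would handle this with \Cref{lem:quotient-observations-well-defined} and the definition of \(\calG^{\rm ex}_W=\calG^{\rm div}_W+\calG^{\rm time}_W\). These gauge directions act only on the Leray-null velocity part and on time-only pressure additions; they do not touch \(\dot R\). Consequently both the covariance sign and the pressure--flux observations, which are exactly cleaned, are unchanged. In the perturbatively localized case, the same argument applies to the cleaned representative, and the leakage sectors are not quotiented. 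They therefore do not affect the equality \(Q^E_W(z)=0\), which is a statement about the representative itself.
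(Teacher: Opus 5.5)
Your proof is correct and is essentially the paper's argument. Both use that, because \(\dot R\ge0\), the condition \(Q^E_W(z)=0\) forces the covariance, kinetic-trace, and gradient pieces to vanish separately, and both then apply the covariance and velocity separation conditions of \Cref{def:energy-separating-window}. The paper applies covariance separation to \(\dot R\) directly, which needs no information about \(\dot U\), so the change to a representative \((0,\dot R)\) that you flag as the main obstacle never arises. It also closes by noting that the pressure variation, being fixed by the compatibility relation, then has \(\dot P^{\act}=0\) and no retained harmonic signal, so \(z\) vanishes in the full cleaned space and not only in its \((\dot U,\dot R)\) components.
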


\begin{proof}
Let \(z=(\dot U,\dot R)\in K^{\PFE,+}_{W,\NS}\).  By definition, \(Q^E_W(z)=0\).  Since \(z\) is positive NS-realizable, \(\dot R\ge0\).  The covariance part of \(Q^E_W\) gives
\[
        \iint\zeta_j\tr\dot R\,dx\,dt=0
        \qquad
        \text{for every }j.
\]
By covariance separation, \(\dot R=0\).  The velocity part of \(Q^E_W\) gives
\[
        \int\theta_j|\dot U(\cdot,s_*)|^2\,dx=0,
        \qquad
        \iint\zeta_j|\nabla\dot U|^2\,dx\,dt=0
        \qquad
        \text{for every }j.
\]
By velocity separation, \(\dot U=0\) in the active quotient.  Since the pressure variation is determined by
\[
        -\Delta\dot P
        =
        \partial_i\partial_j
        \bigl(U_i\dot U_j+\dot U_iU_j+\dot R_{ij}\bigr)
\]
with the active/harmonic pressure split, we also have \(\dot P^{\act}=0\) and no retained harmonic pressure signal in the cleaned representative.  Thus \(z=0\) in the cleaned active defect space.
\end{proof}

\begin{corollary}[Energy coercivity on the positive NS-realizable pressure--flux cone]
\label{cor:energy-coercivity-positive-pf-cone}
Under the hypotheses of \Cref{thm:positive-energy-anti-kernel-pf}, there exists a finite constant \(C^E_W<\infty\) such that
\begin{equation}
        \|z\|_{Z_W}
        \le
        C^E_W
        \bigl(|L^E_Wz|+Q^E_W(z)\bigr)
        \label{eq:energy-coercivity-positive-cone}
\end{equation}
for every \(z\in K^{\PF,+}_{W,\NS}\).
\end{corollary}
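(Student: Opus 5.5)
The argument is the finite-dimensional compactness proof already used for the finite-window observability criterion and \Cref{thm:clean-periodic-pressure-flux-alternative}. Here it is run on a closed cone rather than a linear subspace, with \Cref{thm:positive-energy-anti-kernel-pf} supplying strict positivity. Set
\[
  N_W(z):=|L^E_Wz|+Q^E_W(z),
\]
where \(|L^E_Wz|\) is the Euclidean norm of the finitely many pairings \(L^E_Wz(\phi_j)\).

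\textbf{Step 1: positive homogeneity.} \(K^{\PF,+}_{W,\NS}\) is a closed cone, so \(tz\) lies in it for \(t\ge0\). The map \(L^E_W\) is linear, hence \(|L^E_W(tz)|=t|L^E_Wz|\). For \(Q^E_W\), the two square-root terms are quadratic under the root, hence \(1\)-homogeneous. The covariance term is linear in \(\dot R\) and nonnegative because \(\dot R\ge0\) on the cone, so it is also \(1\)-homogeneous for \(t\ge0\). Thus \(N_W(tz)=tN_W(z)\) for \(t\ge0\). This is exactly why the statement is restricted to the cone: off the cone the trace term is not a seminorm.

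\textbf{Step 2: continuity and compactness.} Every ingredient of \(N_W\) is a finite sum of integrals of fixed smooth tests against \(\dot U\), \(\nabla\dot U\), \(\dot R\), and \(\dot P\), with coefficients built from the fixed background \(U,P,R\). On the finite-dimensional space \(Z_W\), these are linear or quadratic forms, so \(N_W\) is continuous. Let \(S^+_W=\{z\in K^{\PF,+}_{W,\NS}:\|z\|_{Z_W}=1\}\). It is a closed subset of the unit sphere of a finite-dimensional space, hence compact. It may be empty, in which case the claim is vacuous.

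\textbf{Step 3: positivity and conclusion.} Take \(z\in S^+_W\). If \(N_W(z)=0\), then \(L^E_Wz=0\) and \(Q^E_W(z)=0\), so \(z\in K^{\PFE,+}_{W,\NS}\). By \Cref{thm:positive-energy-anti-kernel-pf} this forces \(z=0\), contradicting \(\|z\|=1\). Hence \(N_W>0\) on \(S^+_W\), and by compactness it attains a minimum \(c^E_W>0\). Put \(C^E_W=(c^E_W)^{-1}\). For nonzero \(z\) in the cone, apply the bound to \(z/\|z\|\), which stays in the cone, and use the homogeneity of Step 1 to get \eqref{eq:energy-coercivity-positive-cone}. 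The case \(z=0\) is trivial.

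\textbf{Main obstacle.} The delicate point is matching the norm on \(Z_W\) with the cleaned quotient in which \Cref{thm:positive-energy-anti-kernel-pf} concludes \(z=0\). That theorem gives \(\dot U=0\) only modulo quotient-null directions. So \(\|\cdot\|_{Z_W}\) must be read as the cleaned norm on \(Y_W\), equivalently computed on representatives orthogonal to \(\calG^{\rm ex}_W\). I would make this explicit by working with those representatives, where the cleaned norm is a genuine norm and compactness applies.

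A second check is that \(N_W\) descends to the quotient, so that the energy observations do not depend on the chosen representative. This follows from exact cleaning and \Cref{lem:quotient-observations-well-defined}. In a perturbatively cleaned window, the recorded perturbative error would have to be carried along in the estimate.
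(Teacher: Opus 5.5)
Your proposal is correct and follows the paper's proof essentially verbatim. Both arguments take the compact unit-norm slice of the closed cone $K^{\PF,+}_{W,\NS}$, use continuity and strict positivity of $|L^E_Wz|+Q^E_W(z)$ there via \Cref{thm:positive-energy-anti-kernel-pf}, and rescale by positive homogeneity. Your explicit homogeneity check for the covariance trace term, and your remark that $\|\cdot\|_{Z_W}$ must be read as the cleaned quotient norm (the anti-kernel theorem only gives $z=0$ in the cleaned space), are refinements the paper leaves implicit rather than a different route.
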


\begin{proof}
The set
\[
        S_W=
        \{z\in K^{\PF,+}_{W,\NS}:\|z\|_{Z_W}=1\}
\]
is compact because \(K^{\PF,+}_{W,\NS}\) is a closed cone in a finite-dimensional space.  The function
\[
        z\mapsto |L^E_Wz|+Q^E_W(z)
\]
is continuous and, by \Cref{thm:positive-energy-anti-kernel-pf}, strictly positive on \(S_W\).  Hence it has a positive minimum \(c^E_W>0\).  By homogeneity on the positive cone,
\[
        |L^E_Wz|+Q^E_W(z)
        \ge
        c^E_W\|z\|_{Z_W}.
\]
Taking \(C^E_W=(c^E_W)^{-1}\) proves \eqref{eq:energy-coercivity-positive-cone}.
\end{proof}

For sign-changing formal variations one must not use \(Q^E_W\) as a positive
seminorm.  Instead, fix a finite-dimensional linear energy observation
\[
        \mathcal E^{\rm lin}_W:K^{\PF}_W\longrightarrow E^{\rm lin}_W
\]
consisting of the linearized energy-balance tests \(L^E_Wz(\phi_j)\), together
with any chosen linear kinetic-trace, dissipation, and signed covariance trace
coefficients.  The positive quantity \(Q^E_W\) is reserved for the positive
covariance cone \(\dot R\ge0\).

\begin{proposition}[Residual pressure--flux--energy kernel]
\label{prop:residual-pfe-kernel}
For a general formal finite-window pressure--flux kernel, define
\begin{equation}
        K^{\PFE}_W
        :=
        \ker\left(\mathcal E^{\rm lin}_W|_{K^{\PF}_W}\right).
        \label{eq:formal-pfe-kernel}
\end{equation}
Then exactly one of the following alternatives holds.

\smallskip
\noindent\textup{(i) Energy visibility on \(K^{\PF}_W\).}
If \(K^{\PFE}_W=\{0\}\), then there exists \(C^E_W<\infty\) such that
\[
        \|z\|_{Z_W}
        \le
        C^E_W\|\mathcal E^{\rm lin}_Wz\|_{E^{\rm lin}_W}
\]
for every \(z\in K^{\PF}_W\).

\smallskip
\noindent\textup{(ii) Residual finite-dimensional energy kernel.}
If \(K^{\PFE}_W\neq\{0\}\), then every formal direction invisible to pressure,
flux, and the chosen linear energy observation lies in the finite-dimensional
kernel \(K^{\PFE}_W\).  It becomes relevant to the Navier--Stokes problem only
after intersection with the NS-realizable range.
\end{proposition}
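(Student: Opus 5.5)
The argument is the same finite-dimensional compactness argument used in \Cref{thm:clean-periodic-pressure-flux-alternative} and in the finite-window observability criterion. This time it is applied to the restricted linear map \(\mathcal E^{\rm lin}_W|_{K^{\PF}_W}\). First I record the setting. The space \(Z_W\) is finite-dimensional, and \(K^{\PF}_W=\ker O^P_W\cap\ker O^F_W\) is a linear subspace of it, being an intersection of kernels of linear maps. It is therefore a finite-dimensional normed space, with the norm inherited from \(Z_W\). The observation \(\mathcal E^{\rm lin}_W\) is built from the linearized energy tests \(L^E_Wz(\phi_j)\) in \eqref{eq:linearized-energy-observation}, together with linear kinetic-trace, dissipation and signed covariance coefficients. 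Each of these is linear in \(z=(\dot U,\dot R)\). This holds because \(\dot P\) is linearly determined by the constrained pressure compatibility relation, and \(\dot\Pi\) is linear through \eqref{eq:linearized-flux}. So \(\mathcal E^{\rm lin}_W|_{K^{\PF}_W}\) is a linear map between finite-dimensional spaces, and by \Cref{lem:quotient-observations-well-defined} it is continuous and well defined on cleaned classes. By definition \eqref{eq:formal-pfe-kernel}, its kernel is \(K^{\PFE}_W\). The two alternatives are mutually exclusive and exhaustive, since the kernel either is or is not \(\{0\}\).

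For alternative (i), assume \(K^{\PFE}_W=\{0\}\). I consider the unit sphere \(S=\{z\in K^{\PF}_W:\|z\|_{Z_W}=1\}\). It is compact, since it is closed and bounded in a finite-dimensional space. The continuous function \(z\mapsto\|\mathcal E^{\rm lin}_Wz\|_{E^{\rm lin}_W}\) is strictly positive on \(S\) by injectivity, so it attains a minimum \(c^E_W>0\). Linearity and homogeneity then give \(\|\mathcal E^{\rm lin}_Wz\|\ge c^E_W\|z\|_{Z_W}\) for all \(z\in K^{\PF}_W\), and we take \(C^E_W=(c^E_W)^{-1}\). Unlike \Cref{cor:energy-coercivity-positive-pf-cone}, no cone structure or positivity of \(\dot R\) is needed, because the domain is a genuine linear subspace and the observation is linear. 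This is exactly why the positive seminorm \(Q^E_W\) is excluded here.

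For alternative (ii), suppose \(z\) is a formal direction invisible to pressure, flux and the chosen linear energy observation. Invisibility to pressure and flux puts \(z\) in \(K^{\PF}_W\), using \Cref{cor:pressure-null-reduction-clean-periodic} in the clean periodic case or the definition of the kernel in the localized case. The energy invisibility condition \(\mathcal E^{\rm lin}_Wz=0\) then puts \(z\) in \(K^{\PFE}_W\). This space is finite-dimensional as a subspace of \(Z_W\). The remark about NS-realizability is interpretive and needs no proof. One only notes that \(K^{\PFE}_W\) is defined purely formally, so a vector in it is an obstruction only after intersecting with \(\overline{\operatorname{Range}\mathcal R^{\NS}_W}\), as in the definition of a finite-window true defect phantom.

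The only delicate point is the well-definedness check that \(\mathcal E^{\rm lin}_W\) actually descends to the cleaned quotient. The pressure-gauge direction \(\dot P\mapsto\dot P+a(t)\) enters \(L^E_W\) through the term \(-\iint a(t)U\cdot\nabla\phi\). I would verify that this term vanishes by integrating by parts and using \(\nabla\cdot U=0\), as in the energy observation section. Retained harmonic pressure is not quotiented and simply contributes linearly. With that check done, the proof is the routine finite-dimensional argument above.
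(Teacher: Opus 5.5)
Your proposal is correct and is essentially the paper's proof: you apply the finite-dimensional kernel alternative to \(\mathcal E^{\rm lin}_W\) restricted to \(K^{\PF}_W\), use compactness of the unit sphere to get the coercive estimate in case (i), and note that case (ii) holds by the definition of the kernel. The quotient well-definedness check and the appeal to the pressure-null corollary are harmless but unnecessary, since the estimate is stated on \(K^{\PF}_W\subset Z_W\) and \(K^{\PF}_W=\ker O^P_W\cap\ker O^F_W\) by definition.
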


\begin{proof}
This is the finite-dimensional kernel alternative applied to the linear energy
observation restricted to \(K^{\PF}_W\).  If the kernel is zero, compactness on
the unit sphere gives the coercive estimate.  If the kernel is nonzero, then by
definition it is precisely the finite-dimensional space of formal directions
invisible to pressure, flux, and the selected linear energy channels.
\end{proof}

\begin{remark}[What remains after energy positivity]
The positive energy argument removes all pressure--flux invisible directions that carry nonzero resolved velocity energy, nonzero resolved dissipation, or nonzero positive Reynolds covariance.  Therefore a remaining dangerous direction must be much more special.  It must either be a sign-changing formal stress variation, a localization artifact, an uncontrolled retained harmonic-pressure leakage, or a residual finite-dimensional direction whose NS-realizability is not captured by the positive covariance cone.  Such directions must next be tested by the adjoint trace channel and by the full NS-realizability filter.
\end{remark}

\section{Trace-cost exactification on the pressure--flux--energy kernel}
\label{subsec:trace-cost-exactification}

We now treat the residual directions that survive pressure, flux, and energy.
After the previous reductions, the only remaining finite-window obstruction lies
in the pressure--flux--energy kernel
\[
        K^{PFE}_W
        =
        K^{PF}_W\cap \ker O^E_W .
\]
A residual in \(K^{PFE}_W\) is invisible to the primitive channels, but it may
still be removable by a selected-time trace correction.  The purpose of this
subsection is to prove the finite-dimensional trace-cost alternative:
\[
        \text{trace-aligned residual}
        \quad\Longrightarrow\quad
        \text{controlled trace cost},
\]
while failure of trace alignment produces a left-singular invisible residual.

\subsection{The trace-defect map}

Let \(W=(n,\ell,\Lambda,\chi,s_*)\) be a finite observation window.  Let
\[
        Y_W
\]
be the cleaned active defect space, and let
\[
        H_W
\]
be the finite-dimensional selected-time trace correction space.  Elements of
\(H_W\) are divergence-free finite-window velocity corrections at time
\(s=s_*\).

The active trace-defect map is
\[
        A_W:H_W\longrightarrow Y_W .
\]
Given \(\xi\in H_W\), \(A_W\xi\) is the cleaned active defect generated by
evolving the selected-time correction through the linearized coarse-grained
Navier--Stokes system around the resolved background.

The adjoint map is
\[
        A_W^*:Y_W^*\longrightarrow H_W,
\]
defined by
\[
        \langle A_W\xi,y\rangle_{Y_W,Y_W^*}
        =
        \langle \xi,A_W^*y\rangle_{H_W}.
\]
A dual direction \(y\in Y_W^*\) with
\[
        A_W^*y=0
\]
is invisible to selected-time trace corrections.

Let
\[
        \iota_W:K^{PFE}_W\hookrightarrow Y_W
\]
denote the inclusion of the pressure--flux--energy residual kernel into the
active defect space.  For
\[
        z\in K^{PFE}_W,
\]
write
\[
        g=\iota_W z\in Y_W
\]
for the active residual.

\subsection{Trace cost and its dual formula}

\begin{definition}[Trace cost]
\label{def:trace-cost}
For \(g\in Y_W\), define the selected-time trace cost by
\[
        \operatorname{Cost}^{tr}_W(g)
        :=
        \inf
        \left\{
            \|\xi\|_{H_W}^2:
            A_W\xi=-g
        \right\},
\]
with the convention
\[
        \operatorname{Cost}^{tr}_W(g)=+\infty
\]
if \(-g\notin \operatorname{Range}A_W\).
\end{definition}

\begin{lemma}[Finite-dimensional trace-cost duality]
\label{lem:trace-cost-duality-clean}
Let \(H\) and \(Y\) be finite-dimensional Hilbert spaces, and let
\[
        A:H\to Y
\]
be linear.  For \(g\in Y\) and \(R\ge0\), the following are equivalent:
\[
        \operatorname{Cost}^{tr}_A(g)\le R^2,
\]
and
\[
        |\langle g,y\rangle|
        \le
        R\|A^*y\|_H
        \qquad
        \text{for every }y\in Y^*.
\]
Equivalently,
\[
        \sqrt{\operatorname{Cost}^{tr}_A(g)}
        =
        \sup_{A^*y\ne0}
        \frac{|\langle g,y\rangle|}
             {\|A^*y\|_H},
\]
with value \(+\infty\) if \(g\notin \operatorname{Range}A\).
\end{lemma}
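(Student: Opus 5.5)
The plan is to reduce the statement to the standard minimum-norm solution formula for a linear map between finite-dimensional Hilbert spaces. I identify \(Y^*\) with \(Y\) through the Riesz map, so that \(\langle g,y\rangle\) is the inner product and \(A^*:Y\to H\) is the Hilbert adjoint. The sign in the definition of the cost is harmless: \(A\xi=-g\) is solvable with \(\|\xi\|=\rho\) iff \(A(-\xi)=g\) is, and \(|\langle -g,y\rangle|=|\langle g,y\rangle|\). So it suffices to compute \(\inf\{\|\xi\|_H^2:A\xi=g\}\).

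\emph{Case \(g\notin\operatorname{Range}A\).} Since \(Y\) is finite-dimensional, \(\operatorname{Range}A\) is closed, and \(Y=\operatorname{Range}A\oplus\ker A^*\) orthogonally. Hence \(g\) has a nonzero component \(y_0\in\ker A^*\), which gives \(\langle g,y_0\rangle=\|y_0\|^2>0\) while \(A^*y_0=0\).
\begin{itemize}
\item The inequality \(|\langle g,y\rangle|\le R\|A^*y\|\) fails at \(y=y_0\) for every \(R\), matching \(\operatorname{Cost}=+\infty\).
\item For the supremum formula, take \(y=y_0+\epsilon w\) with \(A^*w\ne0\). The ratio is then of order \(\|y_0\|^2/(\epsilon\|A^*w\|)\to\infty\) as \(\epsilon\to0\). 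If \(A^*\equiv0\), the supremum is over the empty set and we read it as \(+\infty\), consistent with the stated convention.
\end{itemize}

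\emph{Case \(g\in\operatorname{Range}A\).} The affine solution set \(\xi_p+\ker A\) has a unique minimum-norm element \(\xi_*\in(\ker A)^\perp=\operatorname{Range}A^*\), with \(\operatorname{Cost}=\|\xi_*\|^2\).

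First, \(\sup\le\|\xi_*\|\): for any \(y\),
\[
|\langle g,y\rangle|=|\langle A\xi_*,y\rangle|=|\langle \xi_*,A^*y\rangle|\le\|\xi_*\|\,\|A^*y\|.
\]

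Second, \(\sup\ge\|\xi_*\|\): write \(\xi_*=A^*y_*\), which is possible since \(\xi_*\in\operatorname{Range}A^*\). If \(\xi_*\neq0\), then
\[
\langle g,y_*\rangle=\langle \xi_*,A^*y_*\rangle=\|\xi_*\|^2=\|\xi_*\|\,\|A^*y_*\|,
\]
so the supremum is attained. If \(\xi_*=0\), then \(g=0\) and both sides vanish.

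This gives the formula \(\sqrt{\operatorname{Cost}}=\sup_{A^*y\neq0}|\langle g,y\rangle|/\|A^*y\|\). The equivalence "\(\operatorname{Cost}\le R^2\) iff \(|\langle g,y\rangle|\le R\|A^*y\|\) for all \(y\)" follows directly, noting that \(y\) with \(A^*y=0\) impose no constraint when \(g\in\operatorname{Range}A\), because \(g\perp\ker A^*\).

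The main point needing care is the range case: the minimum-norm solution lies in \(\operatorname{Range}A^*\). This is where orthogonal projection onto \((\ker A)^\perp\) and finite dimensionality (closed ranges) are used. It is also where the identification of \(Y^*\) with \(Y\) must be consistent with the pairing \(\langle A\xi,y\rangle=\langle\xi,A^*y\rangle\) used to define \(A_W^*\). The remaining steps are bookkeeping of the conventions for \(+\infty\) and \(g=0\).
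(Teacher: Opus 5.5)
Your proof is correct and follows essentially the same route as the paper. One direction is the adjoint bound $|\langle g,y\rangle|\le\|\xi\|\,\|A^*y\|$ for any solution. The other writes the minimum-norm solution as $\xi_*=A^*y_*\in\operatorname{Range}A^*$ and pairs it with $g$, and the case $g\notin\operatorname{Range}A$ is detected by a vector of $\ker A^*$ not orthogonal to $g$. The only difference is organizational: you compute the supremum directly (showing it is attained, and genuinely infinite off the range when $A\neq0$) and read off the equivalence, whereas the paper proves the equivalence first and obtains the supremum formula by optimizing over $R$.
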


\begin{proof}
Assume first that \(A\xi=-g\).  Then for every \(y\in Y^*\),
\[
        |\langle g,y\rangle|
        =
        |\langle -A\xi,y\rangle|
        =
        |\langle \xi,A^*y\rangle|
        \le
        \|\xi\|_H\|A^*y\|_H.
\]
Taking the infimum over all such \(\xi\) gives one implication.

Conversely, suppose the dual inequality holds.  If \(y\in\ker A^*\), then
\[
        |\langle g,y\rangle|\le0,
\]
so \(g\perp \ker A^*\).  Since the spaces are finite-dimensional,
\[
        (\ker A^*)^\perp=\operatorname{Range}A.
\]
Thus \(g\in\operatorname{Range}A\), and the trace equation \(A\xi=-g\) is
solvable.

Let \(\xi_0\) be the minimal-norm solution of \(A\xi=-g\).  Then
\[
        \xi_0\in \operatorname{Range}A^*,
\]
so there exists \(y_0\in Y^*\) such that
\[
        A^*y_0=\xi_0.
\]
Using \(A\xi_0=-g\), we get
\[
        \|\xi_0\|_H^2
        =
        \langle \xi_0,A^*y_0\rangle
        =
        \langle A\xi_0,y_0\rangle
        =
        -\langle g,y_0\rangle.
\]
Therefore, by the dual inequality,
\[
        \|\xi_0\|_H^2
        \le
        R\|A^*y_0\|_H
        =
        R\|\xi_0\|_H.
\]
Hence
\[
        \|\xi_0\|_H\le R,
\]
which proves
\[
        \operatorname{Cost}^{tr}_A(g)\le R^2.
\]
The supremum formula follows by optimizing over \(R\).
\end{proof}

\subsection{Trace alignment}

Small singular values of \(A_W\) are not dangerous by themselves.  They are
dangerous only if the residual \(g\) has a non-negligible component in the
corresponding left singular directions.

\begin{definition}[Trace-aligned residual]
\label{def:trace-aligned-residual}
Let \(\rho_W>0\) be the branch-native residual scale in the window \(W\).
A residual \(g\in Y_W\) is called trace-aligned at scale \(\rho_W\) if
\[
        |\langle g,y\rangle|
        \le
        \rho_W\|A_W^*y\|_{H_W}
        \qquad
        \text{for every }y\in Y_W^*.
\]
More generally, \(g\) is called combined trace-aligned at scale \(\rho_W\) if
\[
        |\langle g,y\rangle|
        \le
        \rho_W
        \left(
            \|A_W^*y\|_{H_W}
            +
            \|J^E_Wy\|
            +
            \|J^P_Wy\|
            +
            \|J^F_Wy\|
        \right)
\]
for every \(y\in Y_W^*\).
\end{definition}

\begin{lemma}[Trace alignment controls trace cost]
\label{lem:trace-alignment-controls-cost}
If \(g\in Y_W\) is trace-aligned at scale \(\rho_W\), then
\[
        \operatorname{Cost}^{tr}_W(g)\le \rho_W^2.
\]
If \(g\) is combined trace-aligned and the primitive channels are absorbed in the precise sense that
\[
        \|J^E_Wy\|+\|J^P_Wy\|+\|J^F_Wy\|
        \le
        C_W\|A_W^*y\|_{H_W}
\]
for every dual direction relevant to the projected residual \(g\), then
\[
        \operatorname{Cost}^{tr}_W(g)\le C_W\rho_W^2.
\]
\end{lemma}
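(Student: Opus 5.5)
The plan is to reduce both assertions to the finite-dimensional duality of \Cref{lem:trace-cost-duality-clean}, applied with \(H=H_W\), \(Y=Y_W\) and \(A=A_W\). Since \(H_W\) and \(Y_W\) are finite-dimensional and carry fixed Hilbert norms, that lemma applies verbatim.

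For the first assertion, trace alignment at scale \(\rho_W\) is exactly the dual inequality
\[
|\langle g,y\rangle|\le R\|A_W^*y\|_{H_W}\quad\text{for all }y\in Y_W^*,
\qquad R=\rho_W .
\]
Therefore \Cref{lem:trace-cost-duality-clean} gives \(\operatorname{Cost}^{tr}_W(g)\le\rho_W^2\) directly. The reverse direction of that lemma also supplies the solvability \(-g\in\operatorname{Range}A_W\) needed for the cost to be finite. This is because testing against \(y\in\ker A_W^*\) forces \(g\perp\ker A_W^*=(\operatorname{Range}A_W)^\perp\).

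For the second assertion, I would insert the absorption hypothesis into the combined alignment inequality. This gives
\[
|\langle g,y\rangle|\le\rho_W\bigl(\|A_W^*y\|+\|J^E_Wy\|+\|J^P_Wy\|+\|J^F_Wy\|\bigr)\le(1+C_W)\rho_W\|A_W^*y\|_{H_W}.
\]
Thus \(g\) is trace-aligned at scale \((1+C_W)\rho_W\), and the first part yields \(\operatorname{Cost}^{tr}_W(g)\le(1+C_W)^2\rho_W^2\). This matches the stated bound \(C_W\rho_W^2\) once \(C_W\) is understood as a window-dependent constant that is renamed from line to line, which is the paper's convention. I would state this renaming explicitly.

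The main obstacle is the phrase ``for every dual direction relevant to the projected residual \(g\),'' since the duality lemma requires the inequality for all \(y\in Y_W^*\). I would make this precise as follows. Let \(V\subset Y_W^*\) be the subspace of relevant directions, and assume it satisfies \(Y_W^*=V\oplus V_0\) with \(V_0\subset g^\perp\); for example, \(V_0\) could be the annihilator of the span of \(g\) intersected with a complement. For \(y=v+v_0\) we then have \(\langle g,y\rangle=\langle g,v\rangle\), so it suffices to bound \(\|A_W^*v\|\) by \(\|A_W^*y\|\).

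To arrange this, I would choose \(V_0\) to be \(A_W^*\)-orthogonal to \(V\), which is possible in finite dimensions after quotienting by \(\ker A_W^*\cap g^\perp\). Alternatively, I would restrict to the minimal-norm argument in the proof of \Cref{lem:trace-cost-duality-clean}. That argument only ever tests the inequality on \(\ker A_W^*\) and on the single direction \(y_0\) with \(A_W^*y_0=\xi_0\). So the minimal requirement is that the relevant class contain \(\ker A_W^*\) and \(y_0\), and this is the reading I would adopt in the proof.
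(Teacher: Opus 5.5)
Your proof is correct and follows the paper's route. The first claim is the finite-dimensional trace-cost duality lemma with $A=A_W$ and $R=\rho_W$. The second inserts the absorption bound into combined alignment and applies duality again. Your bound $(1+C_W)^2\rho_W^2$ is the accurate one: the paper's proof silently drops the $+1$ and then writes $C_W\rho_W^2$ where squaring would give $C_W^2\rho_W^2$, so the stated constant must indeed be read as renamed.

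Your $V\oplus V_0$ detour is unnecessary, and the claimed $A_W^*$-orthogonal complement inside $g^\perp$ need not exist; for example, take $A_W=I$ on $\mathbb{R}^2$, $g=(1,0)$, $V=\operatorname{span}\{(1,1)\}$. The minimal-norm reading you finally adopt is sound. The paper's simpler reading also works: if absorption holds for every $y$ with $\langle g,y\rangle\neq0$, then $|\langle g,y\rangle|\le(1+C_W)\rho_W\|A_W^*y\|_{H_W}$ holds trivially for the remaining $y$, so the first part applies directly.
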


\begin{proof}
The first statement is exactly
\cref{lem:trace-cost-duality-clean} with \(A=A_W\) and \(R=\rho_W\).

For the second statement, the displayed primitive-absorption inequality converts combined trace alignment into
\[
        |\langle g,y\rangle|
        \le
        C_W\rho_W\|A_W^*y\|_{H_W}
\]
for all dual directions that can pair with the projected residual.  Equivalently, after projecting to the residual pressure--flux--energy annihilator, the primitive dual channels have already been paid for and the remaining trace equation is tested only against the trace channel.  Applying \cref{lem:trace-cost-duality-clean} gives
\[
        \operatorname{Cost}^{tr}_W(g)
        \le
        C_W\rho_W^2.
\]
\end{proof}

\subsection{The pressure--flux--energy--trace obstruction}

\begin{definition}[PFET trace obstruction space]
\label{def:pfet-kernel-clean}
Let
\[
        \Pi_W^\perp:Y_W\longrightarrow (\operatorname{Range}A_W)^\perp
        =\ker A_W^*
\]
be the orthogonal projection in the finite-dimensional Hilbert structure chosen
on \(Y_W\).  The pressure--flux--energy--trace obstruction space is
\begin{equation}
        \mathcal T_W^{\PFET}
        :=
        \Pi_W^\perp\,\iota_W(K_W^{\PFE})
        \subset (\operatorname{Range}A_W)^\perp .
        \label{eq:pfet-obstruction-space}
\end{equation}
The trace-projectable residual subspace is
\begin{equation}
        K^{\PFE,{\rm proj}}_W
        :=
        \{z\in K_W^{\PFE}:\Pi_W^\perp\iota_Wz=0\}
        =
        \{z\in K_W^{\PFE}:\iota_Wz\in\operatorname{Range}A_W\}.
        \label{eq:pfe-projectable-residuals}
\end{equation}
A residual \(z\in K_W^{\PFE}\) is trace-obstructed when
\(\Pi_W^\perp\iota_Wz\ne0\).
\end{definition}

\begin{remark}[Why the obstruction is a projection]
The condition
\(\exists y\in\ker A_W^*\) with \(\langle\iota_Wz,y\rangle\ne0\)
only says that \(\iota_Wz\) has a nonzero component orthogonal to
\(\operatorname{Range}A_W\).  It does not say that the whole residual is
orthogonal to \(\operatorname{Range}A_W\).  Therefore the correct trace
obstruction is the projected component \(\Pi_W^\perp\iota_Wz\), not the
intersection condition \(\iota_Wz\perp\operatorname{Range}A_W\).
\end{remark}

\begin{proposition}[Finite-window PFE--trace alternative]
\label{prop:pfe-trace-alternative-clean}
For a fixed finite window \(W\), the following are equivalent:
\begin{enumerate}
\item every residual \(g=\iota_Wz\), \(z\in K_W^{\PFE}\), lies in
\(\operatorname{Range}A_W\);
\item \(\mathcal T_W^{\PFET}=\{0\}\);
\item for every \(z\in K_W^{\PFE}\) and every \(y\in\ker A_W^*\),
\[
        \langle \iota_Wz,y\rangle=0.
\]
\end{enumerate}
If these equivalent conditions hold, every pressure--flux--energy invisible
residual has finite trace cost.  If they fail, then there exist
\(z\in K_W^{\PFE}\) and \(y\in\ker A_W^*\) such that
\[
        \langle \iota_Wz,y\rangle\ne0,
\]
and the nonzero vector \(\Pi_W^\perp\iota_Wz\in\mathcal T_W^{\PFET}\) is a
finite-window trace obstruction.
\end{proposition}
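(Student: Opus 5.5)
The proof is pure finite-dimensional linear algebra in the Hilbert structure fixed on \(Y_W\), identifying \(Y_W^*\) with \(Y_W\) through the Riesz map, as is already done in the definition of \(A_W^*\). The single structural fact I will use is the orthogonal decomposition
\[
        Y_W=\operatorname{Range}A_W\oplus\ker A_W^*,
\]
which holds because \((\operatorname{Range}A_W)^\perp=\ker A_W^*\) and \(\operatorname{Range}A_W\) is closed in finite dimensions. This is the same identity that was used in the proof of \Cref{lem:trace-cost-duality-clean}. In particular, \(\Pi_W^\perp\) is the orthogonal projection onto \(\ker A_W^*\), and \(I-\Pi_W^\perp\) is the orthogonal projection onto \(\operatorname{Range}A_W\).

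I will then prove the equivalences in a cycle, taking each \(z\in K_W^{\PFE}\) and setting \(g=\iota_Wz\).

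For (1)\(\Rightarrow\)(2): if \(g\in\operatorname{Range}A_W\), then \(\Pi_W^\perp g=0\). Hence the image \(\mathcal T_W^{\PFET}=\Pi_W^\perp\iota_W(K_W^{\PFE})\) is \(\{0\}\).

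For (2)\(\Rightarrow\)(3): take any \(y\in\ker A_W^*\). Since \(g-\Pi_W^\perp g\in\operatorname{Range}A_W\perp y\), we have \(\langle g,y\rangle=\langle\Pi_W^\perp g,y\rangle=0\).

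For (3)\(\Rightarrow\)(1): condition (3) says \(g\perp\ker A_W^*\), so \(g\in(\ker A_W^*)^\perp=\operatorname{Range}A_W\).

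For the finite-cost consequence, fix \(z\in K^{\PFE}_W\). By linearity of \(A_W\), the condition \(g\in\operatorname{Range}A_W\) gives \(-g\in\operatorname{Range}A_W\). The admissible set in \Cref{def:trace-cost} is therefore nonempty, so \(\operatorname{Cost}^{tr}_W(g)<\infty\). Explicitly, the cost equals the squared norm of the minimal-norm solution, which lies in \(\operatorname{Range}A_W^*\). This can also be read off from the supremum formula in \Cref{lem:trace-cost-duality-clean}: the numerator \(\langle g,y\rangle\) vanishes whenever \(A_W^*y=0\), and on the complement the ratio is bounded because the supremum is over a finite-dimensional quotient.

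For the failure case, negate (3). This produces \(z\in K_W^{\PFE}\) and \(y\in\ker A_W^*\) with \(\langle\iota_Wz,y\rangle\neq0\). By the computation in the step (2)\(\Rightarrow\)(3), \(\langle\Pi_W^\perp\iota_Wz,y\rangle=\langle\iota_Wz,y\rangle\neq0\), so \(\Pi_W^\perp\iota_Wz\) is a nonzero element of \(\mathcal T_W^{\PFET}\), i.e.\ a trace obstruction.

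There is no genuine analytic obstacle here. The only point needing care is bookkeeping of the duality: pairings \(\langle\cdot,\cdot\rangle_{Y_W,Y_W^*}\) must agree with the Hilbert inner product used to define \(\Pi_W^\perp\), so that "annihilated by \(\ker A_W^*\)" and "orthogonal to \(\ker A_W^*\)" coincide. I will make this explicit at the start by fixing the Riesz identification. As noted in the remark preceding the proposition, the obstruction should be stated through the projected component \(\Pi_W^\perp\iota_Wz\), not through orthogonality of the whole residual to \(\operatorname{Range}A_W\).
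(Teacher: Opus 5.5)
Your proposal is correct and follows essentially the same route as the paper: both use the orthogonal decomposition \(Y_W=\operatorname{Range}A_W\oplus\ker A_W^*\) to get the three equivalent conditions, obtain finite trace cost from solvability of \(A_W\xi=-g\), and get the obstruction from a separating vector in \(\ker A_W^*\). Your explicit cycle of implications and your Riesz identification of the duality pairing with the Hilbert inner product just spell out details that the paper's one-paragraph proof leaves implicit.
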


\begin{proof}
Since \(Y_W\) is finite-dimensional,
\[
        Y_W
        =
        \operatorname{Range}A_W
        \oplus
        (\operatorname{Range}A_W)^\perp,
        \qquad
        (\operatorname{Range}A_W)^\perp=\ker A_W^*.
\]
A vector \(g\in Y_W\) belongs to \(\operatorname{Range}A_W\) if and only if
\(\Pi_W^\perp g=0\), equivalently if and only if it pairs to zero with every
\(y\in\ker A_W^*\).  Applying this to all \(g=\iota_Wz\) with
\(z\in K_W^{\PFE}\) proves the equivalence.  If the projected component is zero,
the equation \(A_W\xi=-g\) is solvable and the finite-dimensional trace cost is
finite.  If the projected component is nonzero, a separating vector in
\((\operatorname{Range}A_W)^\perp\) gives a nonzero pairing.
\end{proof}

\subsection{NS-realizable PFET obstruction}

A finite-window trace obstruction is still only formal.  It becomes relevant to
Navier--Stokes only if it is generated by actual dyadic defect packages.

Let
\[
        \mathcal R^{NS,T}_W
\]
be the map sending an NS-realizable dyadic package to its cleaned residual in
\(Y_W\) after the pressure--flux--energy reductions.  Define the NS-realizable
PFET obstruction space by
\begin{equation}
        \mathcal T_{W,\NS}^{\PFET}
        :=
        \Pi_W^\perp\,
        \iota_W
        \left(
          K_W^{\PFE}
          \cap
          \overline{\operatorname{Range}\mathcal R^{NS,T}_W}
        \right)
        \subset (\operatorname{Range}A_W)^\perp .
        \label{eq:ns-pfet-obstruction-space}
\end{equation}
The closure is taken in the finite-dimensional topology.

\begin{theorem}[Finite-window anti-phantom criterion]
\label{thm:finite-window-anti-phantom-clean}
If
\begin{equation}
        \mathcal T_{W,\NS}^{\PFET}=\{0\},
        \label{eq:ns-pfet-zero}
\end{equation}
then every NS-realizable pressure--flux--energy residual is trace-projectable.
Equivalently, no NS-realizable dyadic defect has a nonzero residual component
invisible to pressure, flux, energy, and selected-time trace exactification in
the window \(W\).

If \(\mathcal T_{W,\NS}^{\PFET}\ne\{0\}\), then every NS-realizable
finite-window trace obstruction is represented by a vector in this finite-dimensional
space.
\end{theorem}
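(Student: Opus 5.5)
The statement is a finite-dimensional linear-algebra consequence of \Cref{prop:pfe-trace-alternative-clean}, restricted to the NS-realizable part of the residual kernel. I would first fix notation. Set
\[
        V_W:=K_W^{\PFE}\cap\overline{\operatorname{Range}\mathcal R^{NS,T}_W}.
\]
This is the set of NS-realizable pressure--flux--energy residuals. Because the closure is taken in the finite-dimensional topology and $K_W^{\PFE}$ is a closed subspace, $V_W$ is closed. The maps $\iota_W$ and $\Pi_W^\perp$ are linear, so
\[
        \mathcal T^{\PFET}_{W,\NS}=\Pi_W^\perp\iota_W(V_W).
\]
If $\overline{\operatorname{Range}\mathcal R^{NS,T}_W}$ is only a closed cone rather than a subspace, $\mathcal T^{\PFET}_{W,\NS}$ is still a subset of the finite-dimensional space $\ker A_W^*$. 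That is all the second assertion needs, so I would not have to prove linearity of the realizable range.

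For the first assertion, I would assume \eqref{eq:ns-pfet-zero}. Then $\Pi_W^\perp\iota_Wz=0$ for every $z\in V_W$. Using the orthogonal decomposition $Y_W=\operatorname{Range}A_W\oplus\ker A_W^*$ from the proof of \Cref{prop:pfe-trace-alternative-clean}, this gives $\iota_Wz\in\operatorname{Range}A_W$, so $z\in K^{\PFE,{\rm proj}}_W$ in the sense of \eqref{eq:pfe-projectable-residuals}. This is exactly trace-projectability.

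The ``equivalently'' clause would follow from the same decomposition, read through pairings. A residual $g=\iota_Wz$ has a component invisible to trace exactification exactly when some $y\in\ker A_W^*$ has $\langle g,y\rangle\ne0$, and that happens exactly when $\Pi_W^\perp g\ne0$. I would then record the quantitative consequence. By \Cref{lem:trace-cost-duality-clean}, $\operatorname{Cost}^{tr}_W(\iota_Wz)<\infty$ for every $z\in V_W$, and it equals the squared norm of the minimal-norm solution of $A_W\xi=-\iota_Wz$.

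For the converse assertion, suppose $\mathcal T^{\PFET}_{W,\NS}\ne\{0\}$. I would define an NS-realizable finite-window trace obstruction to be a nonzero vector $\Pi_W^\perp\iota_Wz$ with $z\in V_W$. By definition this vector lies in $\mathcal T^{\PFET}_{W,\NS}$. That set is contained in $(\operatorname{Range}A_W)^\perp\subset Y_W$, which is finite-dimensional. For any such $z$, taking $y:=\Pi_W^\perp\iota_Wz$ gives a separating dual direction, since
\[
        \langle\iota_Wz,y\rangle=\|y\|^2\ne0.
\]

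The only point I expect to need care is the closure. A realizable limit of normalized packages must still lie in $K_W^{\PFE}$, and it must not leave the domain on which $\Pi_W^\perp\iota_W$ is defined. Both hold because every space involved is finite-dimensional and every map is linear, hence continuous. Those facts also let the obstruction pass to limits: if $z_k\to z$ in $V_W$, then
\[
        \Pi_W^\perp\iota_Wz_k\to\Pi_W^\perp\iota_Wz.
\]
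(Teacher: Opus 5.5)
Your proposal is correct and follows essentially the same route as the paper, which also just unpacks $\mathcal T^{\PFET}_{W,\NS}=\Pi_W^\perp\iota_W\bigl(K_W^{\PFE}\cap\overline{\operatorname{Range}\mathcal R^{NS,T}_W}\bigr)$ and uses $Y_W=\operatorname{Range}A_W\oplus\ker A_W^*$ to identify vanishing of the projected component with trace-projectability. Your additional remarks are correct refinements of the paper's terse argument: the explicit separating vector $y=\Pi_W^\perp\iota_Wz$, the finite trace cost via the minimal-norm solution, and the observation that the obstruction set need only be a subset of the finite-dimensional space $\ker A_W^*$ when the realizable closure is merely a cone.
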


\begin{proof}
An NS-realizable residual surviving pressure, flux, and energy belongs to
\[
K_W^{\PFE}\cap\overline{\operatorname{Range}\mathcal R_W^{NS,T}}.
\]
Its trace obstruction is exactly the projected component
\(\Pi_W^\perp\iota_Wz\).  Thus vanishing of
\(\mathcal T_{W,\NS}^{\PFET}\) is equivalent to trace projectability of every
such NS-realizable residual.  If the obstruction space is nonzero, all
NS-realizable finite-window trace obstruction components lie in it by
definition.
\end{proof}

\subsection{Moving-window trace failure}

The previous results are fixed-window statements.  A singular branch may still
escape through a moving sequence of windows whose trace maps have degenerating
singular values.  We isolate the precise failure object.

\begin{definition}[Residual left-singular invisible cascade]
\label{def:residual-left-singular-cascade-clean}
A residual left-singular invisible cascade is a sequence
\[
        (W_n,g_n,y_n,\rho_n)
\]
such that:

\begin{enumerate}
\item \(W_n=(n,\ell_n,\Lambda_n,\chi_n,s_n)\) is a moving sequence of finite
windows;

\item \(g_n\in Y_{W_n}\) is an NS-realizable residual surviving the
pressure--flux--energy filters;

\item \(\rho_n>0\) is the branch-native residual scale and
\(\|g_n\|_{Y_{W_n}}\le C\rho_n\);

\item \(y_n\in Y_{W_n}^*\) is normalized:
\[
        \|y_n\|_{Y_{W_n}^*}=1;
\]

\item the combined dual observations vanish:
\[
        \|A_{W_n}^*y_n\|_{H_{W_n}}
        +
        \|J^E_{W_n}y_n\|
        +
        \|J^P_{W_n}y_n\|
        +
        \|J^F_{W_n}y_n\|
        \to0;
\]

\item the residual pairs nontrivially at the normalized residual scale,
\begin{equation}
        |\langle g_n,y_n\rangle|
        \ge
        c_0\rho_n
        \label{eq:strong-left-singular-pairing}
\end{equation}
for some \(c_0>0\), or at least the relative trace-alignment failure holds:
\begin{equation}
        \frac{|\langle g_n,y_n\rangle|}
        {\rho_n\left(
            \|A_{W_n}^*y_n\|_{H_{W_n}}
            +\|J^E_{W_n}y_n\|
            +\|J^P_{W_n}y_n\|
            +\|J^F_{W_n}y_n\|
        \right)}
        \longrightarrow\infty.
        \label{eq:relative-left-singular-failure}
\end{equation}
\end{enumerate}
The first pairing condition gives a strong invisible cascade; the second gives
a relative left-singular failure.  The strong conclusion requires a nontrivial
normalized residual pairing and is not automatic from failure of a uniform
trace-cost estimate alone.
\end{definition}

\begin{theorem}[Trace-cost closure versus left-singular failure]
\label{thm:trace-cost-closure-left-singular-clean}
Let \(W_n\) be a moving sequence of finite windows, and let
\[
        g_n\in Y_{W_n}
\]
be NS-realizable residuals surviving the pressure--flux--energy reductions.
Assume that
\begin{equation}
        \|g_n\|_{Y_{W_n}}\le C_g\rho_n
        \label{eq:residual-scale-bound}
\end{equation}
for a branch-native scale \(\rho_n>0\).

If there exists \(C<\infty\) such that
\begin{align}
        |\langle g_n,y\rangle|
        \le{}&
        C\rho_n
        \left(
            \|A_{W_n}^*y\|_{H_{W_n}}
            +
            \|J^E_{W_n}y\|
            +
            \|J^P_{W_n}y\|
            +
            \|J^F_{W_n}y\|
        \right)
        \label{eq:combined-trace-alignment-moving}
\end{align}
for every \(y\in Y_{W_n}^*\), then \(g_n\) has controlled combined trace cost.
In particular, after the primitive channels have been absorbed,
\begin{equation}
        \operatorname{Cost}^{tr}_{W_n}(g_n)
        \le
        C\rho_n^2.
        \label{eq:moving-trace-cost-bound}
\end{equation}

Conversely, if no estimate of the form
\eqref{eq:combined-trace-alignment-moving} holds along a subsequence, then,
after passing to a further subsequence, there exist \(y_n\in Y_{W_n}^*\) with
\(\|y_n\|=1\) for which the combined dual observations tend to zero and the
relative failure \eqref{eq:relative-left-singular-failure} holds.  If in
addition
\[
        \limsup_n\frac{|\langle g_n,y_n\rangle|}{\rho_n}>0,
\]
then a strong residual left-singular invisible cascade satisfying
\eqref{eq:strong-left-singular-pairing} exists.
\end{theorem}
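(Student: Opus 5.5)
The proof splits into a forward direction, which reduces to the finite-dimensional duality already established, and a converse, which is a negation-of-uniformity argument on normalized dual vectors.

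\emph{Forward direction.} Assume \eqref{eq:combined-trace-alignment-moving} holds with a constant \(C\) uniform in \(n\). Then each \(g_n\) is combined trace-aligned at scale \(C\rho_n\) in the sense of \Cref{def:trace-aligned-residual}.
\begin{enumerate}
\item Invoke the primitive-absorption hypothesis that ``after the primitive channels have been absorbed'' refers to. That is, on the dual directions that can pair with the projected residual,
\[
\|J^E_{W_n}y\|+\|J^P_{W_n}y\|+\|J^F_{W_n}y\|\le C_{W_n}\|A_{W_n}^*y\|_{H_{W_n}}.
\]
\item Apply the second half of \Cref{lem:trace-alignment-controls-cost}. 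This reduces the combined bound to a pure trace bound \(|\langle g_n,y\rangle|\le C'\rho_n\|A_{W_n}^*y\|\).
\item Conclude with \Cref{lem:trace-cost-duality-clean}, taking \(R=C'\rho_n\).
\end{enumerate}
This gives \eqref{eq:moving-trace-cost-bound}, with the constant absorbing \(C_{W_n}\). I would state explicitly that this requires \(C_{W_n}\) to be bounded uniformly in \(n\) on the window class. Without that, the constant in \eqref{eq:moving-trace-cost-bound} is not uniform, and this caveat is built into the phrase ``after absorption.''

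\emph{Converse.} Suppose no uniform \(C\) works along a subsequence. Then for every \(k\) there are infinitely many \(n\), and some \(y\in Y_{W_n}^*\), for which \eqref{eq:combined-trace-alignment-moving} fails with constant \(k\).
\begin{enumerate}
\item Diagonalize to obtain a subsequence \(n_k\) and vectors \(y_{n_k}\) with
\[
|\langle g_{n_k},y_{n_k}\rangle|>k\,\rho_{n_k}\,\mathcal D_{n_k}(y_{n_k}),
\]
where \(\mathcal D_n(y)\) denotes the sum of the four dual observation norms.
\item Normalize: both sides are homogeneous of degree one in \(y\), so we may take \(\|y_{n_k}\|=1\).
\item If \(\mathcal D_{n_k}(y_{n_k})=0\), the relative ratio is \(+\infty\) by convention, and the vanishing of the dual observations is immediate.
\item Otherwise, use \eqref{eq:residual-scale-bound} and Cauchy--Schwarz: \(|\langle g_n,y_n\rangle|\le\|g_n\|\le C_g\rho_n\). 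Hence
\[
\mathcal D_{n_k}(y_{n_k})<\frac{C_g}{k}\to0,
\]
so the combined dual observations tend to zero, and the ratio in \eqref{eq:relative-left-singular-failure} exceeds \(k\to\infty\).
\end{enumerate}
Finally, if \(\limsup|\langle g_n,y_n\rangle|/\rho_n>0\), pass to a further subsequence on which this ratio is bounded below by some \(c_0>0\). That yields \eqref{eq:strong-left-singular-pairing}, and the remaining items of \Cref{def:residual-left-singular-cascade-clean} are inherited from the hypotheses.

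\emph{Main obstacle.} The delicate step is the forward direction's reliance on primitive absorption. The statement only asserts the trace-cost bound ``after the primitive channels have been absorbed,'' so I would make that conditional explicit rather than derive it. The converse is routine: it only uses homogeneity and the a priori bound \(\|g_n\|\lesssim\rho_n\) to force the dual observations to vanish.
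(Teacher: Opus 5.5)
Your proposal is correct and follows the paper's proof essentially step for step. The forward direction is the primitive-absorption clause of the trace-alignment lemma followed by finite-dimensional trace-cost duality. The converse takes normalized witnesses $y_n$ of failed alignment and uses $|\langle g_n,y_n\rangle|\le \|g_n\|_{Y_{W_n}}\le C_g\rho_n$ to force the combined dual observations below $C_g/k$. Two of your additions make the argument more precise than the paper's, which uses the index $n$ itself as the failure constant: you extract indices $n_k$ with failure constant $k$ explicitly, and you note that the absorption constants $C_{W_n}$ must be uniformly bounded for the cost bound to be uniform.
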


\begin{proof}
The first statement is \Cref{lem:trace-alignment-controls-cost}.  Once the
primitive channels have been absorbed on the residual kernel,
\eqref{eq:combined-trace-alignment-moving} reduces to
\[
        |\langle g_n,y\rangle|
        \le
        C\rho_n\|A_{W_n}^*y\|_{H_{W_n}},
\]
and finite-dimensional trace-cost duality gives
\eqref{eq:moving-trace-cost-bound}.

For the converse, suppose \eqref{eq:combined-trace-alignment-moving} fails
along a subsequence.  Then for each index in that subsequence there exists
\(y_n\in Y_{W_n}^*\), normalized by \(\|y_n\|=1\), such that
\begin{align}
        |\langle g_n,y_n\rangle|
        >{}&
        n\rho_n
        \left(
            \|A_{W_n}^*y_n\|_{H_{W_n}}
            +
            \|J^E_{W_n}y_n\|
            +
            \|J^P_{W_n}y_n\|
            +
            \|J^F_{W_n}y_n\|
        \right).
        \label{eq:failed-trace-alignment-witness}
\end{align}
By \eqref{eq:residual-scale-bound},
\[
        |\langle g_n,y_n\rangle|\le C_g\rho_n.
\]
Combining this with \eqref{eq:failed-trace-alignment-witness} gives
\[
        \|A_{W_n}^*y_n\|_{H_{W_n}}
        +
        \|J^E_{W_n}y_n\|
        +
        \|J^P_{W_n}y_n\|
        +
        \|J^F_{W_n}y_n\|
        \le
        \frac{C_g}{n}.
\]
Thus all combined dual observations tend to zero, and
\eqref{eq:failed-trace-alignment-witness} gives the relative failure
\eqref{eq:relative-left-singular-failure}.  The additional limsup assumption is
exactly what is needed to pass to a subsequence with the strong lower bound
\eqref{eq:strong-left-singular-pairing}.
\end{proof}

\begin{corollary}[Output of the trace-cost step]
\label{cor:trace-cost-step-output}
After pressure, flux, and energy have been removed, exactly one of the following
holds in a controlled finite-window family:

\begin{enumerate}
\item every NS-realizable residual is trace-aligned and hence has controlled
selected-time trace cost;

\item the NS-realizable PFET obstruction space
\(\mathcal T_{W,\NS}^{\PFET}\) is nonzero;

\item along a moving sequence of windows, there exists a residual left-singular
invisible cascade, at least in the relative sense of
\eqref{eq:relative-left-singular-failure}.
\end{enumerate}
\end{corollary}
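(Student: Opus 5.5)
The plan is a case analysis that chains the two preceding results, \Cref{thm:finite-window-anti-phantom-clean} and \Cref{thm:trace-cost-closure-left-singular-clean}. Fix a controlled finite-window family \(\{W_n\}\) and the NS-realizable residuals \(g_n=\iota_{W_n}z_n\), with \(z_n\in K^{\PFE}_{W_n}\cap\overline{\operatorname{Range}\mathcal R^{\NS,T}_{W_n}}\). Each \(g_n\) satisfies the branch-native bound \(\|g_n\|_{Y_{W_n}}\le C_g\rho_n\) of \eqref{eq:residual-scale-bound}. Because the statement claims \emph{exactly one} alternative, we read it as an ordered trichotomy: alternative (ii) applies when it holds, (iii) applies when (ii) fails but uniform alignment fails, and (i) applies otherwise. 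We will make this ordering explicit so that the cases are disjoint.

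\textbf{Fixed-window step.} Suppose first that \(\mathcal T^{\PFET}_{W_n,\NS}\ne\{0\}\) for some window in the family. Then \Cref{thm:finite-window-anti-phantom-clean} produces an NS-realizable residual whose projected component \(\Pi^\perp_{W_n}\iota_{W_n}z\) is nonzero. By \Cref{prop:pfe-trace-alternative-clean}, this residual lies outside \(\operatorname{Range}A_{W_n}\), so its trace cost is infinite. This is alternative (ii).

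\textbf{Trace-projectable case.} Otherwise \(\mathcal T^{\PFET}_{W_n,\NS}=\{0\}\) for every \(n\). By the same theorem, every \(g_n\) lies in \(\operatorname{Range}A_{W_n}\) and has finite cost at each fixed window. The remaining question is whether these costs are controlled uniformly. We test the combined alignment inequality \eqref{eq:combined-trace-alignment-moving} with a single constant \(C\).
\begin{itemize}
\item If \eqref{eq:combined-trace-alignment-moving} holds uniformly, the first half of \Cref{thm:trace-cost-closure-left-singular-clean}, together with \Cref{lem:trace-alignment-controls-cost} after the primitive channels are absorbed, gives \(\operatorname{Cost}^{tr}_{W_n}(g_n)\le C\rho_n^2\). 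This is alternative (i), provided ``trace-aligned'' is read in this combined, absorbed sense.
\item If it fails along a subsequence, the converse half of \Cref{thm:trace-cost-closure-left-singular-clean} supplies normalized \(y_n\) whose combined dual observations tend to zero, with the relative failure \eqref{eq:relative-left-singular-failure}. This is a relative residual left-singular invisible cascade, which is alternative (iii).
\end{itemize}

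The main obstacle is the first bullet, specifically the absorption hypothesis. Passing from combined alignment to a pure trace-cost bound requires \Cref{lem:trace-alignment-controls-cost}'s condition that \(J^E,J^P,J^F\) be dominated by \(A^*_{W_n}\) on the dual directions pairing with \(g_n\). I would first try to obtain this from the fact that \(g_n\in K^{\PFE}_{W_n}\) is annihilated by the primitive observations. That should allow restricting to \(y\) in the annihilator of the primitive ranges, where the \(J\)-terms can be removed. If this restriction cannot be justified in general, I would fold the absorption into the meaning of ``controlled finite-window family'' rather than prove it.

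Exclusivity across alternatives (ii) and (iii) also needs care, since a single window can have a nonzero obstruction while the uniform estimate also fails. This is why the alternatives are checked in the fixed order above rather than treated as independent.
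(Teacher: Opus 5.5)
Your proposal is correct and follows the route the paper implicitly intends: the corollary has no separate proof and is the conjunction of \Cref{thm:finite-window-anti-phantom-clean} with the two halves of \Cref{thm:trace-cost-closure-left-singular-clean}. Both of your caveats are genuine features of the statement rather than gaps in your argument, since (ii) and (iii) can hold simultaneously (so ``exactly one'' is only true in your prioritized sense), and the passage from combined alignment to a pure trace-cost bound really does require the absorption inequality, which the paper likewise simply assumes (``after the primitive channels have been absorbed''). Take the fallback rather than your first idea for absorption, because \Cref{lem:trace-cost-duality-clean} needs the bound for every \(y\in Y_{W_n}^*\), and adding to \(y\) a functional vanishing on \(K^{\PFE}_{W_n}\) leaves \(\langle g_n,y\rangle\) unchanged while possibly making \(\|A_{W_n}^*y\|\) much smaller, so a bound tested only on the subspace where the \(J\)-terms vanish does not control \(\operatorname{Cost}^{tr}_{W_n}(g_n)\).
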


\begin{remark}
This is the final finite-window obstruction.  A small singular value of
\(A_W\) is not by itself dangerous.  It becomes dangerous only if an
NS-realizable residual survives pressure, flux, and energy, and then pairs
nontrivially with the corresponding nearly trace-invisible left singular
direction.
\end{remark}

\section{Moving-window growth control and the defect-cascade alternative}
\label{sec:moving-window-alternative}

We now assemble the finite-window hierarchy into a scale-by-scale alternative.
The result is a conditional reduction theorem.  It does not prove unconditional
regularity and it does not construct a singular solution.  Its purpose is to
state precisely what must persist if a point fails to enter the
Caffarelli--Kohn--Nirenberg smallness regime at all sufficiently small dyadic
scales.

A technical point is important.  A defect extraction theorem may produce
nontrivial defects only on a subsequence of dyadic scales.  One cannot then sum
one-step inequalities \(B_n-B_{n+1}\) over all integers unless the inequalities
hold for all sufficiently large \(n\).  The formulation below therefore uses an
extraction index set and a matching selected-interval budget.  The full-sequence
case is recovered by taking the extraction set to be all large integers.

\subsection{Non-CKN dyadic branches}

Let \((u,p)\) be a suitable weak solution in \(Q_1\), and fix
\[
        z_0=(0,0).
\]
Let
\[
        r_n=2^{-n},
        \qquad
        \Psi(r)=C(r)+D(r).
\]
Let \(\varepsilon_{\rm CKN}>0\) be a Caffarelli--Kohn--Nirenberg smallness
constant.

\begin{definition}[Non-CKN dyadic branch]
\label{def:non-ckn-branch}
We say that \(z_0\) has a non-CKN dyadic branch if there exists \(n_0\) such
that
\begin{equation}
        \Psi(r_n)>\varepsilon_{\rm CKN}
        \qquad
        \text{for every } n\ge n_0 .
        \label{eq:non-ckn-branch}
\end{equation}
If no such branch exists, then a CKN scale occurs and \(z_0\) is regular.
\end{definition}

For each dyadic scale \(r_n\), choose a coarse-graining length \(\ell_n\), and
construct the NS-realizable package
\[
        \mathfrak D_{n,\ell_n}
        =
        (U_{n,\ell_n},P_{n,\ell_n};P^{\act}_{n,\ell_n},P^{\har}_{n,\ell_n},R_{n,\ell_n},\Pi_{n,\ell_n}).
\]
Let
\[
        W_n=(n,\ell_n,\Lambda_n,\chi_n,s_n)
\]
be a moving finite observation window and set \(Y_n:=Y_{W_n}\).  The cleaned
finite-window projection of the dyadic package is
\[
        d_n=[\Pi_{W_n}\mathfrak D_{n,\ell_n}]\in Y_n .
\]

\begin{assumption}[Dyadic defect extraction on an index set]
\label{ass:dyadic-defect-extraction-final}
There exist \(c_0>0\), an infinite increasing index set
\[
        I=\{n_k:k\ge k_0\}\subset\{n\ge n_0\},
\]
windows \(W_{n_k}\), and NS-realizable cleaned defect directions
\(d_{n_k}\in Y_{n_k}\) such that
\begin{equation}
        \|d_{n_k}\|_{Y_{n_k}}\ge c_0
        \qquad
        \text{for every }k\ge k_0.
        \label{eq:extracted-defect-lower-bound}
\end{equation}
We write
\[
        \widehat d_{n_k}
        :=
        \frac{d_{n_k}}{\|d_{n_k}\|_{Y_{n_k}}}.
\]
The full-extraction case means \(I\) contains every sufficiently large integer.
\end{assumption}

\begin{remark}[Subsequence extraction and summation]
If only a sparse subsequence \(I\) is available, the later depletion estimate
must also be formulated on the same selected intervals.  A one-step inequality
for \(B_n-B_{n+1}\) on selected indices alone does not telescope over the
subsequence unless additional monotonicity or interval-depletion information is
proved.
\end{remark}

\subsection{Combined observed strength}

For a moving window \(W_n\), define the combined primal observed strength by
\begin{equation}
        \mathsf O_n(d)
        :=
        \|O^P_nd\|+
        \|O^F_nd\|+
        \|O^E_nd\|+
        \|O^T_nd\|.
        \label{eq:combined-primal-strength}
\end{equation}
The corresponding dual observed strength is
\begin{equation}
        \mathsf O_n^*(y)
        :=
        \|J^P_ny\|+
        \|J^F_ny\|+
        \|J^E_ny\|+
        \|A_n^*y\|_{H_n}.
        \label{eq:combined-dual-strength}
\end{equation}
Let \(Y_n^{\NS}\subset Y_n\) denote the finite-dimensional closure of
NS-realizable cleaned defect directions in the window \(W_n\).  The branch
observability constant is
\begin{equation}
        M_n
        :=
        \sup\left\{
        \frac{\|d\|_{Y_n}}{\mathsf O_n(d)}:
        0\ne d\in Y_n^{\NS}
        \right\},
        \label{eq:branch-observability-constant}
\end{equation}
with the convention \(M_n=+\infty\) if the denominator vanishes for some
nonzero \(d\in Y_n^{\NS}\).  Whenever \(M_n<\infty\), every normalized
NS-realizable defect satisfies
\begin{equation}
        \mathsf O_n(\widehat d)\ge M_n^{-1}.
        \label{eq:observability-lower-bound}
\end{equation}

\begin{definition}[Moving-window invisible defect cascade]
\label{def:moving-window-invisible-cascade-final}
A moving-window invisible defect cascade is a sequence
\[
        (W_{n_k},d_{n_k},y_{n_k})
\]
along an extraction index set \(I=\{n_k\}\) such that:
\begin{enumerate}
\item \(d_{n_k}\in Y_{n_k}\) is the cleaned projection of an NS-realizable dyadic
package and \(\|d_{n_k}\|_{Y_{n_k}}\ge c_0>0\);
\item \(y_{n_k}\in Y_{n_k}^*\) is normalized:
\[
        \|y_{n_k}\|_{Y_{n_k}^*}=1;
\]
\item the combined dual observations vanish:
\[
        \mathsf O_{n_k}^*(y_{n_k})\to0;
\]
\item the NS-derived defect still pairs nontrivially with this invisible dual
direction:
\[
        |\langle d_{n_k},y_{n_k}\rangle|\ge c_1>0
\]
for some \(c_1>0\).  A normalized-scale variant replaces the last bound by
\(|\langle d_{n_k},y_{n_k}\rangle|\ge c_1\|d_{n_k}\|_{Y_{n_k}}\).
\end{enumerate}
After the pressure--flux--energy reductions, this object may be replaced by the
residual left-singular cascade of \Cref{def:residual-left-singular-cascade-clean}.
\end{definition}

\subsection{Observable depletion on selected intervals}

The finite-window hierarchy becomes useful across scales only if observed
defects consume a finite dyadic budget on the same index set on which the
defects are extracted.

\begin{assumption}[Observable depletion on the extraction set]
\label{ass:observable-depletion-final}
Let \(I=\{n_k:k\ge k_0\}\) be the extraction index set from
\Cref{ass:dyadic-defect-extraction-final}.  There exist a nonnegative selected
budget \(\mathscr B_k\), weights \(\lambda_{n_k}>0\), an exponent \(q\ge1\), and
errors \(e_k\ge0\) such that
\begin{equation}
        \sum_{k=k_0}^{\infty}e_k<\infty,
        \label{eq:selected-errors-summable}
\end{equation}
and every extracted normalized NS-realizable defect satisfies
\begin{equation}
        \mathscr B_k-\mathscr B_{k+1}
        \ge
        c\lambda_{n_k}\mathsf O_{n_k}(\widehat d_{n_k})^q-e_k.
        \label{eq:selected-observable-depletion}
\end{equation}
In the full-extraction case one may take \(\mathscr B_k=B_{n_k}\) with
\(n_k=k\) after reindexing.  If extraction is genuinely sparse, then
\eqref{eq:selected-observable-depletion} is an interval-depletion hypothesis
and must be proved directly; it is not a consequence of one-step inequalities
on the selected indices alone.
\end{assumption}

\subsection{Effective moving-window observability}

\begin{definition}[Depletion-effective observability]
\label{def:depletion-effective-observability}
A moving sequence of windows is depletion-effective on the extraction set
\(I=\{n_k\}\) if \(M_{n_k}<\infty\) for all sufficiently large \(k\) and
\begin{equation}
        \sum_{k=k_0}^{\infty}\lambda_{n_k}M_{n_k}^{-q}=+\infty.
        \label{eq:selected-effective-observability}
\end{equation}
If \(M_{n_k}=+\infty\) along a subsequence, or if the series in
\eqref{eq:selected-effective-observability} converges, the moving-window
observability is called non-effective on the extracted branch.
\end{definition}

\subsection{Growth profiles and logarithmically admissible windows}

A typical controlled growth profile has the form
\begin{equation}
        M_n\le C N_n^a\exp(CN_n^b),
        \label{eq:growth-profile}
\end{equation}
where \(N_n\) is the effective size of the active window \(\Lambda_n\).

\begin{definition}[Logarithmically admissible windows]
\label{def:log-admissible-windows}
Assume the observability constants satisfy \eqref{eq:growth-profile}.  A moving
window family is logarithmically admissible relative to the extraction set and
the depletion weights if
\begin{equation}
        \sum_{k=k_0}^{\infty}
        \lambda_{n_k}N_{n_k}^{-aq}\exp(-qC N_{n_k}^b)
        =+
        \infty.
        \label{eq:log-admissible}
\end{equation}
\end{definition}

\begin{lemma}[A simple logarithmic admissibility criterion]
\label{lem:log-admissibility-criterion}
Suppose \(I\) contains all sufficiently large integers,
\[
        M_n\le C N_n^a\exp(CN_n^b),
        \qquad
        \lambda_n\ge c(n+2)^{-s}
\]
for some \(s<1\).  If
\begin{equation}
        N_n^b\le \alpha\log(n+2)
        \label{eq:log-window-choice}
\end{equation}
with \(\alpha>0\) small enough that
\begin{equation}
        s+qC\alpha<1,
        \label{eq:log-admissibility-condition}
\end{equation}
then
\[
        \sum_n\lambda_nM_n^{-q}=+\infty.
\]
For a sparse extraction set, the same conclusion holds with the sum restricted
to \(I\) whenever the corresponding restricted series diverges.
\end{lemma}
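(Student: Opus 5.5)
The plan is to turn the growth bound into an explicit lower bound on each term \(\lambda_nM_n^{-q}\), and then compare the resulting series with a divergent \(p\)-series. By the growth profile and the window choice,
\[
        M_n\le C N_n^a\exp(CN_n^b)\le C N_n^a\exp\bigl(C\alpha\log(n+2)\bigr)=C N_n^a(n+2)^{C\alpha}.
\]
Raising to the power \(-q\), where \(q\ge1\) is fixed, gives
\[
        M_n^{-q}\ge C^{-q}N_n^{-aq}(n+2)^{-qC\alpha}.
\]
The weight hypothesis then yields
\[
        \lambda_nM_n^{-q}\ge cC^{-q}N_n^{-aq}(n+2)^{-s-qC\alpha}.
\]

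Next we control the polynomial factor \(N_n^{-aq}\). From the window choice, \(N_n\le(\alpha\log(n+2))^{1/b}\) (we may take \(b>0\) and \(a\ge0\), which is the meaningful case). Hence
\[
        N_n^{-aq}\ge(\alpha\log(n+2))^{-aq/b}.
\]
This is only a logarithmic loss. If \(N_n\) is bounded below by \(1\), as for a nonempty active window, no further care is needed.

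Combining the two estimates, every sufficiently large \(n\) satisfies
\[
        \lambda_nM_n^{-q}\ge c'(n+2)^{-\beta}(\log(n+2))^{-\gamma},
        \qquad \beta:=s+qC\alpha<1,\ \gamma:=aq/b\ge0,
\]
with \(c'>0\). The strict inequality \(\beta<1\) comes from the admissibility condition. Fix any \(\beta'\) with \(\beta<\beta'<1\). Since \((\log(n+2))^{\gamma}\le(n+2)^{\beta'-\beta}\) for large \(n\), each term is bounded below by \(c'(n+2)^{-\beta'}\). The series \(\sum_n (n+2)^{-\beta'}\) diverges, so by comparison \(\sum_n\lambda_nM_n^{-q}=+\infty\).

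The sparse clause is immediate, since it merely assumes divergence of the restricted series. The only genuine point of care is that the logarithmic loss from \(N_n^{-aq}\) must be absorbed. This is exactly where the strict inequality \(s+qC\alpha<1\) is needed, rather than equality. One also has to match the constant \(C\) in the exponent to the admissibility condition, which holds up to renaming constants.
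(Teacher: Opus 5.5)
Your proof is correct and follows essentially the same route as the paper: you use the growth profile and the window choice $N_n^b\le\alpha\log(n+2)$ to get $\lambda_nM_n^{-q}\gtrsim (n+2)^{-s-qC\alpha}(\log(n+2))^{-aq/b}$, and then conclude divergence from $s+qC\alpha<1$. Your explicit choice of $\beta'\in(\beta,1)$ to absorb the logarithmic factor makes precise the final comparison step, which the paper states without detail.
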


\begin{proof}
From the growth bound,
\[
        M_n^{-q}\ge C^{-q}N_n^{-aq}\exp(-qC N_n^b).
\]
Using \eqref{eq:log-window-choice},
\[
        \exp(-qC N_n^b)\ge(n+2)^{-qC\alpha},
\]
and \(N_n\) grows at most logarithmically.  Therefore
\[
        \lambda_nM_n^{-q}
        \ge
        c(n+2)^{-s-qC\alpha}(\log(n+2))^{-aq/b}.
\]
The series diverges when \eqref{eq:log-admissibility-condition} holds.  The
restricted-index statement is exactly the same estimate with the sum taken over
\(n\in I\).
\end{proof}

\subsection{The main moving-window alternative}

\begin{theorem}[Moving-window defect-cascade alternative]
\label{thm:moving-window-defect-cascade-alternative}
Let \((u,p)\) be a suitable weak solution in \(Q_1\), and let \(z_0=(0,0)\).
Assume:
\begin{enumerate}
\item dyadic defect extraction holds on an index set \(I=\{n_k\}\), in the sense
of \Cref{ass:dyadic-defect-extraction-final};
\item observable depletion holds on the same index set, in the sense of
\Cref{ass:observable-depletion-final};
\item the finite-window hierarchy has been applied, so that failure of combined
observability along the extracted windows produces either a finite-window
NS-realizable trace obstruction or a residual left-singular invisible cascade.
\end{enumerate}
Suppose \(z_0\) has a non-CKN dyadic branch.  Then at least one of the following
alternatives holds.

\smallskip
\noindent\textup{(i) Non-effective moving-window observability.}
The extracted moving-window constants fail to be depletion-effective:
\[
        M_{n_k}=+\infty\quad\text{along a subsequence},
\]
or
\begin{equation}
        \sum_{k=k_0}^{\infty}\lambda_{n_k}M_{n_k}^{-q}<\infty.
        \label{eq:noneffective-observability-alt}
\end{equation}

\smallskip
\noindent\textup{(ii) NS-realizable invisible defect cascade.}
There exists a moving-window invisible defect cascade along the extracted
windows.  After the pressure--flux--energy reductions, this cascade may be taken
to be a residual left-singular invisible defect cascade, possibly in the
relative sense of \eqref{eq:relative-left-singular-failure}.

Consequently, in any controlled regime where observability is
depletion-effective on the extraction set and no NS-realizable invisible defect
cascade exists, a CKN scale must occur.  Hence \(z_0\) is regular.
\end{theorem}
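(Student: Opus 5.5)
The argument is by contradiction, and is essentially a summation argument combined with the finite-window hierarchy. Assume $z_0$ has a non-CKN dyadic branch, and assume that alternative (i) fails. Then $M_{n_k}<\infty$ for all large $k$, and the series $\sum_k\lambda_{n_k}M_{n_k}^{-q}$ diverges, i.e.\ the windows are depletion-effective in the sense of \Cref{def:depletion-effective-observability}. We want to show that this forces alternative (ii). We also record the consequence: if (ii) is additionally excluded, we reach a contradiction, so no non-CKN branch exists, and \Cref{def:non-ckn-branch} gives a CKN scale and regularity of $z_0$.

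The first step uses \Cref{ass:dyadic-defect-extraction-final}. It provides normalized NS-realizable directions $\widehat d_{n_k}\in Y^{\NS}_{n_k}$. Since $M_{n_k}<\infty$, the lower bound \eqref{eq:observability-lower-bound} gives $\mathsf O_{n_k}(\widehat d_{n_k})\ge M_{n_k}^{-1}$. We insert this into the selected depletion inequality \eqref{eq:selected-observable-depletion} and sum from $k_0$ to $K$:
\[
  \mathscr B_{k_0}-\mathscr B_{K+1}\ge c\sum_{k=k_0}^{K}\lambda_{n_k}M_{n_k}^{-q}-\sum_{k=k_0}^{K}e_k .
\]
The left side is at most $\mathscr B_{k_0}<\infty$, because the selected budget is nonnegative; finiteness at the initial index is implicit in \Cref{ass:observable-depletion-final}. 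The error sum is bounded by \eqref{eq:selected-errors-summable}. The first sum on the right diverges. This is a contradiction. This is exactly the telescoping argument of \Cref{cor:pf-observability-gives-depletion}, transplanted to the extraction index set. It is legitimate precisely because the depletion hypothesis is posed on the same selected intervals, as stressed in the remark on subsequence extraction.

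It follows that, under a non-CKN branch, observability cannot be depletion-effective; that is, (i) must hold. The remaining task is to connect this to (ii) through hypothesis 3. Failure of depletion-effectiveness means one of two things: either $M_{n_k}=+\infty$ along a subsequence, or the constants $M_{n_k}$ grow too fast. In the first case, some nonzero $d\in Y^{\NS}_{n_k}$ has $\mathsf O_{n_k}(d)=0$. By finite-dimensional duality, as in the Finite-window observability criterion proposition, this yields a dual direction $y$ that annihilates the observation ranges and pairs nontrivially with $d$. In the second case, we choose normalized near-minimizers and obtain $y_{n_k}$ with $\mathsf O^*_{n_k}(y_{n_k})\to0$. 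Hypothesis 3 then converts these into either a nonzero $\mathcal T^{\PFET}_{W,\NS}$ (by \Cref{thm:finite-window-anti-phantom-clean}) or a residual left-singular invisible cascade (by \Cref{thm:trace-cost-closure-left-singular-clean} and \Cref{cor:trace-cost-step-output}). Either outcome is the required moving-window invisible defect cascade, possibly only in the relative sense of \eqref{eq:relative-left-singular-failure}. So the dichotomy is "(i), and, through hypothesis 3, a witness of type (ii)". In particular "at least one of (i), (ii)" holds, and the final regularity statement is the contrapositive of the summation step.

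The main obstacle is the passage from primal non-observability to the dual pairing with the actual extracted defect $d_{n_k}$, which requires $|\langle d_{n_k},y_{n_k}\rangle|\ge c_1$. The primal constant $M_n$ is a supremum over the whole NS-realizable closure $Y^{\NS}_n$, so the badly observed direction need not be the branch direction $\widehat d_{n_k}$ itself. I would first try to settle this by applying the summation argument directly to $\widehat d_{n_k}$. If $\sum_k\lambda_{n_k}\mathsf O_{n_k}(\widehat d_{n_k})^q<\infty$ while $\lambda_{n_k}$ stays non-summable, then $\mathsf O_{n_k}(\widehat d_{n_k})\to0$ along a subsequence. Hahn--Banach in each finite-dimensional $Y_{n_k}$ then provides a normalized $y_{n_k}$ that nearly norms $\widehat d_{n_k}$ while nearly annihilating the dual observation channels. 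Where only the relative version is available, I would state the conclusion in that weaker form, as the theorem already permits.
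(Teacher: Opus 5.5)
Your summation argument is the paper's proof. With (i) failing, $M_{n_k}<\infty$ gives $\mathsf O_{n_k}(\widehat d_{n_k})\ge M_{n_k}^{-1}$ directly from the definition of $M_{n_k}$ on $Y^{\NS}_{n_k}$, and selected-interval depletion telescoped against the nonnegative budget $\mathscr B_k$ gives the contradiction. The paper formally also assumes (ii) fails, but, as you observed, only the failure of (i) is actually used. So your first half already proves the theorem, and in fact shows that (i) is forced.

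Your second half tries to turn (i) into a type-(ii) witness via hypothesis 3 and Hahn--Banach. It is unnecessary, and it is not justified as written:
\begin{itemize}
\item $\sum_k\lambda_{n_k}M_{n_k}^{-q}<\infty$ can come from summable weights with bounded $M_{n_k}$, not only from fast growth of $M_{n_k}$.
\item Primal near-invisibility of $\widehat d_{n_k}$ gives no control of $\mathsf O^*_{n_k}(y)$, because the paper establishes no primal--dual relation between the two combined maps. In particular, the primal trace channel $\Pi^T_W\dot U(s_*)$ is unrelated to $A^*_W$.
\item A nonzero $\mathcal T^{\PFET}_{W,\NS}$ in one fixed window is not a moving-window cascade pairing with the extracted $d_{n_k}$.
\end{itemize}
You should drop this part rather than cite it as part of the conclusion.
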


\begin{proof}
Assume, for contradiction, that there is a non-CKN dyadic branch and that both
alternatives fail.  By dyadic defect extraction, there are extracted windows
\(W_{n_k}\) and normalized defects \(\widehat d_{n_k}\).  Since the
non-effective alternative fails, \(M_{n_k}<\infty\) for all large \(k\), and
\[
        \sum_k\lambda_{n_k}M_{n_k}^{-q}=+\infty.
\]
Since the invisible-cascade alternative also fails, the finite-window hierarchy
leaves the extracted defects in the observed class.  By the definition of
\(M_{n_k}\),
\[
        \mathsf O_{n_k}(\widehat d_{n_k})\ge M_{n_k}^{-1}.
\]
Observable depletion on selected intervals gives
\[
        \mathscr B_k-\mathscr B_{k+1}
        \ge
        c\lambda_{n_k}M_{n_k}^{-q}-e_k.
\]
Summing from \(k_0\) to \(K\) yields
\[
        \mathscr B_{k_0}-\mathscr B_{K+1}
        \ge
        c\sum_{k=k_0}^{K}\lambda_{n_k}M_{n_k}^{-q}
        -
        \sum_{k=k_0}^{K}e_k.
\]
The budget is nonnegative, so the left-hand side is at most
\(\mathscr B_{k_0}<\infty\).  The errors are summable, while the first sum on
the right diverges.  Letting \(K\to\infty\) gives a contradiction.  Therefore a
persistent non-CKN branch forces either non-effective moving-window
observability or an NS-realizable invisible defect cascade.  The contrapositive
gives the CKN-scale and regularity conclusion.
\end{proof}

\subsection{Controlled-regime corollary}

\begin{corollary}[Controlled-regime CKN-scale criterion]
\label{cor:controlled-regime-ckn-scale}
Assume the hypotheses of \Cref{thm:moving-window-defect-cascade-alternative}.
Suppose the moving windows satisfy a growth bound of the form
\[
        M_n\le C N_n^a\exp(CN_n^b)
\]
and are chosen logarithmically admissibly on the extraction set, so that
\[
        \sum_{k}\lambda_{n_k}M_{n_k}^{-q}=+\infty.
\]
If no NS-realizable invisible defect cascade exists in this controlled window
family, then a CKN scale occurs and \(z_0\) is regular.
\end{corollary}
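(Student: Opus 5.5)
The plan is to derive \Cref{cor:controlled-regime-ckn-scale} directly from \Cref{thm:moving-window-defect-cascade-alternative}, with \Cref{lem:log-admissibility-criterion} and \Cref{def:log-admissible-windows} supplying the effectiveness hypothesis. I argue by contradiction. Suppose $z_0$ is not regular. By \Cref{def:non-ckn-branch}, $z_0$ then has a non-CKN dyadic branch, that is, $\Psi(r_n)>\varepsilon_{\rm CKN}$ for all $n\ge n_0$. Otherwise some dyadic scale satisfies the CKN smallness condition, and the Caffarelli--Kohn--Nirenberg criterion recalled in \Cref{sec:dyadic-defect-packages} gives regularity at once. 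Since the hypotheses of \Cref{thm:moving-window-defect-cascade-alternative} are assumed (extraction on $I=\{n_k\}$, observable depletion on the same index set, and the finite-window hierarchy), the theorem applies. So alternative (i) or alternative (ii) must hold.

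Next I exclude alternative (i). The growth bound $M_n\le CN_n^a\exp(CN_n^b)$ gives $M_{n_k}<\infty$ for every $k$, so $M_{n_k}=+\infty$ cannot occur along any subsequence. The logarithmic admissibility assumption, either directly as \eqref{eq:log-admissible} or through the sufficient criterion of \Cref{lem:log-admissibility-criterion} with $N_n^b\le\alpha\log(n+2)$ and $s+qC\alpha<1$, together with the lower bound $M_{n_k}^{-q}\ge C^{-q}N_{n_k}^{-aq}\exp(-qCN_{n_k}^b)$, yields $\sum_k\lambda_{n_k}M_{n_k}^{-q}=+\infty$. This contradicts \eqref{eq:noneffective-observability-alt}. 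Hence the observability is depletion-effective in the sense of \Cref{def:depletion-effective-observability}, and alternative (i) fails.

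Then I exclude alternative (ii), which asserts a moving-window invisible defect cascade along the extracted windows. The hypothesis of the corollary is that no NS-realizable invisible defect cascade exists in this controlled window family. The extracted windows $W_{n_k}$ belong to that family, so (ii) fails too. Both alternatives are therefore excluded, which contradicts the theorem. Hence no non-CKN dyadic branch exists, a CKN scale occurs, and $z_0$ is regular.

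I expect the main obstacle to be a bookkeeping issue rather than an analytic one. The cascade alternative produced by the theorem may be only a residual left-singular cascade in the relative sense of \eqref{eq:relative-left-singular-failure}, and the exclusion hypothesis has to rule out this weaker object as well. To handle this, I will read ``no NS-realizable invisible defect cascade'' as covering both the strong cascade of \Cref{def:moving-window-invisible-cascade-final} and its relative residual variant from \Cref{def:residual-left-singular-cascade-clean}. I will also make sure that the constants $C,a,b$ entering the admissibility sum are the same ones as in the growth bound for $M_n$.
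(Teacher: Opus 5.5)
Your proposal is correct and follows the paper's argument. The growth bound together with the assumed divergence of $\sum_k\lambda_{n_k}M_{n_k}^{-q}$ gives depletion-effectiveness, which excludes alternative (i) of the moving-window theorem; the no-cascade hypothesis excludes alternative (ii); hence no non-CKN branch exists. Your explicit contradiction setup, and your reading of the exclusion hypothesis as also covering the relative residual left-singular variant, simply spell out what the paper's two-line proof leaves implicit.
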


\begin{proof}
The logarithmic admissibility condition implies depletion-effective
observability on the extraction set.  The absence of invisible cascades excludes
the second alternative in \Cref{thm:moving-window-defect-cascade-alternative};
the first alternative is excluded by effectiveness.  Therefore a non-CKN branch
cannot persist.
\end{proof}

\subsection{Failure objects}

The moving-window alternative leaves a precise list of possible failure
mechanisms.

\paragraph{Failure of dyadic defect extraction.}
The non-CKN quantity may fail to produce a nontrivial cleaned active defect
after quotienting only exact pressure-gauge/null directions and after accounting for harmonic-pressure leakage, localization leakage, CKN-small components, and finite-window tails as errors.

\paragraph{Failure of selected-interval depletion.}
A defect may be visible in the finite-window hierarchy, but its observation may
fail to deplete a genuine Navier--Stokes budget on the same index set on which
the defect is extracted.  This includes signed flux cancellation, non-summable
cutoff work, pressure leakage, or the absence of a valid interval budget for a
sparse subsequence.

\paragraph{Finite-window trace obstruction.}
For some fixed window \(W\), the projected obstruction space
\[
        \mathcal T_{W,\NS}^{\PFET}
\]
may be nonzero.  This is the corrected finite-window PFET obstruction.

\paragraph{Residual left-singular invisible cascade.}
After pressure, flux, and energy are removed, an NS-realizable residual may pair
with dual directions satisfying
\[
        A_n^*y_n\to0,
        \qquad
        J_n^Py_n\to0,
        \qquad
        J_n^Fy_n\to0,
        \qquad
        J_n^Ey_n\to0.
\]
This is the trace-cost failure mechanism.

\paragraph{Non-effective moving-window growth.}
Every fixed window may be observable, but the constants \(M_n\) may grow so fast
on the extracted branch that
\[
        \sum_{k}\lambda_{n_k}M_{n_k}^{-q}<\infty.
\]

\subsection{Interpretation}

The theorem converts the local regularity problem into a precise defect-cascade
question.  If no CKN scale appears, then after dyadic rescaling, coarse
graining, exact pressure-gauge cleaning, active/harmonic pressure separation, pressure--flux testing, energy testing, and trace-cost
testing, one of two things must happen:
\[
        \text{the moving-window constants are not absorbable on the extracted branch,}
\]
or
\[
        \text{there exists an NS-realizable combined-invisible defect cascade.}
\]
Thus the remaining obstruction is required to satisfy NS-realizability,
scale-criticality, exact pressure-gauge cleaning, retained harmonic-pressure control, pressure invisibility, flux
invisibility, energy invisibility, and trace invisibility simultaneously.

\section{Model regimes and verification of controlled observability}
\label{sec:model-regimes}

The moving-window alternative is a reduction theorem.  It becomes useful when
its finite-window observability hypotheses can be verified in concrete regimes.
This section records two model settings: the clean periodic finite-window regime
and the perturbatively localized finite-window regime.  The conclusion is not an
unconditional Navier--Stokes regularity theorem.  Rather, the conclusion is that
ordinary active pressure, flux, energy, and trace obstructions reduce to explicit
finite-dimensional kernels or projected trace-obstruction spaces.

\subsection{Clean periodic finite-window regime}

We work on the periodic box
\[
        \T^3=\R^3/(2\pi\Z)^3.
\]
Let
\[
        \Lambda\subset \Z^3\setminus\{0\}
\]
be a finite symmetric set of output modes.  The exclusion of the zero mode removes the only periodic pressure gauge, namely spatial constants.  In a localized cylinder the analogue is not quotienting by all spatially harmonic pressures; the local harmonic component must be retained or estimated as leakage because it can enter momentum and energy through its gradient and pressure work.

A clean periodic finite-window defect direction is a tuple
\[
        z=(\dot U,\dot R)
\]
with Fourier support in \(\Lambda\), where
\[
        \nabla\cdot\dot U=0,
        \qquad
        \dot R=\dot R^T.
\]
The pressure variation is determined by
\begin{equation}
        -\Delta\dot P
        =
        \partial_i\partial_j
        (U_i\dot U_j+\dot U_iU_j+\dot R_{ij}).
        \label{eq:periodic-pressure-variation}
\end{equation}
We write the active pressure source as
\[
        b_z
        =
        \partial_i\partial_j
        (U_i\dot U_j+\dot U_iU_j+\dot R_{ij}).
\]
The flux variation is
\[
        \dot\Pi_z=-\dot R:\nabla U-R:\nabla\dot U.
\]
The energy observation is the chosen finite-dimensional linear energy map from
\Cref{prop:residual-pfe-kernel}, together with the positive energy tests on the
positive covariance cone.  The adjoint trace map is
\[
        A^*_\Lambda:Y_\Lambda^*\to H_\Lambda.
\]

\subsection{Periodic pressure source visibility}

\begin{lemma}[Clean periodic pressure source visibility]
\label{lem:periodic-pressure-source-visibility}
Let \(\Lambda\subset\Z^3\setminus\{0\}\) be finite.  For every pressure source
\[
        b(x)=\sum_{\sigma\in\Lambda}b_\sigma e^{i\sigma\cdot x},
\]
let
\[
        P=(-\Delta)^{-1}b
\]
with zero mean.  Then
\begin{equation}
        N_\Lambda^{-2}|b|_{\ell^2(\Lambda)}
        \le
        |P|_{\ell^2(\Lambda)}
        \le
        m_\Lambda^{-2}|b|_{\ell^2(\Lambda)},
        \label{eq:periodic-pressure-source-visibility}
\end{equation}
where
\[
        N_\Lambda=\max_{\sigma\in\Lambda}|\sigma|,
        \qquad
        m_\Lambda=\min_{\sigma\in\Lambda}|\sigma|.
\]
In particular,
\[
        P=0\quad\Longleftrightarrow\quad b=0.
\]
\end{lemma}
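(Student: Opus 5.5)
The plan is the Fourier-multiplier argument already used for \Cref{lem:clean-periodic-pressure-source-visibility}; indeed the present lemma restates that result in the model-regime setting, so one could simply cite it. I would nevertheless write the short direct argument.

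\emph{Step 1: Fourier solution.} Expand \(P\) on \(\T^3=\R^3/(2\pi\Z)^3\). The characters \(e^{i\sigma\cdot x}\), \(\sigma\in\Z^3\), are eigenfunctions of \(-\Delta\) with eigenvalue \(|\sigma|^2\). Hence the zero-mean solution of \(-\Delta P=b\) is determined mode by mode:
\[
\widehat P(\sigma)=|\sigma|^{-2}b_\sigma \quad (\sigma\in\Lambda), \qquad \widehat P(\sigma)=0 \quad (\sigma\notin\Lambda).
\]
This uses \(\widehat P(0)=0\) from the zero-mean normalization and \(0\notin\Lambda\). In particular, \(P\) has Fourier support in \(\Lambda\).

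\emph{Step 2: Parseval.} It follows that
\[
|P|_{\ell^2(\Lambda)}^2=\sum_{\sigma\in\Lambda}|\sigma|^{-4}|b_\sigma|^2 .
\]

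\emph{Step 3: bounds on the multiplier.} Since \(\Lambda\) is finite and \(0\notin\Lambda\), we have \(0<m_\Lambda\le|\sigma|\le N_\Lambda<\infty\) for every \(\sigma\in\Lambda\). Therefore
\[
N_\Lambda^{-4}\le|\sigma|^{-4}\le m_\Lambda^{-4} \qquad (\sigma\in\Lambda).
\]
Inserting these bounds termwise into the sum from Step 2 and taking square roots gives \eqref{eq:periodic-pressure-source-visibility}.

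\emph{Step 4: the equivalence.} The statement \(P=0\Leftrightarrow b=0\) follows at once from the two-sided bound, since both constants \(N_\Lambda^{-2}\) and \(m_\Lambda^{-2}\) are finite and positive.

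There is no real obstacle here. The only point that needs care is the zero mode, which is exactly the periodic pressure gauge. Excluding \(0\) from \(\Lambda\) is what makes \(m_\Lambda>0\), so that the upper bound is finite. Without this exclusion, \((-\Delta)^{-1}\) would not be well defined on the source range.
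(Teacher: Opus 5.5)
Your proposal is correct and follows the paper's proof essentially verbatim: you solve $-\Delta P=b$ mode by mode to get $\widehat P(\sigma)=|\sigma|^{-2}b_\sigma$ (using $0\notin\Lambda$), sum $|\sigma|^{-4}|b_\sigma|^2$ over $\Lambda$, and bound the multiplier termwise between $N_\Lambda^{-2}$ and $m_\Lambda^{-2}$. The only cosmetic difference is that you bound $|\sigma|^{-4}$ explicitly and then take square roots, which is exactly the step the paper leaves implicit.
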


\begin{proof}
Since \(0\notin\Lambda\), the zero-mean solution satisfies
\[
        \widehat P(\sigma)=|\sigma|^{-2}b_\sigma,
        \qquad \sigma\in\Lambda.
\]
Thus
\[
        |P|_{\ell^2(\Lambda)}^2
        =
        \sum_{\sigma\in\Lambda}|\sigma|^{-4}|b_\sigma|^2,
\]
and \eqref{eq:periodic-pressure-source-visibility} follows from
\(N_\Lambda^{-2}\le |\sigma|^{-2}\le m_\Lambda^{-2}\).
\end{proof}

\subsection{The clean finite-window kernel hierarchy}

Define
\[
        K^P_\Lambda=\{z:b_z=0\}.
\]
Define the pressure--flux kernel
\[
        K^{\PF}_\Lambda
        =
        \{z\in K^P_\Lambda:\dot\Pi_z=0\text{ in the selected flux window}\}.
\]
Define the pressure--flux--energy kernel by the chosen linear energy observation:
\[
        K^{\PFE}_\Lambda
        =
        \ker\left(\mathcal E^{\rm lin}_\Lambda|_{K^{\PF}_\Lambda}\right).
\]
Let
\[
        \iota_\Lambda:K^{\PFE}_\Lambda\hookrightarrow Y_\Lambda
\]
be inclusion and let \(\Pi_\Lambda^\perp\) be the orthogonal projection onto
\((\operatorname{Range}A_\Lambda)^\perp\).  The clean trace obstruction space is
\begin{equation}
        \mathcal T^{\PFET}_\Lambda
        :=
        \Pi_\Lambda^\perp\iota_\Lambda(K^{\PFE}_\Lambda).
        \label{eq:clean-trace-obstruction}
\end{equation}

\begin{definition}[Clean finite-window anti-phantom property]
\label{def:clean-anti-phantom-property}
The clean periodic window \(\Lambda\) has the anti-phantom property if
\begin{equation}
        \mathcal T^{\PFET}_{\Lambda,\NS}
        :=
        \Pi_\Lambda^\perp\iota_\Lambda
        \left(
          K^{\PFE}_\Lambda
          \cap
          \overline{\operatorname{Range}\mathcal R^{\NS}_\Lambda}
        \right)
        =\{0\}.
        \label{eq:clean-ns-trace-obstruction-zero}
\end{equation}
Here \(\mathcal R^{\NS}_\Lambda\) denotes the map from NS-realizable dyadic
packages to their cleaned finite-window residual directions.
\end{definition}

\subsection{Clean finite-window combined observability}

\begin{theorem}[Clean periodic finite-window combined observability]
\label{thm:clean-periodic-combined-observability}
Let \(\Lambda\subset\Z^3\setminus\{0\}\) be a finite periodic output window.
Assume:
\begin{enumerate}
\item the pressure observation is taken in the zero-mean pressure quotient and
is injective on the selected active pressure-source range;
\item the flux tests separate \(K^P_\Lambda/K^{\PF}_\Lambda\);
\item the chosen linear energy tests separate
\(K^{\PF}_\Lambda/K^{\PFE}_\Lambda\), while the positive energy tests are used
only on the positive covariance cone;
\item the clean finite-window anti-phantom property
\(\mathcal T^{\PFET}_{\Lambda,\NS}=\{0\}\) holds.
\end{enumerate}
Then there exists a finite constant \(M_\Lambda<\infty\) such that every
NS-realizable cleaned defect direction \(d\in Y_\Lambda\) satisfies
\begin{equation}
        |d|_{Y_\Lambda}
        \le
        M_\Lambda
        \left(
            |O^P_\Lambda d|
            +|O^F_\Lambda d|
            +|O^E_\Lambda d|
            +|O^T_\Lambda d|
        \right).
        \label{eq:clean-periodic-combined-observability}
\end{equation}
\end{theorem}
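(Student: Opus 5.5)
The plan is to reduce \eqref{eq:clean-periodic-combined-observability} to the finite-dimensional compactness argument of the finite-window observability criterion, restricted to the closed subspace \(Y^{\NS}_\Lambda:=\overline{\operatorname{Range}\mathcal R^{\NS}_\Lambda}\subset Y_\Lambda\). Since \(Y_\Lambda\) is finite-dimensional and all four observation maps are linear and continuous on the cleaned quotient (\Cref{lem:quotient-observations-well-defined}), it suffices to show that the combined primal map is injective on \(Y^{\NS}_\Lambda\). The unit sphere of \(Y^{\NS}_\Lambda\) is compact, and the continuous function \(d\mapsto |O^P_\Lambda d|+|O^F_\Lambda d|+|O^E_\Lambda d|+|O^T_\Lambda d|\) then attains a positive minimum \(c_\Lambda\). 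Setting \(M_\Lambda=c_\Lambda^{-1}\) and using homogeneity gives the estimate.

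Injectivity follows by walking down the kernel hierarchy. Let \(0\ne d\in Y^{\NS}_\Lambda\) with all four observations zero. Hypothesis (1), combined with \Cref{lem:periodic-pressure-source-visibility} and \Cref{cor:pressure-null-reduction-clean-periodic}, gives \(b_d=0\), so \(d\in K^P_\Lambda\). Hypothesis (2) states that the flux tests separate \(K^P_\Lambda/K^{\PF}_\Lambda\); hence \(O^F_\Lambda d=0\) forces \(d\in K^{\PF}_\Lambda\). Hypothesis (3) applies the same argument with the linear energy map \(\mathcal E^{\rm lin}_\Lambda\), which is contained in \(O^E_\Lambda\), and yields \(d\in K^{\PFE}_\Lambda\). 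So \(d\in K^{\PFE}_\Lambda\cap Y^{\NS}_\Lambda\), and hypothesis (4) gives \(\Pi^\perp_\Lambda\iota_\Lambda d=0\). In other words, \(d\in\operatorname{Range}A_\Lambda\) by \Cref{prop:pfe-trace-alternative-clean}.

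The remaining step is to show that a nonzero \(d\in\operatorname{Range}A_\Lambda\cap K^{\PFE}_\Lambda\) cannot also have \(O^T_\Lambda d=\Pi^T_\Lambda\dot U(s_*)=0\). This is the main obstacle, because hypothesis (4) only says the residual is trace-projectable, not that the primal trace observation detects it. I would argue as follows. Write \(d=A_\Lambda\xi\). By construction of the trace-defect map, the selected-time velocity of \(A_\Lambda\xi\) is \(\xi\) itself, projected to the trace window. So \(O^T_\Lambda d=\Pi^T_\Lambda\xi=\xi\), since \(H_\Lambda\) is already the projected trace space. Then \(O^T_\Lambda d=0\) gives \(\xi=0\), and therefore \(d=0\).

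If this identification of the initial trace of \(A_\Lambda\xi\) with \(\xi\) is not literally available from the definition of \(A_\Lambda\), I would make it part of the window normalization (\(\Pi^T_\Lambda\circ A_\Lambda=\mathrm{id}_{H_\Lambda}\)). Alternatively, I would restrict \(A_\Lambda\) to a complement of its kernel and use the injectivity of \(\Pi^T_\Lambda A_\Lambda\) there, stating this explicitly as the interpretation of the trace channel. Once that holds, the hierarchy is exhausted, injectivity on \(Y^{\NS}_\Lambda\) follows, and compactness gives the finite constant \(M_\Lambda\).
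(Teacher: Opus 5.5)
Your proposal follows the paper's proof: hypotheses (1)--(3) push a combined-invisible NS-realizable direction down the chain \(K^P_\Lambda\supset K^{\PF}_\Lambda\supset K^{\PFE}_\Lambda\), hypothesis (4) is invoked, and compactness on the unit sphere of the finite-dimensional NS-realizable space gives \(M_\Lambda\). The one difference is that the paper passes directly from \(\mathcal T^{\PFET}_{\Lambda,\NS}=\{0\}\) to ``no nonzero NS-realizable kernel.'' You correctly observe that (4) alone only places the residual in \(\operatorname{Range}A_\Lambda\), and you supply the missing link \(O^T_\Lambda\circ A_\Lambda=\mathrm{id}_{H_\Lambda}\), which is the natural reading of \(A_W\xi\) as the evolution of \(\xi\) from the slice \(t=s_*\). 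Your write-up is therefore, if anything, more complete than the paper's at this step.
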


\begin{proof}
The pressure observation is injective on active pressure sources by
\eqref{eq:periodic-pressure-source-visibility} and the stated source-range
assumption.  Therefore any pressure-invisible direction lies in
\(K^P_\Lambda\).  The flux separation assumption pushes pressure--flux invisible
directions into \(K^{\PF}_\Lambda\).  The energy separation assumption pushes
pressure--flux--energy invisible directions into \(K^{\PFE}_\Lambda\).  The
remaining trace obstruction is the projected space
\(\mathcal T^{\PFET}_{\Lambda,\NS}\), which is zero by assumption.  Thus the
combined observation map has no nonzero NS-realizable kernel.  Since all spaces
are finite-dimensional, compactness on the unit sphere gives
\eqref{eq:clean-periodic-combined-observability}.
\end{proof}

\begin{corollary}[Clean finite-window algebraic alternative]
\label{cor:clean-finite-window-algebraic}
For a fixed clean periodic window \(\Lambda\), either the combined observation
map is injective on the selected active finite-window space, or a nonzero formal
kernel appears in one of the explicit finite-dimensional stages
\[
        K^P_\Lambda,
        \qquad
        K^{\PF}_\Lambda,
        \qquad
        K^{\PFE}_\Lambda,
        \qquad
        \mathcal T^{\PFET}_\Lambda.
\]
Only after intersection with the NS-realizable range does such a formal kernel
become relevant to Navier--Stokes.
\end{corollary}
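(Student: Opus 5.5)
The corollary is a finite-dimensional linear-algebra statement, and I would prove it by a chain of kernel alternatives. The starting point is the combined observation map \(\calO^{\rm comb}_\Lambda=(O^P_\Lambda,O^F_\Lambda,O^E_\Lambda,O^T_\Lambda)\) on the selected active space. Either this map is injective, in which case the first alternative holds and we are done, or there is a nonzero direction \(z\) in its kernel. In the second case I follow \(z\) through the hierarchy built in the clean periodic regime and show that it forces one of the four listed sets to be nonzero.

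\emph{Pressure stage.} Since \(O^P_\Lambda z=0\), \Cref{lem:periodic-pressure-source-visibility} together with the injectivity of the zero-mean pressure observation on the source range gives \(b_z=0\), exactly as in \Cref{cor:pressure-null-reduction-clean-periodic}. Hence \(z\in K^P_\Lambda\), and this space is nonzero.

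\emph{Flux stage.} Vanishing of the flux observation means \(\dot\Pi_z=0\) in the selected flux window. By the definition of \(K^{\PF}_\Lambda\) as \(\ker\Bop_\Lambda\cap\ker\Fop_\Lambda\), as in \Cref{thm:clean-periodic-pressure-flux-alternative}, we get \(z\in K^{\PF}_\Lambda\).

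\emph{Energy stage.} Vanishing of the chosen linear energy tests places \(z\) in \(K^{\PFE}_\Lambda\), by the definition in \eqref{eq:formal-pfe-kernel} and \Cref{prop:residual-pfe-kernel}. I use only the linear energy tests here and not the positive seminorm \(Q^E\), because \(z\) is a formal direction and need not lie in the positive covariance cone.

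\emph{Trace stage.} Apply \Cref{prop:pfe-trace-alternative-clean} to \(K^{\PFE}_\Lambda\). Either \(\Pi^\perp_\Lambda\iota_\Lambda z\neq0\), so \(\mathcal T^{\PFET}_\Lambda\neq\{0\}\), or \(\iota_\Lambda z\in\operatorname{Range}A_\Lambda\). Since \(z\neq0\), every branch produces a nonzero formal kernel at some explicit stage, which is the required dichotomy. Each kernel is finite-dimensional because the ambient window space is.

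The main obstacle is the trace-projectable case \(\iota_\Lambda z\in\operatorname{Range}A_\Lambda\) with \(O^T_\Lambda z=0\). There the four stages do not obviously capture \(z\), because the primal trace observation \(\Pi^T_\Lambda\dot U(s_*)\) is not literally the dual condition \(A^*y=0\). My first attempt is to note that \(z\neq0\) already lies in \(K^{\PFE}_\Lambda\), so that stage is nonzero and the disjunction holds regardless of the trace alternative. The corollary only asserts that a nonzero formal kernel \emph{appears} in one of the stages, so this is enough.

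The closing sentence of the corollary needs no argument. It restates that the objects relevant to Navier--Stokes are the intersections with \(\overline{\operatorname{Range}\mathcal R^{\NS}_\Lambda}\), as in \Cref{def:clean-anti-phantom-property} and \Cref{thm:clean-periodic-combined-observability}, and those intersections are what \Cref{thm:finite-window-anti-phantom-clean} tests.
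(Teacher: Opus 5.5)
Your proposal is correct and follows the paper's own reasoning. The paper gives no separate proof: the corollary is read off from the stage-by-stage descent \(K^P_\Lambda\supseteq K^{\PF}_\Lambda\supseteq K^{\PFE}_\Lambda\), followed by the projection to \(\mathcal T^{\PFET}_\Lambda\), in the proof of \Cref{thm:clean-periodic-combined-observability}, and it uses that theorem's pressure-injectivity hypothesis exactly as you do. Your handling of the trace-projectable case is also right: the stages are nested, so a nonzero combined-invisible \(z\) already makes \(K^P_\Lambda\) and \(K^{\PFE}_\Lambda\) nonzero, and the trace alternative is never needed for the disjunction.
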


\subsection{Perturbatively localized finite-window regime}

Let \(\chi_R\) be a slowly varying cutoff,
\[
        \chi_R(x,t)=\chi(x/R,t/R^2),
\]
and let \(O_{\Lambda,R}\) denote the localized combined observation map.  Write
\begin{equation}
        O_{\Lambda,R}=O_\Lambda^{\rm per}+C_{\Lambda,R},
        \label{eq:localized-combined-map-decomp}
\end{equation}
where \(O_\Lambda^{\rm per}\) is the clean periodic principal map and
\(C_{\Lambda,R}\) is the localization commutator.  The commutator includes
cutoff leakage, localized pressure inverse error, boundary leakage,
energy-flux cutoff terms, and trace-window projection error.

\begin{assumption}[Perturbative localization bound]
\label{ass:perturbative-localization-bound}
For each fixed finite window \(\Lambda\), there exist constants \(C_\Lambda>0\)
and \(R_\Lambda<\infty\) such that, for all \(R\ge R_\Lambda\),
\begin{equation}
        |C_{\Lambda,R}d|
        \le
        C_\Lambda R^{-1}|d|_{Y_\Lambda}
        \label{eq:perturbative-localization-bound}
\end{equation}
for every \(d\in Y_\Lambda\).
\end{assumption}

\begin{theorem}[Perturbative localization preserves combined observability]
\label{thm:perturbative-localization-combined-observability}
Assume clean periodic combined observability on \(\Lambda\):
\[
        |d|_{Y_\Lambda}\le M_\Lambda|O_\Lambda^{\rm per}d|.
\]
Assume also
\[
        \|C_{\Lambda,R}\|\le \frac1{2M_\Lambda}.
\]
Then localized combined observability holds:
\begin{equation}
        |d|_{Y_\Lambda}\le 2M_\Lambda|O_{\Lambda,R}d|.
        \label{eq:localized-combined-observability}
\end{equation}
In particular, perturbative localization does not create a nonzero
combined-invisible defect direction.
\end{theorem}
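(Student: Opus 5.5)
The argument is a reverse-triangle-inequality perturbation argument, exactly parallel to the proof of \Cref{thm:pf-perturbative-localized-stability}, now carried out for the full combined observation map on \(Y_\Lambda\). I will fix the norm on the target space \(E_\Lambda\oplus P_\Lambda\oplus F_\Lambda\oplus T_\Lambda\) to be the sum of the channel norms. This is the norm appearing in the clean estimate \eqref{eq:clean-periodic-combined-observability}. The operator norm \(\|C_{\Lambda,R}\|\) is then understood from \(Y_\Lambda\), with \(|\cdot|_{Y_\Lambda}\), into this target. With these conventions, both \(O_\Lambda^{\rm per}\) and \(O_{\Lambda,R}\) are linear maps between the same finite-dimensional normed spaces, and \eqref{eq:localized-combined-map-decomp} is an identity of linear maps.

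The steps run as follows. For any \(d\in Y_\Lambda\), the decomposition gives \(O_{\Lambda,R}d=O_\Lambda^{\rm per}d+C_{\Lambda,R}d\). The reverse triangle inequality then yields \(|O_{\Lambda,R}d|\ge|O_\Lambda^{\rm per}d|-|C_{\Lambda,R}d|\). The clean hypothesis gives \(|O_\Lambda^{\rm per}d|\ge M_\Lambda^{-1}|d|_{Y_\Lambda}\). The smallness hypothesis gives \(|C_{\Lambda,R}d|\le (2M_\Lambda)^{-1}|d|_{Y_\Lambda}\). Combining these,
\[
|O_{\Lambda,R}d|\ge \frac{1}{2M_\Lambda}|d|_{Y_\Lambda},
\]
which is \eqref{eq:localized-combined-observability}. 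For the final assertion, suppose \(O_{\Lambda,R}d=0\). The inequality then forces \(d=0\), so the localized map has trivial kernel on \(Y_\Lambda\), and no new combined-invisible direction is created.

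There is one point to handle carefully. The clean estimate of \Cref{thm:clean-periodic-combined-observability} was stated only for NS-realizable directions, that is, on \(Y_\Lambda^{\NS}\). The present statement assumes it on all of \(Y_\Lambda\). I will read the hypothesis as stated. If it is only available on the closed subspace of NS-realizable directions, the same argument applies verbatim with \(Y_\Lambda\) replaced by that subspace, since nothing beyond linearity and the two norm bounds is used. I will add a remark to this effect.

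As a further remark, \Cref{ass:perturbative-localization-bound} gives \(\|C_{\Lambda,R}\|\le C_\Lambda R^{-1}\). The smallness condition therefore holds for every \(R\ge\max(R_\Lambda,2C_\Lambda M_\Lambda)\), in analogy with \Cref{cor:sufficient-localization-scale}. The only real obstacle is bookkeeping. Each channel's localization error, including the harmonic-pressure leakage and the trace-window projection error, must be genuinely contained in \(C_{\Lambda,R}\) and measured in the same target norm. The proof itself is then immediate.
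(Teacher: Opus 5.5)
Your proof is correct and is essentially the paper's argument: the decomposition \(O_{\Lambda,R}=O_\Lambda^{\rm per}+C_{\Lambda,R}\), the reverse triangle inequality, the clean lower bound \(M_\Lambda^{-1}|d|_{Y_\Lambda}\), and the commutator bound \((2M_\Lambda)^{-1}|d|_{Y_\Lambda}\) together give \(|O_{\Lambda,R}d|\ge (2M_\Lambda)^{-1}|d|_{Y_\Lambda}\). Your side remarks are valid but not needed for the statement as posed: restricting to the NS-realizable subspace if that is where the clean estimate holds, and the sufficient scale \(R\ge\max(R_\Lambda,2C_\Lambda M_\Lambda)\).
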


\begin{proof}
Using \eqref{eq:localized-combined-map-decomp},
\[
        |O_{\Lambda,R}d|
        \ge
        |O_\Lambda^{\rm per}d|-|C_{\Lambda,R}d|
        \ge
        \frac1{M_\Lambda}|d|_{Y_\Lambda}
        -
        \frac1{2M_\Lambda}|d|_{Y_\Lambda}.
\]
This gives \eqref{eq:localized-combined-observability}.
\end{proof}

\subsection{Nonperturbative localization}

For a fixed localized finite window \((\Lambda,\chi)\), let
\[
        K^{\rm comb}_{\Lambda,\chi}=\ker O_{\Lambda,\chi}\subset Y_{\Lambda,\chi}.
\]
Then exactly one of the following holds: either \(K^{\rm comb}_{\Lambda,\chi}=0\)
and a finite observability constant exists by compactness, or
\(K^{\rm comb}_{\Lambda,\chi}\ne0\) and every localized combined-invisible
direction lies in this finite-dimensional kernel.  A vector in this kernel is
not automatically a Navier--Stokes obstruction; it may be caused by a poor
cutoff, harmonic pressure leakage, boundary cancellation, or a non-NS-realizable
finite-dimensional artifact.

\subsection{Active/harmonic pressure separation in the localized regime}

Localized pressure is not determined by its local quadratic source alone.  The correct localized structure is
\[
        P=P^{\act}+P^{\har},
        \qquad
        \Delta P^{\har}=0
        \quad\text{in the observation ball},
\]
with only functions of time treated as true pressure gauge.  The active pressure channel may be defined after projecting to \(P^{\act}\), but the harmonic component is retained in the finite-window package and contributes to the energy and trace channels or to an explicitly estimated harmonic-pressure error.

Let \(\calG^{\rm ex}_{\Lambda,\chi}\) be the exact null sector, consisting of the true pressure-gauge directions and any exact Leray-projection null directions.  The cleaned localized active space is
\[
        \widehat Y_{\Lambda,\chi}
        =
        \calZ_{\Lambda,\chi}/\calG^{\rm ex}_{\Lambda,\chi}.
\]
The localization and harmonic-pressure leakage sectors are denoted
\[
        \calN^{\loc}_{\Lambda,\chi},
        \qquad
        \calN^{\har}_{\Lambda,\chi}.
\]
They are not divided out unless a specific observation channel annihilates them exactly.  In the usual localized regime they enter perturbative estimates, for instance
\[
        \|\widetilde{\calO}_{\Lambda,\chi} n\|
        \le
        \eps_{\Lambda,\chi}\|n\|,
        \qquad
        n\in \calN^{\loc}_{\Lambda,\chi}+\calN^{\har}_{\Lambda,\chi}.
\]
By \Cref{lem:quotient-observations-well-defined}, the cleaned observation map is well-defined on \(\widehat Y_{\Lambda,\chi}\) because only exact null directions are quotiented.  If the cleaned localized kernel is zero and the perturbative leakage is small enough, finite-dimensional compactness and stability give a finite cleaned observability constant.

\subsection{Controlled moving-window regime}

Let
\[
        W_n=(n,\ell_n,\Lambda_n,\chi_n,s_n)
\]
be a moving sequence of cleaned localized finite windows, and let
\(N_n=N_{\Lambda_n}\) denote the frequency size of the window.  Suppose
\begin{equation}
        M_n\le C N_n^p\exp(CN_n^b)
        \label{eq:model-moving-growth-bound}
\end{equation}
for constants \(C,p,b>0\).  On an extraction set \(I=\{n_k\}\), the moving
family is controlled if
\begin{equation}
        \sum_k \lambda_{n_k}N_{n_k}^{-pq}\exp(-qC N_{n_k}^b)=+\infty.
        \label{eq:model-effective-divergence}
\end{equation}

\begin{theorem}[Controlled moving-window model theorem]
\label{thm:controlled-moving-window-model}
Assume dyadic defect extraction, selected-interval observable depletion, the
growth bound \eqref{eq:model-moving-growth-bound}, the effective divergence
condition \eqref{eq:model-effective-divergence}, and absence of
NS-realizable invisible defect cascades in the cleaned moving-window family.
Then no non-CKN dyadic branch exists at \(z_0\).  Hence a CKN scale occurs and
\(z_0\) is regular.
\end{theorem}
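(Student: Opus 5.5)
The plan is to derive this as a direct specialization of \Cref{thm:moving-window-defect-cascade-alternative}, in the same way as \Cref{cor:controlled-regime-ckn-scale}. We argue by contradiction. Suppose \(z_0\) has a non-CKN dyadic branch in the sense of \Cref{def:non-ckn-branch}. The hypotheses of the theorem supply three ingredients. Dyadic defect extraction (\Cref{ass:dyadic-defect-extraction-final}) gives an index set \(I=\{n_k\}\) and normalized NS-realizable directions \(\widehat d_{n_k}\). Selected-interval observable depletion (\Cref{ass:observable-depletion-final}) gives a nonnegative budget \(\mathscr B_k\) with summable errors \(e_k\) on the same set. Absence of NS-realizable invisible cascades eliminates alternative (ii) of \Cref{thm:moving-window-defect-cascade-alternative}. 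With these in hand, the remaining task is to exclude alternative (i), non-effective observability.

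To exclude (i), we combine the growth bound \eqref{eq:model-moving-growth-bound} with monotonicity of \(x\mapsto x^{-q}\). Whenever the windows are cleaned and localized with \(N_n<\infty\), the bound forces \(M_{n_k}<\infty\) for every \(k\). It also gives
\[
        \lambda_{n_k}M_{n_k}^{-q}\ge C^{-q}\lambda_{n_k}N_{n_k}^{-pq}\exp(-qCN_{n_k}^b).
\]
The effective divergence condition \eqref{eq:model-effective-divergence} then yields \(\sum_k\lambda_{n_k}M_{n_k}^{-q}=+\infty\). This is exactly depletion-effectiveness in the sense of \Cref{def:depletion-effective-observability}, the same inequality already used in \Cref{lem:log-admissibility-criterion}. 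The constant \(C^{-q}\) does not affect divergence.

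With both alternatives excluded, \Cref{thm:moving-window-defect-cascade-alternative} contradicts the assumed non-CKN branch. Spelled out, the bound \(\mathsf O_{n_k}(\widehat d_{n_k})\ge M_{n_k}^{-1}\) from \eqref{eq:observability-lower-bound} feeds into \eqref{eq:selected-observable-depletion}. Summing in \(k\) bounds a divergent series by \(\mathscr B_{k_0}+\sum_k e_k<\infty\), which is impossible. Hence some \(r_n\) satisfies \(\Psi(r_n)\le\varepsilon_{\rm CKN}\), and the Caffarelli--Kohn--Nirenberg criterion quoted in Section~2 gives regularity of \(z_0\).

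The main obstacle is the step that turns ``no NS-realizable invisible defect cascade'' into the lower bound \(\mathsf O_{n_k}(\widehat d_{n_k})\ge M_{n_k}^{-1}\) along the extracted directions. This step relies on hypothesis (3) of \Cref{thm:moving-window-defect-cascade-alternative}, namely that the finite-window hierarchy converts every failure of combined observability into either a nonzero \(\mathcal T^{\PFET}_{W,\NS}\) or a residual left-singular cascade. It therefore needs the hierarchy of \Cref{thm:clean-periodic-combined-observability} and \Cref{thm:perturbative-localization-combined-observability} to apply in each cleaned window. I would state this explicitly as part of what ``absence of invisible cascades in the cleaned moving-window family'' means, rather than attempt to derive it anew. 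Note also that the extraction normalization \(\|d_{n_k}\|\ge c_0\) is what allows a vanishing observation to produce a cascade with pairing bounded below.
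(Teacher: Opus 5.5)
Your proposal is correct and follows the paper's proof. The growth bound \eqref{eq:model-moving-growth-bound} gives $\lambda_{n_k}M_{n_k}^{-q}\ge C^{-q}\lambda_{n_k}N_{n_k}^{-pq}\exp(-qCN_{n_k}^b)$, so \eqref{eq:model-effective-divergence} yields depletion-effective observability, and the conclusion then follows from \Cref{thm:moving-window-defect-cascade-alternative}. The step you flag as the main obstacle is lighter than you suggest: once the growth bound makes $M_{n_k}$ finite, the lower bound $\mathsf O_{n_k}(\widehat d_{n_k})\ge M_{n_k}^{-1}$ follows directly from the definition \eqref{eq:branch-observability-constant} of $M_{n_k}$ as a supremum over NS-realizable directions.
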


\begin{proof}
The growth bound implies
\[
        M_{n_k}^{-q}
        \ge
        C^{-1}N_{n_k}^{-pq}\exp(-qC N_{n_k}^b)
\]
after changing constants.  Therefore \eqref{eq:model-effective-divergence}
implies depletion-effective observability on the extraction set.  The conclusion
follows from \Cref{thm:moving-window-defect-cascade-alternative}.
\end{proof}

\subsection{What remains outside the controlled regimes}

The controlled regimes leave the following genuine escape mechanisms:
\begin{enumerate}
\item a formal finite-window kernel whose projected trace obstruction
\(\mathcal T^{\PFET}_{\Lambda,\NS}\) intersects the NS-realizable range;
\item nonperturbative localization too large relative to the clean observability
constant;
\item uncontrolled retained harmonic pressure work or leakage;
\item moving-window amplification so fast that
\(\sum_k\lambda_{n_k}M_{n_k}^{-q}<\infty\);
\item residual left-singular alignment with dual directions invisible to
pressure, flux, energy, and adjoint trace;
\item failure of observable depletion on the selected extraction intervals.
\end{enumerate}

\subsection{Summary of the model verification}

Clean periodic active windows reduce combined invisibility to explicit
finite-dimensional kernels and the projected trace obstruction
\(\mathcal T^{\PFET}_\Lambda\).  Perturbative localization preserves clean
observability.  Nonperturbative localization leaves a finite-dimensional kernel.
Moving windows require an effective growth series on the extraction set.  Thus a
singular branch in the controlled regimes must either force non-effective
constants or produce an NS-realizable combined-invisible cascade.

\section{Critical recurrence program and sustaining diagnostics}
\label{sec:critical-recurrence-program}

The preceding sections formulate a combined-observability reduction: a persistent non-CKN branch must either pass through non-effective moving-window constants or produce an NS-realizable combined-invisible defect cascade.  This section records a more mechanism-oriented program.  Its purpose is not to replace the reduction theorem, but to explain what such a cascade would have to do dynamically.  A genuine singular branch would have to rebuild a scale-critical bad pattern at smaller and smaller scales.  We therefore ask which mechanisms could sustain this recurrence.

The three candidate mechanisms are
\[
    \text{vortex stretching},
    \qquad
    \text{active pressure work},
    \qquad
    \text{coarse-grained interscale flux}.
\]
The point of the present section is to define scale-invariant diagnostics for these mechanisms which are natural in the rescaled unit cylinder and meaningful for suitable weak solutions after mollification where necessary.

\subsection{Renormalized orbit and critical recurrence}

Let \(z_0=(0,0)\) be a candidate singular point, and set
\[
    r_n=2^{-n}.
\]
The renormalized Navier--Stokes states are
\begin{equation}
    u^{(n)}(x,t)=r_nu(r_nx,r_n^2t),
    \qquad
    p^{(n)}(x,t)=r_n^2p(r_nx,r_n^2t).
\end{equation}
All diagnostics below are defined on the fixed unit cylinder \(Q_1\).  This convention is important: scaling correctness is built into the definitions, rather than imposed by hand through powers of \(r_n\).

A natural critical state space is
\[
    \Xcrit
    =
    \left\{
    (v,q):
    v\in L^3(Q_1),\quad
    q-(q)_{B_1}(t)\in L^{3/2}(Q_1)
    \right\},
\]
possibly refined by the suitable weak solution local energy topology.  Define the CKN-good set
\[
    \Good_{\CKN}
    =
    \left\{
    (v,q)\in\Xcrit:
    \exists\rho\in(0,1)
    \text{ such that }
    \Psi_{v,q}(\rho)\le\eps_{\CKN}
    \right\},
\]
and the CKN-bad set \(\Bad_{\CKN}=\Xcrit\setminus\Good_{\CKN}\).

\begin{definition}[Critical recurrent branch]
A dyadic branch at \((0,0)\) is called critical recurrent if there exist constants \(0<c<C<\infty\) such that, along infinitely many \(n\),
\[
    c\le \Psi(r_n)\le C,
\]
and the rescaled states remain a definite distance from the CKN-good set in a chosen critical topology:
\[
    \dist_{\Xcrit}
    \bigl((u^{(n)},p^{(n)}),\Good_{\CKN}\bigr)
    \ge c_0>0.
\]
\end{definition}

\begin{problem}[Renormalized recurrence problem]
Can a suitable weak solution generate a renormalized orbit which remains recurrent in \(\Bad_{\CKN}\)?  Equivalently, can the Navier--Stokes scaling dynamics sustain a noncompact critical bad regime?
\end{problem}

\subsection{Admissible cutoffs and relative filters}

Fix a cutoff
\[
    \chi\in C_c^\infty(Q_1),
    \qquad
    0\le\chi\le1,
    \qquad
    \chi\equiv1 \text{ on } Q_{1/2}.
\]
Also fix a nonnegative parabolic mollifier \(\rho\in C_c^\infty(\R^3\times\R)\) with
\[
    \int_{\R^3\times\R}\rho(x,t)\,dx\,dt=1,
\]
and define
\[
    \rho_\ell(x,t)=\ell^{-5}\rho(x/\ell,t/\ell^2),
    \qquad
    S_\ell f=\rho_\ell*f.
\]
Here \(\ell\in(0,\ell_0)\) is a relative filter scale.  In physical variables the corresponding filter scale is \(\ell r_n\).  The parameter \(\ell\) therefore resolves subscale structure relative to the current blow-up scale, not an absolute length scale.

\subsection{Flux diagnostic}

For the renormalized solution define
\[
    U_{n,\ell}=S_\ell u^{(n)},
\]
\[
    R_{n,\ell}
    =
    S_\ell(u^{(n)}\otimes u^{(n)})
    -U_{n,\ell}\otimes U_{n,\ell},
\]
and
\[
    \Pi_{n,\ell}
    =
    -R_{n,\ell}:\nabla U_{n,\ell}.
\]
Since \(u^{(n)}\in L^3\), the tensor \(u^{(n)}\otimes u^{(n)}\) belongs to \(L^{3/2}\), and for fixed \(\ell>0\) the smoothed objects are smooth.  Hence \(\Pi_{n,\ell}\in L^1(Q_1)\).

Define the absolute flux activity
\begin{equation}
    \Flux^{\absop}_{n,\ell}
    =
    \int_{Q_1}\chi\,|\Pi_{n,\ell}|\,dx\,dt,
\end{equation}
and the signed flux activity
\begin{equation}
    \Flux^{\sgn}_{n,\ell}
    =
    \left|
    \int_{Q_1}\chi\,\Pi_{n,\ell}\,dx\,dt
    \right|.
\end{equation}
The flux coherence ratio is
\begin{equation}
    \Gamma_{n,\ell}
    =
    \frac{\Flux^{\sgn}_{n,\ell}}
    {\Flux^{\absop}_{n,\ell}+\varepsilon},
    \qquad \varepsilon>0.
\end{equation}
The parameter \(\varepsilon\) only prevents division by zero and may later be sent to zero.

\begin{definition}[Sign-coherent and oscillatory flux recurrence]
A branch is sign-coherent flux-sustained if, along a subsequence and for some relative filters \(\ell_n\),
\[
    \Flux^{\absop}_{n,\ell_n}\ge c_0>0,
    \qquad
    \Gamma_{n,\ell_n}\ge c_1>0.
\]
It is oscillatory flux-cancellation sustained if
\[
    \Flux^{\absop}_{n,\ell_n}\ge c_0>0,
    \qquad
    \Gamma_{n,\ell_n}\to0.
\]
\end{definition}

The distinction is essential.  Local energy identities naturally control signed flux, while a cascade strength is closer to absolute flux.  Thus a flux-mediated recurrent branch is either sign-coherent, in which case one may hope to deplete a finite energy budget, or oscillatory, in which case the obstruction is a cancellation mechanism not detected by signed energy balance alone.

\subsection{Pressure diagnostic}

For suitable weak solutions one has \(p^{(n)}\in L^{3/2}\) and \(u^{(n)}\in L^3\).  Therefore the pressure work term appearing in the local energy inequality is integrable.  Define the total pressure work
\begin{equation}
    \Press^{\tot}_{n}
    =
    \left|
    \int_{Q_1}
    \bigl(p^{(n)}-(p^{(n)})_{B_1}(t)\bigr)
    u^{(n)}\cdot\nabla\chi\,dx\,dt
    \right|.
\end{equation}
This quantity is invariant under adding a function of time to the pressure, because \(\nabla\cdot u^{(n)}=0\).

To distinguish active pressure from harmonic leakage, choose \(\eta\in C_c^\infty(B_1)\) with \(\eta\equiv1\) on \(B_{3/4}\), and define the local active pressure by
\begin{equation}
    p^{\act}_n
    =
    \mathcal R_i\mathcal R_j
    \bigl(\eta u^{(n)}_i u^{(n)}_j\bigr),
\end{equation}
where \(\mathcal R_i\) denotes the Riesz transform.  Since \(u^{(n)}_i u^{(n)}_j\in L^{3/2}\), we have \(p^{\act}_n\in L^{3/2}\).  Define
\begin{equation}
    \Press^{\act}_{n}
    =
    \left|
    \int_{Q_1}
    p^{\act}_n u^{(n)}\cdot\nabla\chi\,dx\,dt
    \right|.
\end{equation}
The remaining pressure is denoted
\[
    p^{\har}_n=p^{(n)}-p^{\act}_n,
\]
modulo functions of time, and is spatially harmonic in the interior region where \(\eta\equiv1\).  The corresponding harmonic pressure leakage is
\begin{equation}
    \Press^{\har}_{n}
    =
    \left|
    \int_{Q_1}
    p^{\har}_n u^{(n)}\cdot\nabla\chi\,dx\,dt
    \right|.
\end{equation}
The quantity most relevant to a local recurrence mechanism is \(\Press^{\act}_n\), because it is generated by the local quadratic source rather than by the retained harmonic pressure component or far-field leakage.

\subsection{Resolved vortex-stretching diagnostic}

Let
\[
    \Omega_{n,\ell}=\nabla\times U_{n,\ell},
    \qquad
    S_{n,\ell}=\frac12\bigl(\nabla U_{n,\ell}+\nabla U_{n,\ell}^T\bigr).
\]
For a suitable weak solution the raw stretching term \(\omega\cdot S\omega\) is not a stable \(L^1\) object, since \(\omega\in L^2\) and \(S\in L^2\).  The filtered diagnostic is therefore used.

Define the positive resolved stretching production
\begin{equation}
    \Vort^+_{n,\ell}
    =
    \int_{Q_1}
    \chi
    \bigl(\Omega_{n,\ell}\cdot S_{n,\ell}\Omega_{n,\ell}\bigr)_+
    \,dx\,dt.
\end{equation}
Define also the alignment ratio
\begin{equation}
    \mathfrak A_{n,\ell}
    =
    \frac{
    \int_{Q_1}
    \chi
    \bigl(\Omega_{n,\ell}\cdot S_{n,\ell}\Omega_{n,\ell}\bigr)_+
    \,dx\,dt
    }
    {
    \int_{Q_1}\chi |S_{n,\ell}|\,|\Omega_{n,\ell}|^2\,dx\,dt+\varepsilon
    }.
\end{equation}
Thus \(\Vort^+_{n,\ell}\) measures the amount of positive resolved stretching, while \(\mathfrak A_{n,\ell}\) measures the degree of geometric alignment between vorticity and expanding strain.

There is also a subgrid vorticity-transfer term.  The filtered velocity satisfies
\[
    \partial_t U_{n,\ell}-\Delta U_{n,\ell}
    +\nabla\cdot(U_{n,\ell}\otimes U_{n,\ell})
    +\nabla P_{n,\ell}
    =
    -\nabla\cdot R_{n,\ell}.
\]
Taking curl gives, schematically,
\[
    \partial_t\Omega_{n,\ell}-\Delta\Omega_{n,\ell}
    +U_{n,\ell}\cdot\nabla\Omega_{n,\ell}
    =
    \Omega_{n,\ell}\cdot\nabla U_{n,\ell}
    -\nabla\times\nabla\cdot R_{n,\ell}.
\]
Hence unresolved scales may contribute to resolved vorticity growth.  Define
\begin{equation}
    \Vort^{\sg}_{n,\ell}
    =
    \left|
    \int_{Q_1}
    \chi\,
    \Omega_{n,\ell}\cdot
    (\nabla\times\nabla\cdot R_{n,\ell})
    \,dx\,dt
    \right|.
\end{equation}
Equivalently, after integration by parts, one may use
\begin{equation}
    \Vort^{\sg}_{n,\ell}
    =
    \left|
    \int_{Q_1}
    R_{n,\ell}:
    \nabla\bigl(\nabla\times(\chi\Omega_{n,\ell})\bigr)
    \,dx\,dt
    \right|.
\end{equation}
The full filtered vorticity-sustaining diagnostic is
\begin{equation}
    \Vort_{n,\ell}
    =
    \Vort^+_{n,\ell}
    +
    \Vort^{\sg}_{n,\ell}.
\end{equation}
For a first, simpler version of the program one may work only with \(\Vort^+_{n,\ell}\) and treat \(\Vort^{\sg}_{n,\ell}\) as an error or as a separate unresolved mechanism.

\subsection{Scaling and suitability}

The preceding definitions are natural for three reasons.
First, they are made on the rescaled unit cylinder, so they are invariant under the Navier--Stokes scaling.  Written back in physical variables, they correspond schematically to
\[
    \Flux^{\absop}_{n,\ell}
    \sim
    r_n^{-1}\int_{Q_{r_n}} |\Pi_{\ell r_n}|\,dx\,dt,
\]
\[
    \Press_n
    \sim
    r_n^{-1}\int_{Q_{r_n}} p\,u\cdot\nabla\chi_{r_n}\,dx\,dt,
\]
and
\[
    \Vort_{n,\ell}
    \sim
    r_n\int_{Q_{r_n}}
    (\omega_{\ell r_n}\cdot S_{\ell r_n}\omega_{\ell r_n})_+\,dx\,dt,
\]
with the appropriate filtered fields.

Second, they are meaningful for suitable weak solutions.  Flux and pressure work use only \(u\in L^3\), \(p\in L^{3/2}\), and smoothing.  The vorticity-stretching diagnostic is filtered precisely because the raw product \(\omega\cdot S\omega\) is not naturally integrable at suitable-weak regularity.

Third, they separate mechanisms.  \(\Flux^{\absop}_{n,\ell}\) measures interscale activity, \(\Gamma_{n,\ell}\) measures signed coherence, \(\Press^{\act}_n\) measures active pressure work, and \(\Vort_{n,\ell}\) measures resolved stretching plus subgrid vorticity transfer.

\subsection{Critical recurrence diagnostic conjecture}

\begin{conjecture}[Critical recurrence diagnostic conjecture]
Let \((u,p)\) be a suitable weak solution and let \((0,0)\) have a non-CKN dyadic branch:
\[
    \Psi(r_n)>\eps_{\CKN}
    \qquad\text{for all sufficiently large } n.
\]
Then there exist relative filter scales \(\ell_n\in[\ell_-,\ell_+]\subset(0,1)\), a subsequence, and a constant \(c>0\) such that
\begin{equation}
    \Vort_{n,\ell_n}
    +
    \Press^{\act}_{n}
    +
    \Flux^{\absop}_{n,\ell_n}
    \ge c.
\end{equation}
\end{conjecture}

The conjecture should be read as a necessary-mechanism statement, not as a regularity theorem.  It says that a recurrent bad branch cannot remain bad without at least one sustaining mechanism.  If true, the local regularity problem can be reformulated as follows: determine whether persistent vortex stretching, active pressure focusing, or interscale flux can continue indefinitely without producing a CKN scale or depleting a finite local budget.

\begin{target}[Necessary mechanism theorem]
Prove a rigorous version of the implication
\[
    \Psi(r_n)>\eps_{\CKN}\text{ for all large }n
    \quad\Longrightarrow\quad
    \limsup_{n\to\infty}
    \left(
    \Vort_{n,\ell_n}
    +\Press^{\act}_{n}
    +\Flux^{\absop}_{n,\ell_n}
    \right)>0
\]
for an admissible choice of relative filters \(\ell_n\).
\end{target}

\subsection{Programmatic trichotomy}

Assuming the diagnostic conjecture, any critical recurrent branch should fall into one of three mechanism classes.

\begin{enumerate}[label=\textup{(\roman*)}]
\item \emph{Stretching-dominated recurrence:}
\[
    \limsup_n \Vort_{n,\ell_n}>0.
\]
The problem is whether persistent vortex-stretching alignment can be compatible with diffusion, pressure projection, and known vorticity-direction regularity mechanisms.

\item \emph{Pressure-dominated recurrence:}
\[
    \limsup_n \Press^{\act}_n>0.
\]
The problem is whether active pressure can refocus strain and sustain recurrence, rather than merely redistributing or dispersing energy.

\item \emph{Flux-dominated recurrence:}
\[
    \limsup_n \Flux^{\absop}_{n,\ell_n}>0.
\]
This case splits into sign-coherent flux recurrence and oscillatory flux-cancellation recurrence according to the behavior of \(\Gamma_{n,\ell_n}\).
\end{enumerate}

\begin{principle}[Critical recurrence principle]
A Navier--Stokes singularity, if it exists, should correspond to a recurrent bad orbit sustained by vortex stretching, active pressure work, interscale flux, or an oscillatory cancellation among these mechanisms.  A positive regularity route must show that every such sustaining mechanism is impossible or budget-depleting.  A counterexample route must construct a recurrent mechanism that escapes all such budgets while preserving incompressibility, pressure compatibility, Reynolds covariance positivity, and the local energy inequality.
\end{principle}

\subsection{Relation to the observability framework}

The earlier defect-observability framework remains useful, but it should be interpreted as a secondary language.  The defect package
\[
    D_{n,\ell}
    =
    (U_{n,\ell},P_{n,\ell};P^{\act}_{n,\ell},P^{\har}_{n,\ell},R_{n,\ell},\Pi_{n,\ell})
\]
records the coarse-grained manifestation of the renormalized orbit.  Combined observability asks whether this package is visible to pressure, flux, energy, and trace.  The critical recurrence program asks a more mechanism-oriented question:
\[
    \boxed{
    \text{what sustains the recurrence of bad scales?}
    }
\]
Pressure--flux observability becomes a tool for detecting flux-sustained recurrence.  Energy positivity becomes a tool for excluding covariance-supported hidden recurrence.  Trace-cost language should be invoked only after vortex stretching, active pressure work, and interscale flux have failed to explain the recurrence.

\subsection{Revised research roadmap}

The program suggests the following order of attack.

\begin{enumerate}[label=\textup{Step \arabic*:}, leftmargin=3em]
\item Define robust diagnostics \(\Vort_{n,\ell}\), \(\Press^{\act}_n\), \(\Flux^{\absop}_{n,\ell}\), and \(\Gamma_{n,\ell}\) with precise admissibility requirements on \(\chi\), \(\eta\), and \(S_\ell\).

\item Prove a weak necessary-mechanism theorem: a persistent non-CKN branch forces at least one of the diagnostics to remain nontrivial along a subsequence.

\item Analyze stretching-dominated recurrence through vorticity-direction geometry, strain concentration, and possible alignment rigidity.

\item Analyze pressure-dominated recurrence by separating active pressure from harmonic leakage and by estimating pressure work in the local energy inequality.

\item Analyze flux-dominated recurrence by separating sign-coherent flux from oscillatory flux cancellation.  A sign-coherent flux cascade should be tested against local energy depletion; an oscillatory cascade should be tested against pressure compatibility and Reynolds covariance positivity.

\item Reinterpret any remaining mechanism as an NS-realizable invisible defect cascade in the earlier combined-observability framework.
\end{enumerate}

This section is deliberately programmatic.  Its purpose is to turn the final obstruction from a formal invisible defect into a dynamical question about the persistence of scale-critical sustaining mechanisms.

\section{Concrete theorem targets and the resulting research program}
\label{sec:concrete-targets}

The previous sections reduce the local regularity problem to a precise
moving-window alternative.  A non-CKN dyadic branch can persist only if the
combined observability constants are not effective on the extracted branch, the
selected-interval depletion mechanism fails, or an NS-realizable invisible
defect cascade exists.  This section formulates the concrete theorem targets
needed to turn the reduction into a positive regularity route.

The statements in this section are targets, not established theorems of the
present manuscript.

\subsection{Target I: Dyadic defect extraction}

The first target is to justify the bridge between failure of CKN smallness and
the existence of nontrivial cleaned dyadic defects.

\begin{target}[Dyadic defect extraction]
\label{thm:target-defect-extraction}
Let \((u,p)\) be a suitable weak solution in \(Q_1\).  Suppose \(z_0=(0,0)\)
has a non-CKN dyadic branch:
\[
        \Psi(r_n)>\varepsilon_{\rm CKN}
        \qquad
        \text{for all }n\ge n_0.
\]
Then there exist coarse-graining scales \(\ell_{n_k}\), finite windows
\(W_{n_k}\), an infinite extraction set \(I=\{n_k\}\), and cleaned
NS-realizable defect directions
\[
        d_{n_k}=[\Pi_{W_{n_k}}\mathfrak D_{n_k,\ell_{n_k}}]\in Y_{W_{n_k}}
\]
such that
\[
        |d_{n_k}|_{Y_{W_{n_k}}}\ge c_0>0
        \qquad
        \text{for all }k.
\]
A stronger version would take \(I\) to contain every sufficiently large dyadic
index.  If only a sparse \(I\) is available, Target II must be proved on the
matching selected intervals.
\end{target}

The main difficulty is to prevent the entire non-CKN quantity from disappearing
into the exact time-dependent pressure gauge, retained harmonic-pressure leakage, pure localization leakage, already CKN-small
components, or finite-window truncation tails.  A useful first version may allow
\[
        |d_{n_k}|_{Y_{W_{n_k}}}\ge c_0-\operatorname{Tail}(W_{n_k})
\]
and then choose windows so that the tail is smaller than \(c_0/2\).

\subsection{Target II: Observable depletion}

The second target is to prove that visible defects consume a finite dyadic
budget on the same index set on which they are extracted.

\begin{target}[Observable depletion on selected intervals]
\label{thm:target-observable-depletion}
Let \(I=\{n_k\}\) be the extraction set from Target I.  There exist a
nonnegative selected budget \(\mathscr B_k\), weights \(\lambda_{n_k}>0\), an
exponent \(q\ge1\), and summable errors \(e_k\ge0\) such that for every
NS-realizable normalized extracted defect direction \(\widehat d_{n_k}\),
\[
        \mathscr B_k-\mathscr B_{k+1}
        \ge
        c\lambda_{n_k}
        \left(\mathsf O_{n_k}(\widehat d_{n_k})\right)^q
        -e_k.
\]
In the full-extraction case, one may take \(\mathscr B_k=B_{n_k}\) with
\(n_k=k\) after reindexing.  In the sparse case, this is an interval-depletion
statement and cannot be replaced by one-step inequalities on selected indices
without an additional monotonicity argument.
\end{target}

This target is where the local energy inequality becomes structural.  It must
measure the cost of visible defect production across scales.  In the
pressure--flux case, the forward sign-coherent flux condition from
\Cref{def:signed-flux-window} is one possible route; backscatter or oscillatory
flux requires a separate budget or an explicit error term.

\subsection{Target III: finite-window NS-realizability exclusion}

The finite-window hierarchy leaves formal kernels and, after the trace step, a
projected obstruction space.  A formal object becomes relevant to
Navier--Stokes only after intersection with the finite-window closure of
NS-realizable packages.

\begin{target}[Finite-window NS-realizability exclusion]
\label{thm:target-NS-realizability-exclusion}
Let \(W\) be a clean or perturbatively localized finite observation window.  The
NS-realizable projected PFET obstruction space satisfies
\[
        \mathcal T_{W,\NS}^{\PFET}
        =
        \Pi_W^\perp\iota_W
        \left(
          K_W^{\PFE}
          \cap
          \overline{\operatorname{Range}\mathcal R_W^{\NS,T}}
        \right)
        =\{0\},
\]
after quotienting only exact null directions and after treating harmonic-pressure leakage, localization effects, and CKN-small components as controlled errors.
\end{target}

There are two possible routes.  The first is algebraic: compute the finite-window
symbols for pressure, flux, energy, and trace and prove that any common formal
obstruction violates the coarse Navier--Stokes identities.  The second is
variational: use Reynolds covariance positivity, pressure compatibility, and the
local energy inequality to show that any nonzero NS-realizable obstruction must
carry pressure, flux, energy, or trace signal.

\subsection{Target IV: moving-window growth bounds}

Even if every fixed window is observable, a potential singularity may escape by
moving through windows whose constants deteriorate.

\begin{target}[Moving-window growth control]
\label{thm:target-moving-window-growth}
For every controlled family of active finite windows \(W\), the cleaned combined
observability constants satisfy
\[
        M_W\le C N_W^p\exp(CN_W^b),
\]
where \((C,p,b)\) depend only on the window class and local scale-critical
bounds, not on the dyadic scale.  On every extraction set \(I=\{n_k\}\) chosen
by Target I, the windows can be selected so that
\[
        \sum_k \lambda_{n_k}N_{n_k}^{-pq}\exp(-qC N_{n_k}^b)=+\infty.
\]
\end{target}

The expected sources of growth are finite-dimensional quotient conditioning,
localized pressure inverse constants, cutoff commutators, backward parabolic
adjoint amplification, and trace-cost singular values.  The goal is to keep
these losses polynomial or single-exponential in the active window size.

\subsection{A first conditional main theorem}

The four targets combine into a conditional local regularity theorem.

\begin{theorem}[Conditional defect-observability regularity theorem]
\label{thm:conditional-defect-observability-regularity}
Let \((u,p)\) be a suitable weak solution in \(Q_1\), and let \(z_0=(0,0)\).
Assume that Targets I--IV hold in a controlled window class, with Target II
formulated on the same extraction set as Target I.  Assume also that no residual
left-singular invisible defect cascade exists outside the controlled window
class.  Then \(z_0\) is regular.
\end{theorem}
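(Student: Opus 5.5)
The plan is a contradiction argument that reduces the statement to \Cref{thm:moving-window-defect-cascade-alternative}, checking that Targets I--IV supply exactly its hypotheses and close off both of its alternatives. Suppose \(z_0\) is not regular. By \Cref{def:non-ckn-branch}, if no non-CKN branch existed some dyadic scale would satisfy \(\Psi(r_n)\le\varepsilon_{\rm CKN}\), and the CKN criterion would give regularity. So \(z_0\) has a non-CKN dyadic branch, \(\Psi(r_n)>\varepsilon_{\rm CKN}\) for all \(n\ge n_0\).

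\textbf{Step 1: extraction and depletion.} Target I applied to this branch produces an extraction set \(I=\{n_k\}\), windows \(W_{n_k}\), and cleaned NS-realizable directions with \(\|d_{n_k}\|\ge c_0\). This is \Cref{ass:dyadic-defect-extraction-final}. Target II, which by hypothesis is formulated on the same \(I\), gives \Cref{ass:observable-depletion-final} with a nonnegative selected budget \(\mathscr B_k\) and summable errors \(e_k\).

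\textbf{Step 2: effectiveness of the constants.} Target IV gives \(M_{W}\le CN_W^p\exp(CN_W^b)\) uniformly in the dyadic scale, and on this \(I\) a choice of windows for which \(\sum_k\lambda_{n_k}N_{n_k}^{-pq}\exp(-qCN_{n_k}^b)=+\infty\). Before using this I would check one point. The windows chosen in Target I must be compatible with those chosen in Target IV, so I will take the controlled window class to be one in which both selections are made simultaneously. With that choice, the argument of \Cref{thm:controlled-moving-window-model} converts the growth bound into depletion-effective observability, \(\sum_k\lambda_{n_k}M_{n_k}^{-q}=+\infty\), with \(M_{n_k}<\infty\). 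This excludes alternative (i) of \Cref{thm:moving-window-defect-cascade-alternative}.

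\textbf{Step 3: excluding alternative (ii).} This is the main obstacle. The finite-window hierarchy (\Cref{thm:main-reduction}, \Cref{cor:trace-cost-step-output}) says that a failure of combined observability produces one of two objects:
\begin{enumerate}
\item a nonzero \(\mathcal T^{\PFET}_{W,\NS}\) in some fixed window;
\item a residual left-singular invisible cascade.
\end{enumerate}
Target III rules out the first in every controlled window. Inside the controlled class, a cascade of the second kind would make \(M_{n_k}\) either infinite or fail to satisfy the Target IV bound, which is excluded by Step 2. Outside the class, such cascades are excluded by the explicit extra hypothesis of the theorem.

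The delicate point is that \(M_n\) in \eqref{eq:branch-observability-constant} is defined on the NS-realizable closure \(Y_n^{\NS}\), whereas Target III is a statement about \(K^{\PFE}_W\) intersected with the NS range. I would argue as in \Cref{thm:clean-periodic-combined-observability}: any NS-realizable \(d\) with \(\mathsf O_n(d)=0\) lies successively in \(K^P\), \(K^{\PF}\), and \(K^{\PFE}\), and then has zero projection \(\Pi^\perp_W\iota_W d\) by Target III. Trace-projectability together with \(O^T d=0\) should then force \(d=0\) modulo exact null directions. This is where I would be most careful. If the implication does not hold as stated, I would strengthen the reading of Target III to injectivity of the combined map on \(Y^{\NS}_W\), which is what Target IV's finiteness of \(M_W\) implicitly presupposes.

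\textbf{Step 4: conclusion.} With alternatives (i) and (ii) both excluded, \Cref{thm:moving-window-defect-cascade-alternative} gives the contradiction directly. For the record, the summation is
\[
\mathscr B_{k_0}\ \ge\ c\sum_k\lambda_{n_k}M_{n_k}^{-q}-\sum_k e_k=+\infty .
\]
So no non-CKN dyadic branch exists, a CKN scale occurs, and \(z_0\) is regular.
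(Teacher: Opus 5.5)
Your argument is correct and is essentially the paper's proof: a contradiction via a non-CKN branch, Target I for extraction, the lower bound $\mathsf O_{n_k}(\widehat d_{n_k})\ge M_{n_k}^{-1}$, Target II for depletion on the same index set, Target IV for divergence of $\sum_k\lambda_{n_k}M_{n_k}^{-q}$, and telescoping against the nonnegative budget. The only difference is that you route it through \Cref{thm:moving-window-defect-cascade-alternative} rather than inlining the summation. One caveat: your Step~3 claim that an in-class residual left-singular cascade would force $M_{n_k}=\infty$ or violate the Target IV bound is not justified. That cascade is defined through the dual quantities $A^*_{W}y$ and $J^{P}_Wy$, $J^{F}_Wy$, $J^{E}_Wy$, which do not control the primal constant $M_n$, and even a genuine degeneration $M_{n_k}\to\infty$ would be compatible with the growth bound. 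The claim is also unnecessary: once Target IV gives $M_{n_k}<\infty$, the bound $\mathsf O_{n_k}(\widehat d_{n_k})\ge M_{n_k}^{-1}$ holds by the definition \eqref{eq:branch-observability-constant}, and that is all the summation uses.
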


\begin{proof}
Assume, for contradiction, that \(z_0\) is singular.  Then no sufficiently small
dyadic scale satisfies the CKN smallness condition, so there is a non-CKN dyadic
branch.  Target I yields an extraction set \(I=\{n_k\}\) and normalized
NS-realizable defect directions \(\widehat d_{n_k}\).  Target III excludes
fixed-window NS-realizable trace obstructions, and the assumed absence of
residual left-singular cascades excludes the moving trace failure.  Hence the
combined observation lower bound gives
\[
        \mathsf O_{n_k}(\widehat d_{n_k})\gtrsim M_{n_k}^{-1}.
\]
Target II gives
\[
        \mathscr B_k-\mathscr B_{k+1}
        \ge
        c\lambda_{n_k}M_{n_k}^{-q}-e_k.
\]
Target IV makes the series \(\sum_k\lambda_{n_k}M_{n_k}^{-q}\) divergent, while
\(\sum_k e_k<\infty\).  Summing in \(k\) contradicts the finiteness and
nonnegativity of the selected budget.  Therefore a CKN scale must occur, and
\(z_0\) is regular.
\end{proof}

\begin{remark}
The theorem is conditional, but the conditions are now aligned: sparse defect
extraction is paired with selected-interval depletion, and the trace obstruction
is the projected space \(\mathcal T_{W,\NS}^{\PFET}\), not the incorrect
condition that the whole residual be orthogonal to \(\operatorname{Range}A_W\).
\end{remark}

\subsection{Counterexample route}

A counterexample to this route would have to construct a moving-window sequence
\[
        (W_{n_k},d_{n_k},y_{n_k})
\]
such that
\[
        d_{n_k}\in \overline{\operatorname{Range}\mathcal R^{\NS}_{W_{n_k}}},
        \qquad
        |d_{n_k}|_{Y_{W_{n_k}}}\ge c_0,
        \qquad
        |y_{n_k}|_{Y_{W_{n_k}}^*}=1,
\]
\[
        \mathsf O_{n_k}^*(y_{n_k})\to0,
        \qquad
        |\langle d_{n_k},y_{n_k}\rangle|\ge c_1.
\]
It must also preserve the coarse momentum identity, pressure compatibility,
Reynolds covariance positivity, the flux identity, and the local energy
inequality.  Thus the failure object is a highly organized scale-critical
NS-realizable invisible cascade, not an arbitrary concentration defect.

\subsection{First feasible theorem: pressure--flux depletion in controlled windows}

A first feasible theorem target is a pressure--flux depletion statement in
controlled windows.

\begin{target}[Pressure--flux depletion in controlled windows]
\label{thm:first-feasible-pf-depletion}
Let \(W\) be a clean periodic or perturbatively localized active finite window.
Suppose a normalized NS-realizable defect direction \(d\in Y_W\) has nonzero
active pressure--flux observation:
\[
        |O_W^P d|+|O_W^F d|\ge\eta.
\]
Then, under forward sign coherence for the flux part and a clean active/harmonic pressure split, there exists a local budget \(B_W\) and a constant \(c_W>0\) such
that
\[
        B_{\rm in}-B_{\rm out}\ge c_W\eta^q-\Err_W.
\]
For perturbatively localized windows, the localization contribution to the
error should satisfy \(\Err_W\le C_W R^{-1}\), up to the explicitly separated
oscillation/backscatter term.
\end{target}

\subsection{Suggested order of attack}

A conservative order of attack is:
\begin{enumerate}
\item prove clean periodic pressure--flux coercivity;
\item prove perturbative localization stability;
\item prove pressure--flux depletion for NS-realizable packages under forward
sign coherence;
\item analyze the pressure--flux kernel by positive energy on the covariance
cone and by linear energy observations off the cone;
\item prove trace alignment or identify residual left-singular cascades;
\item estimate moving-window growth constants;
\item upgrade sparse extraction plus interval depletion, or prove full-sequence
extraction.
\end{enumerate}

\subsection{Conclusion of the program section}

The framework converts the local Navier--Stokes regularity problem into a
compatibility-and-observability problem.  Proving the aligned targets would give
a CKN scale.  Any failure of the route must produce an NS-realizable, cleaned,
non-summable, moving-window invisible defect cascade, or a precise failure of
selected-interval depletion or moving-window effectiveness.

\section{Conclusion and next steps}
\label{sec:conclusion}

The framework developed here changes the focus from a single preferred observable to a compatibility system of observables.  A Navier--Stokes defect relevant to the regularity problem must satisfy all of the following constraints simultaneously:
\begin{enumerate}
  \item it must be produced by an actual suitable weak solution through dyadic rescaling and coarse graining;
  \item it must satisfy the exact coarse momentum identity;
  \item its pressure must obey the active pressure compatibility law with the retained harmonic pressure component kept outside the true time-dependent gauge;
  \item its Reynolds stress must generate the exact energy flux;
  \item it must avoid detection by pressure, flux, energy, and selected-time adjoint trace observations;
  \item it must survive trace-cost exactification by aligning with trace-invisible or nearly trace-invisible left singular directions;
  \item it must persist through a moving sequence of windows with non-effective observability constants or non-summable amplification.
\end{enumerate}

The added theorem-targets section makes the status of the paper explicit.  The manuscript does not prove an unconditional regularity theorem.  It identifies four concrete analytic tasks: dyadic defect extraction, observable depletion, finite-window NS-realizability exclusion, and moving-window growth control.  If these targets are proved in a controlled window class, the moving-window alternative yields a CKN scale.  If the route fails, the failure must occur through a much more structured object than an arbitrary compactness defect.

The critical recurrence program added above refines the meaning of this remaining obstruction.  A combined-invisible defect cascade should not be regarded merely as an algebraic kernel.  If it is genuinely Navier--Stokes-realizable, it must be sustained dynamically across scales.  The proposed diagnostics \(\Vort_{n,\ell}\), \(\Press^{\act}_n\), \(\Flux^{\absop}_{n,\ell}\), and \(\Gamma_{n,\ell}\) are meant to identify whether the sustaining mechanism is vortex stretching, active pressure work, sign-coherent flux, or oscillatory flux cancellation.

The main obstruction statement is therefore sharpened as follows.  If a non-CKN dyadic branch persists, then either visible defects deplete a finite local budget, or the branch produces an NS-realizable invisible defect cascade.  After pressure, flux, and energy have been removed, the remaining obstruction is a residual left-singular cascade: a sequence of NS-generated residuals pairing nontrivially with dual directions whose adjoint trace observations degenerate.

This gives a precise target for both directions.  A positive route must prove that the finite-window kernels do not intersect the NS-realizable range with non-summable moving-window amplification, and must verify the depletion inequality for the chosen observations.  A counterexample route must construct a cleaned, NS-realizable, scale-critical cascade invisible to active pressure, flux, energy, and adjoint trace while preserving pressure compatibility, the local energy inequality, and the Reynolds covariance structure.

In the controlled regimes analyzed above, ordinary active pressure defects are visible, perturbative localization is stable, and nonperturbative localization leaves only a finite-dimensional kernel.  Thus the hard remaining question is no longer simply whether compactness can fail.  It is whether the Navier--Stokes equations can realize a moving-window, scale-critical, combined-invisible defect cascade.

\end{document}